\documentclass[a4paper,reqno,11pt]{amsart}
\usepackage[utf8]{inputenc}
\usepackage[normalem]{ulem}
\usepackage{amsmath,amsthm,amsfonts,amssymb,mathtools}
\usepackage{graphicx}
\usepackage{color}
\usepackage{float,dsfont}
\usepackage[square,numbers]{natbib}
\usepackage{thm-restate}
\usepackage{hyperref}
\usepackage{comment}
\mathtoolsset{showonlyrefs}
\usepackage[title]{appendix}
\usepackage[reversemp,
    paperwidth=210mm,
    paperheight=297mm,
    top={26mm},
    headheight={5.5pt},
    headsep={5.6mm},
    text={150mm,227mm},
    marginparsep=5mm,
    marginparwidth=12mm,
    bindingoffset=10mm,
    footskip=10mm]{geometry} 

\numberwithin{equation}{section}
\newtheorem{theorem}{Theorem}[section]
\newtheorem{corollary}[theorem]{Corollary}
\newtheorem{lemma}[theorem]{Lemma}
\newtheorem{proposition}[theorem]{Proposition}
\theoremstyle{definition}
\newtheorem{definition}[theorem]{Definition}
\newtheorem{remark}[theorem]{Remark}

\newcommand{\black}{\color{black}}

\DeclareMathOperator*{\esssup}{ess\,sup}
\setcounter{tocdepth}{1}

\begin{document}
\font\myfont=cmr12 at 20pt
\title[Anisotropic rigid body interacting with a Poiseuille flow]{Long-time behavior of an anisotropic rigid body interacting with a Poiseuille flow\\ in an unbounded {2D} channel}
\author{Denis Bonheure}
\address{
	Département de Mathématique\\
	Université Libre de Bruxelles\\
	Boulevard du Triomphe 155\\
	1050 Brussels - Belgium
}
\email{denis.bonheure@ulb.be}
\author{Matthieu Hillairet}
\address{
	Institut Montpelliérain Alexandre Grothendieck\\
	Université de Montpellier\\
	Place Eu\-gène Bataillon\\
	34090 Montpellier - France}
\email{matthieu.hillairet@umontpellier.fr}
\author{Clara Patriarca}
\address{Département de Mathématique\\
	Université Libre de Bruxelles\\
	Boulevard du Triomphe 155\\
	1050 Brussels - Belgium}
\email{clara.patriarca@ulb.be}
\author{Gianmarco Sperone}
\address{
	Dipartimento di Matematica\\
	Politecnico di Milano\\
	Piazza Leonardo da Vinci 32\\
	20133 Milan - Italy}
\email{gianmarcosilvio.sperone@polimi.it}
\date{\today}
\thanks{This research was initiated in Brussels when M.H. was a visiting fellow, C.P. was on a PhD internship and G.S. was post-doc, all supported by the ARC Advanced 2020-25 grant ``PDEs in interaction'' at ULB. D.B. is also supported and by the Francqui Foundation as Francqui Research Professor 2021-24. M.H. is supported by the Institut Universitaire de France. }
\maketitle

\begin{abstract}
\noindent
We study the long-time behavior of an elliptic rigid body which is allowed to vertically translate and rotate in a 2D unbounded channel under the action of a {Poiseuille} flow at large distances. The motion of the fluid is modelled by the incompressible Navier-Stokes equations, while the motion of the solid is described through Newton's laws. In addition to the solid  inertia and the hydrodynamic forces, we assume the dynamics of the solid is driven by internal elastic restoring forces but without any structural damping. Through a precise description of the motion of the elliptic body whenever it comes close to the channel boundaries, we prove global-in-time existence of weak solutions. Our second main contribution is a proof of return to equilibrium in case the amplitude of the Poiseuille flow is small. \black To our knowledge,  this represents the first long-time analysis of fluid-solid interaction problems with a given non-trivial final state.  

\medskip
\noindent
\textbf{AMS subject classification}: 74F10, 35Q30, 76D03, 35B40. \\
\textbf{Keywords}: fluid-solid interactions, Navier-Stokes equations, collisions, long-time behavior. 
\end{abstract} 
\tableofcontents


\newcommand{\tildex}{\tilde x}
\section{Introduction}
\subsection{Context}
Because of its relevance to applications,  describing the motion of 
particles in a channel is one of the crucial problems in the field of fluid-solid interactions \cite[Section 7-3]{Happel&Brenner65}.  From the analytical standpoint,  this is a simple system to state that contains a wide variety of intriguing phenomena.  Consider for instance the fall of a particle inside a vertical channel.  If the particle is sufficiently small and close to the centerline of the channel,  interactions with the boundaries are neglectable and the equations of motion are fully integrable (for simple particle shape). This leads for instance to (by now classical) Jeffery's orbits \cite{Jeffery22} in case of elliptical particles.
The situation becomes more involved when the particle moves close to the channel boundaries.  One must then take into account the influence of the walls to compute the hydrodynamic forces exerted on the particle.  When fluid inertia is neglected, several studies provide either exact formulas with suitable transformations \cite{ONeill&Stewartson67,Cooley&ONeill68,Cooley&Oneill69,Dean&Oneill64} or approximations when the distance between the obstacle and the wall is small  with respect to its dimensions \cite{Cox67}. 
However,  such descriptions lead to paradoxical results.  For instance,  assuming the particle is spherical prevents from a lateral migration of the particle relative to the wall.  This is in contradiction with the observed Segré-Silberberg effect  that predicts particles falling in a cylindrical channel have privileged orbits \cite{SegreSilberberg62}.  One common explanation of this paradox is to reintroduce fluid convection \cite{Hogg}: relying on matched aysmptotic methods, approximate solutions enable to recover the privileged orbits of Segré and Silberberg (see the introduction of \cite{Matasetal} for a review of  known results).  However, to  our knowledge, handling the full nonlinear problem is still an open issue since it makes  the mathematical analysis more subtle with no explicit exact analytical formulas for solutions.  

One ambition of our study is to contribute to the mathematical analysis of solid/wall interactions in this fluid/solid context.  We aim in particular to provide a sharp description of particle trajectories without relying on exact
or approximate formulas.  To this end,  we start from another standpoint on particle motion in channels.  Following some related models used in the engineering literature \cite{cossu2000instability, dolci2019bifurcation, fani2015motion,obligado2013bi}, we consider a spring-mounted, undamped, rigid elliptic body which is immersed in an incompressible viscous flow modelled by Navier-Stokes equations and which is free to move in the direction orthogonal to an incoming Poiseuille flow (see the precise definition \eqref{eq:poiseuille} below).  This toy model serves as a paradigm for many problems in physics and engineering sciences such as the wind blowing on a bridge or the oscillations of a submarine communication cable (see \cite{williamson2004vortex} for more examples).  In this apparatus, the elliptic body is at rest if it lays on the channel centerline with its axis parallel and orthogonal to the channel axis.  In case the body shifts or rotates from this rest position,  the created asymmetry interacts with the Poiseuille flow and makes the body to move. 
In the absence of restoring forces,  the particle would leave the channel centerline possibly to reach a Segré-Silberberg orbit.  There is then a competition between the fluid forces and the restoring forces.  Numerical evidence \cite{patriarca2022numerical} shows that, in this regard, if the intensity of the inflow is sufficiently small, in the long time the streamlines of the fluid are characterized by an upstream-downstream symmetry. In particular, it is possible to observe the presence of two symmetrical eddies in a closed re-circulation zone in the wake of the obstacle and, thus, the flow can be considered steady. As soon as the inflow's intensity is above a critical threshold value, the flow pattern makes a transition from an $x_2$-symmetrical steady configuration to an unsteady periodical one, where it exhibits a vortex shedding phenomenon \cite{blackburn1999study}. 
While this issue has been extensively studied from the numerical and experimental point of view,  a mathematical approach is still lacking. In this paper, we propose a mathematical approach of the first regime  of small inflow in the 2D setting.

\subsection{Model.}   To describe precisely the model, we set
\begin{equation}\label{prova}
B_{eq} = \left\{ (x_1,x_2) \in \mathbb{R}^{2} \ \Bigg| \ \dfrac{x_{1}^{2}}{d^2} + \dfrac{x_{2}^{2}}{\delta^{2}} \leq 1 \right\}
\end{equation}
which describes an elliptic rigid body $B_{eq}$ of mass $\mathcal{M}>0$ and moment of inertia $\mathcal{J}>0.$ To fix the ideas,  we assume that $\delta < d$ below  so that $d$ is the large axis of $B_{eq}$.   We recall that, in this 2D setting the body the moment of inertia is a time-independent scalar.  We assume that the body is immersed in a two-dimensional channel $A$ of width $2\mathcal L.$  Up to a good choice of coordinates we fix $A \doteq \mathbb{R}\times (-\mathcal L,\mathcal L).$ The upper and lower boundaries of the channel are therefore given by $\Gamma=\mathbb{R}\times\{-\mathcal L\}\cup \mathbb{R}\times\{\mathcal L\}.$ We restrict to the case $\mathcal L > 2d$ so that -- with the assumption that the large axis of $B_{eq}$ has length $2d$ -- the body cannot touch simultaneously the upper and lower part of $\Gamma.$ We will comment on  this restriction further on.  What remains of the channel is filled with a
homogeneous incompressible viscous fluid. 

The rigid body is free to rotate and move vertically but it does not translate horizontally inside the channel.  We respectively denote by $h$ and $\theta$ the vertical displacement of the barycenter of the rigid body and its anticlockwise  rotation  from the central line $x_2=0$.  Thus,
\begin{equation}\label{eq:Bh}
 B(h,\theta)= Q(\theta)B_{eq}+h\,\widehat{e}_2 \doteq \begin{bmatrix} \cos\theta & -\sin\theta \\ \sin\theta & \cos\theta \end{bmatrix} B_{eq}+h\,\widehat{e}_2\,, \qquad \forall\, (h,\theta) \in A_{d,\delta}\,,
\end{equation}
tracks the position of the body after the vertical translation of height $h$ and the rotation of angle $\theta$.
{Here,} $A_{d,\delta}$ denotes the set of admissible values for $(h,\theta)$ {that imposes $B_{eq} \Subset A:$} 
$$
A_{d,\delta} \doteq \left\{(h,\theta) \in \mathbb{R}^{2} \ \Big| \ {|h|+\sqrt{(d\sin\theta)^2+(\delta\cos\theta)^2}<\mathcal L} \right\} \, .
$$

\begin{figure}[H]
    \centering
    \includegraphics[scale=0.5]{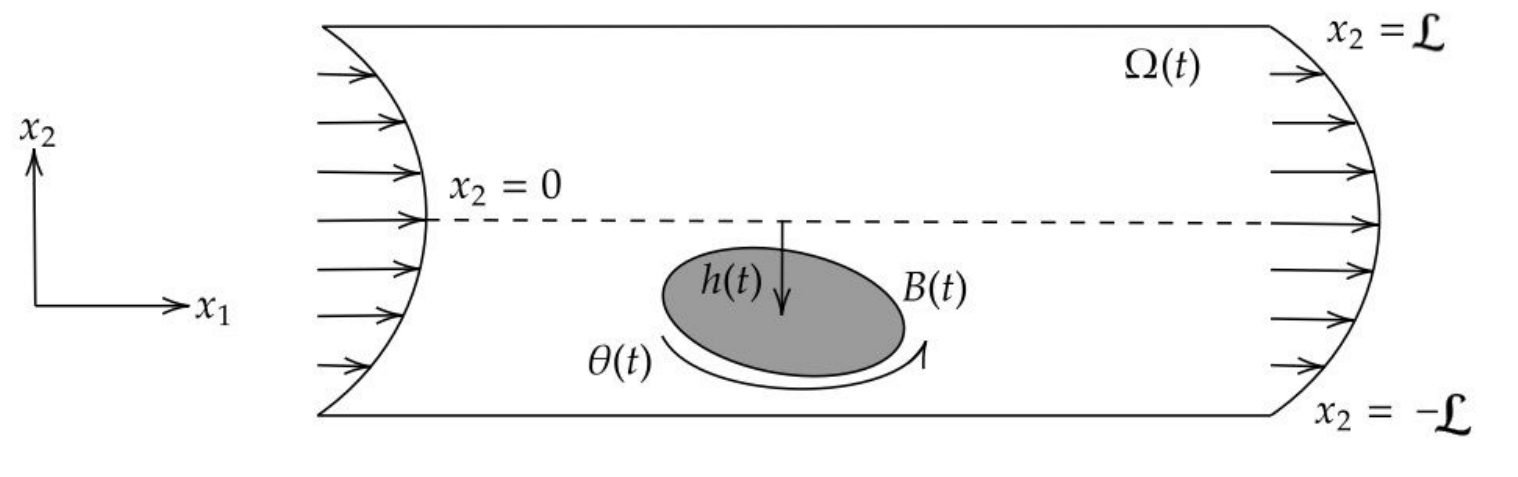}
    \vspace{-4mm}
    \caption{Summary of the notations.} 
    \label{fig:my_label}
\end{figure}

{We consider the displacement of the elliptical particle on a timespan $(0,{\mathcal T}).$} Due to the motion of the rigid body inside $A$, the domain occupied by the fluid
is variable in time and is given at time $t$ by $\Omega(t)=A\setminus \overline{B(t)}$, where 
\begin{equation}\label{eq:domain}
B(t) = \{(x_1,x_2)\in  \mathbb{R}^{2} \mid Q(\theta(t))^T(x_1,x_2-h(t))\in B_{eq}\}.
\end{equation}
For simplicity, in the sequel we will sometimes omit emphasizing the dependence on $t\in(0,\mathcal T)$ and, with an abuse of notation, we will denote through the Cartesian product $\Omega(t)\times (0,\mathcal T)$ the space-time domain defined by
\begin{equation}
\{(x,t)\in\mathbb{R}^2\times\mathbb{R}_+ \, | \, x\in\Omega(t), \ t \in (0,\mathcal T) \}\,.
\end{equation}
We will do the same abuse while writing $\partial B(t) \times (0,\mathcal T)$ instead of $\cup_{t\in(0,\mathcal T)}\partial B(t)\times\{t\}$.
{All notations are summarized in Figure \ref{fig:my_label}.}

In the fluid domain $\Omega(t),$ we introduce
$U:\Omega(t)\times (0,\mathcal T)\to\mathbb{R}^2$ and $P:\Omega(t)\times(0,\mathcal T)\to~\mathbb{R}$ that denote respectively the velocity field and the pressure of the fluid. We denote by $\mu>0$ and  $\rho>0$ the viscosity and the density of the fluid. We recall that Newtonian laws state that the stress tensor of the fluid reads
\begin{equation}\label{eq:stress}
\Sigma_\mu(U,P)=-P\mathbb{I}+2\mu D(U)\quad\mbox{with}\quad
D(U)=\frac{\nabla U+(\nabla U)^{\top}}{2}\, ,
\end{equation}
where $\mathbb{I}$ is the $2\times 2$-identity matrix, and that we have the algebraic relation
\[
\nabla \cdot \Sigma_\mu(U,P) = \mu \Delta U - \nabla P\,
\]
for any divergence free vector field $U$. 
In the absence of a body in the channel $A$,  the fluid flow would be driven by a (prescribed) pressure drop {per unit length} $\mathcal P_0>0$. We would have then $(U,P)=(v_p, \pi_p)$
where the so-called stationary Poiseuille flow $v_p$ associated with the pressure $\pi_p$ solves
\begin{equation}\label{eq:NS_Poiseuille}
	-\mu \,\Delta v_p+\rho (v_p\cdot \nabla)v_p+\nabla \pi_p=0 \, , \qquad \nabla \cdot v_p=0 \qquad \text{in} \ \  A\,,
\end{equation}
with no-slip boundary conditions, i.e. $v_p=0$ on $\Gamma$. With this boundary condition, the equation \eqref{eq:NS_Poiseuille} has a unique unidirectional solution whose expression is explicit, namely
\begin{equation}\label{eq:poiseuille}
\left\{
\begin{aligned}
	& v_p(x_1,x_2)= v_{0}(x_2) \widehat{e}_{1} = \frac{\mathcal P_0\,\mathcal L^2}{2\mu}\left(1-\frac{x_2^2}{\mathcal L^2}\right) \widehat{e}_{1} \qquad \forall\, x_{2} \in [-\mathcal L,\mathcal L] \,,\\
	& \pi_p(x_1,x_2) = -\mathcal P_0 x_1	\,.
\end{aligned}	
\right.
\end{equation} 
In particular, the pressure drop  per unit length $\mathcal P_0$ is directly related to the flow rate $2\mathcal P_0 \mathcal L^3/(3\mu)$.
In the presence of a solid body inside $A$, the couple velocity-pressure $(U,P)$ is a solution to the Navier-Stokes equations on $\Omega (t)\times (0,\mathcal T)$ forced by the Poiseuille flow $v_p$ at spatial infinity,  the body velocity-field on $\partial B(t)$ and no-slip boundary conditions on $\Gamma.$

As for the elliptical body, its motion follows Newton's laws of solid dynamics taking into account the action of the fluid and restoring forces only.  Assuming continuity of normal stress at the fluid/solid interface, the hydrodynamics forces produced by the fluid on the rigid body $B$ are given by 
$$-\int_{\partial B(t)} \Sigma_\mu(U,P)\widehat{n} \, d\sigma,$$
where the minus sign is due to the fact that we choose $\widehat{n}$ to be the outward normal to $\partial\Omega(t)$, thus directed towards the interior of $B$.  The lift force that enters in Newton's equation along $\widehat{e}_2$ reads then
$$-\widehat{e}_2\cdot\int_{\partial B(t)} \Sigma_\mu(U,P)\widehat{n} \, d\sigma\,.$$
The torque  produced by the fluid on the rigid body $B$ is given by
$$-\int_{\partial B(t)} (x-h\widehat{e}_2)^{\perp} \cdot \Sigma_\mu(U,P)\widehat{n} \, d\sigma\,,$$
and contributes to the the angular part of Newton's equations.  
We recall that we assume that $B$ cannot move horizontally (this can be achieved by applying an active drag  force on $B$). 
In Newton's equations for linear and angular momentum we also add respectively $F_h, F_\theta : \mathbb{R}^{2} \to \mathbb{R}$ that represent smooth elastic restoring forces, the model case being purely linear forces given by Hooke's law.  These forces are obtained as partial derivatives of a  smooth potential 
$F:\mathbb{R}^2\to \mathbb{R}$, namely
\[
F_h(h,\theta)\doteq \frac{\partial F}{\partial h}(h,\theta)\,, \qquad F_\theta(h,\theta)\doteq \frac{\partial F}{\partial \theta}(h,\theta)\,.
\]
We take $F(0,0)=0$ and we further assume that 
\begin{itemize}
\item for any ${\theta}_M>0$, there exist ${\varpi_M},r_M>0$ such that
\begin{equation}\label{hp_F}
   h \partial_h F(h,\theta) + \theta \partial_\theta F(h,\theta) \ge  {\varpi_M} F(h,\theta),\qquad  F(h,\theta) \ge \dfrac{r_M}{2} (h^2+\theta^2),
\end{equation}
for all $(h,\theta) \in A_{d,\delta}$ with  $|\theta|<\theta_M$;
\item there exists an increasing function $\Theta:\mathbb{R}^+\to \mathbb{R}^+: \ell\mapsto\theta(\ell)$ such that
\begin{equation}\label{coercivity}
\{(h,\theta)\in A_{d,\delta} : F(h,\theta)\le \ell \}\subseteq \mathbb R\times(-\theta(\ell),\theta(\ell))\,.
\end{equation} 
\end{itemize}
Such assumptions are in particular satisfied when the restoring force satisfies linear Hooke's law. They are also compatible with more realistic nonlinear restoring forces saturating at some values of $h$ and $\theta$. 

In a nutshell,  the fluid-solid interaction evolution problem gives rise to the coupled system 
\begin{equation} \label{eq:evolution_pb}
\left\{
\begin{aligned}
& \rho(\partial_{t} U+(U\cdot \nabla)U) = \nabla \cdot \Sigma_\mu(U,P)\, ,\quad  \nabla \cdot\, U=0 \text{ in} \bigcup_{t\in(0,\mathcal T)}\Omega(t)\times\{t\}, \\ \medskip
& \lim_{|x_1|\to\infty} U(x_1,x_2,t) = v_p(x_2) \text{ in } [-\mathcal L,\mathcal L] \times (0,\mathcal T),  \\ \medskip
& U=0 \text{ on} \ \ \Gamma\times (0,\mathcal T),\ U={h'}\,\widehat{e}_2+\theta' (x-h\widehat{e}_2)^{\perp} \text{ on} \bigcup_{t\in(0,\mathcal T)}\partial B(t)\times\{t\} , \\ \medskip
& \mathcal M{h''} +F_{h}(h,\theta)=-\widehat{e}_2\cdot\int_{\partial B} \Sigma_\mu(U,P)\widehat{n} \, d\sigma \text{ in } (0,\mathcal T) , \\ \medskip
& {\mathcal J} \theta'' +F_{\theta}(h,\theta)=-\int_{\partial B} (x-h\widehat{e}_2)^{\perp} \cdot \Sigma_\mu(U,P)\widehat{n} \, d\sigma \text{ in } (0,\mathcal T).
\end{aligned}
\right.
\end{equation}
We complement the system with initial conditions 
\begin{equation}\label{eq:ic}
\left\{
\begin{aligned}
   & (h,{h'})(0)=( h_0,{h'_0}),\\
   & (\theta,\theta')(0)= ( \theta_0,{\theta'_0}),\\
  &   U(x,0)=U_{0}(x)\text{ in }{\Omega(0) = A \setminus \overline{B(0)}},
\end{aligned}
\right.
\end{equation}
for some initial  position $(h_0,\theta_0) \in A_{d,\delta},$ velocities $(h'_{0}, \theta'_{0}) \in \mathbb{R}^2$ and $U_0 : \Omega_0 \to \mathbb R^2$, where, here and in the sequel, $\Omega_0 \doteq \Omega(0) = A\setminus \overline{B_0}$ with $B_0 \doteq B(0)=B(h_0,\theta_0).$ Compatibility conditions on the initial conditions will be imposed in \eqref{eq:ic2}. 

In order to restrict the number of physical parameters, it is customary to work with a dimensionless form of the system \eqref{eq:evolution_pb}. The choice is usually motivated by the physics of the problem and the phenomena to be highlighted. We emphasize that our choice here is purely mathematically intended as it serves to normalize concomitantly the Stokes and Reynolds numbers. We take ${\mu}/{\rho d}$ as our reference speed, ${\rho d^2}/{\mu}$ as characteristic time and $d$ as characteristic length.  As a consequence, the physical parameters in the fluid equation then appear only in the inflow/outflow condition at spatial infinity. In the new variables, the channel's width is ${L} = \mathcal L/d$, the axes of the ellipse are respectively of lengths $1$ and ${e}=\delta/d$, whereas the velocity at infinity is now given by the dimensionless Poiseuille velocity field
$$
{{\tilde v_p}({\tildex}_2)}=  \frac{p_0 L^2}{2}\left(1-\frac{\tildex_2^2}{{L}^2}\right) \widehat{e}_{1} \qquad \forall {\tildex}_{2} \in [-L,L]\,,
$$
with 
\begin{equation}\label{eq:tildep0}
{p_0} = \frac{\mathcal P_0\rho d^3}{\mu^2}.
\end{equation}
In the new variables, the dimensionless system reads, with obvious notations,
\begin{equation} \label{eq:evolution_pb-scaled}
\left\{
\begin{aligned}
& \partial_{\tilde t} {\tilde U}+({\tilde U}\cdot \tilde \nabla){\tilde U} = \tilde \nabla \cdot \Sigma_1({\tilde U},{\tilde P})\, ,\quad  \tilde \nabla \cdot\, {\tilde U}=0 \text{ in} \bigcup_{\tilde t\in(0,T)}\tilde{\Omega}(\tilde t)\times\{\tilde t\} \, , \\ \medskip
& \lim_{|\tildex_1|\to\infty} {\tilde U}(\tildex_1,\tildex_2,\tilde  t) = \tilde  v_{p}(\tilde x_2) \text{ in } [-L,L] \times (0,T),  \\ \medskip
& {\tilde U}=0 \text{ on} \ \ {\tilde \Gamma\times (0,T),\ {\tilde U}= {\tilde h'}}\,\widehat{e}_2+{\tilde\theta' (\tildex-\tilde h}\widehat{e}_2)^{\perp} \text{ on} \bigcup_{\tilde t\in(0,T)}\partial \tilde B(\tilde t)\times\{\tilde t\}   \, , \\ \medskip
&  \frac{\mathcal M}{\rho d^2} {\tilde h''} +  \frac{\rho}{\mu^2}  G_{\tilde h} (\tilde h, \tilde \theta)=- {\widehat{e}_2}\cdot{\int_{\partial \tilde B} \Sigma_1({\tilde U},{\tilde P})\widehat{n} \, }d{\tilde\sigma \text{ in } (0,T)} \, , \\ \medskip
&   { \frac{\mathcal J}{{\rho d^4}}	\tilde \theta'' + \frac{\rho}{\mu^2}  G_{\tilde \theta}(\tilde h,\tilde \theta)=- \int_{\partial \tilde B} (\tildex- \tilde h}\widehat{e}_2)^{\perp} \cdot \Sigma_1({\tilde U},{\tilde P})\widehat{n} \, d\tilde \sigma \text{ in } (0,T) \, ,
\end{aligned}
\right.
\end{equation}
where $G(\cdot,\cdot) = F(d\,\cdot,\cdot)$ and $T = ({\mu}/{\rho d^2})\mathcal T$. 
For the initial conditions, we have
\begin{equation}\label{eq:ic-scaled}
\left\{
\begin{aligned}
   & {(\tilde h,{\tilde h'})}(0)=\frac1d (h_0, \frac{\rho d^2}{\mu}h'_0){\doteq (\tilde{h}_0,\tilde{h}'_0)},\\
   & {(\tilde\theta,\tilde\theta')}(0)= (\theta_0,\frac{\rho d^2}{\mu}\theta'_0){\doteq (\tilde{\theta}_0,\tilde{\theta}'_0)},\\
  &   {\tilde U(\tilde x,0)}=\frac{\rho d}{\mu}U_{0}(d \tildex){\doteq \tilde{U}_0}\text{ in }{\tilde\Omega_0 = \tilde A \setminus \tilde B_0}.
\end{aligned}
\right.
\end{equation}
In the fluid equation,  we remark that both the fictive density and viscosity are now set to $1$ as desired while ${p_0}$ contains all physical dependencies. The quantities that now appear in the Newton equations have a clear physical meaning: $\frac{\mathcal M}{{\rho d^2}}$ and $\frac{\mathcal J}{{\rho d^4}}$ are density ratios
whereas $\frac{\rho}{\mu^2}G$ is a dimensionless potential.

\medskip

To end the introduction of our model,  we remark that, in the absence of a Poiseuille flow (say $p_0= 0$) we have a formal {\em a priori} estimate by multiplying the fluid equation in \eqref{eq:evolution_pb-scaled} with $\tilde U$. Integrating by parts, this entails:
\begin{multline} \label{eq_energy_intro}
\dfrac{1}{2}\dfrac{{d}}{{d}\tilde t} \left[  {\frac{\mathcal M}{\rho d^2}} |\tilde h'(\tilde t)|^2 + 
\frac{\mathcal J}{\rho d^4}  |\tilde \theta'(\tilde t)|^2 + \int_{\tilde \Omega(\tilde t)} |\tilde U(\tilde t,\tilde x)|^2 { d}\tilde x   + \frac{2\rho}{\mu^2}{G}(\tilde h(\tilde t),\tilde \theta(\tilde t)) \right] \\
+ \int_{\tilde \Omega(\tilde t)} 2 |D(\tilde U)(\tilde t,\tilde x)|^2 { d}\tilde x { d}\tilde t = 0.
\end{multline}
The system \eqref{eq:evolution_pb-scaled} can then be interpreted as a partially dissipative (only fluid viscosity involves dissipation) system forced by the Poiseuille flow. 
Observe also that the energy scales uniformly with the factor $\frac{\rho}{\mu^2}$ in the adimensionalization, namely
\begin{equation}\label{eq:energyscaling}
E^{adim}(\tilde t) = \frac{\rho}{\mu^2} E^{phys}(t),
\end{equation}
where 
\[
\begin{aligned}
& E^{adim} (\tilde t) = \frac12\left(\frac{\mathcal M}{\rho d^2}  |\tilde h'(\tilde t)|^{2} + 
\frac{\mathcal J}{\rho d^4} |\tilde \theta'(\tilde t)|^{\rm 2} + \int_{\tilde \Omega(\tilde t)} |\tilde U(\tilde t,\tilde x)|^2 { d}\tildex  + 2\frac{\rho}{\mu^2}  G(\tilde h(\tilde t),\tilde \theta(\tilde t))\right),\\
& E^{phys} (t) = \frac{1}{2}\left(  \mathcal M| h'(t)|^{2} + {\mathcal J} |\theta'(t)|^{2} + \int_{\Omega(t)} |U(t,x)|^{2} { d}x+ 2{F}(h(t),\theta(t))  \right).
\end{aligned}
\]%
All dimensionless variables and rescaled sets have been surmounted by a ``tilda'' to highlight the changes in \eqref{eq:evolution_pb-scaled}-\eqref{eq:energyscaling} but we will return to the original notations in the sequel to lighten the presentation. On the contrary, we will keep for clarity the new notations of the adimensional constants $L,e,p_0,T$. We refer to \eqref{eq:adim-const} for further simplifications of the notations. From now on, we also set $ \Sigma= \Sigma_1$. 

\subsection{Previous related references}
In this paper we aim to construct a global-in-time solution to \eqref{eq:evolution_pb-scaled} and analyze its long-time behavior. 
The Cauchy theory for fluid/solid problem has been thoroughly studied around the beginning of the 21st century.
Mostly two standpoints are proposed. The first one is a pseudolagrangian formulation in which characteristics associated with the solid motion are constructed in order to fix the fluid domain.  This enables to transform the Navier-Stokes system into a quasilinear system that is coupled with the Newton laws.  Classical solutions can then be constructed via a perturbative method.  Various results are obtained, with possible restrictions on the physical parameters  depending on the way the linearized problem is handled \citep{conca,CumsilleTakahashi,CumsilleTakahashi2,Hieber_etal,Grandmont&Maday00,Takahashi,TakahashiTucsnak}.  The second approach relies on a fully eulerian formulation of the coupled problem and energy estimate \eqref{eq_energy_intro}. This formulation is obtained by remarking that the fluid/solid system can be seen as a plain fluid with constrained velocity-field on the solid domain.  The constraint can then be mollified to construct approximate solutions that lead to solutions to the target system via a compactness method.  This yields naturally weak solution to the coupled problem (with an appropriate notion of weak solution as we shall see below) \cite{DejEste, DejEste2,GunzLeeSeregin,Hoffmann&Starovoitov99,Hoffmann&Starovoitov00}.  Eventually, these methods yield existence and uniqueness of classical solutions until possible collision between the bodies or between one body and the container boundaries (under smallness assumption on the initial data in the 3D case) while weak solutions are proven to exist globally regardless collisions \cite{Feireisl03,SMStarTuck}.  In the 2D case,  the picture is complete since weak solutions are known to be unique and coincide with the classical one before collision \cite{GlassSueur}.  All these results are obtained either in a bounded domain or in the whole space.  Semi-bounded containers containing one solid body are considered in \cite{HillTaka,patriarca} extending the existence of weak solutions.  In all these previous references, rotations of the body were either neglected or did not influence the fluid domain.

Collision in fluid/solid problems is a delicate issue.  It should be noted here that, in systems like \eqref{eq:evolution_pb-scaled} where we assume no-slip of fluid on solid boundaries,  collision implies a crossing of the characteristics associated with the fluid velocity-field.  This prevents from keeping a smooth fluid velocity-field through a collision.  Thus,  first computations discuss which norms of classical solutions should blowup in case of collision \cite{Star}. These computations are then complemented in \cite{Star2} showing that a finite-time collision (under the action of sufficiently singular forces) would lead to the existence of several (weak) solutions after collision.  Evidence that no collision in finite time is possible (whether with weak or strong solutions) are provided in \cite{Hesla,Hillairet2007,hillairet2023global,HillTaka,sabbagh2019motion}  except for very peculiar 3D geometries \cite{HillairetTakahashi10b}.  The result was recently  extended to the motion of spheres in $\mathbb R^3$ \cite{hillairet2023global}.  These rigorous results extend  previous heuristic computations relying on lubrication approximation \cite{Cox67,Happel&Brenner65}.  Explanations for the discrepancy between a no-collision result and what is observed in  real-life experiments are also discussed in following references: influence of solid geometry or boundary conditions \cite{GeVaHill,GeVaHill1,GVHW,HillTaka3},  influence of viscosity \cite{Sokhna,Munnier1,Munnier2},  influence of incompressibility assumption \cite{Sarka}.  

The above analysis of Cauchy theory yields that weak solutions exist globally in time (regardless collision) and that, at least in simple configurations,  these solutions have a reasonable sense since no collision occurs in finite time.  There are fewer references regarding the large-time behavior of these solutions.  This might be due to the fact that, in case the container is bounded and there are no forcing term, a simple energy/dissipation estimate yields that the kinetic energy of the system decays exponentially fast so that the solution converges to a trivial rest state.  Without forcing term,  the situation of the all space is more complex.  An algebraic decay of solutions is computed in \cite{MR4643426,Ervedoza1,Ervedoza2} but no asymptotic profile of the solution is known to date.  The case of a spherical particle moving in a compressible fluid under the action of gravity is analyzed in \cite{FereislNecasova}.  Regarding fluid/solid systems forced by an inflow,  a first issue to analyze the large time behavior is the existence of stationary states.  This issue is discussed in \cite{BoGaGa}.  We point out that for symmetry reasons,  we have trivially a stationary solution to our coupled problem when the solid is in the position $B_{eq}$. 

\subsection{Main contributions}
 In this paper,   our first contribution is to develop a weak solution framework for \eqref{eq:evolution_pb-scaled}. 
We start with existence of a solution until the first collision between $B(t)$ and $\partial A$ when the initial data matches the compatibility conditions, that is
\begin{equation}\label{eq:ic2}
\left\{
\begin{aligned}
 & \nabla \cdot U_{0}  = 0 && \text{in} \ \  \Omega_0 \, , \\[3pt]
 & \lim_{|x_1|\to\infty} U_{0}(x_1,x_2) = v_p(x_2) &&  \forall x_{2} \in [-L,L] \, , \\
& U_{0}  \cdot \widehat{e}_2 =0 &&  \text{on} \ \ \Gamma\, , \\[3pt]
 & U_{0} \cdot \widehat{n}=  (h'_{0} \,\widehat{e}_2+\theta'_{0}(x-h_0)^{\perp}) \cdot \widehat{n} && \text{on} \ \ \partial  B_0 \,. \\[3pt]
\end{aligned}
\right.
\end{equation}
 We also obtain that these weak solutions are time-continuous and satisfy a suitable energy estimate (see Theorem \ref{th:existence_weak_local}).
We complement then the study of the Cauchy problem with a careful analysis of the collision issue.   For this we obtain non-trivial extensions of computations from the above-cited references in two directions.  First,  in comparison to our model, an important simplification shared by all the references mentioned above is that the geometry of the gap (understood as the thin contact region between the body and the walls of the container) depends only on a distance parameter $\mathfrak d=\mathfrak d(h).$ This was possible since either the motion of the body is confined to a vertical translation or the bodies are assumed to be spherically symmetric.  In the case at consideration, where an elliptic body  is allowed to rotate, the geometry of the gap depends both on $h$ and $\theta$.   Hence,  we need to generalize previous computations to non-symmetric configurations in which we must capture the time-evolution of the gap geometry.  
Secondly,  previous computations only care for finite time-horizon of solutions.  Herein,  we obtain a global-in-time bound from below on the distance between the elliptic body $B(t)$ and $\Gamma.$ This is a crucial step toward the analysis of the long-time behavior of the body.  Indeed,  with our dissipative system of equations, we expect that after a transition time, during which the excess of kinetic energy of the full system is dissipated, the body returns to its rest state with the fluid unknowns converging to the stationary solution outside this domain. However, 
this is possible only if the particle does not remain stuck in a viscous layer close to one part of $\Gamma.$ Such a scenario is impossible for small Poiseuille flow when fluid inertia is neglected, that is setting $\rho=0$ in \eqref{eq:evolution_pb}.  In this regime,  hydrodynamic forces applied to the body split into the sum of a contribution proportional to the body velocities ($h'$ and $\theta'$) and a contribution of the Poiseuille flow proportional to $p_0.$ When the elliptic body slows down  due to fluid viscosity,  it remains to compare the amplitude of Poiseuille flow contribution (whose proportionality coefficient of $p_0$ turns out to be bounded independent of the position of the body in the channel)  and the restoring forces.  If $p_0$ is sufficiently small, the restoring force dominates and the body goes  back to rest state.  When one reintroduces fluid inertia, i.e. the full Navier-Stokes equations are considered, the situation is more involved.  We cannot rule out {\em a priori} that fluid convection combines with the Poiseuille flow to retain the body close to one boundary.  Hence,  we provide a fine comparison between the effects of convection combined with Poiseuille flow and restoring forces to yield the following theorem.
\begin{theorem} \label{th:uniform_distance-intro}
For any initial data $((h_0,\theta_0),U_0,(h'_0,\theta'_0)) \in A_{1,e} \times \{L^2(\Omega_0)+v_p\}\times \mathbb R^2$ satisfying \eqref{eq:ic2}, there exists ${p}_*^{(1)}>0$ such that, if $p_0\le {p}_*^{(1)}$,
 then 
\begin{enumerate}
	\item[(i)] the weak solutions $(U,h,\theta)$ to \eqref{eq:evolution_pb-scaled}-\eqref{eq:ic-scaled} are global-in-time ;
	\item[(ii)] there exists a constant  ${\mathfrak d}_{min}^0 >0$, such that  
$$
		{\rm dist}(B(t), \partial A) \geq {\mathfrak d}_{min}^0\,, \qquad \forall t \geq 0 \, .
$$
\end{enumerate}
\end{theorem}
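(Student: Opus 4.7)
The strategy is a bootstrap on the total energy, translating a uniform-in-time energy bound into the distance bound via the coercivity of the potential $F$, and then using the distance bound to rule out collision. Let $[0,T^*)$ denote the maximal interval on which the weak solution provided by Theorem \ref{th:existence_weak_local} exists, with $T^*$ equal either to $+\infty$ or to the first time at which $B(t)$ touches $\partial A$. The aim is to prove that, for $p_0 \le p_*^{(1)}$ sufficiently small, $E(t)$ stays uniformly bounded on $[0,T^*)$, whence $\mathrm{dist}(B(t),\partial A)\ge \mathfrak d_{min}^0 > 0$, which by the maximality of $T^*$ forces $T^* = +\infty$ and yields both (i) and (ii) at once.

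To derive a global-in-time energy estimate, I would introduce a divergence-free background field $V_p$ coinciding with $v_p$ outside a large ball and satisfying $V_p = h'\,\widehat e_2 + \theta'(x - h\,\widehat e_2)^{\perp}$ on $\partial B(t)$ (constructed via a smooth cut-off of $v_p$ together with a Bogovskii correction to restore the divergence-free constraint, plus a rigid-lift supported near $\partial B$). Testing the weak formulation against $U - V_p$, which vanishes on $\Gamma$ and is rigid on $B(t)$, yields an identity of the form
\begin{equation*}
E(t) + \int_0^t \!\!\int_{\Omega(s)} 2|D(U)|^2\,dx\,ds = E(0) + \int_0^t \mathcal W(s)\,ds,
\end{equation*}
where $\mathcal W$ gathers the work of the Poiseuille forcing, namely the pressure-drop contribution, the convective coupling terms $(U\cdot\nabla)v_p$ and $(v_p\cdot\nabla)U$, and the boundary terms from $v_p$ on $\partial B(t)$, all linear in $p_0$. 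Using Korn's inequality on the perforated domain $\Omega(t)$, Cauchy--Schwarz and Young's inequality, one can estimate $\int_0^t \mathcal W \le \frac{1}{2}\int_0^t \!\!\int 2|D(U)|^2 + C p_0^2 \bigl(1 + \sup_{[0,t]}E\bigr)$. Absorbing the dissipation term and choosing $p_0$ small so that $Cp_0^2$ is strictly less than some absorption constant gives a uniform bound $E(t) \le \ell := C_*(E(0),p_0)$ valid on $[0,T^*)$, with $\ell \to \tfrac12 E(0)$-type behavior as $p_0 \to 0$.

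Once $E(t)\le \ell$ uniformly, assumption \eqref{coercivity} gives $|\theta(t)| \le \Theta(\ell)$, and applying \eqref{hp_F} with $\theta_M = \Theta(\ell)$ yields $h(t)^2 + \theta(t)^2 \le 2\ell/r_{\Theta(\ell)}$. Choosing $p_*^{(1)}$ small enough makes $\ell$ small enough that the resulting bound on $(h(t),\theta(t))$ confines the body to a compact subset of $A_{1,e}$ bounded away from $\partial A_{1,e}$, yielding $\mathrm{dist}(B(t),\partial A) \ge \mathfrak d_{min}^0$ for an explicit geometric $\mathfrak d_{min}^0 > 0$. Since Theorem \ref{th:existence_weak_local} asserts that $T^* < \infty$ forces $B(t)$ to reach $\partial A$ as $t \to T^*$, this uniform distance bound is incompatible with $T^* < \infty$, whence $T^* = +\infty$ and both conclusions follow.

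The main obstacle I foresee is the mild circularity in closing the energy estimate: the Korn and Bogovskii constants on $\Omega(t)$ used to absorb $\mathcal W$ depend on the distance from $B(t)$ to $\partial A$, which is precisely what we are trying to bound. A continuity/bootstrap argument on a critical threshold $\ell^*$ for $F(h,\theta)$ handles this: work on the open set of times where $F(h(t),\theta(t)) < \ell^*$, over which $B(t)$ stays in a fixed compact subset of $A$ so the functional-analytic constants are uniform, close the estimate with a strict improvement $F \le \ell^*/2$, and extend by continuity to the whole of $[0,T^*)$. The threshold $p_*^{(1)}$ then depends only on $L$, $e$, the coercivity constants $r_M, \varpi_M, \Theta$, the density ratios $\mathcal M/(\rho d^2)$ and $\mathcal J/(\rho d^4)$, and on $E(0)$.
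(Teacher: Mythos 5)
Your proposal rests on the claim that a straightforward energy estimate gives a uniform-in-time bound $E(t)\le \ell$ for small $p_0$, and that a uniform bound on the energy translates into a lower bound on $\mathrm{dist}(B(t),\partial A)$ via the coercivity of the potential. Both steps have fatal gaps, and they are exactly the two obstructions the paper is built around.

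First, the energy estimate does not give a uniform-in-time bound. Testing against $u=U-s$ (your $U-V_p$) gives precisely \eqref{eq:energy_estimate_weak}, namely $E_{tot}(t)\le E_0 + C_{geo}\lambda_0^2\,t - \tfrac14\int_0^t\|\nabla u\|^2$, \emph{linear} in $t$: the forcing work $\int\hat g\cdot u$ is bounded, via Cauchy--Schwarz, Poincar\'e and Young, by $\tfrac14\|\nabla u\|^2_{L^2}+C\lambda_0^2$, and after integrating and absorbing the first piece the leftover $C\lambda_0^2 t$ grows. It cannot be absorbed by the dissipation because $\int\|\nabla u\|^2$ controls the kinetic energy (via Poincar\'e and the trace inequality \eqref{eq_dissipationsolide}) but does \emph{not} control the potential energy $H(h,\theta)$; the solid equations are an undamped oscillator forced implicitly by the fluid, and viscosity only damps velocities, not positions. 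This is explicitly discussed right after Proposition~\ref{prop_Ekin}. The paper obtains a decaying energy bound only \emph{after} introducing the Haraux-type perturbed energy $E_\omega$ (Theorem~\ref{cor_estimation1}), whose construction uses the test field $w$ in \eqref{w} -- and the bounds \eqref{stime_w} on $w$ blow up as $\mathfrak d_{min}^0\to 0$. So the perturbed-energy argument already \emph{needs} the distance estimate as input; your bootstrap is genuinely circular, and a continuity argument on a threshold for $F$ does not break the circle because the linear drift $C\lambda_0^2 t$ has nothing to do with the size of $F$.

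Second, even granting a uniform energy bound $E(t)\le\ell$, coercivity of $H$ does not produce a distance bound unless $\ell$ is \emph{small}. As you yourself note, $\ell$ behaves like $E_0$ as $p_0\to 0$: for, say, a quadratic potential $H(h,\theta)=\tfrac12(h^2+\theta^2)$, the bound $H\le \ell$ with $\ell$ of order $E_0$ is perfectly consistent with $|h|$ close to $L-1$, i.e.\ with the body touching the wall. But the theorem is not a smallness statement for initial data -- the authors emphasize that \emph{any} initial datum in a fixed bounded region of phase space is allowed (Theorem~\ref{lerayintro} remark). The distance bound therefore cannot come from energy alone. The paper proves it via a lubrication-theory argument: a ``potential energy of contact'' $P_c\sim \mathfrak d^{-1/2}$ (Sections~\ref{sec:distance1}--\ref{sec:distance2}), built from an optimized Stokes-type test function $\tilde v^\bot$ and modulated by a factor $a(t)$ to neutralize the rotation terms. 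Closing that estimate relies crucially on the \emph{sign} of the restoring force near the wall, $H_h\le -k_0<0$ (see~\eqref{eq_lbF}), not on coercivity of $H$, and on the smallness $\lambda_0\lesssim k_0$. None of this machinery appears in your proposal, and without it the conclusion does not follow.
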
	

The smallness assumption $p_0\le {p}_*^{(1)}$ is required only to deduce the uniform lower bound in assertion (ii). Without this condition, weak solutions are still global-in-time but we cannot rule out collisions in infinite time. 

Theorem \ref{th:uniform_distance-intro} naturally allows to address the large-time behavior of solutions to \eqref{eq:evolution_pb-scaled} for small $p_0$. To this respect, our second main result shows that any weak solution of \eqref{eq:evolution_pb-scaled} converges, as time goes to infinity and provided $p_0$ is small enough, to the equilibrium configuration $(U_{\rm eq},P_{\rm eq},0,0)$,
where $(U_{\rm eq},P_{\rm eq})$ is the unique steady state in $\Omega_{eq}=A\setminus B_{eq}$, i.e.
\begin{equation} \label{eq:st_pb-intro}
\left\{
\begin{aligned}
& -\Delta U_{\rm eq}+ (U_{\rm eq}\cdot \nabla)U_{\rm eq}+\nabla P_{\rm eq}=0\,,\ \nabla \cdot\, U_{\rm eq}=0 \text{ in } \Omega_{eq}  \, , \\[3pt]
& U_{\rm eq}=0 \text{ on }  \partial B_{eq}\cup \Gamma,\ \lim_{|x_1|\to\infty} U_{\rm eq}(x_1,x_2) = v_p(x_2),\  \forall x_{2} \in [-L,L] \,. 
\end{aligned}
\right.
\end{equation}
The configuration $(U_{\rm eq},P_{\rm eq},0,0)$ is indeed an equilibrium of \eqref{eq:evolution_pb-scaled} since, for symmetry reasons,  
\begin{equation} \label{eq:st_pbc-intro}
\widehat{e}_2\cdot\int_{\partial B_{eq}} \Sigma(U_{\rm eq},P_{\rm eq})\widehat{n} \, d\sigma = \int_{\partial B_{eq}} x^{\perp} \cdot \Sigma(U_{\rm eq},P_{\rm eq})\widehat{n} \, d\sigma =0 \,.
\end{equation}
The second main result then reads as follows.
\begin{theorem} \label{lerayintro}
For any initial data $((h_0,\theta_0),U_0,(h'_0,\theta'_0)) \in A_{1,e} \times \{L^2(\Omega_0)+v_p\} \times \mathbb R^2$  satisfying \eqref{eq:ic2}, there exists ${p}_*^{(2)}\in(0,{p}_*^{(1)}]$ such that, if $\,p_0 \le {p}_*^{(2)}$ and $(U,h,\theta)$ is a weak solution to \eqref{eq:evolution_pb-scaled}-\eqref{eq:ic-scaled},  
then $(U,h,\theta)$ converges to the equilibrium $(U_{\rm eq},0,0)$ as $t\to\infty$ in the following sense:
$$
\lim_{t\to\infty}\|U(t)-U_{\rm eq}\|^2_{L^2(A)}=\lim_{t \to +\infty} (|h(t)|^2+|\theta(t)|^2) =\lim_{t \to +\infty} (|h'(t)|^2+|\theta'(t)|^2)= 0\, .
$$
\end{theorem}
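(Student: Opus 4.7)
\emph{Overall strategy.} The plan is to compare $(U,h,\theta)$ to the equilibrium by means of a \emph{relative energy} built around $(U_{\rm eq},0,0)$, derive a dissipative differential inequality for it, and then conclude convergence with a LaSalle/Barbalat argument. The standing assumption is that $p_0\le p_\ast^{(1)}$, so that Theorem \ref{th:uniform_distance-intro} is available: $(h(t),\theta(t))$ stays in a compact of $A_{1,e}$ and $\mathrm{dist}(B(t),\partial A)\ge \mathfrak d_{\min}^{0}$. This uniform geometry gives uniform Korn and Poincaré constants on $\Omega(t)$, so all functional-analytic estimates are time-independent. The threshold $p_\ast^{(2)}\le p_\ast^{(1)}$ will be fixed in the process.

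\emph{Step 1 and 2: reference field and relative energy inequality.} First I construct a divergence-free reference field $U^{\sharp}(t,\cdot)$ on $A$ which (i) coincides, on a neighbourhood of $B(t)$, with the rigid transport of $U_{\rm eq}$ sending $\Omega_{eq}$ onto $\Omega(t)$, (ii) equals $v_p$ outside a bounded region, (iii) vanishes on $\Gamma$, (iv) equals $0$ on $\partial B(t)$. A smooth cut-off plus a Bogovski\u{\i} corrector in a thin collar of $B(t)$ provides $U^{\sharp}$ uniformly in $t$, with $\norm{U^{\sharp}-v_p}_{H^{1}}\le C p_0$ by the standard Leray bound for \eqref{eq:st_pb-intro} at small $p_0$. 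With this reference I define
\begin{equation*}
\mathcal{E}(t)\doteq \tfrac12\left[\tfrac{\mathcal M}{\rho d^2}|h'(t)|^2 + \tfrac{\mathcal J}{\rho d^4}|\theta'(t)|^2 + \norm{U(t)-U^{\sharp}(t)}_{L^{2}(\Omega(t))}^{2}\right] + \tfrac{\rho}{\mu^{2}}G(h(t),\theta(t)),
\end{equation*}
which vanishes precisely at the equilibrium. Testing the weak formulation of \eqref{eq:evolution_pb-scaled} against an extension of $U-U^{\sharp}$ that is rigid on $B(t)$, and subtracting the stationary identity for \eqref{eq:st_pb-intro}, I expect to obtain
\begin{equation*}
\frac{d}{dt}\mathcal{E}(t) + 2\norm{D(U-U^{\sharp})}_{L^{2}(\Omega(t))}^{2} = \mathcal{R}(t),
\end{equation*}
where $\mathcal{R}$ collects (a) convection cross terms involving $U^{\sharp}$ and $U-U^{\sharp}$, (b) a Reynolds-transport contribution from $\partial_t U^{\sharp}$ (proportional to $|h'|+|\theta'|$ times derivatives of the transported $U_{\rm eq}$), and (c) boundary terms on $\partial B(t)$ coming from the rigid extension. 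Using $\norm{U^{\sharp}-v_p}_{H^{1}}\le C p_0$ and Cauchy--Schwarz, every piece of $\mathcal{R}$ carries at least one factor $p_0$ or at least one factor among $|h'|,|\theta'|,\norm{D(U-U^{\sharp})}_{L^{2}}$, so that
\begin{equation*}
|\mathcal{R}(t)|\le \tfrac12\norm{D(U-U^{\sharp})}_{L^{2}(\Omega(t))}^{2} + C p_0\bigl(|h'|^{2}+|\theta'|^{2}+\norm{U-U^{\sharp}}_{L^{2}}^{2}+G(h,\theta)\bigr).
\end{equation*}
Taking $p_\ast^{(2)}$ so small that $Cp_0$ is dominated by the spectral constant of Korn--Poincar\'e and by the coercivity constants in \eqref{hp_F}, the dissipation absorbs $\mathcal{R}$ and I end up with a clean Lyapunov inequality
\begin{equation*}
\frac{d}{dt}\mathcal{E}(t) + c\bigl(\norm{U-U^{\sharp}}_{H^{1}(\Omega(t))}^{2}+|h'(t)|^{2}+|\theta'(t)|^{2}\bigr)\le 0.
\end{equation*}

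\emph{Step 3: convergence via LaSalle/Barbalat.} The previous inequality yields $\mathcal{E}\in L^{\infty}(0,\infty)$ and
$$\int_{0}^{\infty}\bigl(\norm{U-U^{\sharp}}_{H^{1}(\Omega(t))}^{2}+|h'(t)|^{2}+|\theta'(t)|^{2}\bigr)\,dt<\infty.$$
Given any sequence $t_n\to\infty$, translate the trajectory and use the energy bound together with Aubin--Lions compactness on unit-length time slabs; extracting a subsequence, $(U,h,\theta)(\cdot+t_n)$ converges to a weak solution $(U^{\infty},h^{\infty},\theta^{\infty})$ of \eqref{eq:evolution_pb-scaled} on $\mathbb{R}$ whose velocity dissipation $\norm{D(U^{\infty}-U^{\sharp})}_{L^{2}}$ and body velocities $(h^{\infty})',(\theta^{\infty})'$ are identically zero. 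Hence $(h^{\infty},\theta^{\infty})$ is constant in time and the Newton equations collapse to $F_h(h^{\infty},\theta^{\infty})=F_\theta(h^{\infty},\theta^{\infty})=0$; the coercivity \eqref{hp_F} forces $(h^{\infty},\theta^{\infty})=(0,0)$, and then $U^{\infty}$ solves the stationary problem \eqref{eq:st_pb-intro}, whose unique solution for $p_0$ small is $U_{\rm eq}$. Since the $\omega$-limit set reduces to the singleton $\{(U_{\rm eq},0,0)\}$ and $\mathcal{E}$ is non-increasing and nonnegative, the whole trajectory converges, which is exactly the three convergences of the statement.

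\emph{Main obstacle.} The heart of the proof is Step 2: constructing $U^{\sharp}$ so that it respects the no-slip on $\Gamma$, the Poiseuille behavior at infinity, \emph{and} the time-dependent rotated body $\partial B(t)$, while producing a remainder $\mathcal{R}$ each of whose pieces carries a smallness factor. The genuine rotation $\theta'\neq 0$ makes the transported field non-trivially time-dependent, and the convective term $(U\cdot\nabla)U$ produces cross contributions involving $v_p$ that are not a priori small. Closing the estimate therefore requires simultaneously the smallness of $p_0$ (via the stationary Leray bound on $U_{\rm eq}$), the uniform distance bound from Theorem \ref{th:uniform_distance-intro} (to control Korn, Poincar\'e and trace constants on $\Omega(t)$ independently of $t$), and the coercivity \eqref{hp_F} (to turn the potential $G$ into a genuine part of the dissipated functional); the interplay of these three ingredients is precisely what fixes the threshold $p_\ast^{(2)}$.
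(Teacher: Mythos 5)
There is a genuine gap in Step~2, and it is exactly the obstruction the paper flags in its introduction: the solid carries an \emph{undamped} oscillator, so a plain relative-energy balance cannot produce the Lyapunov inequality you write down. When you test the weak formulation against a rigid extension of $U-U^{\sharp}$, the time derivative of $\mathcal E$ naturally produces the dissipation $2\|D(U-U^{\sharp})\|_{L^2}^2$, which through Poincar\'e and the trace inequality \eqref{eq_dissipationsolide} controls $\|U-U^{\sharp}\|_{L^2}^2$ and $|h'|^2+|\theta'|^2$; but it gives \emph{no} control on $G(h,\theta)$ or on $|h|^2+|\theta|^2$. Meanwhile your remainder estimate, which is otherwise plausible, contains the term $Cp_0\,G(h,\theta)$ --- this arises unavoidably, e.g.\ from the nonvanishing lift $F^{[h,\theta]}_{\rm eq}=\mathcal O(\lambda_0|\theta|)$ of the transported stationary solution (Lemma~\ref{lemma:u_eq}(i)) paired against $h'$, and from convection cross terms. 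There is nothing on the left-hand side to absorb $Cp_0\,G$, no matter how small $p_0$ is; smallness of $p_0$ only degrades the RHS, it does not create a $-cG$ on the LHS. So the inequality
$\frac{d}{dt}\mathcal E + c(\|U-U^\sharp\|_{H^1}^2+|h'|^2+|\theta'|^2)\le 0$
does not follow; at best one gets
$\frac{d}{dt}\mathcal E + c(\ldots)\le C p_0\,G(h,\theta)$, which is not a Lyapunov estimate and does not give $\int_0^\infty |h'|^2+|\theta'|^2\,dt<\infty$ either. With $\mathcal E$ not monotone, Step~3 (``since $\mathcal E$ is non-increasing and nonnegative, the whole trajectory converges'') also collapses.

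What is missing is precisely the perturbed-energy (Haraux) mechanism that the paper introduces twice: in Section~\ref{sec:distance2} via $E_\omega = E_{tot}+\omega(mhh'+J\theta\theta'+\int_\Omega u\cdot w)$, and again in Section~\ref{sec:equil} via $\tilde E_\omega$. Differentiating the cross term $\omega(mhh'+J\theta\theta')$ reinserts the Newton equations and, thanks to the coercivity $h\partial_h F+\theta\partial_\theta F\ge\varpi F$ from \eqref{hp_F}, produces a genuine $-\omega\overline\varpi\,H(h,\theta)$ dissipation term, which is what absorbs the $p_0 G$ remainder for $p_0$ and $\omega$ small and closes the Gr\"onwall estimate. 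This is not a cosmetic refinement: the paper explicitly warns that Newton's laws here are dispersive, so the naive energy/dissipation method you use cannot control the amplitude of the solid's oscillations. Once the perturbed energy is in place, the paper obtains an \emph{exponential} decay of $\tilde E_\omega$ directly; this supersedes the LaSalle/Barbalat compactness step entirely (a step that would in any case be delicate on the moving domain $\Omega(t)$ and would not, by itself, upgrade $\omega$-limit information to convergence without monotonicity).

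Two smaller points. First, the paper proceeds in two stages: it first uses \eqref{uniform_estimate_en-section5} (the $E_\omega$ estimate from Theorem~\ref{th:uniform_distance}) to get $(h(t),\theta(t))$ small for $t\ge t_0$, and only then performs the change of variables $X[h,\theta]$; your one-shot construction of $U^\sharp$ would require this smallness from the start. Second, the paper does not use a Bogovski\u{\i} collar corrector: it uses a streamfunction pushforward $u^{[h,\theta]}_{eq}=\nabla^\perp(\psi_{eq}\circ Y[h,\theta])$, which keeps the transported field exactly divergence-free and exploits Galilean invariance near $\partial B(t)$ to get sharp control of $g^{[h,\theta]}-\hat g^-$ and of $F^{[h,\theta]}_{\rm eq}$ (Lemma~\ref{lemma:u_eq}); these are the estimates that make the $p_0$-factors appear where you claimed they would, so they deserve spelling out if you want to carry the argument through.
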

We point out that in this statement, we compare the fluid velocity fields $U(t)$ and $U_{\rm eq}$ on the full channel up to extending them by their solid counterparts on the associated solid domains.
The precise dependence of the threshold values $p_*^{(1)}, p_*^{(2)}$ and of the minimal distance $\mathfrak{d}_{min}^0$ on the relevant parameters of the problem (the initial data and the relevant physical parameters) will be clearly given in Theorem \ref{th:uniform_distance} and Theorem \ref{leray}.  We point out that Theorem \ref{lerayintro} is much stronger than a local asymptotic stability result (in particular Theorem \ref{th:uniform_distance} would be much easier to prove for small data) because \textit{any} initial data in a bounded region of the phase space is covered by the statement, i.e. whatever $r>0$, there exist thresholds $p_*^{(1)}(r), p_*^{(2)}(r)$ that apply to any initial datum whose energy norm is less than $r$. 
Theorem \ref{lerayintro} also shows that, in the absence of an external forcing, i.e. when $p_0=0$, any possible trajectory of the system \eqref{eq:evolution_pb-scaled} will go to rest, which proves the presence of an intrinsic damping mechanism of the fluid.

Theorem \ref{th:uniform_distance-intro} and Theorem \ref{lerayintro} are written for the dimensionless Problem \eqref{eq:evolution_pb-scaled}. If we go back to the system written in the physical variables, i.e. to Problem \eqref{eq:evolution_pb}, one naturally wonders the influence of the physical parameters on the thresholds $p_*^{(1)}$ and $p_*^{(2)}$.  It is in fact interesting to notice that two mechanisms enter the game. The pressure drop $\mathcal P_0$  needs to be small compared to the ratio $\mu^2/\rho$ in order to get a control of the energy. But this is not enough, $\mathcal P_0$ needs also to be small compared to the stiffness of the elastic force $F_h$ in order to get a control of  the distance to the walls of the channel. This means that Theorem \ref{lerayintro} should be seen as a stability result for small pressure drop rather than for small Reynolds number or high viscosity (compared to the density). We point out that it is by no means our intention to study the asymptotic regimes of small or large viscosity but some comments are given in Remark \ref{remark_viscosity2} and Remark \ref{remark_viscosity3}. 

\par

The proofs of Theorem \ref{th:uniform_distance-intro} and Theorem \ref{lerayintro} contain severe difficulties associated with both fluid and solid parts of system \eqref{eq:evolution_pb-scaled}.  Concerning the solid part,  we emphasize that, with the elastic restoring forces that are included here,  Newton's laws can be seen as  dispersive equations with an additional implicit fluid damping.  This further dispersive structure prevents from a simple application of energy/dissipation methods to tackle the long-time behavior.  
Using viscosity dissipation combined with trace theorems, one can (classically) show that the kinetic energies of the solid and the fluid are damped. Obtaining damping terms for the potential energy of the solid to control the amplitude of its oscillations is much more delicate. To this aim, we import an idea that we learned from \cite{haraux1985non,haraux1985two}, originally devised for the wave equation (and by now commonly used for other dissipative models), to our setting:  we introduce a perturbed energy functional that improves the dissipative estimate for the energy of the system.

As for the fluid part,  a critical difficulty is that the fluid problem is set on an unknown moving domain with non-homogeneous boundary conditions. In particular, to prove the convergence to the equilibrium configuration,  it is natural to compare the solutions of two partial differential equations that are set on different domains: the (unknown) time-moving domain and the stationary configuration $h=\theta=0$.  The proof of Theorem \ref{lerayintro} therefore requires different ingredients to overcome these difficulties. 

The main idea behind the proof of Theorem \ref{th:uniform_distance-intro} lies in the introduction of a potential energy of contact that we extract by analyzing the asymptotics of the Stokes problem in thin domains.  As already mentioned, our asymptotic analysis is built on previous results, \cite{GeVaHill} for instance. The main challenge in our situation consists in dealing with general solid motions and gap geometries. A tentative in that direction has been done in \cite{MR4643472} but the obtained formulas are not tractable for our purposes. In our analysis, we use a genuine variational point of view in the spirit of \cite{Sokhna}. The new crucial step is then to identify the influence of an optimized constant (see the quantity $c_*$ appearing in {Section \ref{sec:test-function}}) that is vanishing when considering simple motions and symmetric geometries. We then compute a time-evolution equation for this potential energy of contact by using suitable multipliers for the Navier-Stokes equations.  

Our proof of Theorem \ref{lerayintro} is made in  two steps. We  first use improved dissipative estimate to conclude that the position of the ellipse is close to its expected asymptotic value for large times. This enables us to proceed with an asymptotic stability analysis starting from small initial data. To this aim, we define an ad-hoc change of variables to bring the equilibrium position $B_{eq}$ to the position of the solid at time $t$. Namely, we locally transform $B_{eq}$ into $B(t)$ by using the isometry $x\mapsto h(t)\widehat{e}_2 + Q(\theta(t)) x$ in a compact neighbourhood of $B_{eq}$ that we connect to the identity outside a compact neighbourhood of $B(t)$. Using this change of variables to rewrite the steady solution $U_{eq}$ in the time-depending fluid domain $\Omega(t)$, say $V_{eq}(t,\cdot)$, we then workout a stability estimate on the equation satisfied by $U-V_{eq}$. In this argument, the space-regularity of $U_{eq}$ and the time-regularity of $(h,\theta)$ are the key ingredients to control the difference $U-V_{eq}$ far from the solid whereas the Galilean invariance of the Navier-Stokes equations are used to deal with the estimates of the forces and of the fluid velocity close to $\partial B(t)$.

We emphasize that, even if the symmetry simplifies drastically the analysis, Theorem \ref{th:uniform_distance-intro} and Theorem \ref{lerayintro} are new also in the case of a disk. Any asymptotic stability result is a challenge for the Navier-Stokes equations and it becomes even more challenging in the context of a fluid-solid interaction models. 
Nevertheless,  one of the originality and novelty of our  work is to deal with a non-spherical body.  The choice of an elliptic body is justified by the aim of breaking the full symmetry of a disk without making computations impossible (as it would be for a body of arbitrary shape). We believe that our analysis should apply to any regular strictly convex shape at the cost of some analytical technicalities but without further conceptual difficulties.  Our methodology is however limited to sufficiently small convex body.  Indeed,  when the body is large (keeping the elliptical assumption for simplicity) $B(t)$ can touch simultaneously the top and the bottom of the channel.  If $B_{eq}$ is not strictly convex,  the geometry of gaps in close-to-contact configurations can also be complicated.  For these reasons, the collision issue would require another approach when relaxing this assumption.  Another limitation of our approach is that the translation of $B_{eq}$ is chosen to be vertical only.  In this respect,  this study can be seen as a first simplified step towards the understanding of the full complex dynamics.   Such extension would be relevant to other applications, like the motion of red blood cells flowing in the circulatory system. In that case, there is no spring-type forces applying on each cell but cell-cell and cell-vessel interactions.
We do not expect that the collision issue would be affected by such a generalization but the existence and stability of rest states could be different.  Finally, in this paper we do not deal with uniqueness of solutions to \eqref{eq:evolution_pb-scaled}: the understanding of this issue strongly relies on the characterization of the behavior of the semigroup associated to the fluid-structure system, which is clear for 2D bounded domains \cite{GlassSueur}, but still not understood in partially bounded domains.

\subsection{Organization of the paper and notations}
 Section \ref{sec:preliminaries} provides some preliminary notions: we describe the functional setting and we construct a solenoidal extension for the Poiseuille flow, which enables us to introduce an equivalent formulation of \eqref{eq:evolution_pb-scaled}, given by problem \eqref{eq:hom_pb1}-\eqref{eq:ODE_newref1}, where the velocity field vanishes at infinity. 

In Section \ref{sec:existence}, we give a global-in-time (up to collision) existence result of weak solutions to problem \eqref{eq:hom_pb1}-\eqref{eq:ODE_newref1} and we derive bounds for the kinetic energy of the system independent of the distance between $B(t)$ and $\partial A$.  
 
 Sections \ref{sec:distance1} and \ref{sec:distance2} are devoted to the proof of Theorem \ref{th:uniform_distance}, which yields the existence of a uniform lower bound for the distance between the obstacle and the boundary of the channel for all times and implies Theorem \ref{th:uniform_distance-intro}. In Section \ref{sec:distance1}, we give a detailed description of the geometry of the gap between $B$ and $\partial A$ at small distances and we construct some suitable test functions required to compute the repulsive force exerted by the boundary $\partial A$ on $B$ through the fluid. In Section \ref{sec:distance2}, we proceed to the actual proof of Theorem \ref{th:uniform_distance}, which consists in providing an upper bound for the potential energy of contact. In order to focus on the main ideas of the argument, technical details required for the proof of Theorem \ref{th:uniform_distance} are contained in Sections \ref{app_difftheta}, \ref{app_asymptotics} and \ref{app_P2} of the Appendix.

Finally, Section \ref{sec:equil} contains Theorem \ref{leray}, where we prove convergence to the equilibrium state as time goes to infinity when $p_0$ is sufficiently small and therefore deduce Theorem \ref{lerayintro}.

Our problem contains a large list of parameters. In the forthcoming computations, some are of primary interest, namely the initial energy and the pressure drop, while others are less critical as for instance $L,e,m,J$ (see \eqref{eq:adim-const} for the definitions of $m$ and $J$). In our computation, we consider the less critical parameters as fixed and we will be cautious with the dependencies of constants in terms of the one of primary interest. To this aim, we will reserve two notations for the constants appearing in the estimates: $C_{dyn}$ will denote a ``dynamical constant'' that depends possibly on $L,e,m,J$ and $F$; $C_{geo}$ will denote a ``geometric constant'' that allows dependencies in $L$ and $e$ only.

Other notations will be used for universal constants and whenever critical, we explicit the dependencies. All constants denoted with a symbol $C$ may vary from one line to another.


\section{Weak formulation of the fluid-solid system}\label{sec:preliminaries}
In this section we give a notion of weak solution to \eqref{eq:evolution_pb-scaled} following \cite{denisetal,BoGaGa,patriarca}. From now on, we shorten some notations as follows: 

\begin{equation}\label{eq:adim-const}
m=\frac{\mathcal M}{\rho d^2}\,,\ J=\frac{\mathcal J}{{\rho d^4}}\,,\ {\lambda_0} = \frac{p_0 L^2}{2} = \frac{\mathcal P_0\rho \mathcal{L}^2}{2\mu^2d}
\text{ and } H(\cdot,\cdot)=\frac{\rho}{\mu^2}  G(\cdot,\cdot) = \frac{\rho}{\mu^2}  F(d\cdot,\cdot) \,. 
\end{equation}

We emphasize that $H$, up to adimensionalization, satisfies both bounds in \eqref{hp_F}
with 
$
\overline{\varrho}_M=(\rho/{\mu^2})\min(d^2,1){r_M}
$
and some 
$
\overline{\varpi}_M>0\,
$
depending on $\theta_M >0$:
\begin{equation}\label{hp_H}
H(h,\theta) \ge \dfrac{\overline{\varrho}_M}{2} (h^2+\theta^2),\quad |\theta|<\theta_M.
\end{equation}
 We start by recalling how to treat the non-constant behavior at infinity.  We then introduce the associated function spaces and weak formulation of \eqref{eq:evolution_pb-scaled}.   \par To explain our construction,  we assume in the whole section  that $T>0$ and that $(U,P,h,\theta)$ is a smooth solution to \eqref{eq:evolution_pb-scaled} on $(0,T)$
such that $(h(t),\theta(t)) \in { A_{1,e}}$ for all $t \in (0,T). $  We point out that, since the larger axis of $B_{eq}$ is ${1}$ and ${L > 2}$ we have:
\begin{equation} 
\left\{
\begin{aligned}
& {\rm dist}(B(t), \{ x_2 = L\}) > \frac{ {1}}2\, && \text{when }h(t) <\frac{{1}}{2} \, , \\[4pt]
& {\rm dist}(B(t),\{x_2  =-L\}) > \frac{ {1}}2  && \text{when }h(t) > -\frac{{1}}{2} \, , \\[4pt]
& \min( {\rm dist}(B(t),\{x_2  =-L\}), {\rm dist}(B(t), \{ x_2  = L\}) ) \geq \frac{{1}}2 && \text{when } h(t) \in \bigg\{-\frac{{1}}2,\frac{{1}}2\bigg\}  \, .
\end{aligned}
\right. \label{eqgeometric}
\end{equation}
Below, we fix $\varepsilon_0 =\frac{{1}}4$.

\subsection{Reformulation of the fluid equation}
The fluid equation in Problem \eqref{eq:evolution_pb-scaled} is set in a two-dimensional unbounded channel with a prescribed non-zero velocity field at infinity, namely the  Poiseuille flow in \eqref{eq:poiseuille}.   To construct a finite-energy setting,  we begin by capturing the flow for large values of $|x_1|$.  We construct an {\em ad hoc} truncation around the rigid body of the Poiseuille velocity profile by adapting (by now classical) Ladyzhenskaya's procedure \cite{Ladyzhenskaya1969} (see also \cite{amick} and \cite[Chapter VI]{galdi}) to our time-dependent problem. For this, we divide the channel $A$ into three parts, {\em i.e.} we set
\begin{equation} \label{eq:partition_A}
A = \bigcup_{i=0}^{2} A_{i} \, ,
\end{equation}
where
\begin{equation}\label{eq:partition}
A_{0} \doteq A \cap ([-{3},{3}] \times \mathbb{R})\,,
\quad A_{1} \doteq A \cap ((-\infty,-{3}) \times \mathbb{R})\,,\quad A_{2} \doteq A \cap (({3},\infty) \times \mathbb{R}) \, ,
\end{equation} 
and then prove the following lemma.

\begin{lemma}\label{lemma:extension}
 There exists a  smooth divergence-free vector-field $s:(0,T)\times A\to \mathbb R^2 : (t,x) \mapsto s(t,x)$ such that 
\begin{equation} \label{eq:props}
\left\{
\begin{aligned}
& s(t,\cdot)=(0,0) && \text{in a neighborhood of $B(t) $ and on $\partial A,$} \\[3pt]
& s(t,\cdot)=v_{p} && \text{in} \ \ A_{1} \cup A_{2} \, , 
\end{aligned}
\right.
\end{equation}
Furthermore, there exists  $C_{geo} >0$, depending only on $L$, such that
\begin{itemize}
\item[(i)]  $\| s(t,\cdot) \|_{W^{2,\infty}(A)} \leq { C_{geo}} \lambda_0$, for all $t \in (0,T)$,
\vspace{6pt}
\item[(ii)] $\|\partial_t s(t,\cdot)\|_{L^2(A)} \leq { C_{geo}} \lambda_0 |h'(t)|$, for all $t \in (0,T)$,
\vspace{6pt}
\item[(iii)] the mapping $h \to \widehat{f}[h]\, \dot{=}\, \partial_t s$ is continuous from $W^{1,\infty}(0,T)$ into $L^2((0,T) \times A)$,\medskip
\vspace{6pt}
\item[(iv)] $\widehat{f}[h]=h' \bar{f}[h]$ where  ${\rm supp}(\bar{f}[h,\theta]) \subset A_0$ and the mapping $h \mapsto \bar{f}(h)$ is Lipschitz from $W^{1,\infty}(0,T)$ into $L^{\infty}((0,T) \times A)$ with
\[
\begin{aligned}
{\rm a)} &  \|\bar{f}[h]\|_{L^{\infty}(A)} \leq {C_{geo}\lambda_0}, &&  \forall \, h  \in W^{1,\infty}(0,T) \, , \\[6pt]
{\rm b)} &  \|\bar{f}[h] - \bar{f}[\tilde{h}]\|_{L^{\infty}((0,T)\times A)} \leq {C_{geo}\lambda_0} \|h - \tilde{h} \|_{L^{\infty}(0,T)},    && 
\forall \, (h,\tilde{h}) \in [W^{1,\infty}(0,T)]^2,
\end{aligned}
\]
\vspace{6pt}
\item[(v)] if $\widehat{g}(s) \,\dot{=} - (s\cdot \nabla)\, s - \nabla \pi_p + \Delta s  \in \mathcal{C}^{\infty}(\bar{A})$, then $\text{supp}(\widehat{g}) \subset A_0$ and 
\[
 \|\widehat{g}\|_{L^2(A)} \leq {C_{geo} \lambda_0 \left(1 + \lambda_0\right)}\,.
\]
\end{itemize}
\end{lemma}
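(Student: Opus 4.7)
The construction relies on the stream-function representation $v_p = \nabla^\perp \psi_p$ with $\psi_p(x_2) := \int_{-L}^{x_2} v_0(\xi)\,d\xi$. I would look for $s$ in the form $s = \nabla^\perp \Psi$ with $\Psi(t,x) := \eta(t,x)\,\psi_p(x_2)$ for a well-chosen cutoff $\eta$. Divergence-freeness is then automatic, and the three pointwise conditions in \eqref{eq:props} translate into conditions on $\eta$: $\eta\equiv 1$ outside $A_0$ yields $s = v_p$ in $A_1\cup A_2$; $\eta\equiv 0$ in a neighborhood of $B(t)$ yields $s = 0$ there; and, since $v_p$ vanishes on $\partial A$, the identity $s = \eta v_p + \psi_p\,\nabla^\perp\eta$ shows that $\nabla\eta\equiv 0$ in a strip around $\partial A$ suffices to ensure $s|_{\partial A}=0$.

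Concretely, I would build $\eta$ by combining a longitudinal cutoff $\zeta(x_1)$ (equal to $1$ for $|x_1|\geq 3$ and to $0$ for $|x_1|\leq 5/2$) with a vertical cutoff $\varphi(x_2 - h(t))$ tailored so that $\eta\equiv 0$ on a neighborhood of the slab $\mathbb{R}\times[h-1,h+1]\supset B(h,\theta)$, the inclusion being \emph{uniform in $\theta$}. This uniform containment is the key observation: it makes $\eta$, and hence $s$, depend on $t$ only through $h(t)$ and not through $\theta$, which is precisely what items (iii)--(iv) require. The precise combination is arranged so that $\eta$ is identically constant (equal to $1$) in a fixed-width strip around $\partial A$ inside $A_1\cup A_2$, and, in the longitudinal transition region $5/2\leq|x_1|\leq 3$ where $\zeta'\neq 0$, the cutoff is further flattened near $\partial A$ so that $\nabla\eta\equiv 0$ there; this flattening is compatible with the conditions on $B(t)$ because for admissible $(h,\theta)\in A_{1,e}$ the support of the transition of $\varphi$ is strictly separated from $\partial A$ in the region where $\zeta'\neq 0$.

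The verification of (i)--(v) is then direct: (i) is a $W^{2,\infty}$-estimate combining $\|\psi_p\|_{W^{2,\infty}(A)}\le C_{geo}\lambda_0$ with the uniform bounds on $\eta$; (ii)--(iv) follow by writing $\partial_t s = h'(t)\,\bar f[h]$ with $\bar f[h] := \nabla^\perp(\partial_h\eta \cdot \psi_p)$, which is supported in $A_0$ by construction and whose Lipschitz dependence on $h\in W^{1,\infty}(0,T)$ comes from the smoothness of $\eta$ and $\psi_p$ together with the chain rule; and (v) uses that the couple $(v_p,\pi_p)$ is a globally smooth stationary Navier--Stokes solution in $A$, so $\widehat g(s)\equiv 0$ on $A_1\cup A_2$ where $s = v_p$, hence $\mathrm{supp}(\widehat g)\subset A_0$, and the $L^2$-estimate over the bounded set $A_0$ splits into a quadratic convective contribution of order $\lambda_0^2$ from $(s\cdot\nabla)s$ and linear diffusive and pressure contributions of order $\lambda_0$ from $\Delta s$ and $\nabla\pi_p$, yielding the bound $C_{geo}\lambda_0(1+\lambda_0)$. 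The only nontrivial geometric point is the reconciliation between $s = v_p$ in $A_1\cup A_2$ (forcing $\eta\equiv 1$ including on $\partial A\cap(A_1\cup A_2)$) and $s|_{\partial A}=0$ via a single global stream function, since $\psi_p$ takes different nonzero constant values on the two walls of $\partial A$; this is what the flattening of $\eta$ near $\partial A$ in the longitudinal transition addresses, and it is the step that demands the most care. All other verifications are mechanical and reduce to standard cutoff estimates.
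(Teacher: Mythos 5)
Your construction is in the same spirit as the paper's: write $s=\nabla^{\perp}\Psi$ with $\Psi=\eta\,\psi_p$, where $\eta$ is a cutoff that depends on time only through $h(t)$ (in the paper, $\Psi=b\,Z[h]$ with $Z[h]=\chi(h)Z^{+}+(1-\chi(h))Z^{-}$), and you correctly isolate the one delicate point, namely that $\psi_p$ takes two \emph{different} nonzero constants on the two components of $\partial A$, so ``$s=v_p$ in $A_1\cup A_2$'' and ``$s=0$ on $\partial A$'' are in tension. That was indeed worth flagging.

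However, the ``flattening'' you propose does not close that gap. If $\nabla\eta\equiv 0$ on a neighbourhood of $\partial A$, then by connectivity $\eta$ is identically $1$ on a full strip along each wall (because it must match $\eta\equiv 1$ in $A_1\cup A_2$). You also require $\eta\equiv 0$ on a slab containing $B(h,\theta)$. But for $(h,\theta)\in A_{1,e}$ the body may be \emph{arbitrarily close} to a wall — there is no a priori lower bound on ${\rm dist}(B,\partial A)$, obtaining one is precisely the content of Theorem~\ref{th:uniform_distance} — so there is no fixed-width region between $B$ and the nearby wall in which $\eta$ can transition with $\|\eta\|_{W^{3,\infty}}$ under control. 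Your claim that ``the support of the transition of $\varphi$ is strictly separated from $\partial A$'' is therefore false when $h$ approaches $\pm(L-e)$, and the identity $s=\eta v_p + \psi_p\nabla^{\perp}\eta$ then leaves a nonzero tangential component $s_2=\psi_p(\pm L)\,\partial_1\eta$ on the offending wall in the strip $2<|x_1|<3$. The fix is not to flatten $\eta$ but to \emph{shift} the stream function: take $\Psi = c(h) + \eta\,\bigl(\psi_p-c(h)\bigr)$ with $c(h)$ interpolating between $\psi_p(L)$ and $\psi_p(-L)$ according to which wall $B$ is near (in the paper's notation, replace $bZ[h]$ by $bZ[h]+c(h)(1-Z[h])$ with $c(h)=\chi(h)b(L)+(1-\chi(h))b(-L)$). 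Then $s=\eta v_p + (\psi_p-c)\nabla^{\perp}\eta$ vanishes on the near wall because the prefactor $\psi_p-c$ vanishes there, and vanishes on the far wall because $\nabla\eta=0$ there. With that correction your verification of items (i)--(v) is sound; incidentally, the paper's displayed formula $s=\nabla^{\perp}\bigl(b\,Z[h]\bigr)$ appears to suffer from the same omission, which the phrase ``standard computations show'' glosses over.
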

\noindent
\begin{proof}
Let $b : [-L,L] \longrightarrow \mathbb{R}$ the function defined by 
$$
 b(x_2)=-\lambda_0 L\left[ \frac{x_2}{L}-\frac{1}{3} \frac{x_2^3}{L^3} \right] \qquad \forall x_{2} \in [-L,L] \, ,
$$
so that 
$
b'(x_2)= v_{p}(x_{2})\cdot \widehat{e}_1
$
for all $x_{2}\in(-L,L)$.
We take smooth cutoff functions  $\zeta_1,\zeta^+_0,\zeta_0^-,\chi$ such that
\[
\left\{\begin{split}
&\mathds{1}_{[-{2,2}]} \leq \zeta_{1} \leq \mathds{1}_{[{-3,3}]}\,, \ \mathds{1}_{[{\frac{1}2},\infty)} \leq \chi \leq \mathds{1}_{[{\frac{1}4},\infty)}\,,\\
&\mathds{1}_{(-\infty,L-\varepsilon_0]} \leq  \zeta_{0}^-  \leq \mathds{1}_{(-\infty,L-\varepsilon_0/2]}\,, \ \mathds{1}_{[-L+\varepsilon_0,\infty)} \leq  \zeta_{0}^+ \leq \mathds{1}_{[-L+\varepsilon_0/2,\infty)}\,.  
\end{split}\right.
\]
Then for any $(x_1,x_2) \in A$, we write
$$
Z^+(x_1,x_2)=1 - \zeta_1(x_1)\,\zeta_{0}^+(x_2 ) \,, \  Z^-(x_1,x_2) =  1 - \zeta_1(x_1)\,\zeta_{0}^-(x_2),
$$
and
\[
Z[h](x_1,x_2) = \chi(h) Z^+(x_1,x_2) + (1-\chi(h)) Z^-(x_1,x_2).
\]
Finally, we define
\begin{equation*} 
s(t,x_1,x_2) \doteq \left( -\frac{\partial}{\partial x_2}  \big( b(x_{2}) Z[h](x_1,x_2) \big),\frac{\partial}{\partial x_1} \big(b(x_{2})Z[h](x_1,x_2)\big) \right),
\end{equation*}
for $(t,x_1,x_2) \in (0,T)\times  A$.
Since $\varepsilon_0=\frac{{1}}4$ and recalling \eqref{eqgeometric}, standard computations show that $s$ verifies all the stated properties. 
\end{proof}
We emphasize that the vector field $s$ constructed in Lemma \ref{lemma:extension} depends on time through the function $h$ but it is only space-dependent on time interval along which the solid is far away from the centerline of the channel. Indeed, by construction we observe that $s(t,\cdot) = s^{-}(\cdot)$ (with obvious notations) is time-independent so that $\widehat{f}[h]\, \dot{=}\, \partial_t s=0$ when $-\frac{{1}}{2}<  h(t)<\frac{{1}}2$. 
The assertion (iv) is important to treat the term $\widehat{f}$ in the Cauchy theory, see Section \ref{section:Cauchy}.

\medbreak

We now look for solutions to problem \eqref{eq:evolution_pb-scaled} in the form $U=u+s$ and $P=p+\pi_p$.  Then,  $(u,p,h,\theta)$ solves the following problem with source term $\widehat{g} - { \widehat{f}[h]}$
\begin{align}
&
\label{eq:hom_pb1}
 \left\{
\begin{aligned}
& \left.\begin{array}{rl}\medskip
\partial_{t} u-\nabla \cdot \Sigma(u,p)+ (u\cdot \nabla)\,u+(u\cdot \nabla)\,s+(s\cdot \nabla)\,u\!\!
& = \widehat{g} - { \widehat{f}[h]}  \\ \medskip
\nabla \cdot u \!\! & = 0
\end{array}\right\} \text{in } \Omega(t)\times (0,T)\, , \\ \medskip
& \lim_{|x_1|\to\infty}u(x_1,x_2,t)=0,\  \forall x_{2} \in [-L,L] \, , \ t \in (0,T) \, , \\ \medskip
& u=0 \text{ on } \Gamma\times (0,T),\  u={h'}\,\widehat{e}_2 + \theta'(x-h \widehat{e}_{2})^{\perp} \text{ on } \partial B(t) \times (0,T),
\end{aligned}
\right.
\\
\label{eq:ODE_newref1}
& \left\{
\begin{aligned}\medskip
& m{h''} +H_{h}(h,\theta)=-\widehat{e}_2\cdot\int_{\partial  B(t)} \Sigma(u,p)\widehat{n}\, d\sigma \text{ in } (0,T) \, , \\ \medskip
& J{\theta''} +H_{\theta}(h,\theta)=-\int_{\partial B(t)} (x-h\widehat{e}_{2})^{\perp} \cdot \Sigma(u,p)\widehat{n}\, d\sigma \text{ in } (0,T) \, ,
\end{aligned}
\right.
\end{align}
with initial conditions
\begin{equation}\label{ichat}
\left\{
\begin{aligned}
&     (h,{h'})(0)=( h_0,h'_0),\\
& (\theta,\theta')(0)=( \theta_0,\theta'_0),\\
& u_{0}(x) \doteq U_0(x)-s(0,x) \text{ in } \Omega_0 \, .
\end{aligned}
\right.
\end{equation}
{To write down this set of equations, we used the fact that the Poiseuille flow does not produce lift and torque on $B$, {\em i.e.}
\[
\widehat{e}_2 \cdot \int_{\partial B(t)} \Sigma(v_p,\pi_p) \widehat{n} = \int_{\partial B(t)} (x-h\widehat{e}_2)^{\bot}\cdot  \Sigma(v_p,\pi_p)\widehat{n} = 0\,.
\]
Assuming $(u,p,h,\theta)$ is a smooth solution to Problem \eqref{eq:hom_pb1}-\eqref{eq:ODE_newref1}-\eqref{ichat} and taking $\phi \in \mathcal{C}^{\infty}_c([0,T) \times A)$ such that  \black $\phi(t,x) = \ell(t)\widehat{e}_2 + \alpha(t) (x-h(t)\widehat{e}_2)^{\bot}$ for some $(\ell,\alpha) \in \left[ \mathcal{C}^{\infty}_c([0,T))\right]^2$ for $x\in B(t)$, we multiply \eqref{eq:hom_pb1}$_1$ by $\phi$ and integrate by parts over space and time. 
Using \eqref{eq:ODE_newref1} to compute boundary terms, we get
\begin{equation} \label{eq:weak_hom}
\begin{aligned}
 -\int_{0}^{T} &\int_{\Omega(t)} \left[ u \cdot \partial_{t} \phi + (u\cdot \nabla)\,{\phi} \cdot u
- 2  
D(u) : D(\phi)\right] \, dx \, dt   \\[6pt]
 + \int_{0}^{T} &\int_{\Omega(t)} \left[ (u\cdot \nabla)\,s \cdot \phi +(s\cdot \nabla)\,u \cdot \phi  + (\widehat{f}[h] 
- \widehat{g}) \cdot \phi\right] \, dx \, dt  -\int_{A \setminus B_0}u_{0} \cdot \phi(0) 
\\[6pt]  
 & = mh'_{0} \, \ell(0) + J \theta'_{0} \, \alpha(0)+ \int_{0}^{T} \left(m {h'}{\ell'} -H_{h}(h,\theta) \, \ell + J{\theta'}{\alpha'}-H_{\theta}(h,\theta) \, \alpha \right) \, dt \,.
 \end{aligned}
\end{equation}
Furthermore, multiplying \eqref{eq:hom_pb1} by $u$, integrating by parts in space and using \eqref{eq:ODE_newref1} again, we formally infer the energy identity
\begin{equation} \label{eq_estimationenergie1}
\begin{split}
 \dfrac{{d}}{{d}t}
\left[E_{tot}(t)\right] + 2  \int_{\Omega(t)} |D(u)|^2 \, dx
= 
\int_{\Omega(t)} (\widehat{g} - \widehat{f}[h] - (u \cdot \nabla) s  ) \cdot u\,,
\end{split}
\end{equation}
where 
\begin{equation}\label{energy_original}
  E_{tot}(t)\doteq \frac12\left(\|u(t)\|^2_{L^2(\Omega(t))}  +m|h'(t)|^2+J|\theta'(t)|^2\right)+H(h(t),\theta(t))
\end{equation}
is the total energy. 
Since the right-hand side of this identity is bounded for $u \in L^2(\Omega(t))$ we observe that a weak formulation can be based on the velocity-field only that satisfies the classical $L^{\infty}_t L^2_x \cap L^2_t H^1_x$ regularity.

 For any initial data $((h_0,\theta_0),U_0=u_0+s(0,x),(h'_0,\theta'_0)) \in A_{1,e} \times \{L^2(\Omega_0)+v_p\} \times \mathbb R^2$ satisfying \eqref{eq:ic2}, we associate the initial total energy $E_0$ defined by 
\begin{equation}\label{E0}
E_{0}\doteq E_{kin}(0)+H(h_0,\theta_0) = \frac12\left(\|u_0\|^2_{L^2(\Omega(t))}  +m|h_0'|^2+J|\theta_0'|^2\right)+H(h_0,\theta_0). 
\end{equation}
We recall that the energy scales like $\rho/\mu^2$ in the adimensionalisation, as emphasized in \eqref{eq:energyscaling}, i.e. $E_0 = E^{adim}(0) = (\rho/\mu^2) E^{phys}(0)$.

\subsection{Definition of weak solutions to  \eqref{eq:evolution_pb-scaled}} \label{sec:funct}
In order to define the notion of a weak solution to problem \eqref{eq:evolution_pb-scaled}, we introduce the spaces
\begin{equation}
\begin{aligned}
\mathcal{H}(B)=&\{(v,\ell,\alpha)\in L^2(A)\times \mathbb{R}\times \mathbb{R}  \mid \nabla \cdot v=0 \text{ in } A, \ v\cdot\widehat{n}=0 \text{ on } \partial A, \\
& \ v=\ell\,\widehat{e}_2+\alpha\,(x_1,x_2-h)^{\perp}\text{ in } B\},\\[3pt]
\mathcal{V}(B)= &\{(v,\ell,\alpha)\in H^1_0(A)\times \mathbb{R}\times \mathbb{R}  \mid \nabla \cdot v=0 \text{ in } A,  \\ & v=\ell\,\widehat{e}_2+\alpha\,(x_1,x_2-h)^{\perp} \text{ in } B\},
\end{aligned}
\end{equation}
endowed, in view of Poincaré inequality, with the scalar products
\begin{equation}\label{eq:clean_scalarproducts}
\begin{split}
\langle z_1,z_2\rangle_{\mathcal{H}(B)}= &\int_{\Omega}u_1\cdot u_2\,dx+m \ell_1 \ell_2+J\alpha_1\alpha_2, \\ 
\langle z_1,z_2\rangle_{\mathcal{V}(B)}=& \int_{\Omega}\nabla u_1:\nabla u_2\,dx +2|B|\,\alpha_1\alpha_2 \, ,
 \end{split}
\end{equation}
where $z_i=(u_i,\ell_i,\alpha_i)$, $i \in \{ 1,2 \}$. 
We denote the norms induced by the scalar products in \eqref{eq:clean_scalarproducts} by $\|\cdot \|_{\mathcal{H}(B)}, \|\cdot \|_ {\mathcal{V}(B)}$. Observe that the right-hand side in $\eqref{eq:clean_scalarproducts}_2$ can be  replaced by an integral on $A$, i.e. 
$$\langle z_1,z_2\rangle_{\mathcal{V}(B)}= \int_{A}\nabla u_1:\nabla u_2\,dx $$
since $\nabla u_1 : \nabla u_2=2\alpha_1\alpha_2$ inside $B$. Recalling that $D(\cdot)$ denotes the symmetric part of the gradient, we have Korn identity
\begin{equation}\label{eq:splitting}
2\int_A D({u_1}):D( u_2)\,dx = \int_A\nabla{u_1}:\nabla  u_2\,dx \, .
\end{equation}
for all $(u_1, u_2) \in (H_0^1(A))^2$ satisfying $\nabla \cdot u_1=\nabla \cdot u_2=0.$   Such a formula is in particular valid for all $((u_1,\ell_1,\alpha_1),(u_2,\ell_2,\alpha_2)) \in \mathcal V(B)^2.$

We also recall that the Poincaré inequality on $A$ directly gives a trace inequality on $\partial B$, with a constant that does not depend on the position of $B$. Indeed, 
$$\int_A|v|^2\, dx \le \frac{4L^2}{\pi^2}\int_A |\nabla v|^2\, dx,$$
for every $v\in \mathcal{V}(B)$, so that we infer  a trace inequality for functions in $\mathcal{V}(B)$: 
\begin{equation} \label{eq_dissipationsolide}
\mu_1 |\ell|^2+\mu_2|\alpha|^2= \int_B|v|^2\, dx \le \int_A|v|^2\, dx \le \frac{4L^2}{\pi^2}\int_A |\nabla v|^2\, dx,
\end{equation}
with $\mu_1=|B_{eq}|=\pi e$ and $\mu_2=\int_{B_{eq}} |x|^2\, dx= \frac{\pi e}4(e^2+1)$. 
The important feature of this trace inequality is that it yields a universal constant that depends only on the geometry of the solid $B$ and the channel's height but not on the precise position of $B$. In particular, the crucial point that we will use later on is that the inequality can be used when $B$ is arbitrarily close to one of the boundaries of $A$.

Finally,  if $h,\theta : (0,T) \rightarrow \mathbb{R}$ are functions of time such that $(h(t), \theta(t)) \in A_{{1,e}}$ for every $t \in (0,T)$, we define the spaces
\begin{equation}
\begin{split}
L^p(0,T;\mathcal{V}(B(t)))=\bigg\{f:(0,T)\to \mathcal{V}(B(t)) \Big| 
\int_0^T\|f(\tau)\|^p_{\mathcal{V}(B(t))}\,d\tau<+\infty\bigg\} 
\end{split}
\end{equation}
for $1\le p < \infty$, and
\begin{equation}
L^\infty(0,T;\mathcal{H}(B(t)))=\bigg\{f:(0,T)\to \mathcal{H}(B(t)) \ \ \Big| 
\esssup_{\tau \in (0,T)}\|f(\tau)\|_{\mathcal{H}(B(t))}<+\infty\bigg\}\,.
\end{equation}
The norms are defined respectively by 
$$
\|f\|_{L^p(0,T;\mathcal{V}(B(t)))}=
\left(\int_0^T\|f(\tau)\|^p_{\mathcal{V}(B(t))}\,d\tau\right)^{1/p}
$$
for $1\le p < \infty$ and 
$$\|f\|_{L^\infty(0,T;\mathcal{H}(B(t)))}=
\esssup_{\tau \in (0,T)}\|f(\tau)\|_{\mathcal{H}(B(t))}.$$
We are now ready to give a definition of weak solution to \eqref{eq:evolution_pb-scaled}.
\begin{definition} \label{weakoriginal}
Let $T>0$.  Given  $(h_0,\theta_0) \in A_{{1,e}}$ and$(u_0,h'_0,\theta'_0)\in \mathcal{H}(B_0)$ we say that a triplet $(u,h,\theta)$ is a weak solution to problem \eqref{eq:hom_pb1}-\eqref{eq:ODE_newref1}-\eqref{ichat}  on $(0,T)$ if
\begin{itemize}
\item[(i)] $(h,\theta)\in W^{1,\infty}(0,T;\mathbb{R}^2)\cap \mathcal{C}([0,T]; {A_{1,e}})$,  with $h(0)= h_0$ and $\theta(0) = \theta_0$
\item[(ii)] $(u,h',\theta')\in L^2(0,T;\mathcal{V}(B(t)))\cap  L^\infty(0,T;\mathcal{H}(B(t)))$, 
\item[(iii)] for every $(\phi,\ell,\alpha) \in \mathcal{C}^1([0,T];\mathcal{V}(B(t)))$ such that $\phi(\cdot,T)=\ell(T)=\alpha(T)=0$, $(u,h,\theta)$ satisfies \eqref{eq:weak_hom}.

\end{itemize}
\end{definition}

 Some comments are in order.  We recall that we denote $B_0 = B(h_0,\theta_0)$ and $\Omega_0 = A \setminus B_0.$ The compatibility conditions \eqref{eq:ic2} for initial data are encoded in this definition by the statement 
$(u_0,h'_0,\theta'_0) \in \mathcal{H}(B_0).$ 
We point out that our definition of solution depends on the construction of $s$ and in particular on \black  the choice of $\varepsilon_0$ so that the various possible choices could give rise to {\em a priori} different weak solutions to \eqref{eq:evolution_pb-scaled}.   We do not intend to prove uniqueness in the present work. \black 
{ Since $(h,\theta) \in W^{1,\infty}(0,T)$, we can also extend the weak formulation to any $(\phi,\ell,\alpha)  \in W^{1,2}((0,T) \times A) \times H^1((0,T))$  such that 
\[\left\{
\begin{aligned}
\phi(t,x) &= \ell(t) + \alpha(t) (x-h(t)\widehat{e}_2)^{\bot}&&   \text{ in }B(t),\\ 
\phi & = 0&&   \text{ on }\partial A. 
\end{aligned}\right.
\]
}
We infer in particular for such test functions that for any $0\le t_1<t_2\le T$, 
\begin{equation} \label{eq:weak_gen}
\begin{aligned}
& 2  \int_{t_1}^{t_2} \int_{\Omega(t)} D(u) : D(\phi) \, dx \, dt -\int_{t_1}^{t_2} \left( \int_{\Omega(t)}{ \left[ u \cdot \partial_{t} \phi + (u\cdot \nabla)\,{\phi} \cdot u\right]} \, dx\right)dt   \\[6pt]
& + \int_{t_1}^{t_2} \int_{\Omega(t)} \left[ (u\cdot \nabla)\,s \cdot \phi +(s\cdot \nabla)\,u \cdot \phi { + \widehat{f}[h] \cdot \phi}\right] \, dx \, dt+\int_{\Omega(t_2)} u(t_2) \cdot \phi(t_2)dx 
 \\[6pt]
&- \int_{\Omega(t_1)} u(t_1) \cdot \phi(t_1)dx -\int_{t_1}^{t_2} \left( m {h'}{\ell'} -H_{h}(h,\theta) \, \ell + J{\theta'}{\alpha'}-H_{\theta}(h,\theta) \, \alpha \right) \, dt\\[6pt]
& + m \left(h'(t_2)\ell(t_2)  -  h'(t_1)\ell(t_1)\right) + J\left( \theta'(t_2) \alpha(t_2) -  \theta'(t_1) \alpha(t_1)\right)\\
&  =  \int_{t_1}^{t_2} \int_{\Omega(t)} \widehat{g} \cdot \phi \, dx \, dt.    
 \end{aligned}
\end{equation}


\section{Cauchy theory for weak solutions} \label{section:Cauchy}
\label{sec:existence}
The purpose of this section is to 
analyse weak solutions (up to collision) to the Cauchy Problem \eqref{eq:hom_pb1}-\eqref{eq:ODE_newref1}-\eqref{ichat}, in the sense of Definition \ref{weakoriginal}. 
Our first result in this section is the existence of at least one solution and a control of the growth  
of the total energy. As previously said, we do not tackle the uniqueness issue. 
\begin{theorem}\label{th:existence_weak_local}
There exists  $\lambda_0^{(0)}>0$, {depending on $L$ and $e$}, such that if $\lambda_0 \leq \lambda_0^{(0)}$ and  $(h_0,\theta_0) \in A_{1,e},$ $(u_0,{ h'_0,\theta'_0)\in\mathcal{H}(B_0)}$, then
\begin{itemize}
\item [(i)] there exist $T >0$ and at least one weak solution $(u,{h},{\theta})$ to problem \eqref{eq:hom_pb1}-\eqref{eq:ODE_newref1}-\eqref{ichat}  on $(0,T).$
\end{itemize}
Furthermore, for any $T >0$,   there exists a constant $C_{geo}$
such that for any weak solution $(u,h,\theta)$ on $(0,T)$, we have:
\begin{itemize}
\item[(ii)] $u\in \mathcal{C}( [0,T];L^2(A))$ and for all $0 \leq t_1 \leq t_2\leq T$, 
\begin{equation}\label{eq:energy_estimate_weak}
E_{tot}(t_2) -  E_{tot}(t_1) + \dfrac{1}{4} \int_{t_1}^{t_2} \|\nabla u(\tau)\|^2_{L^2(A)} \, d\tau  \leq  C_{geo} \lambda_0^2(t_2 -t_1) ;
\end{equation} 
\end{itemize}
Finally, we have the following blow-up alternative:
\begin{itemize}
\item[(iii)]
$\text{either }T=\infty\text{ or } T<\infty \text{ and }\lim\limits_{t \to T} (h(t),\theta(t))  \notin A_{1,e}.$
\end{itemize}
\end{theorem}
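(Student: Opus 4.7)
The plan is to combine the by-now standard construction of weak solutions for incompressible fluid-solid systems (following San Martín-Starovoitov-Tucsnak, Hoffmann-Starovoitov and Feireisl, whose setting is adapted here to the partially unbounded channel and the new forcing $\widehat{g}-\widehat{f}[h]$) with the energy identity \eqref{eq_estimationenergie1} derived formally above. The smallness assumption $\lambda_0\leq\lambda_0^{(0)}$ enters only at the energy-estimate step, where it is precisely what allows the convective contribution of the Poiseuille extension $s$ to be absorbed into the viscous dissipation.

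For existence on some $(0,T)$, I would work on a fixed time-dependent reference configuration by penalizing the rigid constraint: extend $u$ to $A$ and add a term $\tfrac{1}{\varepsilon}\int_{B(t)}|D(u_\varepsilon-P_{B(t)}u_\varepsilon)|^2$ that forces $u_\varepsilon$ to be a rigid motion inside $B(t)$ in the limit $\varepsilon\to 0$ (alternatively, a Galerkin method on a basis of $\mathcal{V}(B(t))$ obtained by pulling back a fixed basis through a diffeomorphism generated by a candidate trajectory). The position $(h,\theta)$ is coupled through a Schauder-type fixed point on $C^1([0,T])$: given $(h,\theta)$, solve the regularized fluid equation, insert the hydrodynamic force and torque into \eqref{eq:ODE_newref1} to update $(h,\theta)$; contraction holds for $T$ small enough. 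Uniform $\varepsilon$-estimates follow from the approximate version of the energy inequality below, and compactness on the moving domain (via extension-by-reflection and Aubin--Lions in the spirit of the cited references) permits to pass to the limit in all terms, including those involving $s$, $\widehat{g}$ and $\widehat{f}[h]$, where the Lipschitz dependence of $\bar f[h]$ from Lemma \ref{lemma:extension}\,(iv) is crucial to identify the limit.

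For the energy estimate \eqref{eq:energy_estimate_weak}, I would take $\phi=u$ in the weak formulation (admissible in the approximate problem and then passed to the limit), use the Korn identity \eqref{eq:splitting}, and estimate the source terms using Lemma \ref{lemma:extension}, the Poincaré inequality on $A$ and the trace-type inequality \eqref{eq_dissipationsolide}. The dangerous term $\int (u\cdot\nabla)s\cdot u$ is bounded by $\|\nabla s\|_{L^\infty}\|u\|^2_{L^2}\leq C_{geo}\lambda_0\|\nabla u\|^2_{L^2}$, and the inertial term $\int \widehat{f}[h]\cdot u = h'\int\bar{f}[h]\cdot u$ is controlled by $C_{geo}\lambda_0 |h'|\|u\|_{L^2(A_0)}$; applying \eqref{eq_dissipationsolide} to the factor $|h'|$ and Poincaré to $\|u\|_{L^2}$, this is again of order $C_{geo}\lambda_0\|\nabla u\|^2_{L^2}$. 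Choosing $\lambda_0\leq\lambda_0^{(0)}$ small enough makes both contributions absorbable into, say, three quarters of the dissipation. The remaining term $\int \widehat{g}\cdot u$ is time-independent with $\|\widehat{g}\|_{L^2}\leq C_{geo}\lambda_0(1+\lambda_0)$, so a single application of Young's inequality produces the residual $C_{geo}\lambda_0^2(t_2-t_1)$ on the right-hand side (up to absorbing $1+\lambda_0$ into $C_{geo}$ since we already require $\lambda_0$ small). The strong continuity $u\in C([0,T];L^2(A))$ follows from the weak $L^2$-continuity coming from the weak formulation, combined with upper-semicontinuity of the $L^2$-norm from the energy inequality.

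The blow-up alternative in (iii) is obtained by maximality: as long as $(h(t),\theta(t))$ remains in a compact subset of $A_{1,e}$, the domain $\Omega(t)$ stays uniformly Lipschitz, Lemma \ref{lemma:extension} applies with uniform constants, and the energy bound supplies a uniform control of $(u,h',\theta')$ in $\mathcal{H}(B(t))$, so one may pass to the limit $t\to T^{*-}$ and restart the local construction at $T^*$. The maximal solution therefore either exists globally, or $(h(t),\theta(t))$ must leave every compact subset of $A_{1,e}$ as $t\to T$. The main obstacle is not the energy arithmetic, which follows once Lemma \ref{lemma:extension} is in hand, but the compactness in the moving-boundary setting: this is where the restriction $(h,\theta)\in A_{1,e}$ is critical (collisions would destroy both the Lipschitz character of $\partial\Omega(t)$ and the uniformity of the trace inequality \eqref{eq_dissipationsolide}); the quantitative refinement ruling out collisions in finite time is exactly the content of Theorem \ref{th:uniform_distance-intro}, proved later.
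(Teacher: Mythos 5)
Your proposal is correct and follows essentially the same route as the paper: existence by the standard penalization/Galerkin construction and compactness cited from the fluid–solid literature, the energy estimate by testing with $u$, Korn's identity, Poincaré, the uniform trace inequality \eqref{eq_dissipationsolide} and Lemma \ref{lemma:extension}, with the smallness of $\lambda_0$ used solely to absorb the convective term $(u\cdot\nabla)s\cdot u$ and the inertial term $\widehat f[h]\cdot u$ into the dissipation, and a Young-type absorption for $\widehat g\cdot u$ producing the residual $C_{geo}\lambda_0^2(t_2-t_1)$. The treatment of strong $L^2$-continuity via weak continuity plus upper-semicontinuity and of the blow-up alternative via concatenation is also exactly what the paper invokes (there via \cite{bravin}).
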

\begin{proof}
Let  $(h_0,\theta_0) \in A_{1,e}$ and $(u_0,h'_0,\theta'_0)\in\mathcal{H}(B_0).$ Since the implicit term $\widehat{f}[h]$ is Lipschitz, the existence of at least one weak solution $(u,{h},{\theta})$ of \eqref{eq:hom_pb1}-\eqref{eq:ODE_newref1}-\eqref{ichat}, i.e. Assertion (i), on some timespan $(0,T)$ -- depending decreasingly on the initial total energy $E_0$ and increasingly on the distance between $B_0$ and $\partial A$ -- 
can be proved following the standard methods in fluid-solid interaction problems, see e.g. \cite{conca,DejEste,GunzLeeSeregin,SMStarTuck}. 
Concerning Assertion (ii), the continuity in time of the weak solutions with values in $L^2(A)$ and the energy estimate \eqref{eq:energy_estimate_weak} can be proved following \cite[Theorem 2.1]{bravin}.  We only provide here a formal computation of the energy estimate to motivate the assumption that $\lambda_0$  should be sufficiently small.
Indeed the formal identity \eqref{eq_estimationenergie1} yields that, for all $0\leq t_1\leq t_2 \leq T$
\begin{multline} \label{eq_energ0}
\frac12\left( E_{tot}(t_2) -  E_{tot}(t_1)\right) +  \!\int_{t_1}^{t_2}\! \int_{A} |Du|^2dxd\tau 
\\
= \frac12 \int_{t_1}^{t_2}\! \int_{\Omega(\tau)}
 \left(\widehat{g}  - (u \cdot \nabla) s    - \widehat{f}[h]\right) \cdot u\, dxd\tau\,.
\end{multline}
This identity, based on multipying \eqref{eq:hom_pb1} by $u$, can be justified by adapting the regularization arguments of \cite[Theorem 2.1]{bravin}. Then, on the left-hand side, we have, by Korn identity, 
\[
2 \int_{A} |Du(\tau,x)|^2dx = 
\int_{A} |\nabla u(\tau,x)|^2dx\,. 
\]
To estimate the right-hand side of \eqref{eq_energ0}, we use Poincaré inequality, Lemma \ref{lemma:extension} and trace inequality \eqref{eq_dissipationsolide} to obtain successively 
\[
\begin{aligned}
\left| \int_{t_1}^{t_2} \int_{\Omega(\tau)} \widehat{g} \cdot u\,dxd\tau  \right| 
& \leq 
\dfrac{1}{4} \int_{t_1}^{t_2} \int_{A} |\nabla u|^2dxd\tau 
+ {{C_{geo}}}\int_{t_1}^{t_2} \int_{A}|\widehat{g}|^2dxd\tau \\[6pt]
&\leq  
\dfrac{1}{4} \int_{t_1}^{t_2} \int_{A} |\nabla u|^2dxd\tau 
+ {{C_{geo}}\, \lambda_0^2} \left(1 +{\lambda_0}\right)^2  (t_2 -t_1) \, ,
\\[6pt]
\end{aligned}
\]
\[
\begin{aligned}
\left| \int_{t_1}^{t_2} \int_{\Omega(\tau)} ( u \cdot \nabla) s \cdot u\,dxd\tau\right|
& \leq { C_{geo}}\|s\|_{W^{1,\infty}(A)} \int_{t_1}^{t_2} \int_{A} |\nabla u|^2dxd\tau \\[6pt]
&
\leq {C_{geo}}\, \lambda_0\int_{t_1}^{t_2} \int_{A} |\nabla u|^2dxd\tau \, ,
\end{aligned}
\]
\[
\begin{aligned}
\left| \int_{t_1}^{t_2} \int_{\Omega(\tau)} \widehat{f}[h] \cdot u\,dxd\tau 
\right|
& \leq  {C_{geo}}\lambda_0 \int_{t_1}^{t_2}|h'(\tau)|\left(\int_{A} |\nabla u|^2dx\right)^{\frac12}\, d\tau\\[6pt]
&\leq  {C_{geo}}\lambda_0\int_{t_1}^{t_2} \int_{A} |\nabla u|^2dxd\tau \, .\\ 
 \end{aligned}
\]
Using these inequalities and choosing  $\lambda_0$ small enough with respect to $C_{geo}$
, we obtain that the right-hand side of \eqref{eq_energ0} is bounded by
\[
\dfrac{3}{8} \int_{t_1}^{t_2} \int_{A} |\nabla u|^2dxd\tau + {{C_{geo}}\, \lambda_0^2} \left(1 +{\lambda_0}\right)^2 (t_2 -t_1)  \,,
\]
 yielding the estimate \eqref{eq:energy_estimate_weak}.

{ The blow-up alternative is standard and follows from a concatenation method based on the remark that the $L^2$-norm of $(u(t),h'(t),\theta'(t))$ may not blow-up in finite time.}
 \end{proof}

\begin{remark}\label{rem-small}
We remark that, in the above proof, the smallness condition $\lambda_0 \leq \lambda_0^{(0)}$ means physically that $\frac{\mathcal P_0\rho \mathcal{L}^2}{2\mu^2d}\leq \lambda_0^{(0)}$, i.e. the original pressure drop $\mathcal P_0$ should be small compared to the ratio $\mu^2/(\rho d L^2)$. This smallness condition has for only purpose to ensure source terms can be bounded by dissipation in the  energy estimate to yield \eqref{eq_energ0}.  The method that we sketched above yields existence until contact for any intensity of the Poiseuille flow.  However, with no restriction on $\lambda_0$ one must expect that $E_{tot}$ grows exponentially with time.  
\end{remark}
 
From now on, we assume tacitly that $\lambda_0 \leq \lambda_0^{(0)}$. We complement our analysis with a further energy estimate for weak solutions. 
Indeed,  in absence of a potential energy term, the estimate \eqref{eq:energy_estimate_weak} predicts that the kinetic energy of the system, namely 
\[
E_{kin}(t) \doteq\frac12\left(\|u(t)\|^2_{L^2(\Omega(t))}  +m|h'(t)|^2+J|\theta'(t)|^2\right),
\]
as well as its dissipation, grow at most linearly in time.  However, Poincaré inequality implies the dissipation is morally larger than the time integral of the kinetic energy.  So, we may expect  that as long as the potential energy is under control, the kinetic energy remains bounded with time while the dissipation grows linearly. This is the motivation of the next proposition. 
\begin{proposition} \label{prop_Ekin}
Let the assumption of Theorem \ref{th:existence_weak_local} be in force and assume $\lambda_0 \leq \lambda_0^{(0)}$.  There exists a constant $C_{dyn}$  depending  on $e$, $L,$ $m,$ and $J$ such that, for all $t \in (0,T)$, 
\begin{equation} \label{eq_Ekin}
E_{kin}(t) \leq E_{kin}(0) + C_{dyn}\left({\lambda}_0^2 + 
{\max_{s\in(0,t)}{|\nabla H(h(s),\theta(s))|^2}}   \right).
\end{equation}
\end{proposition}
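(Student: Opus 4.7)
The plan is to upgrade the global energy estimate \eqref{eq:energy_estimate_weak} into a differential inequality for $E_{kin}$ alone, then exploit fluid dissipation to prevent any unbounded growth in time. Writing \eqref{eq:energy_estimate_weak} in ``per unit time'' form, which is legitimate for almost every $t$ since $E_{tot}$ is absolutely continuous (by time-continuity of $u$ in $L^2(A)$ and $(h,\theta)\in W^{1,\infty}$), I obtain
\begin{equation*}
\frac{d}{dt}E_{tot}(t) + \frac{1}{4}\|\nabla u(t)\|^2_{L^2(A)} \le C_{geo}\lambda_0^2.
\end{equation*}
Subtracting the time-derivative of the potential energy, $\tfrac{d}{dt}H(h(t),\theta(t)) = H_h h' + H_\theta \theta'$, produces
\begin{equation*}
\frac{d}{dt}E_{kin}(t) + \frac{1}{4}\|\nabla u(t)\|^2_{L^2(A)} \le C_{geo}\lambda_0^2 - H_h h' - H_\theta\theta'.
\end{equation*}

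The key ingredient that converts this into linear damping is the universal Poincaré/trace estimate \eqref{eq_dissipationsolide}: since $(u,h',\theta')\in\mathcal V(B(t))$, there exists a constant $c_0 = c_0(L,e,m,J) > 0$ such that $\|\nabla u\|_{L^2(A)}^2 \ge c_0\, E_{kin}(t)$, \emph{uniformly} in the position of $B(t)$ inside $A$. This uniformity, which is the crucial feature of \eqref{eq_dissipationsolide}, is what prevents the estimate from degenerating as the body approaches $\partial A$. I then bound the source term by Young's inequality,
\begin{equation*}
|H_h h' + H_\theta\theta'| \le \tfrac{\eta}{2}(|h'|^2 + |\theta'|^2) + \tfrac{1}{2\eta}|\nabla H(h,\theta)|^2,
\end{equation*}
and use the trivial bound $|h'|^2 + |\theta'|^2 \le \tfrac{2}{\min(m,J)}E_{kin}$. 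Choosing $\eta$ small enough, the kinetic contribution gets absorbed into the damping term and the inequality becomes
\begin{equation*}
\frac{d}{dt}E_{kin}(t) + \frac{c_0}{8}\, E_{kin}(t) \le C_{dyn}\bigl(\lambda_0^2 + |\nabla H(h(t),\theta(t))|^2\bigr).
\end{equation*}

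Estimate \eqref{eq_Ekin} then follows from Gronwall's lemma: multiplying by $e^{c_0 t/8}$ and integrating on $(0,t)$, the exponential kernel has total mass bounded by $8/c_0$, so the time-integral of the right-hand side is controlled by $C_{dyn}(\lambda_0^2 + \max_{s\in(0,t)}|\nabla H(h(s),\theta(s))|^2)$, while the prefactor $e^{-c_0 t/8}$ in front of $E_{kin}(0)$ is harmlessly bounded by $1$. I expect the only real subtlety to be a rigorous derivation of the differential inequality from the integral statement \eqref{eq:energy_estimate_weak}; this can be bypassed either by running the whole argument in integral form (writing an integral inequality for $t\mapsto e^{c_0 t/8}E_{kin}(t)$ and applying an integral Gronwall lemma), or by taking the limits of integration in \eqref{eq:energy_estimate_weak} arbitrarily close to produce an a.e.\ differential inequality. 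The whole gain compared to Theorem \ref{th:existence_weak_local} -- trading the linear-in-time growth $C\lambda_0^2 t$ for the time-uniform bound \eqref{eq_Ekin} -- is entirely due to the uniform Poincaré damping provided by \eqref{eq_dissipationsolide}.
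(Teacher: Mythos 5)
Your proposal is correct and follows essentially the same route as the paper: rewrite the energy estimate with $E_{kin}$ isolated, use the uniform trace/Poincaré bound \eqref{eq_dissipationsolide} to produce linear damping in $E_{kin}$, absorb the $H_h h'+H_\theta\theta'$ cross-term via Young, and close with Grönwall. The only cosmetic difference is that you absorb $|h'|^2+|\theta'|^2$ against the $c_0 E_{kin}$ damping through the trivial bound $m|h'|^2+J|\theta'|^2\le 2E_{kin}$, whereas the paper absorbs it directly against $\|\nabla u\|^2$ via \eqref{eq_dissipationsolide}; these two steps are equivalent up to constants once \eqref{eq_dissipationsolide} has been invoked.
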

\begin{proof}
We start by rewriting the energy inequality \eqref{eq:energy_estimate_weak} in the following way:
\[
\begin{aligned}
&E_{kin}(t_2) - E_{kin}(t_1)  + \dfrac{1}{4} \int_{t_1}^{t_2} \|\nabla u(\tau)\|^2_{L^2(A)}d\tau \\
 & +  \int_{t_1}^{t_2} \left(h'(\tau)  H_h(h(\tau),\theta(\tau)) \right.  \left. +  \theta'(\tau) H_\theta(h(\tau),\theta(\tau))\right)d\tau   \leq  {C_{geo}} \lambda_0^2 (t_2 -t_1)\,,
 \end{aligned} 
 \]
where $0 \leq t_1 \leq t_2 \leq T$.
By inequality \eqref{eq_dissipationsolide}, we deduce that for all $0 \leq t_1 \leq t_2 \leq T$, 
\begin{equation}\label{eq:yae}
\begin{split}
 E_{kin}(t_2) -E_{kin}(t_1)  & + \dfrac{1}{8} \int_{t_1}^{t_2} \|\nabla u(\tau)\|^2_{L^2(A)}d\tau  \\
&   \leq  {C_{g}}\left(\lambda_0^2(t_2-t_1)+\int_{t_1}^{t_2}{|\nabla H(h(\tau),\theta(\tau))|^2}d\tau \right).  
\end{split}
 \end{equation}

The inequality \eqref{eq_dissipationsolide} again together with Poincaré inequality imply that there exists a constant  ${C_{dyn}}$  depending on  $e$, $L$, $m$ and $J$  so that
\[
{C_{dyn}}  E_{kin}(t) \leq  \|\nabla u(t)\|^2_{L^2(A)}\,, \ \forall \, t \in (0,T).
\]
Eventually, we infer, for $0 \leq t_1 \leq t_2 \leq T$, that
\[
\begin{split}
E_{kin}(t_2) - E_{kin}(t_1)\,  + \, & \dfrac{{C_{dyn}} }{16} \int_{t_1}^{t_2} E_{kin}(\tau)d\tau \\ & \leq
  {C_{geo}}\left(\lambda_0^2(t_2-t_1)+\int_{t_1}^{t_2}{|\nabla H(h(\tau),\theta(\tau))|^2}d\tau \right).
\end{split}
\]
Since, by Theorem \ref{th:existence_weak_local}, $E_{kin} \in \mathcal{C}([0,T]),$ we may apply Gr\"onwall lemma to deduce that
\[
E_{kin}(t) \leq E_{kin}(0) {\rm e}^{-{C_{dyn}} t /16} +
{C_{geo}}\left( {\lambda}_0^2 + {\max_{s\in(0,t)}{|\nabla H(h(s),\theta(s))|^2}} \right)  \int_0^{t} {\rm e}^{- {C_{dyn}} (t-s) /16} ds,
\]
and the bound \eqref{eq_Ekin} follows.
\end{proof}

Observe that, as well known, without any force, the kinetic energy remains bounded along any trajectory. In presence of the elastic restoring forces $H_h$ and $H_\theta$, this remains true if $\theta(t)$ is uniformly bounded in time, but false in general. The restoring force $H_h$ plays a less critical role in this analysis since $h$ is a priori bounded due to the presence of the walls of the channel. A closer inspection of the behaviour of $E_{kin}$ shows that on intervals $(t_1,t_2)$ where $|\theta|$ increases, $E_{kin}(t_2)-E_{kin}(t_1)$ is a priori bounded but an increase of kinetic energy is expected on intervals where $|\theta|$ decreases. 

\par 

In order to bound the kinetic energy over time, we will restrict our attention to trajectories up to collision, either with a wall, or with a fictive chosen bound on the angular displacement. Showing that collisions do not occur will be our main objective from Section \ref{sec:distance1} and Section \ref{sec:distance2}. 
The choice of the fictitious bound for $\theta$ is based on the coercivity assumption \eqref{coercivity} on the potential $H$. This assumption implies that there exists $\alpha_0\in \mathbb R^+$ such that,
\begin{equation}\label{coercivity-bis}
H(h,\theta)\le 3E_0+\overline{\varrho}_M\Rightarrow |\theta|\le \alpha_0\,,
\end{equation}%
whatever $(h,\theta)\in A_{1,e}$.
We then introduce $\theta_M=\alpha_0+1$. This precise choice is motivated by the forthcoming Theorem \ref{cor_estimation1} where we shall prove that when $\lambda_0$ is small enough, actually $H(h(t),\theta(t))\le 3E_0+\overline{\varrho}_M$ on any time interval $(0,T)$ where $(h(t),\theta(t))\in A_{1,e}$ and $|\theta(t)|<\theta_M$. 

With this chosen $\theta_M$, since initially $H(h_0,\theta_0)\le E_0$, we infer that $|\theta_0|\le \alpha_0$ and we can define
\begin{equation}\label{T_M}
T_M\doteq\sup\{t\ge 0 : (h(t),\theta(t))\in A_{1,e}\text{ and } |\theta(t)| < \theta_M\}\in (0,+\infty]\,.
\end{equation}
It then directly follows from Proposition \ref{prop_Ekin} that 
\begin{equation}\label{E_kin^M}
\max_{t\in(0,T_M)}E_{kin}(t) \leq E_{kin}(0) 
+ {C_{dyn}}\left(\lambda_0^2 +S_M \right),
\end{equation}
where 
\begin{equation}\label{F_M}
S_M\doteq \max\{|\nabla H(h,\theta)|^2: (h,\theta)\in A_{1,e} \text{ and } |\theta|\le\theta_M\}\,.
\end{equation}
{The assumption \eqref{coercivity} implies $S_M$ is monotone increasing with respect to $E_0$.}
In the sequel, we will often refer to
\begin{equation}\label{Ekin_max}
E_{kin}^{M} \doteq  E_{kin}(0) + C_{dyn}\left({\lambda^2_0} + S_M   \right)\le  E_0 + C_{dyn}\left({\lambda^2_0} + S_M   \right),
\end{equation}
that is the maximal possible kinetic energy on $(0,T_M)$. Observe that $\theta_M$ is fixed by now on and depends only (increasingly) on the initial energy $E_0$. 
Finally, observe that $E_{kin}^{M}$ is bounded by an increasing function of the initial total energy and $\lambda_0$.
We will drop the subscript $M$ when using the constants $\overline{\varrho}_M$ and $\overline{\varpi}_M$ in the sequel.

\begin{remark}
Since the dimensionless potential $H$, the initial total energy $E_0$ and $\overline{\varrho}_M$ are all proportional to $\rho/\mu^2$, we observe that, if we take initial conditions in the physical variables of the form given in \eqref{eq:ic}, then $\theta_M$ does not depend on $\rho$ and $\mu^2$.
\end{remark}

\black 

\section{Distance estimate -- Preliminary results}\label{sec:distance1}
Estimating the distance between $B(t)$ and $\partial A$ requires to compute the repulsion force exerted by the boundary $\partial A$ on $B(t)$ through the fluid. 
To this aim, it is by now well documented, see \cite{Hillairet2007,HillTaka}, that one method is to use as a dual problem  the stationary Stokes system in $A \setminus \overline{B}$ with a suitable boundary condition on  $\partial B$.  
In this section, we do the preliminary work, namely we derive fine properties of these stationary Stokes solutions whatever the orientation of $B$ when it is close to the lower part of $\partial A$.  These are based first on a precise description of the gap between $B$ and $\partial A.$ We complement this section with several ingredients that will be required for computing distance estimates in the following section. 

\subsection{Description of the gap.} \label{sec_gapdescription}

Firstly, we recall properties of $\partial B$ that result from the fact that $B$ is convex with a smooth, strictly convex and compact boundary.  We remark that we can parametrize the boundary of $B$ around any point $X  \in \partial B$ by a smooth graph $\gamma_{X}$ on some length $4\lambda_*.$ Namely, denoting by $(\tau_X,n_X) \in \mathbb S^1 \times \mathbb S^1$ the local Frenet basis in $X \in \partial B$, we can fix a constant $r >0$ such that for every point $X\in \partial B$, we have
\[
(x_1,x_2) \in \partial B \cap B_{\mathbb R^2}(X,r) \quad \Leftrightarrow 
\quad
(x_1,x_2) = X+ t \tau_X + \gamma_X(t) n_X \quad \text{for some } t \in (-2\lambda_*,2\lambda_*) \,.
\]
Since $\partial B$ is compact, we emphasize that we can choose a similar radius $r$ and length $\lambda_*$ for all points $X\in \partial B$.
Since $\partial B$ is smooth both mappings $\partial B \mapsto \mathbb S^1 \times \mathbb S^1 : X \mapsto (\tau_X,n_X)$ and $\partial B \mapsto \mathcal{C}^m([-2\lambda_*,2\lambda_*]): X \mapsto \gamma_X$ (whatever $m \in \mathbb N^*$) are smooth. 
\begin{figure}[H]
    \centering
    \includegraphics[scale=0.8]{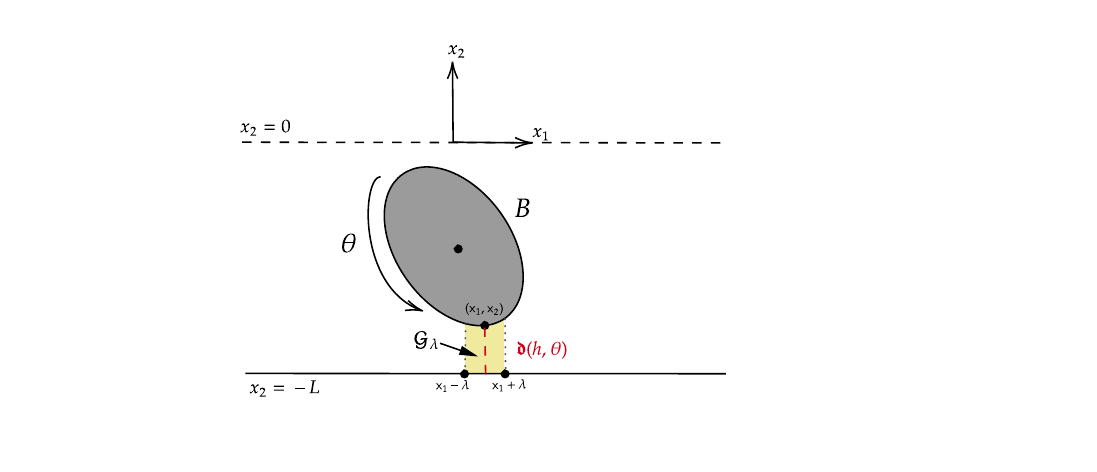}
    \vspace{-4mm}
    \caption{Representation of the gap $   \mathcal{G}_\lambda$ between $B$ and $\partial A$, when $h<0$.}
    \label{fig:contact}
\end{figure}
\black
Let $(h,\theta) \in A_{1,e}$.  We denote by {$X[\theta]$} the point on $\partial B_{eq}$ such that $h \widehat{e}_2 + Q(\theta){X[\theta]}$ achieves the (minimal) distance between $\partial B$  and the lower part of $\partial A$, i.e. the wall $x_2=-L$ . We note that {$X[\theta]$} is uniquely defined since $B$ is strictly convex and we denote $(\mathsf x_1[\theta],\mathsf x_2[\theta]) \doteq Q(\theta){X[\theta]}$.   We set also $\gamma [\theta] \doteq \gamma_{X[\theta]}$ and  we have the local expansion:
\begin{equation} \label{eq_expansiongamma}
\gamma [\theta](\tau) = \mathsf{x}_2[{\theta}] + \kappa_{2}[\theta] \tau^2 + \kappa_3[\theta] \tau^3 + \kappa_4[\theta] \tau^4 +  \varepsilon[\theta](\tau)|\tau|^4 \qquad \forall \, \tau \in [-2\lambda_*,2\lambda_*]\,,
\end{equation}
where $\theta \mapsto (\mathsf x_2[\theta],\kappa_2[\theta],\kappa_3[\theta],\kappa_4[\theta]) \in \mathbb R^4$  is smooth and
\[
|\varepsilon[\theta](\tau)| \leq \varepsilon(\tau)\,, \;\; \forall \, \theta \in [0,2\pi)  \quad \text{where} \quad \lim_{\tau \to 0} |\varepsilon(\tau)| = 0 \, .
\]
In particular, we have:
\begin{equation} \label{eq_expansiongamma_der}
\partial_{\theta} \gamma[\theta](\tau) = \partial_{\theta} \mathsf{x}_2[\theta]+ \partial_{\theta} \kappa_2[\theta] \tau^2 + \partial_{\theta} \kappa_3[\theta] \tau^3 + \varepsilon_{1}[\theta](\tau) |\tau|^3 \qquad \forall \,  \tau \in [-2\lambda_{*},2\lambda_{*}]\,,
\end{equation}
with similar considerations for $\varepsilon_{1}[\theta]$ as for $\varepsilon[\theta].$ {Furthermore, there exist four constants $(c_i^{(2)})_{1\leq i\leq 4}$ such that:
\begin{equation} \label{constants}
\left\{
\begin{aligned}
	c_1^{(2)}\tau^2 & \le \gamma[\theta](\tau)-{\sf{x}_2}[\theta] \le c_2^{(2)}\tau^2\,, \\
	c_3^{(2)}\tau^2 & \le \partial_\theta \gamma[\theta](\tau)-\partial_\theta{\sf{x}_2}[\theta] \le c_4^{(2)}\tau^2\,,
\end{aligned}	
\right.
	\quad \forall \, \tau\in[-2\lambda_*,2\lambda_*]\,. 
\end{equation}
Finally, since $B$ is open and $\partial B$ is strictly convex, we observe that $ \mathsf{x}_2[\theta],\kappa_2[\theta]$ belong to a bounded interval of $(-\infty,0)$ and $(0,\infty)$ respectively whatever $\theta$.  Since $\partial B$ is compact, there exist $0 <d_{min} < d_{max}$ and $0 < \kappa_{2}^{min} \leq \kappa_2^{max}$  so that $\mathsf x_2(\mathbb R)= [-d_{max},-d_{min}]$ and $\kappa_2(\mathbb R) = [\kappa_2^{min},\kappa_2^{max}].$
We refer to the Appendix \ref{app_difftheta} for an explicit description of $\gamma[\theta]$ exploiting that $B$ is an ellipse and providing explicit values for $d_{min},d_{max},\kappa_2^{min},\kappa_2^{max}$.}

\medskip

From now on, we drop the $\theta$-dependencies on all these notations for legibility. However, it is important to keep in mind that these quantities are related to the orientation of $B$ through the angle $\theta$ that changes with time. 
We introduce also the splitting of the fluid domain $\Omega \doteq A \setminus B. $ For $\lambda \in (0,\lambda_*)$
 we set
 \[
 \mathcal G_{\lambda} \doteq \{ (x_1,x_2) \in A \ | \ |x_1-\mathsf x_1| < \lambda \, , \quad -L < x_2 < h +\gamma(x_1 -\mathsf x_1) \, \} \, .
 \]
This domain represents the gap between $B$ and $\partial A$
below $B$, see Figure \ref{fig:contact}. We note that,  when $h<0,$ there exists a strictly positive distance 
$d_0[\lambda]$ depending only on $\lambda$ such that
\[
{dist}(\partial B \setminus \overline{\mathcal G_{\lambda}}, \partial A) > d_0[\lambda] \, .
\]
In particular, we set {$d_* \doteq d_0[\lambda_*]\,.$}

\medskip

For later purpose, we need to find relations between $\mathsf x_2$ and $\kappa_2$ and between $(\partial_{\theta} \kappa_2,\partial_{\theta} \mathsf x_2)$ and $\kappa_3$. We state the required properties in the following lemma. {Its proof exploits that $B$ is an ellipse and is postponed to the Appendix \ref{sec:Proof-appA}}:
\begin{lemma} \label{lem_geom}
The following statements hold true:
\begin{itemize}
\item[(i)] 
there exists a diffeomorphism $\mathsf X_2 : [\kappa_2^{min},\kappa_2^{max}] \to {[-d_{max},-d_{min}]}$ with inverse $\mathcal K_2$ such that:
\begin{equation}\label{eq_K_2etY}
\kappa_2 = \mathcal K_2(\mathsf x_2 ) \quad \text{and} \quad \mathsf x_2 = \mathsf X_2(\kappa_2) \qquad \forall \, {\theta \in \mathbb R} \, .
\end{equation}
\item[(ii)] for all ${\theta \in \mathbb R}$, we have
\begin{equation}\label{eq_kappa_2}
\kappa_3 = -\kappa_2 \mathsf x_2  \partial_{\theta} \left[\dfrac{\kappa_2}{\mathsf x_2}\right] \, . 
\end{equation}
\item[(iii)] there exists a $\mathcal{C}^1$-mapping $\mathcal K_3 : [\kappa_2^{min},\kappa_2^{max}] \to \mathbb R$ such that
\[
\kappa_3[\theta] = \partial_{\theta} \mathcal K_3(\kappa_2[\theta]). 
\]
\end{itemize}
\end{lemma}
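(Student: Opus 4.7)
The strategy is to compute $\mathsf{x}_2[\theta]$, $\kappa_2[\theta]$ and $\kappa_3[\theta]$ explicitly by exploiting the elliptic nature of $B_{eq}$, then observe that (i) and (ii) are direct algebraic consequences, and finally derive (iii) from (i) and (ii).

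\smallskip

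First, I would write the natural parametrization of $\partial B_{eq}$ as $X_0(\phi)=(\cos\phi,\,e\sin\phi)$, so that a point on $\partial B$ reads $x(\phi)=h\widehat e_2 + Q(\theta)X_0(\phi)$. Identifying the contact point $\phi_0=\phi_0(\theta)$ amounts to solving $\partial_\phi x_2=-\sin\theta\sin\phi+e\cos\theta\cos\phi=0$ together with the minimality condition; this yields $(\cos\phi_0,\sin\phi_0)=-(\sin\theta,\,e\cos\theta)/r[\theta]$, where
\[
r[\theta]\,\dot=\,\sqrt{\sin^2\theta+e^2\cos^2\theta}.
\]
A direct evaluation then gives
\[
\mathsf{x}_2[\theta]=-r[\theta],\qquad \mathsf{x}_1[\theta]=-\frac{(1-e^2)\sin\theta\cos\theta}{r[\theta]}.
\]

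\smallskip

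Next, I would Taylor-expand both components $x_1(\phi)$ and $x_2(\phi)$ around $\phi_0$ in the variable $\psi=\phi-\phi_0$, then invert the relation $\tau=x_1(\phi)-\mathsf{x}_1[\theta]$ to write $\psi=\psi(\tau)$ as a power series, and substitute into $x_2(\phi)-\mathsf{x}_2[\theta]$. Careful but routine bookkeeping (the only technical step of the argument) gives
\[
\kappa_2[\theta]=\frac{r[\theta]^3}{2e^2},\qquad \kappa_3[\theta]=-\frac{r[\theta]^5\, r'[\theta]}{2e^4},
\]
where $r'[\theta]=(1-e^2)\sin\theta\cos\theta/r[\theta]$. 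At this point (i) is immediate: eliminating $r$ between $\mathsf{x}_2=-r$ and $\kappa_2=r^3/(2e^2)$ one gets the algebraic identity $\kappa_2=-\mathsf{x}_2^3/(2e^2)$, so
\[
\mathcal{K}_2(k)=-\frac{k^3}{2e^2},\qquad \mathsf{X}_2(k)=-(2e^2 k)^{1/3}
\]
are smooth, strictly monotone and mutually inverse on the intervals $[-d_{\max},-d_{\min}]=[-1,-e]$ and $[\kappa_2^{\min},\kappa_2^{\max}]=[e/2,\,1/(2e^2)]$.

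\smallskip

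For (ii), I would compute the right-hand side directly from the explicit formulas: $\kappa_2/\mathsf{x}_2=-r^2/(2e^2)$, so $\partial_\theta(\kappa_2/\mathsf{x}_2)=-rr'/e^2$, and therefore
\[
-\kappa_2\,\mathsf{x}_2\,\partial_\theta\!\left(\frac{\kappa_2}{\mathsf{x}_2}\right)=-\frac{r^3}{2e^2}\cdot(-r)\cdot\left(-\frac{rr'}{e^2}\right)=-\frac{r^5 r'}{2e^4}=\kappa_3,
\]
which proves the claim.

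\smallskip

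Finally, for (iii), I would combine (i) and (ii). Expanding the derivative in (ii) one has
\[
\kappa_3=-\kappa_2\,\partial_\theta\kappa_2+\frac{\kappa_2^2}{\mathsf{x}_2}\,\partial_\theta\mathsf{x}_2=-\partial_\theta\!\left(\frac{\kappa_2^2}{2}\right)+\frac{\kappa_2^2}{\mathsf{x}_2}\,\partial_\theta\mathsf{x}_2.
\]
Since by (i) one has $\kappa_2^2/\mathsf{x}_2=\mathsf{x}_2^5/(4e^4)$, the second term equals $\partial_\theta(\mathsf{x}_2^6/(24e^4))=\partial_\theta(\kappa_2^2/6)$. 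Hence $\kappa_3=\partial_\theta(-\kappa_2^2/3)=\partial_\theta \mathcal{K}_3(\kappa_2)$ with the simple choice $\mathcal{K}_3(k)=-k^2/3$, which is smooth on $[\kappa_2^{\min},\kappa_2^{\max}]$. The only delicate point in the whole proof is the Taylor inversion used to identify $\kappa_3$ in the second step; everything else is purely algebraic manipulation of the explicit elliptic formulas.
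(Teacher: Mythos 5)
Your proposal is correct, and the final formulas you write down for $\mathsf x_2$, $\kappa_2$, $\kappa_3$ all agree with those in the paper (which derives $\mathsf x_2[\theta]=-\sqrt{\sin^2\theta+e^2\cos^2\theta}$, $\kappa_2=-\tfrac{\mathsf x_2}{2}\bigl(\cos^2\theta+\tfrac{\sin^2\theta}{e^2}\bigr)$, $\kappa_3=-\cos\theta\sin\theta\,\mathsf x_2\,\kappa_2\,(1-\tfrac1{e^2})$; your $\kappa_2=r^3/(2e^2)$ and $\kappa_3=-r^5r'/(2e^4)$ reduce to exactly these). The overall spirit is the same — exploit the explicit ellipse geometry — but there are two genuine differences. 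First, to obtain $\kappa_2,\kappa_3$ the paper substitutes the power-series ansatz $x_2=\mathsf x_2+\kappa_2\tau^2+\kappa_3\tau^3+\cdots$, $x_1=\mathsf x_1+\tau$ into the implicit Cartesian equation of the rotated ellipse and matches coefficients of $\tau^k$; this is purely algebraic and avoids inverting a series. You instead Taylor-expand the trigonometric parametrization $X_0(\phi)=(\cos\phi,e\sin\phi)$ around the contact angle $\phi_0(\theta)$ and invert $\tau=x_1(\phi)-\mathsf x_1$ to a power series in $\tau$ — this is the step you label ``careful but routine bookkeeping'' and do not carry out; it is not a logical gap since the output matches the paper's, but it hides genuinely error-prone work that the paper's ansatz sidesteps. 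Second, your proof of (iii) is stronger and cleaner than the paper's: the paper stops at showing $\kappa_3=-\bigl(\kappa_2-\tfrac{\kappa_2^2\mathsf X_2'(\kappa_2)}{\mathsf X_2(\kappa_2)}\bigr)\partial_\theta\kappa_2$ and defines $\mathcal K_3$ abstractly as the integral $\int_{\kappa_2}^{\kappa_2^{\max}}\bigl(\xi-\tfrac{\xi^2\mathsf X_2'(\xi)}{\mathsf X_2(\xi)}\bigr)d\xi$ — an argument that only uses the outputs of (i) and (ii) and would survive for any smooth strictly convex body — whereas you observe that for the ellipse $\tfrac{\mathsf X_2'(\xi)}{\mathsf X_2(\xi)}=\tfrac1{3\xi}$, collapse the integrand to $\tfrac{2\xi}{3}$, and get the closed form $\mathcal K_3(\kappa_2)=-\kappa_2^2/3$ up to an irrelevant additive constant. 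Your version is tailored to the ellipse but more transparent; the paper's is more structural.
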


\subsection{Construction of test functions.} \label{sec:test-function}

Inspired by the computations in \cite{Hillairet2007,Sokhna}, our next goal is to expand Stokes solutions when the distance between $B(t)$ and $\partial A$ is small.  It will turn out that since the shape of the gap $\mathcal G_{\lambda_*}$ changes with time (because of the distance and also of the curvature of $\partial B(t)$),  we will need to have at hand an expansion of the solution of the Stokes problem for a whole family of boundary conditions. 
Consequently, we proceed with computing a profile for solutions to
 \begin{equation}\label{eq_stokesprofile}
\left\{
\begin{aligned}
 - \Delta v + \nabla q &= 0  && \text{ in $\Omega$} \, , \\
  \nabla \cdot v & = 0 && \text{ in $\Omega$} \, , \\
  v &=0 && \text{ on $\partial A$}\,,  \\
  v &  = v_*&&  \text{ on  $\partial B$}\,,\\
  \lim_{|x_1|\to\infty} v(x_1,x_2) &= 0 && \forall\, x_{2} \in [-L,L] \, , \\ 
\end{aligned}
\right. 
\end{equation}
for a specific class of boundary conditions $v_*$ and 
in the asymptotic regime ${\rm dist}(B,\partial A) \doteq \mathfrak d(h,\theta)  << 1$. We observe that when $h <0$ the distance is achieved below the ellipse and $\mathfrak d(h,\theta) = h+\mathsf x_2 + L$, see Figure \ref{fig:contact}.    In the subsequent computations, we will apply our construction to the following three boundary conditions :
\begin{equation} \label{threebc}
v_{*}^{\bot}(x) = \widehat{e}_2 \, , \qquad v_*^{||}(x) = \widehat{e}_1  \, , \qquad v_*^{\circlearrowleft}(x) = (x- (\mathsf x+h \widehat{e}_2))^{\bot} \qquad \forall x \in \partial B \, ,
\end{equation}
mimicking translation, respectively parallel and orthogonal to $\partial A$, and rotation of $B$ (computed with respect to the point $h\widehat{e}_2 + \mathsf x$).
We remark that, in these three cases, there exists a stream function $\psi_* \in \mathcal{C}^{\infty}(\bar{A})$ such that $v_* = \nabla^{\bot}\psi_* = (-\partial_2 \psi_*, \partial_1 \psi_*)$, namely 
\begin{equation}\label{eq:stream}
\psi_{*}^{\bot}(x)=x_1-{\sf x}_1,\ \psi_*^{||}(x)={\sf x}_2+h-x_2,\text{ and } \psi_*^{\circlearrowleft}(x) =\frac12(x_1-{\sf x}_1)^2+\frac12(x_2-\mathsf x_2-h)^2. 
\end{equation}
 We have chosen {streamfunctions} that are zero at the central point of the gap $\mathcal G_{\lambda_*}$, that is where $(x_1,x_2)=(\mathsf{x_1}, \mathsf{x_2}+h)$.
We therefore present the general method under this further assumption and then its applications to the three specific  cases.  

\medskip

Under the assumption that $v_* = \nabla^{\bot}\psi_*,$ we recall first that there is a unique solution (up to a constant, regarding the pressure) to \eqref{eq_stokesprofile} whose velocity field {(extended by $\nabla^{\bot} \psi_*$ on $B$)} is the vector field achieving 
\begin{equation} \label{min1}
\min_{v \in H^1_0(A)}
\left\{ \int_{\Omega} |\nabla v|^2 \ \bigg| \ \nabla \cdot {v} = 0 \,, \ {v} = \nabla^{\bot} \psi_* \text{ on $B$} \right\}\,. 
\end{equation}
Let $w \in H^1_0(A)$ be a competitor for this minimization problem.  Since $A$ is simply connected, we can find $\psi \in H^2(A)$ such that ${w}= \nabla^{\bot} \psi$ in $A$. Since $w$ prescribes no flux on vertical lines in $A$, we may also normalize $\psi$ by assuming that $\psi = 0$ on $\partial A.$ Doing so, solving \eqref{min1} amounts to solving the problem
\[
\min_{\psi \in H^2_0(A)}
\left\{ \int_{\Omega } |\nabla^2 \psi|^2 \ \bigg| \ \nabla \psi = \nabla \psi_*  \text{ on $B$} \right\} \,.
\] 
We propose to compute an approximate minimum by minimizing the vertical derivatives in the gap with the same boundary conditions:
\begin{equation} \label{eq:min2}
\min_{\psi \in H^2(\mathcal G_{\lambda_*})} \left\{ \int_{\mathcal G_{\lambda_*}} |\partial_{22} \psi|^2 \ \bigg| \ \nabla \psi = \nabla \psi_* \ \text{ on $\partial B$ } , \quad \psi = \partial_2 \psi = 0 \ \text{ on $\partial A$} \right\} \, .
\end{equation}
It turns out that the minimum in \eqref{eq:min2} can be explicitly computed. Before giving the explicit form of the minimizer, we introduce 
the function $\tau \in (-2\lambda_*,2\lambda_*) \mapsto \mathfrak d + \gamma[\theta](\tau) - \mathsf x_2,$ which measures the vertical distance between $\partial A$ and $\partial B$ as a function of the horizontal distance $\tau$ from $(\mathsf x_1,\mathsf x_2),$  and the vertical-distance ratio,  defined for every $x$ in the gap $\mathcal G_{\lambda_*}$ by:
\begin{equation*}
r(x_1,x_2) = \dfrac{x_2 + L }{\mathfrak d + \gamma(x_1 - \mathsf x_1)-\mathsf x_2}. 
\end{equation*}
Following \cite{GeVaHill1}, we observe that the minimization problem \eqref{eq:min2}, under the change of variables 
	\begin{equation}\label{change_variables}
		(x_1,x_2)\mapsto (\tau(x_1,x_2), r(x_1,x_2)) \qquad \forall\,(x_1,x_2)\in \mathcal{G}_{\lambda_*}\,, 
	\end{equation}
	 amounts to find, for every $\tau\in (-2\lambda_*,2\lambda_*)$, the minimizer of the functional 
	$$
	\int_0^1|\psi''_\tau(r)|^2\cdot \frac{1}{(\mathfrak{d}+\gamma (\tau)-{\sf x_2})^3}\,dr\,.
	$$
This is a one-dimensional minimization problem, whose associated Euler-Lagrange  equation yields $\psi''''_\tau=0$.} 

{Applying that $\psi = \partial_2 \psi = 0$ on $\partial A$, t}he minimizer in \eqref{eq:min2} can then be written as
\begin{equation}\label{eq:optimizer}
\begin{split}
{\psi}_{opt}(x) = \ \psi_1(x_1 - \mathsf{x}_1) P^{opt}_1 \left( r(x_1,x_2) \right) 
+  \psi_2(x_1 - \mathsf{x}_1)P^{opt}_2 \left(  r(x_1,x_2)  \right), \ \forall x \in \mathcal{G}_{\lambda_*} \, ,
\end{split} 
\end{equation}
where $P^{opt}_1,P^{opt}_2$ are the polynomia defined for $s \in [0,1]$ by
\begin{equation*}
P^{opt}_1(s) = 3s^2 - 2s^3 \quad \text{and} \quad P^{opt}_2(s) = s^2(s-1),
\end{equation*}
$\psi_1,\psi_2$ are {chosen so that $\nabla \psi = \nabla \psi_*$ on $\partial B$}, namely: 
\begin{equation}
\left\{ 
\begin{array}{l}
\psi_1(\tau)  = \psi_*(\mathsf x_1 + \tau,  h + \gamma[\theta](\tau))  - c_*,\\ \medskip
\psi_2(\tau)  = \partial_2 \psi_*(\mathsf x_1 + \tau,  h + \gamma[\theta](\tau)) (\mathfrak d + \gamma[\theta](\tau) - \mathsf x_2)\,.
\end{array}\right.
\end{equation}
{With respect to computations restricted to simpler motions and fully symmetric geometries, as for instance in \cite{Hillairet2007}}, we emphasize the appearance of a ``new'' constant $c_* \in \mathbb{R}$. Optimizing \eqref{eq:min2} with respect to this constant $c_*$ fixes its value through the optimality condition
\begin{equation} \label{eq_d222psinul}
\int_{\mathsf x_1-\lambda_{*}}^{\mathsf x_1+ \lambda_{*}} \partial_{222} \psi_{opt}(x_1,-L) \, d x_1 =0 \, .
\end{equation}

Now, we propose the following construction of an approximate solution $\tilde{v}$ to problem \eqref{eq_stokesprofile}.  We fix $\zeta \in \mathcal{C}^{\infty}(\mathbb{R})$ such that {$\mathds{1}_{[-1,1]} \leq \zeta \leq \mathds{1}_{[-2,2]}$},
and we set, for all $x\in A$, 
\begin{equation}\label{eq:tilde_psi}
\tilde{\psi}(x)
= 
\left\{
\begin{aligned}
& \zeta\left( \dfrac{(x_1 - \mathsf x_1)}{\lambda_*} \right)  \psi_{opt}(x) \\
& + \left(1-\zeta\left( \dfrac{(x_1 -\mathsf x_1) }{\lambda_*} \right)\right) \zeta\left( \dfrac{{\rm dist}({x},B)}{d_{*}}\right) \left( \psi_*(x) - c_* \right) \; && \forall x \in \mathcal G_{2\lambda_*}\,, \\[6pt]
&  \zeta\left( \dfrac{{\rm dist}({x},B)}{d_{*}}\right) \left( \psi_*(x) - c_* \right) 
\; &&  \forall x \in \Omega \setminus \mathcal G_{2\lambda_*}\,, \\[6pt]
&  \psi_*(x) - c_* && \forall x \in B \, .
\end{aligned}
\right.
 \end{equation}
Standard computations show that $\tilde{v} \doteq \nabla^{\bot} \tilde{\psi}$ is such that
\[
  \tilde{{v}} \in H^1_0(A) \, , \qquad \nabla \cdot \tilde{v} = 0 \quad \text{in} \ A \, , \qquad  \tilde{v} = \nabla^{\bot} \psi_* \quad \text{on} \ B \,.
\]
We postpone a sharper description of this construction to Appendix \ref{app_asymptotics}.
For the three different boundary conditions given in \eqref{threebc}, we deduce from \eqref{eq:stream} and \eqref{eq:optimizer} the following expressions, for $x=(x_1,x_2)\in\mathcal G_{2\lambda_*}$, of the stream functions $\psi^\perp_{opt}$, $\psi^\parallel_{opt}$,$\psi^\circlearrowleft_{opt}$ of the respective optimizer in \eqref{eq:min2}:
\begin{equation} \label{psising}
\begin{aligned}
\psi^\perp_{opt}(x_1,x_2) & \doteq \left(x_1 - {\sf x}_1\black- c_*^{\bot} \right)\, \black P_1^{opt} \left(r(x_1,x_2) \right)\,, \\[6pt]
\psi^\parallel_{opt}(x_1,x_2) & \doteq  -(\gamma(x_1 - \mathsf{x}_1) -{\sf x}_2+ c_{*}^{\parallel})\, P_1^{opt}  \left(r(x_1,x_2) \right)  \\
& \hspace{5mm} - \left( \mathfrak{d} + \gamma(x_1 - \mathsf{x}_1) - \mathsf x_2\right)\, P_{2}^{opt}   \left(r(x_1,x_2) \right) \, , \\[6pt]
\psi^\circlearrowleft_{opt}(x_1,x_2) & \doteq  \left(\frac{1}{2} (x_1 - \mathsf{x}_1)^2 + \frac{1}{2}(\gamma(x_1-\mathsf x_1) - \mathsf x_2)^2 - c_{*}^{\circlearrowleft}\right)\, P_1^{opt}  \left(r(x_1,x_2) \right) \\
& \hspace{5mm} - (\gamma(x_1-\mathsf x_1) - \mathsf x_2) (\mathfrak{d} + \gamma(x_1 - \mathsf x_1) - \mathsf x_2)\, P_2^{opt}  \left(r(x_1,x_2) \right)\,.
\end{aligned}
\end{equation}
We refer to Lemma \ref{lem_c*} in Appendix \ref{app_asymptotics} for an asymptotic expansion of the values $c_*^{\bot}$, $c_*^{||}$, $c_{*}^{\circlearrowleft}$ when $\mathfrak d << 1$. 
The streamfunctions $\tilde\psi^\perp$, $\tilde\psi^\parallel$,$\tilde\psi^\circlearrowleft$ are then computed using \eqref{eq:tilde_psi} with the respective expression of $\psi_*$ as in \eqref{eq:stream}. The associated approximate Stokes solutions will be denoted by $\tilde{v}^{\bot},\tilde{v}^{\parallel}$ and $\tilde{v}^{\circlearrowleft}.$ 

For further references, we compute, using the change of variables in \eqref{change_variables} and the above construction of $\psi_{opt}^{\bot},$ 
\begin{align}
\int_{\mathcal G_{\lambda_*}} |\partial_{22} \psi_{opt}^{\bot} |^2 \, dx
&= \int_{-\lambda_*}^{\lambda_*} \dfrac{(\tau - c_{*}^{\bot})^2}{(\mathfrak d (t)+ \gamma[\theta](\tau) - \mathsf x_2)^3} \, {d}\tau\int_0^1 |\partial_{zz}P_{1}^{opt}(z)|^2{d}z\,  \\[6pt]&=  12  \int_{-\lambda_*}^{\lambda_*} \dfrac{(\tau- c_*^{\bot})^2}{(\mathfrak d(t) + \gamma[\theta(t)](\tau) - \mathsf x_2[\theta(t)])^3} \, {d}\tau, \label{eq:intpsiopt}
\end{align}
{and we observe that the approximations are singular in the limit $\mathfrak d << 1$ only at the $H^1$ level and that the singularity is concentrated in the gap. Namely, 
there exists a constant $C_{geo}\,>0$ (therefore depending only on the geometry of $B$) independent of $\mathfrak{d}$ and $\theta$ such that 
\begin{equation}\label{stime_test}
\left\{
\begin{aligned}
\|\tilde{v}^{\bot}\|_{L^2(A)}+ \|\tilde{v}^{||}\|_{L^2(A)} + \|\tilde{v}^{\circlearrowleft} \|_{L^2(A)} & \le C_{geo}\, \\
	\|\tilde{v}^{\bot} \|_{\mathcal{C}^2_b(\Omega\setminus \mathcal{G}_{\lambda_*})}+\|\tilde{v}^{||} \|_{\mathcal{C}^2_b(\Omega\setminus \mathcal{G}_{\lambda_*})}+\|\tilde{v}^{\circlearrowleft} \|_{\mathcal{C}^2_b(\Omega\setminus \mathcal{G}_{\lambda_*})}& \le C_{geo}\,	\,,
	\end{aligned}
\right.
\end{equation}  
where $\mathcal{C}^2_b(\Omega\setminus\mathcal{G}_{\lambda_*})$ denotes the space of all bounded continuous functions on $\Omega\setminus\mathcal{G}_{\lambda_*}$.}
\subsection{A refined trace inequality when $B$ is close to $\partial A$}
In our computations, we shall use a suitable test function to prove a minimal distance estimate.  A first {possible way to tackle this issue could be} to relate the velocities of the ellipse to the {fluid dissipation -- {\em i.e.} the $H^1$-norm of the velocity of the fluid --  and conclude by a Gr\"onwall argument.}  We recall here standard {trace results for this purpose}. They will however be insufficient for our purpose but we will need them to estimate remainder terms.
\par
To state these results,  we fix $h < 0$ and $\theta \in \mathbb R$ such that $(h,\theta) \in A_{1,e}.$ Given $v \in \mathcal V(B)$, we recall that $v$ matches a rigid motion in $B$. 
In the following lemma, we compute this rigid motion with respect to $\mathsf x+ h\widehat{e}_2$, namely the point realizing the minimum distance between $B$ and $\partial A$ (and thus betwen $B$ and $x_2 = -L$ since $h <0$). 
We set
\begin{equation} \label{rigid}
v(x_1,x_2) = \ell_1 \widehat{e}_1 + \ell_2 \widehat{e}_2 + \alpha (x - (\mathsf x + h\widehat{e}_2))^{\bot}\,, \qquad \forall (x_{1},x_{2}) \in \overline{B} \, .
\end{equation} 
{We emphasize that $\ell_1$ might not vanish since this is the velocity of the point  $\mathsf x + h\widehat{e}_2$ and not the center of the ellipse.}
With this convention, we have the following statement
\begin{lemma}\label{lemma_smart}
There exists a constant $C_{geo}>0$, depending only on $e$ and $L$ such that,
for every $v \in \mathcal V(B)$ satisfying \eqref{rigid} in $\overline{B}$, 
\begin{itemize}
\item[(i)] $|\ell_2| \leq C_{geo} \, \mathfrak{d}(h,\theta)^{3/4} \|\nabla v \|_{L^2(A)}$,
\item[(ii)]  $| \ell_1 | \leq C_{geo} \,{\mathfrak d(h,\theta)^{1/4}} \|\nabla v \|_{L^2(A)}$,
\item[(iii)]  $| \alpha | \leq C_{geo} \, \mathfrak{d}(h,\theta)^{1/4} \|\nabla v \|_{L^2(A)}$.
\end{itemize}
\end{lemma}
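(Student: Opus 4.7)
The estimates are of lubrication type: they quantify how the three rigid degrees of freedom of $v$ on $\partial B$ are constrained by the smallness of the gap $\mathfrak d(h,\theta)$. My plan is to combine the vanishing of $v$ on the lower wall, the rigid-motion prescription on $\partial B$, and the incompressibility $\nabla\cdot v=0$, on a collection of vertical segments spanning the thin region below $B$, and to carefully choose a horizontal aperture $\lambda\sim \mathfrak d^{1/2}$ together with a suitable test weight to extract each coefficient.

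On the vertical segment $\{\mathsf x_1+\tau\}\times(-L,h+\gamma(\tau))$ the fundamental theorem of calculus applied to $v_1$ and $v_2$, combined with the boundary conditions on $\partial A$ and $\partial B$, gives
\begin{equation*}
\ell_1-\alpha(\gamma(\tau)-\mathsf x_2)=\int_{-L}^{h+\gamma(\tau)}\!\partial_2 v_1\,dy \quad\text{and}\quad \ell_2+\alpha\tau=-\int_{-L}^{h+\gamma(\tau)}\!\partial_1 v_1\,dy,
\end{equation*}
the second identity relying on divergence-freeness. Setting $F(\tau)\doteq \int_{-L}^{h+\gamma(\tau)}v_1(\mathsf x_1+\tau,y)\,dy$ and differentiating in $\tau$ recasts the second relation as
\begin{equation*}
\ell_2+\alpha\tau=-F'(\tau)+\gamma'(\tau)\bigl[\ell_1-\alpha(\gamma(\tau)-\mathsf x_2)\bigr].
\end{equation*}
For (ii) I would square the first identity, apply Cauchy--Schwarz in $y$ using $\mathfrak d+\gamma(\tau)-\mathsf x_2\le \mathfrak d+C_{geo}\lambda^2$, integrate in $\tau\in(-\lambda,\lambda)$ with $\lambda\sim \mathfrak d^{1/2}$, and absorb the cross-term in $\alpha$ via Young's inequality together with the bound (iii) obtained simultaneously below.

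For (i) and (iii) I would test the one-dimensional identity above against a smooth weight $\eta\in C^\infty_c(-\lambda,\lambda)$, chosen respectively even and normalized by $\int\eta\,d\tau=1$ (which kills the $\alpha\tau$ term and picks up $\ell_2$) or odd with $\int\eta(\tau)\tau\,d\tau=1$ (which kills $\ell_2$ and picks up $\alpha$). Integrating by parts in the $F'$ term and using Cauchy--Schwarz,
\begin{equation*}
\Bigl|\int_{-\lambda}^\lambda \eta'(\tau)F(\tau)\,d\tau\Bigr|\le \|\eta'\|_{L^2(-\lambda,\lambda)}\,\|F\|_{L^2(-\lambda,\lambda)}.
\end{equation*}
A Cauchy--Schwarz in $y$ followed by a vertical Poincaré inequality in the gap (exploiting $v_1=0$ at $x_2=-L$) yields $\|F\|_{L^2(-\lambda,\lambda)}^2\le C_{geo}(\mathfrak d+C_{geo}\lambda^2)^3\|\nabla v\|_{L^2(A)}^2$, while a scaling argument gives $\|\eta'\|_{L^2}\sim \lambda^{-3/2}$ for the even choice and $\sim \lambda^{-5/2}$ for the odd one. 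Taking $\lambda=\mathfrak d^{1/2}$ produces exactly the dominant contributions $\mathfrak d^{3/4}\|\nabla v\|$ for $\ell_2$ and $\mathfrak d^{1/4}\|\nabla v\|$ for $\alpha$. The remainder terms involving $\gamma'$, $\ell_1$ and $\alpha$ are controlled by the expansions \eqref{eq_expansiongamma}--\eqref{constants}; in particular the leading $O(|\ell_1|)$ contribution $2\kappa_2\ell_1\int\eta\,\tau\,d\tau$ in the $\alpha$-estimate is absorbed thanks to (ii).

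The hard part is that the three coefficients are coupled through the asymmetry of $\gamma$ when $\theta\ne 0$, so (i)--(iii) must be closed simultaneously via Young-type absorption and bootstrapping rather than sequentially. A delicate additional point will be to verify that all constants depend only on $e$ and $L$ and not on $\theta$, which rests on the uniform bounds on $\mathsf x_2,\kappa_2,\kappa_3$ provided by \eqref{constants} and Lemma \ref{lem_geom}.
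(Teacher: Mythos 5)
Your proposal is correct and reproduces the paper's lubrication-type strategy: extract the rigid coefficients from integrals of $\partial_1 v_1,\partial_2 v_1$ over the gap, control them via Cauchy--Schwarz in the vertical variable plus a Hardy/Poincaré bound (using $v=0$ on $\{x_2=-L\}$), and optimize the aperture $\lambda\sim\mathfrak d^{1/2}$. Your smooth even/odd test weights $\eta$ are a cleaner implementation of what the paper achieves by applying the divergence theorem on symmetric and non-symmetric sub-gaps and then integrating the resulting squared inequality over $\lambda$ (in effect a step-function weight), and the scalings $\|\eta'\|_{L^2}\sim\lambda^{-3/2}$, $\lambda^{-5/2}$ match. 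One small correction: your closing worry that (i)--(iii) "must be closed simultaneously" is unfounded. The crude trace bound $|\alpha|\le C_{geo}\|\nabla v\|_{L^2(A)}$ from \eqref{eq_dissipationsolide} already suffices to absorb the $\alpha$ cross-term in (ii) (it enters at order $\mathfrak d$, strictly subcritical relative to $\mathfrak d^{1/4}$), so (ii) is self-contained, (i) is likewise self-contained, and only (iii) requires a previously established estimate — you use (ii) to handle the $\gamma'\ell_1$ contribution, while the paper instead invokes (i) to peel $\ell_2$ off the non-symmetric gap integral; both routes are valid.
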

\begin{proof}
{We prove our result in case $\mathfrak d(h,\theta) \leq \lambda_*^2$ ($\lambda_*$ being a minimum length on which we have a local parametrization of $\partial B$) otherwise this is a consequence to Poincaré inequality  \eqref{eq_dissipationsolide}.}

We start with the second inequality. We simply integrate $\partial_2 v_1$ in the gap $\mathcal G_{\lambda}$ to get
\[
\begin{split}
\left|\int_{\sf x_1-\lambda}^{\sf x_1+\lambda} v_{1}(x_1,h + \gamma(x_1-\mathsf x_1)) \, dx_1 \right| & = \left|\int_{\mathcal G_{\lambda}} \partial_2 v_1\, dx_1\,dx_2\right|\\
& \leq \sqrt{\lambda(\mathfrak d(h,\theta)+c_2^{(2)}\lambda^2)}\|\nabla v\|_{L^2(A)}\,,   
\end{split}
\]
for all small $\lambda > 0$.
However, for $\lambda > 0$ small, we have 
\[
\left|\int_{\sf x_1-\lambda}^{\sf x_1+\lambda} v_{1}(x_1,h+\gamma(x_1-\mathsf x_1))\,{ d}x_1 - 2\lambda \ell_1 \right| \le C_{geo}\, \lambda^3 |\alpha|\,.
\]
We note at this point that, by \eqref{eq_dissipationsolide}, we have
$|\alpha| \leq C_{geo} \|\nabla v\|_{L^2(A)}.$
Eventually, we obtain 
$$|\ell_1|\leq C_{geo} \left(\sqrt{\frac{\mathfrak d(h,\theta)}{\lambda}+c_2^{(2)}\lambda}  + {\lambda^2}\right)\|\nabla v\|_{L^2(A)}\,.
$$
To optimize the asymptotic estimate for $\mathfrak d(h,\theta) << 1$, we now take $\lambda=\mathfrak d(h,\theta)^{\frac12}$ (that is a possible choice {since $\mathfrak{d}(h,\theta) \leq \lambda_*^2$}) to conclude that
$$
|\ell_1|\leq C_{geo}\, \left(\mathfrak d(h,\theta)^{1/4}+{\mathfrak d(h,\theta)}\right)\|\nabla v\|_{L^2(A)} {\leq C_{geo}\, \mathfrak d(h,\theta)^{1/4} \|\nabla v\|_{L^2(A)}} \,.
$$

We now turn back to assertion (i). Going inside the proof of {\cite[Theorem 3.1 - eq. (12)]{Star2}}, one realizes that 
$$\left|v({\sf x}_1,{\sf x}_2)\cdot \widehat{n}({\sf x}_1,{\sf x}_2)\right|\leq C_{geo} \mathfrak{d}(h,\theta)^{3/4} \|\nabla v \|_{L^2(A)}$$
for some $C_{geo}>0$ that can be controlled once we know the curvature of $\partial B$ is bounded from above and from below by positive constants. This actually leads to (i). However, to have a better view on the dependance of $C_{geo}$, we {propose a rewriting of the } argument which will be used anyway to prove assertion (iii). We integrate $\partial_2 v_2=-\partial_1 v_1$ on $\mathcal G_{\lambda}$ to obtain
\[
2\lambda\ell_2 = \int_{\sf x_1-\lambda}^{\sf x_1+\lambda} v_{2}(x_1,h + \gamma(x_1-\mathsf x_1)) \, dx_1 = \int_{{\mathcal G_{\lambda}}} \partial_1 v_1\, dx_1\,dx_2\,.
\]
Estimating the right-hand side like in the proof of (ii) is not enough. We compute instead 
$$\int_{{\mathcal G_{\lambda}}} \partial_1 v_1\, dx_1\,dx_2 = \int_{-L}^{h+\gamma(-\lambda)}v_1({\sf x}_1-\lambda,x_2)\, dx_2 - \int_{-L}^{h+\gamma(\lambda)}v_1({\sf x}_1+\lambda,x_2)\, dx_2$$
and we deduce that 
$$\left|\int_{{\mathcal G_{\lambda}}} \partial_1 v_1\, dx_1\,dx_2 \right|^2
\le 2(\mathfrak d(h,\theta)+c_2^{(2)}\lambda^2)\left(\|v_1({\sf x}_1-\lambda,\cdot)\|_{L^2(dx_2)}^2+\|v_1({\sf x}_1+\lambda,\cdot)\|_{L^2(dx_2)}^2\right),$$
for all small $\lambda > 0$.
Using Hardy inequality, we then infer  
\[
\begin{split}\medskip
\left|\int_{{\mathcal G_{\lambda}}} \partial_1 v_1\, dx_1\,dx_2 \right|^2
& 
\le  C\left(\mathfrak d(h,\theta)+c_2^{(2)}\lambda^2\right)^3\left(\|\partial_2v_1({\sf x}_1-\lambda,\cdot)\|_{L^2(dx_2)}^2\right.\\ \medskip
& \quad + \left.\|\partial_2v_1({\sf x}_1+\lambda,\cdot)\|_{L^2(dx_2)}^2\right).\\
\end{split}
\]
Integrating over $(0,\lambda)$ gives
\[
\begin{split}
\frac{4\lambda^3}{3}|\ell_2|^2
& \le C\left(\mathfrak d(h,\theta)+c_2^{(2)}\lambda^2\right)^3 \left(\int_{{\sf x}_1-\lambda}^{{\sf x}_1+\lambda}\int_{-L}^{h+\gamma(\xi-{\sf x}_1)}|\partial_2v_1(\xi,x_2)|^2\,  dx_2 d\xi\right)\\
\end{split}
\]
and therefore 
\[
\begin{split}
 |\ell_2|& \le  C\left(\frac{\mathfrak d(h,\theta)}{\lambda}+c_2^{(2)}\lambda\right)^{3/2}\|\nabla v\|_{L^{2}(A)},
\end{split}
\]
so that (i) is proved by choosing $\lambda=\mathfrak d(h,\theta)^{\frac12}$.

We finally focus on assertion (iii). We integrate again $\partial_2 v_2=-\partial_1 v_1$ but this time we select on a non-symmetric subregion, for instance $\overline{\mathcal G_{\lambda}}={\{\mathcal G_{\lambda}\mid \lambda>0\}\setminus\{\mathcal G_{\lambda/2}\mid \lambda>0\}}$,  of the gap $\mathcal G_{\lambda}$ to obtain
\[
\left|\frac{\lambda \ell_2}{2}  +\frac{3\lambda^2\alpha}{8}\right|^2 = \left|\int_{\sf x_1+\lambda/2}^{\sf x_1+\lambda} v_{2}(x_1,h + \gamma(x_1-\mathsf x_1)) \, dx_1 \right|^2= \left|\int_{\overline{\mathcal G_{\lambda}}} \partial_1 v_1\, dx_1\,dx_2\right|^2\,.
\]
As above, we compute
$$\int_{\overline{\mathcal G_{\lambda}}} \partial_1 v_1\, dx_1\,dx_2 = \int_{-L}^{h+\gamma(\lambda)}v_1({\sf x}_1+\lambda,x_2)\, dx_2 - \int_{-L}^{h+\gamma(\lambda/2)}v_1({\sf x}_1+\lambda/2,x_2)\, dx_2$$
and we deduce that 
\[
\begin{split}
\Bigg|\int_{\overline{\mathcal G_{\lambda}}}  & \partial_1 v_1\, dx_1\,dx_2 \Bigg|^2 \\
&\le 2\left(\mathfrak d(h,\theta)+c_2^{(2)}\lambda^2\right)\!\left(\|v_1({\sf x}_1+\lambda,\cdot)\|_{L^2(dx_2)}^2+
\|v_1({\sf x}_1+\lambda/2,\cdot)\|_{L^2(dx_2)}^2\right), 
\end{split}
\]
for all small $\lambda > 0$.
Using Hardy inequality,
we deduce that
\[
\begin{split}\medskip
\Bigg|\int_{\overline{\mathcal G_{\lambda}}}  & \partial_1 v_1\, dx_1\,dx_2 \Bigg|^2 \\
& \le 
  \left(\mathfrak d(h,\theta)+c_2^{(2)}\lambda^2\right)^3\left(\|\partial_2v_1({\sf x}_1+\lambda,\cdot)\|_{L^2(dx_2)}^2+\|\partial_2v_1({\sf x}_1+\lambda/2,\cdot)\|_{L^2(dx_2)}^2\right).
\end{split}
\]
It then follows 
\[
\begin{split}
\int_{0}^\lambda \left|\frac{\xi \ell_2}2  +\frac{3\xi^2\alpha}{8}\right|^2\, d\xi
& \le 2 \left(\mathfrak d(h,\theta)+c_2^{(2)}\lambda^2\right)^3 \left(\int_{{\sf x}_1}^{{\sf x}_1+\lambda}\int_{-L}^{h+\gamma(\xi-{\sf x}_1)}|\partial_2v_1(\xi,x_2)|^2\,  dx_2 d\xi\right. \\
& \quad \left.+2 \int_{{\sf x}_1}^{{\sf x}_1+\lambda/2}\int_{-L}^{h+\gamma(\xi-{\sf x}_1)}|\partial_2v_1(\xi,x_2)|^2\,  dx_2 d\xi\right)\\
\end{split}
\]
and therefore, using  (i), we get the estimate 
\[
\begin{split}
 |\alpha|^2& \le  C_{geo}\, \lambda\left(\left(\frac{\mathfrak d(h,\theta)}{\lambda^{2}}+c_2^{(2)}\right)^3 + \frac{\mathfrak{d}(h,\theta)^{3/2} }{\lambda^3}\right)\|\nabla v\|_{L^{2}(A)}^2,
\end{split}
\]
so that (iii) is proved by choosing $\lambda=\mathfrak d(h,\theta)^{\frac12}$. 
\end{proof}
Applied to solutions of \eqref{eq:hom_pb1}-\eqref{eq:ODE_newref1}-\eqref{ichat}, Lemma \ref{lemma_smart} gives a finer control on the velocities $(\theta',h')$ close to the walls. The distance estimate \eqref{eq:dist-estim} is already contained in \cite[Theorem 3.1]{Star2}. The velocity bound \eqref{eq:refined-trace} will be crucial in our analysis in Section \ref{sec:distance2}.
\begin{corollary}\label{cor:finer-trace}
Let $\lambda_0 \leq \lambda_0^{(0)}$ and $(u_0,{ h'_0,\theta'_0)\in\mathcal{H}(B_0)}$. Let $(u,{h},{\theta})$ be a weak solution to problem \eqref{eq:hom_pb1}-\eqref{eq:ODE_newref1}-\eqref{ichat} whose existence follows from 
Theorem \ref{th:existence_weak_local} on its lifespan $(0,T)$. 
We have 
\begin{equation}\label{eq:dist-estim}
|\mathfrak d'(t)|^2 \leq C_{geo}\,  \mathfrak d(t)^{3/2} \|\nabla u\|^2_{L^2(A)}  
\end{equation}
and
\begin{equation}\label{eq:refined-trace}
|h'|^2 + |\theta'|^2 \leq C_{geo}\,  \sqrt{\mathfrak d(t)} \|\nabla u\|^2_{L^2(A)}\,,
\end{equation}
where $C_{geo}\,$ depends on $e$.
\end{corollary}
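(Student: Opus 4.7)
The plan is to reduce the two estimates to direct applications of Lemma \ref{lemma_smart}. Since $u(t,\cdot)$ equals a rigid motion on $B(t)$, the first step is to rewrite the no-slip constraint $u=h'\widehat{e}_2+\theta'(x-h\widehat{e}_2)^{\perp}$ about the contact point $\mathsf{x}+h\widehat{e}_2$ in the form \eqref{rigid}; comparing coefficients of the two parametrizations of the same rigid velocity yields
\[
\ell_{1}=-\theta'\,\mathsf{x}_{2},\qquad \ell_{2}=h'+\theta'\,\mathsf{x}_{1},\qquad \alpha=\theta'.
\]
Once this identification is in place, assertion (iii) of Lemma \ref{lemma_smart} immediately gives $|\theta'|=|\alpha|\leq C_{geo}\,\mathfrak{d}(t)^{1/4}\|\nabla u\|_{L^{2}(A)}$, and combining assertion (i) with the uniform bound $|\mathsf{x}_{1}|\leq C_{geo}$ from Section \ref{sec_gapdescription} leads to $|h'|\leq |\ell_2|+|\theta'\mathsf{x}_1|\leq C_{geo}\,\mathfrak{d}(t)^{1/4}\|\nabla u\|_{L^{2}(A)}$ as well. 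Squaring and summing then produces \eqref{eq:refined-trace}.

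For \eqref{eq:dist-estim}, the key additional observation is that $\mathfrak{d}'(t)$ coincides with $\ell_{2}(t)$, so that the sharper estimate (i) of Lemma \ref{lemma_smart}, carrying the factor $\mathfrak{d}^{3/4}$, applies directly. To prove this identity I would use an envelope argument: by the very definition of $X[\theta]$ as the minimiser of $X_{1}\sin\theta+X_{2}\cos\theta$ on $\partial B_{eq}$, one has $\mathsf{x}_{2}[\theta]=\min_{X\in\partial B_{eq}}(X_{1}\sin\theta+X_{2}\cos\theta)$; strict convexity and smoothness of $\partial B_{eq}$ ensure that the minimum is attained at a unique, smoothly varying point, and differentiating under the minimum then gives $\partial_{\theta}\mathsf{x}_{2}[\theta]=\mathsf{x}_{1}[\theta]$. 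Substituting into $\mathfrak{d}(t)=L+h(t)+\mathsf{x}_{2}[\theta(t)]$ yields
\[
\mathfrak{d}'(t)=h'(t)+\partial_{\theta}\mathsf{x}_{2}[\theta(t)]\,\theta'(t)=h'(t)+\mathsf{x}_{1}[\theta(t)]\,\theta'(t)=\ell_{2}(t),
\]
and Lemma \ref{lemma_smart}(i) together with squaring closes the argument.

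The main delicate point is the envelope identity $\partial_{\theta}\mathsf{x}_{2}=\mathsf{x}_{1}$, which hinges on the smoothness and uniqueness of $X[\theta]$; both are already available from the strict convexity and $\mathcal{C}^{\infty}$ regularity of $\partial B_{eq}$ recorded in Section \ref{sec_gapdescription}, so no new ingredient is required. Everything else is algebraic: a rearrangement of the rigid-velocity constraint around the contact point and a reading-off of the coefficients. A small caveat is that Lemma \ref{lemma_smart} is designed for the small-distance regime $\mathfrak{d}(h,\theta)\leq \lambda_{*}^{2}$; in the opposite regime both \eqref{eq:dist-estim} and \eqref{eq:refined-trace} follow trivially from the universal Poincaré--trace bound \eqref{eq_dissipationsolide}, with the bounded factors $\mathfrak{d}^{1/2}$ or $\mathfrak{d}^{3/2}$ absorbed into the geometric constant.
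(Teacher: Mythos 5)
Your proposal is correct and follows essentially the same route as the paper: rewrite the rigid velocity about the contact point to identify $\ell_1=-\theta'\mathsf x_2$, $\ell_2=h'+\theta'\mathsf x_1$, $\alpha=\theta'$, apply the three trace estimates of Lemma \ref{lemma_smart}, and use $\partial_\theta \mathsf x_2[\theta]=\mathsf x_1[\theta]$ to identify $\mathfrak d'$ with $\ell_2$. The only small deviation is that you prove $\partial_\theta\mathsf x_2=\mathsf x_1$ by an envelope (Danskin) argument on $\mathsf x_2[\theta]=\min_{X\in\partial B_{eq}}(X_1\sin\theta+X_2\cos\theta)$, whereas the paper obtains it by directly differentiating the explicit formula \eqref{eq:x1x2}; both are valid, and your variant has the minor advantage of not relying on the ellipse-specific expression.
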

\begin{proof}
Since, expressing the rotation with respect to the point of contact $\sf{x}=(\sf{x}_1,\sf{x}_2)$, we have 
$$u={h'}\,\widehat{e}_2 + \theta'(x-h \widehat{e}_{2})^{\perp} =  
- \theta'{\sf x}_2 \widehat{e}_{1} + (\theta'{\sf x}_1+h')\widehat{e}_{2} + \theta' (x - (\mathsf x + h\widehat{e}_2))^{\bot}  \text{ on } \partial B(t)\,,
$$
we deduce from Lemma \ref{lemma_smart} that 
$$|\theta'|^2\le C_{geo}\, \sqrt{\mathfrak d(t)} \|\nabla u\|^2_{L^2(A)}, \quad |\theta'{\sf x}_1+h'|^2\le C_{geo}\,\mathfrak d(t)^{3/2} \|\nabla u\|^2_{L^2(A)}\,,$$
{Recalling that $\mathfrak{d}(t) = h(t) + \mathsf{x}_2[\theta(t)] + L$,  the conclusion follows since $\partial_{\theta}{\sf x}_2[\theta]={\sf x}_1[\theta]$.  This latter identity can be observed by $\theta$-differentiating  \eqref{eq:x1x2} for instance.}
\end{proof}


\section{Distance estimate -- Identifying a potential energy of contact}\label{sec:distance2}
In this section we fix an initial configuration 
and we assume that $\lambda_0 \leq \lambda_{0}^{(0)}$ so that Theorem \ref{th:existence_weak_local} holds true. We focus on a weak solution $(u,h,\theta)$ 
to \eqref{eq:hom_pb1}-\eqref{eq:ODE_newref1}-\eqref{ichat} on the time interval $(0,T_M)$,  where we recall that the definition of $T_M$ (given in \eqref{T_M}) implies that no contact arises and $|\theta|< \theta_M$ on $(0,T_M)$. To fix the ideas,  we focus on a distance estimate with the lower boundary $x_2 = -L$ of $A. $ By symmetry, similar arguments would enable to control the distance to  the top boundary  of $A.$ We assume that, on some time interval $ (T_-,T_+) \subset [0,T_M]$ we have $h(t) < 0$ and $\mathfrak d(t) \doteq \mathfrak d(h(t),\theta(t))< \eta_0$ with $\eta_0 >0$ small to be fixed later on.   Up to taking $\eta_0$ smaller in case $T_-=0$, we may assume $\mathfrak d(T_-) = \eta_0$ without loss of generality.

For further reference, we recall that \eqref{hp_F} implies that {given $\eta_0\in(0,L-1)$, there exists a constant $k_0 > 0$ such that
\begin{equation} \label{eq_lbF}
	H_h(h(t),\theta(t)) \leq {-k_0} = -\frac{{\bar{\varpi}}\bar{\varrho}(L-1-\eta_0)}2< 0 \quad \text{as long as } \mathfrak d(t) < \eta_0.
\end{equation}

The section is dedicated to the proof of the following assertions that obviously imply Theorem \ref{th:uniform_distance-intro} since $p_0=2\lambda_0/L^2$. 
\begin{theorem}\label{th:uniform_distance}
	There exist a decreasing function $\omega_0^-(E_0)$ of the initial energy, that also depends on the parameters $m,J,e,L,\overline{\varpi},\overline{\varrho}$, a constant $\sigma\in(0,1)$ that depends on $\overline{\varpi}$, $C_{geo}>0$,
	and a threshold $\lambda_0^{(1)}>0$  depending on $m,J,e,L,\overline{\varpi}$,  such that, if 
	\begin{equation}\label{condition_lambda0}
\lambda_0 \leq \lambda_{0}^{(0)},\ 	\lambda_0\le \lambda_0^{(1)}k_0\ \text{ and } \			
C_{geo}\lambda^2_0\le \sigma\omega_0^-\bar \varrho,
	\end{equation}
	then
	\begin{itemize}
		\item[(i)] there exists a global weak solution $(u,h,\theta)$ to \eqref{eq:hom_pb1}-\eqref{eq:ODE_newref1}-\eqref{ichat};
		\item[(ii)] there exists ${\mathfrak d}_{min}^0 >0$ for which
		\begin{equation} \label{eq_minimaldistance}
			{\rm dist}(B(t), \partial A) \geq {\mathfrak d}_{min}^0 \qquad \forall t \geq 0 \, ;
		\end{equation}
		\item[(iii)]
		there exists $\beta_0>0$, that depends decreasingly on $E_0$, such that 
		\begin{equation}\label{uniform_estimate_en-section5}
			E_{tot}(t)\le  3E_0\,e^{-\beta_0 t}+  \frac{C_{geo}\lambda_0^2}{\beta_0}.
		\end{equation}
	\end{itemize}
\end{theorem}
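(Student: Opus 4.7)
The plan is to build a ``potential energy of contact'' $\Psi$, singular as $\mathfrak d \to 0$, and to control its growth using the restoring force $H_h$ which, by \eqref{eq_lbF}, is strictly repulsive whenever $\mathfrak d < \eta_0$. I would work on a connected component $(T_-,T_+)$ of $\{t \in (0,T_M) : h(t)<0, \mathfrak d(t)<\eta_0\}$, with $\mathfrak d(T_-)=\eta_0$. The starting point is to insert the triple $(\tilde v^{\perp},1,0) \in \mathcal V(B(t))$ from Section \ref{sec:test-function} into the weak formulation \eqref{eq:weak_gen} over $(T_-,t)$, where $\tilde v^{\perp}$ is the approximate Stokes flow generated by a unit vertical translation of $B$. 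The resulting identity takes the schematic form
\begin{align*}
\Big[m h' + \!\int_{\Omega} u\cdot \tilde v^{\perp}\Big]_{T_-}^{t} + \!\int_{T_-}^{t}\!\Big(2\!\int_{\Omega}D(u){:}D(\tilde v^{\perp}) + H_h(h,\theta)\Big)ds = \!\int_{T_-}^{t} \mathcal R_1\,ds,
\end{align*}
where $\mathcal R_1$ gathers the convection $(u\cdot\nabla)u\cdot\tilde v^{\perp}$, the Poiseuille source $\widehat g$, the time derivatives of $\tilde v^{\perp}$ induced by $(h',\theta')$, and the $\widehat{f}[h]\cdot \tilde v^{\perp}$ contribution.

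The heart of the extraction is to isolate the singular part of $2\int D(u){:}D(\tilde v^{\perp})$. Splitting $u = h'\tilde v^{\perp} + (u - h'\tilde v^{\perp})$, the minimizing property of $\tilde v^{\perp}$ together with the explicit computation \eqref{eq:intpsiopt} shows that the leading contribution equals $h'(t)\,\mathcal I(\theta,\mathfrak d)$ with $\mathcal I(\theta,\mathfrak d)\sim C(\theta)/\mathfrak d^{3/2}$ as $\mathfrak d \to 0$. Since $\mathfrak d'(t) = h'(t) + \partial_{\theta}\mathsf x_2[\theta]\,\theta'(t)$, this term is, up to a $\theta'$-error, the time derivative of a function $\Psi(\mathfrak d,\theta)$ with $\partial_{\mathfrak d}\Psi = \mathcal I$, hence $\Psi \sim C(\theta)/\sqrt{\mathfrak d}$. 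Handling the $\theta$-derivative of $\Psi$ relies on the algebraic relations between $\mathsf x_2, \kappa_2, \kappa_3$ of Lemma \ref{lem_geom} (whence the role of $c_*$ optimized in Section \ref{sec:test-function}), and the resulting $\theta'$-term, together with all other remainders in $\mathcal R_1$, are absorbed thanks to the refined trace bound $|\theta'|^2+|h'|^2 \le C_{geo}\,\sqrt{\mathfrak d}\,\|\nabla u\|_{L^2(A)}^2$ from Corollary \ref{cor:finer-trace}, combined with the energy dissipation \eqref{eq:energy_estimate_weak} and the $L^2$--bounds \eqref{stime_test} on $\tilde v^{\perp}$.

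Closing the estimate for (ii) uses \eqref{eq_lbF}: on $(T_-,T_+)$ one has $-\!\int_{T_-}^{t}\!H_h(h,\theta)\,ds \ge k_0(t-T_-)$, which, compared with the $O(\lambda_0)$-contributions coming from $\widehat g$ and the $O(\mathfrak d^{1/4})$-smallness supplied by Corollary \ref{cor:finer-trace} for the convective and rotational terms, is dominant provided the smallness \eqref{condition_lambda0} holds. One ends up with a uniform-in-time bound $\Psi(\mathfrak d(t),\theta(t)) \le \Psi(\eta_0,\theta(T_-)) + C_{dyn}$, and inverting the relation $\Psi \sim 1/\sqrt{\mathfrak d}$ produces $\mathfrak d(t)\ge \mathfrak d_{min}^0$. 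A bootstrap on $T_M$ together with the blow-up alternative of Theorem \ref{th:existence_weak_local} then promote the local solution to a global one, giving (i).

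For the exponential decay (iii), I would follow the Haraux-type perturbed-energy idea mentioned in the introduction: the energy \eqref{energy_original} is dissipated only in the fluid, so I introduce
\begin{align*}
\mathcal E(t) = E_{tot}(t) + \varepsilon\Big( m h(t) h'(t) + J \theta(t)\theta'(t) + \!\int_{\Omega(t)}\! u \cdot W[h,\theta]\,dx\Big),
\end{align*}
with $W[h,\theta]$ a divergence-free $H^1_0(A)$-lift of the rigid velocity $h\,\widehat{e}_2 + \theta(x-h\,\widehat{e}_2)^{\perp}$ on $\partial B$. Differentiating and using \eqref{eq:hom_pb1}--\eqref{eq:ODE_newref1}, the $\varepsilon$-correction produces $-\varepsilon(h H_h + \theta H_\theta) \le -\varepsilon\overline{\varpi}\,H$ by \eqref{hp_F}, supplying the missing potential-energy damping, while the other $\varepsilon$-terms are controlled by the fluid dissipation (Poincar\'e--Korn) and by $\lambda_0^2$ through Lemma \ref{lemma:extension}. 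Choosing $\varepsilon$ small enough yields $\mathcal E' + \beta_0 \mathcal E \le C_{geo}\lambda_0^2$ with $\mathcal E$ comparable to $E_{tot}$, hence \eqref{uniform_estimate_en-section5}. The main obstacle is the extraction step: writing a multiplier identity for the full Navier--Stokes system with the highly anisotropic $\tilde v^{\perp}$ produces numerous remainder types---inertial convection, Poiseuille convection, $\theta$-drift of the gap geometry---that must be dominated by the $k_0$-repulsion uniformly in $\mathfrak d$. It is precisely the fine gap asymptotics of Lemma \ref{lem_geom} together with the $\mathfrak d^{1/4}$-smallness of Corollary \ref{cor:finer-trace} that make this competition tractable under the quantitative smallness \eqref{condition_lambda0}.
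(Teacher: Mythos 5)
Your overall plan matches the paper's: test the weak formulation with $\tilde v^{\perp}$, extract the singular viscous contribution, exploit $H_h\le -k_0$, control other remainders via Corollary \ref{cor:finer-trace}, and then run a Haraux-type perturbed-energy argument for (iii). But there is a genuine gap in how you propose to control the $\theta'$-drift of the potential $\Psi$. The dangerous term in $\dot\Psi$ has the form $\theta'\, g(\theta)/\sqrt{\mathfrak d}$ — it comes from $\theta$-differentiating the denominator $\mathfrak d + \gamma[\theta](\tau) - \mathsf x_2[\theta]$ — and it is of the \emph{same order} as $\Psi$ itself. Bounding it by Corollary \ref{cor:finer-trace} gives $|\theta'|/\sqrt{\mathfrak d}\le C\,\mathfrak d^{-1/4}\|\nabla u\|_{L^2(A)}$, and integrating in time then forces either Cauchy--Schwarz against $\int \mathfrak d^{-1/2}$ (circular, since that is $\sim\int \Psi$, the quantity you want to bound) or against $\bigl(\sup\Psi\bigr)^{1/2}\sqrt{t-T_-}$ (which grows in $T_+-T_-$). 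Either way you are pushed into a Gr\"onwall iteration that can only give a bound on finite time horizons, not the uniform-in-time estimate (ii).

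The paper's fix is the modulation trick you did not include: it replaces $\Psi$ by $P_c(t) = a(t)P_c^0(t) - \bigl(\int_{\Omega(t)} u\cdot a\tilde v^{\perp}\,dx + m h' a\bigr)$, where the amplitude $a(t)>0$ solves an ODE chosen precisely to cancel the $1/\sqrt{\mathfrak d}$ coefficient in ${\rm Mod}(t)$ (Lemma \ref{lem_Mod}). The reason $a$ stays bounded away from $0$ and $\infty$ — essential for $P_c$ to remain comparable to $1/\sqrt{\mathfrak d}$ — is exactly Lemma \ref{lem_geom}(iii): $\kappa_3[\theta]=\partial_\theta\mathcal K_3(\kappa_2[\theta])$, so the forcing in the ODE is an exact $\theta$-derivative of a periodic quantity and $a=\exp\bigl(\mathcal G(\kappa_2[\theta(t)])-\mathcal G(\kappa_2[\theta(T_-)])\bigr)$ is bounded. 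You invoke Lemma \ref{lem_geom} and the role of $c_*$, but that algebraic identity is what \emph{enables} the modulation; by itself it does not make the $\theta'/\sqrt{\mathfrak d}$ term subcritical, and it cannot be ``absorbed'' by Corollary \ref{cor:finer-trace} alone. Once the modulation is in place, your remaining steps (the $-k_0(t-T_-)$ repulsion beating the $O(\lambda_0^2)$ source, the Haraux functional for (iii), the bootstrap on $T_M$ for (i)) line up with the paper.
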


  The proof, which basically requires to show that $T_M=+\infty$, will be done in several steps. First, we deduce a local-in-time version of Assertion (ii), namely we prove the existence of a lower bound for the distance on the time interval $(0,T_M)$, see Proposition \ref{prop:uniform_distance}. This implies that either $T_M=+\infty$ or $\theta(T_M)=\theta_M$. In the second case, in Theorem \ref{cor_estimation1}, by exploiting a continuation argument, built on a local-in-time version of Assertion (iii), we show an absurd, so that actually $T_M=+\infty$. Assertion (i) is then a consequence of the alternative in Item (iii) of Thereom \ref{th:existence_weak_local} and the statements (ii)-(iii) eventually hold for all $t\ge 0$.

\par

With the study of the long time behavior in mind, the crucial novelty of this theorem is that the bound {\eqref{eq_minimaldistance}} is {global in time}.  This generality requires the smallness condition imposed on the Poiseuille flow and motivates a novel approach. The interested reader may note that integrating \eqref{eq:eq_estimate_dist} {below} without imposing that the coefficient multiplying $(t-T_-)$ is {negative} (namely, for an arbitrary Poiseuille flow) yields only a distance estimate in finite-time. In fact, our approach is flexible enough to extend the no-collision result in finite time known for a disk to the case of the ellipse, even in the absence of a restoring force. As already mentioned, when the solid is a disk, the symmetries play a fundamental role to rule out collisions. {Since our analysis includes the case of an ellipse, we will have to consider a non-symmetric gap and take into account that the curvature varies in time.}   

\begin{remark}
	The explicit forms of ${\mathfrak d}_{min}^0$, $\omega_0^-$ and $\beta_0$ will be given respectively in Proposition \ref{prop:uniform_distance} and 
	Theorem \ref{cor_estimation1}. We also comment on the physical meaning of the smallness assumption \eqref{condition_lambda0} of Theorem \ref{th:uniform_distance} in   Remark \ref{remark_viscosity2}.
\end{remark}

\subsection{Further remarks on the energy estimate and weak formulation}
Our reasoning will be based on two main ingredients: the energy estimates and the weak formulation of \eqref{eq:evolution_pb}.

\medskip

Concerning the energy estimate, we can use a finer version of \eqref{eq_Ekin} on $(T_-,T_+)$. {Indeed, taking advantage of the assumption that $\mathfrak{d}(t) < \eta_0$ on $(T_-,T_+),$  we extract that the potential energy of the ellipse is almost constant so that the dissipation essentially decreases the kinetic energy.} More precisely,  under this assumption, we may replace \eqref{eq_dissipationsolide} 
by its refined version from Corollary \ref{cor:finer-trace}, that is 
\[
|h'|^2 + |\theta'|^2 \leq {C_{geo}}  \sqrt{\eta_0} \|\nabla u\|^2_{L^2(A)}
\] 
where we have bounded $\mathfrak d(t)$ by $\eta_0$ and {$C_{geo}$} is a constant that depends only on {$e$ and $L$.} We then obtain the refined bound
\[
 \left| h'  H_h(h(t),\theta(t)) + \theta'  H_\theta(h(t),\theta(t)) \right|  \leq  \dfrac{1}{8} \int_{A} |\nabla u|^2+ C_{geo}{\sqrt{\eta_0}}
 \max_{s\in(0,t)}|\nabla H(h(s),\theta(s))|^2\,,
 \] 
and arguing as in the proof of Proposition \ref{prop_Ekin}, we infer that
 \[
 \begin{split}
E_{kin}(t_2) - E_{kin}(t_1) + & \dfrac{1}{8} \int_{t_1}^{t_2} \|\nabla u(\tau)\|^2_{L^2(A)}d\tau 
 \leq  {C_{geo}} \left( {\lambda^2_0} + \sqrt{\eta_0} S_M  \right)(t_2-t_1),
\end{split}
 \]
for all $T_- \leq t_1 \leq t_2 \leq T_+.$ 
Writing this estimate between $t_1 = T_-$ and $t=t_2 \leq T_+$, we infer 
\begin{equation} \label{eq_estimate_0}
\int_{T_-}^{t} \|\nabla u(\tau)\|^2_{L^2(A)}d\tau  \leq 8 E_{kin}^{M} + {C_{geo}} \left({\lambda_0^2}+{\sqrt{\eta_0}} S_M \right)(t-T_-) \, ,
\end{equation}
where we recall, from \eqref{Ekin_max} and the definition of $S_M$, that 
\begin{equation}\label{eq:E_kinM}
E_{kin}^{M}= E_{kin}(0) + {C_{dyn}}\left({\lambda^2_0} + S_M   \right)\le E_0+ {C_{dyn}}\left({\lambda_0^2} + S_M   \right)\,.
\end{equation}
In particular, we infer that 
\begin{equation} \label{eq_estimate_01}
 \int_{T_-}^{t} \|\nabla u(\tau)\|^2_{L^2(A)}d\tau  \leq 8E_0+C_{dyn}(\lambda_0^2+S_M) + {C_{geo}} \left({\lambda_0^2}+{\sqrt{\eta_0}} S_M\right)(t-T_-) \, .
\end{equation}

\black
\medskip

As for the weak formulation,  we recall that  $u \in     \mathcal{C}([0,T]; L^2(A))$ satisfies  \eqref{eq:weak_gen} for arbitrary compatible $(\phi,\ell,\alpha) \in H^1((T_-,T_+) \times A) \times [H^1((T_-,T_+))]^2.$  We point out that,  for $t \in [T_-,T_+]$, we have $h < 1/2$ since we may assume without loss of generality that $\eta_0 < 1/2.$ Hence,  the term $\widehat{f}[h]$ in \eqref{eq:weak_hom} vanishes on $[T_-,T_+]$ and the weak formulation in the form of \eqref{eq:weak_gen} reduces to:
\begin{equation} \label{eq:weak_gen2}
\begin{aligned}
&\int_{\Omega(t_2)} u(t_2) \cdot w(t_2)dx -  \int_{\Omega(t_1)} u(t_1) \cdot w(t_1)dx \\[4pt]
&
+ m \left(h'(t_2)\ell(t_2)-h'(t_1)\ell(t_1)\right) + {J} \left( \theta'(t_2) \alpha(t_2)-\theta'(t_1) \alpha(t_1)\right)\\
& -\int_{t_1}^{t_2} \left( \int_{\Omega(\tau)}{ \left( u \cdot \partial_{t} w + (u\cdot \nabla)\,{w} \cdot u\right)} \, dx + m\,{h'}\,{\ell'} -H_{h}(h,\theta) \, \ell + {J}\,{\theta'}\,{\alpha'}-H_{\theta}(h,\theta) \, \alpha \right) \, d\tau \\[2pt]
& + 2  \int_{t_1}^{t_2} \int_{\Omega(\tau)} D(u) : D(w) \, dxd\tau + \int_{t_1}^{t_2} \int_{\Omega(\tau)} \left( (u\cdot \nabla)\,s  +(s\cdot \nabla)\,u  \right) \cdot w \, dxd\tau \\ 
& =  \int_{t_1}^{t_2} \int_{\Omega(\tau)} \widehat{g} \cdot w \, dx d\tau   
 \end{aligned}
\end{equation}
for all $T_- \leq t_1 \leq t_2 \leq T_+.$ 
Below, we will use this weak formulation with compatible triplets $(w,\ell,\alpha)$ that satisfy further 
\[
(w,\ell,\alpha)  \in W^{1,\infty}((T_-,T_+)\times A){\times\left( W^{1,\infty}((T_-,T_+))\right)^2}.
\]
The above weak formulation implies then in particular that 
\begin{equation} \label{eq_Tw}
I_{w} \doteq t \mapsto \int_{\Omega(t)} u \cdot w \, dx + m h' \ell + J \theta' \alpha \in H^1([T_-,T_+]) \, ,
\end{equation}
with
\begin{equation} \label{eq_Tw'}
\begin{aligned}
I_{w}' (t)  
  = &    \int_{\Omega(t)} \left( u \cdot \partial_{t} w + (u\cdot \nabla)\,{w} \cdot u\right) dx + m\,{h'}\,{\ell'} -H_{h}(h,\theta) \, \ell + {J}\,{\theta'}\,{\alpha'}-H_{\theta}(h,\theta) \, \alpha  \\[6pt]
& - 2   \int_{A} D(u) : D(w) \, dx  -  \int_{\Omega(t)} \left( (u\cdot \nabla)\,s +(s\cdot \nabla)\,u  \right) \cdot w\, dx +  \int_{\Omega(t)} \widehat{g} \cdot w \, dx\,,
\end{aligned}
\end{equation}
 for a.e.  $t\in [T_-,T_+]$.\black

\subsection{Definition of a potential energy of contact}

As emphasized in \cite{GeVaHill} among others, controlling the distance between $B(t)$ and $\partial A$ should be based on exploiting the term
\[
2 \int_{A} D(u) : D(w)\, dx
\]
that enables to  approximate the repulsive force $F_{rep}$ exherted on $B(t)$ by the one induced by the Stokes problem. In case of a simple vertical translation, equivalent computations to the ones in \cite[Section 3.1]{GeVaHill1} (see in particular formula (3.7)) in the 2D case show that, this repulsive force is approximated by:
\[
F_{rep} \sim - {\mathfrak d}' \int_{\mathcal G_{\lambda_*}} |\partial_{22} \psi_{opt}^{\bot}|^2\, dx = - 12{\mathfrak d}'  \int_{-\lambda_*}^{\lambda_*} \dfrac{(\tau- c_*^{\bot})^2}{(\mathfrak d + \gamma[\theta](\tau) - \mathsf x_2[\theta])^3}\,{d}\tau.
\]
If the rotation is frozen, this quantity is related to the time-derivative of
\[
{P_c^0}(t) \doteq 6 \int_{-\lambda_*}^{\lambda_*} \dfrac{(\tau- c_*^{\bot})^2}{(\mathfrak d(t) + \gamma[\theta(t)](\tau) - \mathsf x_2[\theta(t)])^2}\, {d}\tau
\]
and we may expect to control the distance $\mathfrak d$ through ${{P_c^0}}$ (remember that it scales like $\mathfrak d^{-1/2}$ when $\mathfrak d << 1$). This will stand for our potential energy of contact. However, when rotation is allowed we get:
\begin{equation} \label{eq_H_0'}
\begin{aligned}
({P_c^0})'(t) = & - 12 {\mathfrak d}'(t)  \int_{-\lambda_*}^{\lambda_*} \dfrac{(\tau- c_*^{\bot})^2}{(\mathfrak d(t) + \gamma[\theta(t)](\tau) - \mathsf x_2[\theta(t)])^3}\, {d}\tau \\
&  +  12 \int_{-\lambda_*}^{\lambda_*} \dfrac{{({\mathfrak d}' (t) \partial_{\mathfrak d}{c}_*^{\bot} + \theta'(t) \partial_{\theta} {c}_*^{\bot})}  (\tau- c_*^{\bot})}{(\mathfrak d(t)  + \gamma[\theta(t)](\tau) - \mathsf x_2[\theta(t)])^2}{d}\tau   \\
&  - 12\theta'(t)  \int_{-\lambda_*}^{\lambda_*} \dfrac{(\tau- c_*^{\bot})^2 (\partial_\theta \gamma[\theta(t)](t) - \partial_{\theta}{\mathsf x}_2[\theta])}{(\mathfrak d(t) + \gamma[\theta(t)](\tau) - \mathsf x_2[\theta(t)])^3} \, {d}\tau\,,
\end{aligned}
\end{equation}
where the terms on the second and third lines are comparable to ${P_c^0}$ when $\mathfrak d << 1.$ In order to avoid the use of a Gr\"onwall argument that would induce a loss of control on the distance for large times, we propose here to use a modulation trick. We must also take into account the effect of fluid inertia. This motivates the following construction.
\black

\medskip

{Let $a(t)\in W^{1,\infty}((T_-,T_+))$ be a strictly positive amplitude function such that $a(T_-)=1$, and suppose that there are two constants $0 < \underline{a} \leq \overline{a}$ such that $\underline{a}\le a(t)\le \overline{a}$. We then define
\[
\begin{split}
P_c(t) \doteq \   6 a(t) \int_{-\lambda_*}^{\lambda_*} & \dfrac{(\tau -c_*^{\bot}) ^2}{(\mathfrak d(t) + \gamma[\theta(t)](\tau) - \mathsf x_2[\theta(t)])^2 } \, d\tau \\
& \quad  - \left( \int_{\Omega(t)} u(t,x) \cdot  a(t) \tilde{v}^{\bot}(t,x) \, dx +  m h'(t)  a(t)   \right),
\end{split}
\]
for $t \in [T_{-},T_{+}]$. The precise definition of the amplitude function $a(t)$ will be given later on. We first justify the construction of $P_c(t)$ and show that it enables to control the distance $\mathfrak d(t)$.}

\begin{lemma} \label{lem_H}
 We have $P_c \in W^{1,\infty}((T_-,T_+))$ and there is $C_{geo}^{(min)}>0$ and $C_{geo}^{(max)}>0$, depending only on $e$, such that 
 \[
\dfrac{C_{geo}^{(min)}}{\sqrt{\mathfrak d(t)}} - \sqrt{C_0^+}  
\leq \dfrac{P_c(t)}{a(t)}  \leq  \dfrac{C_{geo}^{(max)}}{\sqrt{\mathfrak d(t)}} 
+\sqrt{C_0^+},
\]
for all $t\in [T_-,T_+]$, where $C_0^+=C_{dyn}(E_0+\lambda_0^2+S_M)$ is an increasing function of $E_0$.
\end{lemma}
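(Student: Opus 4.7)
The plan is to split
\begin{equation*}
P_c(t) = a(t)\bigl[Q(t) - R(t)\bigr],\quad Q(t) \doteq 6 \int_{-\lambda_*}^{\lambda_*} \frac{(\tau - c_*^{\bot})^2}{(\mathfrak{d}(t) + \gamma[\theta(t)](\tau) - \mathsf{x}_2[\theta(t)])^2} \, d\tau,
\end{equation*}
\begin{equation*}
R(t) \doteq \int_{\Omega(t)} u \cdot \tilde v^{\bot}\, dx + m h'(t),
\end{equation*}
and to estimate each piece separately. A key preliminary observation is that $(\tilde v^{\bot}, 1, 0)$ is an admissible test triplet in $\mathcal V(B(t))$: by the construction \eqref{eq:tilde_psi}, the streamfunction on $B$ reads $\tilde{\psi}^{\bot} = x_1 - \mathsf{x}_1 - c_*^{\bot}$, so $\tilde v^{\bot}|_B = \widehat e_2$ matches the rigid motion with $\ell = 1$, $\alpha = 0$. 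This identifies $R(t) = I_{\tilde v^{\bot}}(t)$ in the notation of \eqref{eq_Tw}. Combined with the fact that $a \in W^{1,\infty}$, that $(h,\theta) \in W^{1,\infty}$, that the parametric dependencies $\mathfrak{d}(t) \mapsto (\gamma[\theta],\mathsf{x}_2[\theta],c_*^{\bot})$ are smooth, and that $\mathfrak{d}(t) > 0$ on $[T_-,T_+]$ (no collision before $T_M$), the formulas \eqref{eq_Tw}--\eqref{eq_Tw'} deliver the asserted time regularity of $P_c$.

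The upper bound on $R$ is a direct Cauchy--Schwarz: from \eqref{stime_test} one has $\|\tilde v^{\bot}\|_{L^2(A)} \leq C_{geo}$, and the a priori kinetic bound \eqref{Ekin_max} gives $\|u(t)\|_{L^2(A)}^2 + m|h'(t)|^2 \leq 2 E_{kin}^{M}$. Therefore
\begin{equation*}
|R(t)| \leq \|u\|_{L^2(A)} \|\tilde v^{\bot}\|_{L^2(A)} + m|h'(t)| \leq C_{dyn} \sqrt{E_{kin}^{M}} \leq \sqrt{C_0^+},
\end{equation*}
with $C_0^+ = C_{dyn}(E_0 + \lambda_0^2 + S_M)$, as required.

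For the two-sided bound on $Q$, I would invoke the inequalities \eqref{constants},
\begin{equation*}
\mathfrak{d}(t) + c_1^{(2)} \tau^2 \,\leq\, \mathfrak{d}(t) + \gamma[\theta(t)](\tau) - \mathsf{x}_2[\theta(t)] \,\leq\, \mathfrak{d}(t) + c_2^{(2)} \tau^2,
\end{equation*}
together with the asymptotic information on $c_*^{\bot}$ provided by Lemma \ref{lem_c*} (which ensures that $c_*^{\bot}$ is of order $\mathfrak d(t)$, hence negligible relative to the essential scale $\sqrt{\mathfrak d(t)}$ of the integration variable $\tau$ in the singular regime). The problem then reduces to explicitly evaluating one-dimensional integrals of the type $\int_{-\lambda_*}^{\lambda_*} \tau^2/(\mathfrak d + c\tau^2)^2\, d\tau$; the change of variables $\tau = \sqrt{\mathfrak{d}}\,\sigma$ reveals their $\mathfrak{d}^{-1/2}$-asymptotics with a positive, strictly bounded multiplicative constant. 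Uniformity in $\theta$ on the range $|\theta|\leq\theta_M$ (on which $c_1^{(2)},c_2^{(2)}$ are positive and bounded) delivers purely geometric constants $C_{geo}^{(\min)}, C_{geo}^{(\max)}$ with $C_{geo}^{(\min)}/\sqrt{\mathfrak d(t)} \leq Q(t) \leq C_{geo}^{(\max)}/\sqrt{\mathfrak d(t)}$, and combining with the bound on $R$ yields the sandwich for $P_c/a$.

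The main delicate point is controlling the effect of the optimal constant $c_*^{\bot}$ in the numerator $(\tau - c_*^{\bot})^2$, whose contribution must be shown to stay subleading with respect to the singular factor $\mathfrak d^{-1/2}$; once the Appendix asymptotics for $c_*^{\bot}$ are available, the remaining estimate reduces to an elementary computation and the lemma follows.
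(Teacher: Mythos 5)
Your proof is correct and takes essentially the same approach as the paper: both split $P_c/a$ into the singular one-dimensional integral and the $L^2$-pairing/kinetic piece, bound the latter by Cauchy--Schwarz via \eqref{stime_test} and the kinetic energy estimate, and bound the former by extracting the $\mathfrak d^{-1/2}$ scaling uniformly in $\theta$ (the paper packages your by-hand change of variables $\tau=\sqrt{\mathfrak d}\,\sigma$ into Lemma \ref{lemma_Nico} and identifies the limiting constant as $I_{2,2}(\kappa_2)$, while you derive it directly from \eqref{constants}; both rely on Lemma \ref{lem_c*} to control $c_*^{\bot}$). The time-regularity claim is handled in the same spirit too — the paper states it as an immediate consequence of the $W^{1,\infty}$ regularity of $(h,\theta)$ and smoothness of the geometry, and your observation that the second term is the functional $I_{\tilde v^\bot}$ of \eqref{eq_Tw} with the $W^{1,\infty}$-admissible test triplet $(\tilde v^\bot,1,0)$ is a legitimate way to make the same point (keeping in mind that on the compact interval $[T_-,T_+]$ the distance $\mathfrak d(t)$ is bounded away from zero, so the a priori singular norms of $\tilde v^\bot$ are in fact finite and the $H^1$ formula \eqref{eq_Tw'} upgrades to $L^\infty$).
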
  
\begin{proof}
Time regularity of $P_c$ is a straightforward consequence ot the time regularity of $h,\theta$ and the smoothness of $\partial B.$
To obtain the lower and upper estimates, we bound independently the two terms in $P_c(t)$. We start observing that 
		$$
		\begin{aligned}
		\int_{-\lambda_*}^{\lambda_*} \dfrac{ (\tau -c_*^{\bot}) ^2}{(\mathfrak d(t) + \gamma[\theta(t)](\tau) - \mathsf x_2[\theta(t)])^2 } \, d\tau\  = \ &\int_{-\lambda_*}^{\lambda_*} \dfrac{ \tau^2}{(\mathfrak d(t) + \gamma[\theta(t)](\tau) - \mathsf x_2[\theta(t)])^2 } \, d\tau\\ & -	\int_{-\lambda_*}^{\lambda_*} \dfrac{ 2\tau c_*^{\bot}}{(\mathfrak d(t) + \gamma[\theta(t)](\tau) - \mathsf x_2[\theta(t)])^2 } \, d\tau\\&+\int_{-\lambda_*}^{\lambda_*} \dfrac{(c_*^{\bot})^2}{(\mathfrak d(t) + \gamma[\theta(t)](\tau) - \mathsf x_2[\theta(t)])^2 } \, d\tau\,.
		\end{aligned}
		$$
A combination of Lemmas \ref{lemma_Nico}-\ref{lem_c*} in Appendix \ref{app_asymptotics} thus yields that, since $\mathfrak{d}(t) < \eta_0$ there exists a constant $C_{geo}$ depending only on $e$ for which
\[
\left|	\int_{-\lambda_*}^{\lambda_*} \dfrac{(\tau -c_*^{\bot}) ^2}{(\mathfrak d(t) + \gamma[\theta(t)](\tau) - \mathsf x_2[\theta(t)])^2 } \, d\tau - \dfrac{I_{2,2}(\kappa_2[\theta])}{\sqrt{\mathfrak d}} \right| \leq  C_{geo}\,,
\]
	where  the symbol  $I_{2,2}(\kappa_2)$ stands for the integral 
\[
I_{2,2}(\kappa_2) \dot{=}   \int_{\mathbb R} \dfrac{\tau^2}{(1 + \kappa_2 \tau^2)^2}\, {d}\tau.
\]
We remark that $I_{2,2}$ is a smooth and bounded function of $ \kappa_2\in [\kappa_2^{min},\kappa_2^{max}]$.  	
Concerning the term
$$\int_{\Omega(t)} u(t,x) \cdot  a(t) \tilde{v}^{\bot}(t,x) \, dx +  m h'(t)  a(t),$$
we use Cauchy–Schwarz inequality together with the energy bound \eqref{eq_Ekin}, \eqref{stime_test} and \eqref{eq:E_kinM}  for $t\in~[T_{-}, T_{+}]$. 
\end{proof}

We proceed now with the computation of the time-derivative of $P_c$. To this purpose we recall some definitions of the previous section.
We set $\tilde{v}^{\bot}(t,x) = \nabla^{\bot} \tilde{\psi}^\bot$ the approximation, at time $t \in [T_-,T_+]$, of the Stokes solution $v^{\bot}$ given in \eqref{eq_stokesprofile} with $v_{*}=v_{*}^{\bot}(x) = \widehat{e}_2$. We briefly recall the construction: the geometry of the gap and the distance $\mathfrak d(t)$ are determined at every time $t \in [T_-,T_+]$ by the values of $(h(t),\theta(t))$, then 
$\tilde\psi^\perp$ is defined by \eqref{eq:tilde_psi}  with 
$\psi_{*}^{\bot}(x)=x_1-{\sf x}_1$ and $\psi^\perp_{opt}$ defined in $\eqref{psising}_1$. 
We have similar constructions for the other solid motions yielding the vector-fields $\tilde{v}^{||}$ and $\tilde{v}^{\circlearrowleft}$.
In particular, we set, for all $t\in [T_{-}, T_{+}]$ and for all $x\in A$: 
\begin{equation}\label{vtilde_new}
\tilde{v}(x,t) \doteq  {\mathfrak d}'(t) \tilde{v}^{\bot}(x,t) - \theta'(t) \mathsf{x}_2(t) \tilde{v}^{||}(t) + \theta'(t) \tilde{v}^{\circlearrowleft}(x,t)\,.
\end{equation}
We observe then that $\tilde{v}=u$ on $B(t)$ for all $t \in [T_-,T_+]$. With these notations at hand, we have the following lemma which states the proper way to compute the time-derivative of $P_c$.
\begin{lemma}\label{lem:H'} 
 The derivative ${P_c}'$ can be decomposed as
\begin{equation} \label{eq_Hdot}
{P_c}'(t) =   H_{h}(h(t),\theta(t))a(t)+    {\rm Mod}(t) + a(t)  {\rm Rem}_1(t) + {\rm Rem}_2(t)\,,
\end{equation}
where  ${\rm Rem}_1(t) = {\rm Rem}_{1}^{(a)}(t) + {\rm Rem}_{1}^{(b)}(t)$ and there is a constant $C_{geo}$ (depending only on $e$) for which 
\begin{align} 
\hspace{-8mm}\left|{\rm Rem}^{(a)}_1(t) \right. - &\left.\int_{\Omega(t)} \nabla \tilde{v}^{\bot} : \nabla (u - \tilde{v})\,dx  \right| \leq  C_{geo} \left(\dfrac{|{\mathfrak{d}}'(t)|} {\sqrt{\mathfrak d}(t)} + {|\theta'(t)|} \right)   \,, \label{eq_Rem1a} \\[6pt]  
\hspace{-8mm}{\rm Rem}_{1}^{(b)} (t)= & - {\theta'(t)}\int_{\Omega(t)} 
\bigg[2 \partial_{12} \tilde{\psi}^{\bot}  \left(
\partial_{12} \tilde{\psi}^{\circlearrowleft}  -  \mathsf x_2 \partial_{12} \tilde{\psi}^{||}  \right)
\\[6pt]
&\hspace{8mm}+  
  \partial_{11} \tilde{\psi}^{\bot}\! \left( \partial_{11} \tilde{\psi}^{\circlearrowleft}  - 
 \mathsf x_2 \partial_{11} \tilde{\psi}^{||} \right)
  \bigg]\,dx, \label{eq_Rem1b}
 \end{align}
\begin{align}\label{eq:Mod}   
{\rm Mod}(t)  = &\  6a'(t)  \int_{-\lambda_*}^{\lambda_*} \dfrac{ (\tau -c_*^{\bot}) ^2}{(\mathfrak d(t) + \gamma[\theta(t)](\tau))^2 } d \tau \\[6pt]& + a(t)\theta'(t)  \bigg(  
 \int_{\Omega(t)}  \partial_{22} \tilde{\psi}^{\bot}  \left(\partial_{22}\tilde{\psi}^{\circlearrowleft}   - 
\mathsf x_2 [\theta(t)] \partial_{22} \tilde{\psi}^{||}  
 \right) dx \\[6pt]&
 \hspace{20mm}- 12 \int_{-\lambda_*}^{\lambda_*} \dfrac{ \left( \partial_{\theta}\gamma[\theta](\tau)  - \partial_{\theta} \mathsf x_2[\theta(t)]\right) (\tau-c_*^{\bot})^2}{(\mathfrak d(t) + \gamma[\theta(t)](\tau) - \mathsf x_2)^3} {d}\tau  
\bigg)\,, \\ 
{\rm Rem}_2(t) = 
&  -   \int_{\Omega(t)} [\partial_{t} (a(t)\tilde{v}^{\bot}(t)) + (u(t) \cdot \nabla) (a(t)\tilde{v}^{\bot}(t))] \cdot u(t) \, dx  - m h'(t) {a}'(t)   \\[6pt]
& +   \int_{\Omega(t)}\!\! \left((u(t)\cdot \nabla)\,s  +(s\cdot \nabla)\,u(t) \right) \cdot a(t) \tilde{v}^{\bot} (t) \, dx \!   
- \!\!\int_{\Omega(t)}\!\! \widehat{g} \cdot a(t) \tilde{v}^{\bot}(t)  \, dx. \label{eq:Rem2}
\end{align}
\end{lemma}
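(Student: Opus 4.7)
The approach is to write $P_c = Q - I_w$, where
\[
Q(t) \doteq 6a(t)\int_{-\lambda_*}^{\lambda_*}\frac{(\tau-c_*^\bot)^2}{(\mathfrak{d}(t)+\gamma[\theta(t)](\tau)-\mathsf{x}_2[\theta(t)])^2}\,d\tau,\quad I_w(t)\doteq \int_{\Omega(t)}u\cdot a\tilde v^\bot\,dx + m h'(t)a(t),
\]
and then differentiate each piece separately. The second piece is the functional \eqref{eq_Tw} evaluated at the test triplet $(w,\ell,\alpha) = (a(t)\tilde v^\bot,a(t),0)$, which is admissible since the construction \eqref{eq:tilde_psi} with $\psi_*^\bot(x) = x_1 - \mathsf{x}_1$ forces $\tilde v^\bot|_{B(t)} = \widehat e_2$. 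Hence $I_w \in W^{1,\infty}(T_-,T_+)$ and its derivative is read off from \eqref{eq_Tw'}: all terms except $-H_h(h,\theta)a$ and the viscous contribution $-2\int_{\Omega(t)} D(u):D(a\tilde v^\bot)\,dx$ assemble precisely into $\mathrm{Rem}_2(t)$ as written in \eqref{eq:Rem2}, so that
\[
-I_w'(t) = H_h(h,\theta)\,a + 2\int_{\Omega(t)} D(u):D(a\tilde v^\bot)\,dx + \mathrm{Rem}_2(t).
\]

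For the viscous contribution I apply the Korn identity \eqref{eq:splitting} together with $\nabla\tilde v^\bot\equiv 0$ in $B(t)$ to get $2\int_\Omega D(u):D(a\tilde v^\bot)\,dx = a(t)\int_\Omega \nabla\tilde v^\bot:\nabla u\,dx$. Since $\tilde v|_{B(t)} = u|_{B(t)}$ (a direct consequence of $\mathfrak{d}'(t) = h'(t) + \theta'(t)\mathsf{x}_1[\theta(t)]$, itself obtained from Lemma \ref{lem_geom}(i)), I then split
\[
a\int_{\Omega(t)}\nabla\tilde v^\bot:\nabla u\,dx = a\int_{\Omega(t)}\nabla\tilde v^\bot:\nabla(u-\tilde v)\,dx + a\int_{\Omega(t)}\nabla\tilde v^\bot:\nabla\tilde v\,dx.
\]
The first summand defines $\mathrm{Rem}_1^{(a)}(t)$ up to the lower-order corrections accounted for below. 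For the second, I plug in the decomposition \eqref{vtilde_new} of $\tilde v$ and use the streamfunction expansion $\nabla v:\nabla w = \partial_{11}\psi\,\partial_{11}\varphi + 2\,\partial_{12}\psi\,\partial_{12}\varphi + \partial_{22}\psi\,\partial_{22}\varphi$. Three groups of contributions emerge: (i) the pure $\mathfrak{d}'$-piece $a\mathfrak{d}'\int_\Omega|\nabla\tilde v^\bot|^2dx$, whose dominant $|\partial_{22}\tilde\psi^\bot|^2$-part in the gap equals, by \eqref{eq:intpsiopt}, the quantity $12a\mathfrak{d}'\int_{-\lambda_*}^{\lambda_*}(\tau-c_*^\bot)^2/(\mathfrak{d}+\gamma-\mathsf{x}_2)^3\,d\tau$ up to a bounded-in-$\mathfrak{d}$ error; (ii) the $\partial_{22}$ cross-term in $\theta'$, which matches the first integral in the bracket of \eqref{eq:Mod}; (iii) the $\partial_{11},\partial_{12}$ cross-terms in $\theta'$, which assemble into $\mathrm{Rem}_1^{(b)}$ exactly as in \eqref{eq_Rem1b}.

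It then remains to compute $Q'(t)$ by Leibniz: the amplitude yields the first line of \eqref{eq:Mod}; differentiating the denominator through $\mathfrak{d}'$ yields $-12a\mathfrak{d}'\int(\tau-c_*^\bot)^2/(\mathfrak{d}+\gamma-\mathsf{x}_2)^3d\tau$, which cancels the leading $\partial_{22}$-piece of (i); differentiating through $\theta'$ in $\gamma-\mathsf{x}_2$ produces the second integral in the bracket of \eqref{eq:Mod}; and the terms arising from $\mathfrak{d}'\partial_{\mathfrak{d}}c_*^\bot$ and $\theta'\partial_\theta c_*^\bot$ are of lower order. The main obstacle is the quantitative bookkeeping that justifies the bound \eqref{eq_Rem1a}: I must show that the residual from the main cancellation in (i), together with the $c_*^\bot$-derivative terms, remains controlled by $C_{geo}(|\mathfrak{d}'|/\sqrt{\mathfrak{d}}+|\theta'|)$. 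This is where the fine asymptotics of Appendix \ref{app_asymptotics} (Lemmas \ref{lemma_Nico}--\ref{lem_c*}) for the singular integrals, together with the optimality condition \eqref{eq_d222psinul} defining $c_*^\bot$, are essential: the latter kills the otherwise logarithmically divergent cross-integrals between $\partial_{22}\tilde\psi^\bot$ and the $\tau$-odd pieces of the boundary data on $\partial B(t)$, while the former handle the matching of singular profiles; the estimates \eqref{stime_test} take care of all contributions coming from outside the gap, so that every leftover term fits into the required bound on $\mathrm{Rem}_1^{(a)}$.
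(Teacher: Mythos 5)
Your proposal reconstructs the paper's argument faithfully, merely reordering the bookkeeping: where the paper first differentiates $a P_c^0$, rewrites $D_1$ via the Korn/streamfunction identities, and invokes the weak formulation at the end, you start from $I_w'$ in \eqref{eq_Tw'} with the test $(a\tilde v^{\bot},a,0)$ and then match it against $Q'$. The key ingredients are identical: the identity $\tilde v = u$ on $B(t)$ (which, you correctly note, rests on $\mathfrak d' = h' + \theta' \mathsf x_1$; this comes from $\partial_\theta\mathsf x_2 = \mathsf x_1$ via \eqref{eq:x1x2}, not really from Lemma~\ref{lem_geom}(i)), the split of $a\int\nabla\tilde v^{\bot}:\nabla u$ via \eqref{vtilde_new}, the cancellation of the leading $\mathfrak d'$-singularity between $Q'$ and the viscous term, and the sorting of the $\theta'$ cross-terms into ${\rm Mod}$ ($\partial_{22}$ piece) and ${\rm Rem}_1^{(b)}$ ($\partial_{11},\partial_{12}$ pieces).

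Two statements in your commentary are inaccurate, though they do not undermine the structure. First, the residual from replacing $\int_\Omega|\nabla\tilde v^\perp|^2$ by $\int_{\mathcal G_{\lambda_*}}|\partial_{22}\psi_{opt}^\perp|^2$ is \emph{not} bounded in $\mathfrak d$: by \eqref{stime_test2} it is $\mathcal O(\mathfrak d^{-1/2})$, and only after multiplication by $\mathfrak d'$ does it become the $C_{geo}|\mathfrak d'|/\sqrt{\mathfrak d}$ contribution (this is the paper's $L_1$). You do give the correct final bound, so this is a slip of phrasing rather than a gap. Second, the optimality condition \eqref{eq_d222psinul} does not ``kill'' logarithmically divergent cross-integrals between $\partial_{22}\tilde\psi^\perp$ and $\tau$-odd boundary data; its role here is to make $c_*^\perp = \mathcal O(\mathfrak d)$ (Lemma~\ref{lem_c*}) with controlled derivatives \eqref{eq_cdot}, which bounds the $D_2$-type piece of $Q'$ that enters ${\rm Rem}_1^{(a)}$. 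The logarithmic factors actually do appear, in ${\rm Rem}_1^{(b)}$, and are absorbed later (Lemma~\ref{lem_Rem}) via the refined trace estimate of Corollary~\ref{cor:finer-trace}, not by the choice of $c_*^\perp$.
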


\par Observe that we have organized the terms in \eqref{eq_Hdot} in such a way that ${\rm Rem}_2(t)$ includes all the other terms (not included so far) but the viscous ones in the weak formulation  \eqref{eq_Tw'} with $w= a(t) \tilde{v}^{\bot}$ (we point out that the rigid velocities associated with $w$ are then $\ell(t) = a(t)$ while $\alpha(t) =0$), whereas ${\rm Mod}(t)$ incorporates the terms that will be balanced with the variations of $a(t)$. As it will be clear from the proof of Lemma \ref{lem_Mod}, indeed, ${\rm Mod}(t)$ behaves as a factor of $1/\sqrt{\mathfrak{d}}$, thus in order to obtain a uniform bound for $P_c$ we need to ``neutralize" its behavior with respect to the distance parameter $\mathfrak{d}$ by means of the amplitude function $a$. 

\begin{proof}[Proof of Lemma \ref{lem:H'}]
We first remind the time-derivative of ${P_c^0}:$
\begin{align*}
{\dfrac{d}{dt}} \Bigl[\int_{-\lambda_*}^{\lambda_*} \dfrac{ 6 (\tau- c_*^{\bot})^2}{(\mathfrak d + \gamma[\theta](\tau) - \mathsf x_2[\theta])^2 } \, d\tau \Bigr]
 = & -12  \int_{-\lambda_*}^{\lambda_*} \dfrac{{\mathfrak d}' (\tau- c_*^{\bot})^2}{(\mathfrak d + \gamma[\theta](\tau) - \mathsf x_2[\theta])^3} {d}\tau  \\[4pt]
&  +  12 \int_{-\lambda_*}^{\lambda_*} \dfrac{({\mathfrak d}'  \partial_{\mathfrak d}{c}_*^{\bot} + \theta' \partial_{\theta} {c}_*^{\bot})  (\tau- c_*^{\bot})}{(\mathfrak d  + \gamma[\theta](\tau) - \mathsf x_2[\theta])^2}{d}\tau   \\[4pt]
& - 12 \int_{-\lambda_*}^{\lambda_*} \dfrac{\theta' \left( \partial_{\theta}\gamma[\theta](\tau)  - \partial_{\theta} \mathsf x_2[\theta]\right)(\tau- c_*^{\bot})^2}{(\mathfrak d + \gamma[\theta](\tau) - \mathsf x_2[\theta])^3} { d}\tau \, .
\end{align*}
We denote the three integrals appearing on the right-hand side by ${\rm D}_1(t),{\rm D}_2(t),{\rm D}_3(t)$, respectively. The last ${\rm D}_3$ will be incorporated in the ${\rm Mod}$ term.  Thanks to the Lemmas \ref{lemma_Nico}- \ref{lem_c*} and estimate \eqref{eq_cdot} we can bound
\begin{equation} \label{eq_D2}
|{\rm D}_2(t)| \leq C_{geo}\left(\dfrac{|{\mathfrak{d}}'(t)|} {\sqrt{\mathfrak d(t)}} + {|\theta'(t)|}\right).
\end{equation}

 \medskip
 
We next consider ${\rm D}_1(t)$. We aim to relate it to the second term in the expression of $P_c.$ To rephrase it, we remark that by \eqref{eq:intpsiopt}, we have
\[
\begin{aligned}
\int_{\mathcal G_{\lambda_*}} |\partial_{22} \psi_{opt}^{\bot} |^2 \, dx
=  12  \int_{-\lambda_*}^{\lambda_*} \dfrac{(\tau- c_*^{\bot})^2}{(\mathfrak d + \gamma[\theta](\tau) - \mathsf x_2[\theta])^3} \, {d}\tau.
\end{aligned}
\]
Conversely,  {from the notational conventions above},  we infer 
\[
\begin{aligned}
{\rm D}_1(t)   = \ & - {\mathfrak{d}}'(t) \int_{\mathcal G_{\lambda_*}} |\partial_{22} \psi_{opt}^{\bot}  |^2\, dx\\[6pt]
 = \ &{- {\mathfrak{d}}'(t)\int_{\mathcal G_{\lambda_*}} \nabla \tilde{v}^{\bot} : \nabla \tilde{v}^{\bot}\, dx}{+{\mathfrak{d}}'(t)\int_{\mathcal G_{\lambda_*}}\bigg(2|\partial_{12}\psi_{opt}^{\bot} |^2+|\partial_{11}\psi_{opt}^{\bot}|^2\bigg)\, dx}  
\\[6pt] = \ & { - {\mathfrak{d}}'(t)\int_{\Omega(t)} \nabla \tilde{v}^{\bot} : \nabla \tilde{v}^{\bot}\, dx+ {\mathfrak{d}}'(t)\int_{\Omega(t)\setminus {\mathcal{G}_{\lambda_*}}} \nabla \tilde{v}^{\bot} : \nabla \tilde{v}^{\bot}\, dx} 
\\[6pt]&
+{\mathfrak{d}}'(t)\int_{\mathcal G_{\lambda_*}}\left(2|\partial_{12}\psi_{opt}^{\bot} |^2+|\partial_{11}\psi_{opt}^{\bot}|^2\right)\, dx
\\[6pt] 
 = \ & - \int_{\Omega(t)} \nabla \tilde{v}^{\bot} : \nabla u \, dx + \int_{\Omega(t)} \nabla \tilde{v}^{\bot} :  \nabla ( u - {\mathfrak{d}}'(t) \tilde{v}^{\bot} ) \, dx  + {\rm L}_1(t)
\end{aligned}
\]
where 
\[
{\rm  L}_1(t) =    {\mathfrak{d}}'(t)\int_{\Omega(t)\setminus {\mathcal{G}_{\lambda_*}}} \nabla \tilde{v}^{\bot} : \nabla \tilde{v}^{\bot} \, dx +{\mathfrak{d}}'(t)\int_{\mathcal G_{\lambda_*}}\left(2|\partial_{12}\psi_{opt}^{\bot} |^2+|\partial_{11}\psi_{opt}^{\bot}|^2\right)\, dx.
\]
Using that the singularity of $\tilde{v}^{\bot}$ is concentrated in $\mathcal G_{\lambda_*}$, the estimates in \eqref{stime_test}, where all but vertical second order derivatives are subcritical, and \eqref{stime_test2}, we infer that
\begin{equation} \label{eq_L1}
|{\rm L_1}(t)| \leq   C_{geo} |{\mathfrak d}'(t)|  \left(\dfrac{1}{\sqrt{\mathfrak d(t)}} \right)\,.
\end{equation}

To rewrite the second-term integral in the last expression for ${\rm D}_1(t)$, we {recall that $u= \tilde{v}$ on $\partial B(t).$}
Correspondingly, we use \eqref{vtilde_new} and we split
\[
u - {\mathfrak{d}'} \tilde{v}^{\bot}  = u - \tilde{v} + \theta'\, \mathsf x_2 [\theta] \tilde{v}^{||} - \theta' \tilde{v}^{\circlearrowleft}\,.
\]
We then decompose ${\rm D}_1(t)$ as
\[
\begin{aligned}
{\rm D}_1(t) =&    - \int_{\Omega(t)}  \nabla \tilde{v}^{\bot} : \nabla u  \, dx+ \int_{\Omega(t)} \nabla \tilde{v}^{\bot} :  \nabla ( u -  \tilde{v} )  \, dx  + {{\rm L}_{1}(t) } \\
&  - {\theta'}(t) \int_{\Omega(t)}  \partial_{22} \tilde{\psi}^{\bot}  \left(\partial_{22}\tilde{\psi}^{\circlearrowleft}   - 
 \mathsf x_2[\theta] \partial_{22} \psi_{opt}^{||}  
  \right)\, dx \\
  & - {\theta'}(t)\int_{\Omega(t)} 
\bigg[2 \partial_{12} \tilde{\psi}^{\bot}  \left(
\partial_{12}\tilde{\psi}^{\circlearrowleft}  -  \mathsf x_2[\theta] \partial_{12} \tilde{\psi}^{||}  \right) +  
  \partial_{11} \tilde{\psi}^{\bot}\! \left( \partial_{11} \tilde{\psi}^{\circlearrowleft}  - 
 \mathsf x_2[\theta] \partial_{11} \tilde{\psi}^{||} \right)
  \bigg]\,{ d}x.
  \end{aligned}
\]
Eventually, we introduce 
\[
\begin{aligned}
{\rm Rem}_{1}^{(a)}(t)&  =  \int_{\Omega(t)} \nabla \tilde{v}^{\bot} :  \nabla ( u -  \tilde{v})  \, dx  + {{\rm L}_{1}(t) } +{\rm D}_2(t)  \\
{\rm Rem_{1}^{(b)}} (t)& =\! - {\theta'(t)}\int_{\Omega(t)} \!\!
\bigg[2 \partial_{12} \tilde{\psi}^{\bot}  \left(
\partial_{12} \tilde{\psi}^{\circlearrowleft}  \!-\!  \mathsf x_2[\theta] \partial_{12} \tilde{\psi}^{||}  \right) \!+\!  
  \partial_{11} \tilde{\psi}^{\bot}\! \left( \partial_{11} \tilde{\psi}^{\circlearrowleft} \! -\! 
 \mathsf x_2[\theta] \partial_{11} \tilde{\psi}^{||} \right)
  \bigg]\,dx.
\end{aligned}
\]
Recalling \eqref{eq_D2} and \eqref{eq_L1},  we have the expected estimate for ${\rm Rem}_1^{(a)}.$ Applying a Korn equality to handle the first term of ${\rm D}_1(t)$ and
setting  ${\rm Rem}_1 = {\rm Rem}_1^{(a)} + {\rm Rem}_1^{(b)}$ we have then:
\begin{multline*}
{\rm D}_1(t)+{\rm D}_2(t) + {\rm D}_3(t)   =   - 2 \int_{A}  D(\tilde{v}^{\bot}) : D(u) \, dx + {\rm Rem}_{1}(t)  \\
+ \theta'(t)  \bigg(  
 \int_{\Omega(t)}  \partial_{22} \tilde{\psi}^{\bot}  \left(\partial_{22}\tilde{\psi}^{\circlearrowleft}   - 
\mathsf x_2 [\theta(t)] \partial_{22} \tilde{\psi}^{||}  
 \right) dx \\[6pt]
- 12 \int_{-\lambda_*}^{\lambda_*} \dfrac{ \left( \partial_{\theta}\gamma[\theta](\tau)  - \partial_{\theta} \mathsf x_2[\theta]\right) (\tau-c_*^{\bot})^2}{(\mathfrak d(t) + \gamma[\theta(t)](\tau) - \mathsf x_2[\theta(t)])^3} {d}\tau  
\bigg)\,.
\end{multline*}
Recalling {\rm Mod} as defined in \eqref{eq:Mod}, we have by a simple combination:
\[
\left(a {P_c^0}\right)'(t) = -2  a \int_{A}  D(\tilde{v}^{\bot}) : D(u) \, dx +  a {\rm Rem}_{1}(t)  +  {\rm Mod}(t)\,.
\]
We recognize in the first term on the right-hand side the dissipation involved in the weak-formulation \eqref{eq_Tw'} 
with $w=a \tilde{v}^{\bot}. $ Since $a \tilde{v}^{\bot} \in H^1((T_-,T_+) \times A),$ with $a \tilde{v}^{\bot} = a \widehat{e}_2$ on $B(t)$ we replace:
\begin{multline*}
-2  a \int_{A}  D(\tilde{v}^{\bot}) : D(u) \, dx=  \frac{{d}}{{d}t} \left[\int_{\Omega(\cdot)} u \cdot a \tilde{v}^{\bot} \, dx+ m h' a \right]  + H_{h}(h,\theta) \, a   \\
\begin{aligned}
 & -  \left( \int_{\Omega(t)} \left( u \cdot \partial_{t} (a \tilde{v}^{\bot}) + (u\cdot \nabla)\,(a \tilde{v}^{\bot}) \cdot u\right) \, dx + m\,{h'}\,{a'}  \right) \\
& +  \int_{\Omega(t)} \left( (u\cdot \nabla)\,s \cdot w +(s\cdot \nabla)\,u \cdot (a \tilde{v}^{\bot}) \right) \, dx   - \int_{\Omega(t)} \widehat{g} \cdot (a \tilde{v}^{\bot})\, dx\,.
\end{aligned}
\end{multline*}
Gathering the terms on the second and third lines into the remainder ${\rm Rem}_2$ and remarking that
\[
P_c =  a {P_c^0} - \left( \int_{\Omega(t)} u \cdot a \tilde{v}^{\bot}\, dx + m h' a\right),
\] 
we obtain finally
\[
{P_c}'(t) =   H_{h}(h(t),\theta(t))a(t)+    {\rm Mod}(t) + a(t)  {\rm Rem}_1(t) + {\rm Rem}_2(t)\,,
\]
which is the expected identity.
\end{proof}

\subsection{Lower bound on the distance}
Combining Lemma \ref{lem:H'}  with Lemma \ref{lem_H}, a lower bound on $\sqrt{\mathfrak d(t)}$ then derives from two more lemmas providing estimates on each of the pieces in the decomposition of ${P_c}'$. 
\begin{lemma} \label{lem_Mod}
We can find two constants $0 < \underline{a} \leq \overline{a},$ and another constant ${C_{geo}}$ all three depending only on $e$
such that
\begin{itemize}
\item[(a)]  $ \underline{a} \leq a(t) \leq \overline{a}$ for all $ t\in [T_-,T_+],$\\[-8pt]
\item[(b)]    $|{a}'(t)| \leq  C_{geo} \overline{a}|\theta'(t)|$ for all $t \in (T_-,T_+)\,.$ \\[-10pt]
\end{itemize}
 Furthermore, there holds
\[
 \int_{T_-}^{t} |{\rm Mod}(\tau)|d\tau  \leq C_{geo}  \sqrt{t-T_-} \left( \int_{T_-}^t \|\nabla u(\tau)\|^2_{L^2(A)}d\tau \right)^{\frac 12}.
\]

\end{lemma}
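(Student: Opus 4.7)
The strategy is to select $a(t)$ so that the leading $\mathfrak d^{-1/2}$ singularity of ${\rm Mod}(t)$ cancels exactly, leaving a remainder controlled by $|\theta'(t)|$. The starting point is an asymptotic analysis, in the spirit of Lemmas \ref{lemma_Nico} and \ref{lem_c*} of the Appendix, of each of the three pieces of ${\rm Mod}$ as $\mathfrak d \to 0$. The first integral behaves like $\mathcal{I}_2(\kappa_2[\theta])/\sqrt{\mathfrak d} + O(1)$ for a smooth function $\mathcal{I}_2$ that is bounded away from zero on $[\kappa_2^{min},\kappa_2^{max}]$; the combination of the second ($\Omega(t)$-integral) and third integrals contributes $\mathcal{I}_3(\kappa_2[\theta])/\sqrt{\mathfrak d} + O(1)$ for some smooth bounded function $\mathcal{I}_3$, where the $O(1)$ terms are controlled by the non-singular parts of the Stokes profiles (cf.\ \eqref{stime_test}) and by the expansions \eqref{eq_expansiongamma}--\eqref{eq_expansiongamma_der} of $\gamma[\theta]$. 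The essential structural input is Lemma \ref{lem_geom}(ii)-(iii), which ensures that the coefficient $\mathcal{I}_3$ depends on $\theta$ only through $\kappa_2[\theta]$ in a way that makes the ratio $\mathcal{I}_3/\mathcal{I}_2$ a smooth bounded function of $\theta$.

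Guided by this expansion, I define
\[
a(t) \doteq \exp\bigl(\Phi(\theta(T_-)) - \Phi(\theta(t))\bigr), \qquad \Phi(\theta) \doteq \frac{1}{6}\int_{0}^{\theta} \frac{\mathcal{I}_3(\kappa_2[\xi])}{\mathcal{I}_2(\kappa_2[\xi])}\, d\xi,
\]
so that $a(T_-) = 1$ and direct differentiation produces the exact cancellation
\[
6\, a'(t)\, \mathcal{I}_2(\kappa_2[\theta(t)]) + a(t)\,\theta'(t)\, \mathcal{I}_3(\kappa_2[\theta(t)]) = 0.
\]
Since $|\theta(t)| \le \theta_M$ on $[T_-, T_+]$ by the very definition of $T_M$ in \eqref{T_M} and $\Phi$ is continuous, one obtains constants $0 < \underline{a} \le \overline{a}$, depending only on $e$ and $\theta_M$, verifying assertion (a). Differentiating further yields $a'(t) = -\Phi'(\theta(t))\theta'(t) a(t)$ with $\Phi'$ uniformly bounded on $[-\theta_M,\theta_M]$, so (b) follows with $C_{geo} = \sup_{|\theta| \le \theta_M} |\Phi'(\theta)|$.

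Once the cancellation is performed, ${\rm Mod}(t)$ reduces to $O(1)$ subcritical remainders, all bounded by $C_{geo}|\theta'(t)|$ after collecting terms and using (b). The claimed estimate then follows from the Cauchy--Schwarz inequality combined with the trace inequality \eqref{eq_dissipationsolide}, which implies $|\theta'(\tau)|^2 \le C_{geo} \|\nabla u(\tau)\|_{L^2(A)}^2$:
\[
\int_{T_-}^t |{\rm Mod}(\tau)|\, d\tau \le C_{geo} \int_{T_-}^t |\theta'(\tau)|\, d\tau \le C_{geo} \sqrt{t-T_-} \left( \int_{T_-}^t \|\nabla u(\tau)\|_{L^2(A)}^2\, d\tau \right)^{1/2}.
\]
The main technical obstacle is identifying the precise form of the leading singularity, in particular showing that the $\Omega(t)$-integral (the term cross-pairing $\partial_{22}\tilde\psi^{\bot}$ with $\partial_{22}\tilde\psi^{\circlearrowleft} - \mathsf x_2 \partial_{22}\tilde\psi^{||}$) combines with the third integral into a single $\mathcal{I}_3(\kappa_2[\theta])/\sqrt{\mathfrak d}$ contribution depending on $\theta$ only through $\kappa_2[\theta]$; this relies essentially on the relations \eqref{eq_K_2etY} and \eqref{eq_kappa_2} between $\mathsf x_2$, $\kappa_2$ and $\kappa_3$ furnished by Lemma \ref{lem_geom}, together with the parity information about $c_*^{\bot}$ supplied by Lemma \ref{lem_c*}.
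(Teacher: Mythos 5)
Your construction of $a$ and the overall cancellation scheme match the paper's, and the final Cauchy--Schwarz/trace estimate is correct. However, there is a genuine gap in the proof of assertion (a). You define $\Phi(\theta)=\frac{1}{6}\int_0^\theta \mathcal{I}_3(\kappa_2[\xi])/\mathcal{I}_2(\kappa_2[\xi])\,d\xi$ and bound $\Phi$ by invoking $|\theta(t)|\le\theta_M$ and the continuity of $\Phi$; this yields $\underline{a},\overline{a}$ depending on $\theta_M$, hence on the initial energy $E_0$ through the coercivity of $H$. But the lemma claims these constants depend \emph{only} on $e$, and this sharper uniformity is what is actually used downstream (e.g.\ when $\underline{a}$ enters $C_{dyn}^-$ in Proposition \ref{prop:uniform_distance}). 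Boundedness of the \emph{integrand} $\mathcal{I}_3/\mathcal{I}_2$ does not by itself prevent $\Phi$ from growing linearly in $\theta$.

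The missing point, which is also where your description of $\mathcal{I}_3$ is imprecise, is that the coefficient of $a\theta'/\sqrt{\mathfrak d}$ in ${\rm Mod}$ is not a function of $\kappa_2[\theta]$ alone but has the form $\partial_\theta\kappa_2[\theta]\cdot F(\kappa_2[\theta])$: the explicit contribution $-12\,\partial_\theta\kappa_2\, I_{4,3}(\kappa_2)$ carries $\partial_\theta\kappa_2$ manifestly, and the contribution $\kappa_3 I(\kappa_2)$ inherits the same factor \emph{via} Lemma \ref{lem_geom}(iii), since $\kappa_3=\mathcal K_3'(\kappa_2)\partial_\theta\kappa_2$. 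Consequently $\Phi'(\theta)=\frac{d}{d\theta}\,\mathcal G(\kappa_2[\theta])$ for a $\mathcal{C}^1$ function $\mathcal G$ on $[\kappa_2^{min},\kappa_2^{max}]$, so $a(t)=\exp\bigl(\mathcal G(\kappa_2[\theta(T_-)])-\mathcal G(\kappa_2[\theta(t)])\bigr)$. Since $\theta\mapsto\kappa_2[\theta]$ is periodic with range $[\kappa_2^{min},\kappa_2^{max}]$ determined by $e$, the bounds $\underline{a},\overline{a}$ follow from continuity of $\mathcal G$ on a compact interval, uniformly in $\theta\in\mathbb R$ and independently of $\theta_M$. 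Without isolating this $\partial_\theta\kappa_2$ factor and performing the change of variables $\theta\mapsto\kappa_2$, you cannot obtain the claimed dependency of the constants.
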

\begin{lemma} \label{lem_Rem}
{
There exists a constant $C_{dyn}$ that depends on $m,J,L,e$ such that, for all $t < T_+$,   we have:
\begin{multline*}
\int_{T_-}^{t} \big(  a(\tau) |{\rm Rem}_1(\tau)| + |{\rm Rem}_2(\tau)| \big) d\tau \\
\leq C_{dyn} \left(   \sqrt{t-T_-} \left( \int_{T_-}^t\|\nabla u(\tau)\|^2_{L^2(A)}d\tau    \right)^{\frac 12} +   \int_{T_-}^t\|\nabla u(\tau)\|^2_{L^2(A)}d\tau  + \lambda_0^2 (t-T_-)\right)  . 
\end{multline*}
}
\end{lemma}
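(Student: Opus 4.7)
The strategy is to bound each of the pieces of ${\rm Rem}_1 = {\rm Rem}_1^{(a)} + {\rm Rem}_1^{(b)}$ and ${\rm Rem}_2$ separately and then sum the contributions. The main tools are: (i) the refined trace bounds of Corollary~\ref{cor:finer-trace}, which yield
\[
\frac{|\mathfrak d'(\tau)|}{\sqrt{\mathfrak d(\tau)}} + |h'(\tau)| + |\theta'(\tau)| \leq C_{geo}\,\mathfrak d(\tau)^{1/4}\|\nabla u(\tau)\|_{L^2(A)} \leq C_{geo}\,\eta_0^{1/4}\|\nabla u(\tau)\|_{L^2(A)};
\]
(ii) the uniform $L^2$ and $C^2_b$ size estimates on $\tilde v^\bot,\tilde v^{||},\tilde v^\circlearrowleft$ from \eqref{stime_test}; (iii) the bounds $\underline a\le a\le \overline a$ and $|a'|\le C_{geo}\overline a|\theta'|$ of Lemma~\ref{lem_Mod}; (iv) the control $\|s\|_{W^{1,\infty}(A)}+\|\widehat g\|_{L^2(A)}\le C_{geo}\lambda_0(1+\lambda_0)$ of Lemma~\ref{lemma:extension}.

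\textbf{Step 1 (subcritical piece of $\rm Rem_1^{(a)}$).} By \eqref{eq_Rem1a} the ``easy'' part is controlled pointwise by $C_{geo}(|\mathfrak d'|/\sqrt{\mathfrak d}+|\theta'|)\le C_{geo}\eta_0^{1/4}\|\nabla u\|_{L^2}$, whose time integral is bounded by $C_{dyn}\sqrt{t-T_-}(\int_{T_-}^t\|\nabla u\|^2)^{1/2}$ via Cauchy--Schwarz. \textbf{Step 2 (main singular integral).} Controlling $\int_{\Omega(t)}\nabla\tilde v^\bot:\nabla(u-\tilde v) dx$ is the heart of the argument. A naive Cauchy--Schwarz fails because $\|\nabla\tilde v^\bot\|_{L^2(A)}^2\sim\mathfrak d^{-3/2}$ is not time-integrable. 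The key observation is that $u-\tilde v$ vanishes on the full boundary $\partial B(t)\cup\partial A$: this follows from the construction \eqref{vtilde_new} of $\tilde v$ combined with the identity $\mathfrak d'=h'+\mathsf x_1\theta'$ (obtained from $\partial_\theta\mathsf x_2=\mathsf x_1$). Writing $\tilde v^\bot=\nabla^\perp\tilde\psi^\bot$, the dominant contribution $\int\partial_{22}\tilde\psi^\bot\,\partial_2(u-\tilde v)_1 dx$ in the gap $\mathcal G_{\lambda_*}$ is moved by one further vertical integration by parts to $-\int\partial_{222}\tilde\psi^\bot\cdot(u-\tilde v)_1 dx$, the boundary terms vanishing thanks to the matching of $\tilde v$ and $u$ on $\partial B$ and the no-slip on $\partial A$. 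A vertical Hardy--Poincaré bound applied to $(u-\tilde v)_1$ yields a factor of the local gap height $(\mathfrak d+\kappa_2\tau^2)$, which precisely compensates the extra singularity. Away from $\mathcal G_{\lambda_*}$ the $C^2_b$-bound of $\tilde v^\bot$ gives the trivial estimate. Altogether one obtains
\[
\Big|\int_{\Omega(t)}\nabla\tilde v^\bot:\nabla(u-\tilde v) dx\Big|\le C_{geo}\|\nabla u\|_{L^2(A)}^2+C_{geo}\eta_0^{1/4}\|\nabla u\|_{L^2(A)},
\]
whose time integral matches two of the allowed terms on the right-hand side.

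\textbf{Step 3 ($\rm Rem_1^{(b)}$).} The integrands in \eqref{eq_Rem1b} involve only the subcritical mixed derivatives $\partial_{11}\tilde\psi^\bullet$ and $\partial_{12}\tilde\psi^\bullet$; moreover the leading singular contributions in the gap cancel by the parity symmetry $\tau\mapsto-\tau$ of the $\gamma[\theta]$-expansion, since $\tilde\psi^\bot$ is odd (modulo the shift $c_*^\bot$) while $\tilde\psi^{||}$ and $\tilde\psi^\circlearrowleft$ are even at leading order. The resulting $L^1(\mathcal G_{\lambda_*})$-norms are thus uniformly bounded, and multiplication by $|\theta'|\le C_{geo}\eta_0^{1/4}\|\nabla u\|_{L^2}$ gives after time integration a contribution of the form $\sqrt{t-T_-}(\int\|\nabla u\|^2)^{1/2}$. \textbf{Step 4 ($\rm Rem_2$).} The four terms of \eqref{eq:Rem2} are treated separately: the decomposition $\partial_t(a\tilde v^\bot)=a'\tilde v^\bot+a(\mathfrak d'\partial_\mathfrak d\tilde v^\bot+\theta'\partial_\theta\tilde v^\bot)$ combined with Corollary~\ref{cor:finer-trace} and the uniform bounds on $\tilde v^\bot,\partial_\bullet\tilde v^\bot$ controls the first term by $C\|\nabla u\|_{L^2}\|u\|_{L^2}\le C\|\nabla u\|^2$; the convection $(u\cdot\nabla)(a\tilde v^\bot)\cdot u$ is handled with Ladyzhenskaya's $L^4$-inequality and Poincaré by $C\|\nabla u\|^2$; $mh'a'$ yields $C|\theta'|^2\le C\|\nabla u\|^2$; finally the Poiseuille source terms involving $s$ and $\widehat g$ are estimated via $\|s\|_{W^{1,\infty}},\|\widehat g\|_{L^2}\le C_{geo}\lambda_0(1+\lambda_0)$ and produce, after Young's inequality absorbing a fraction of $\|\nabla u\|_{L^2}^2$, the term $\lambda_0^2(t-T_-)$. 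Summing all contributions yields the claim with a constant $C_{dyn}$ depending on $m,J,L,e$.

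The main obstacle is indeed Step~2: the naive bound $\|\nabla\tilde v^\bot\|_{L^2}^2\sim\mathfrak d^{-3/2}$ is not time-integrable and the trace bounds $|\mathfrak d'|,|\theta'|\lesssim\mathfrak d^{3/4},\mathfrak d^{1/4}$ are not by themselves enough to compensate; the essential new ingredient is the vanishing of $u-\tilde v$ on the \emph{entire} boundary of the gap (which requires the precise choice of $\tilde v$ and the identity $\mathfrak d'=h'+\mathsf x_1\theta'$), coupled with a Hardy--Poincaré inequality across the thin gap that converts the singular weight into an integrable quantity.
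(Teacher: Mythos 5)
Your overall decomposition --- the subcritical pieces of ${\rm Rem}_1^{(a)}$ bounded pointwise by $C_{geo}(|\mathfrak d'|/\sqrt{\mathfrak d}+|\theta'|)$, the main singular integral $\int_\Omega\nabla\tilde v^\bot:\nabla(u-\tilde v)$, the term ${\rm Rem}_1^{(b)}$, and ${\rm Rem}_2$ in four pieces --- matches the structure of the paper's argument, and your Steps 1, 3 and 4 are essentially the paper's (for ${\rm Rem}_1^{(b)}$ the paper actually only establishes the logarithmic bound $C_{geo}|\theta'|(|\ln\mathfrak d|+1)$, not uniform $L^1$-boundedness, but this is harmless since $|\theta'|\lesssim\mathfrak d^{1/4}\|\nabla u\|_{L^2}$). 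The genuine gap is in Step 2. After the vertical integration by parts the dominant term is $\int_{\mathcal G_{\lambda_*}} d_{222}(\tau)(u-\tilde v)_1\,dx$ with $d_{222}(\tau)\doteq\partial_{222}\psi^\bot_{opt}$ a function of $x_1$ alone. Cauchy--Schwarz in $x_2$ plus the one-dimensional Poincar\'e/Hardy inequality gives $|\int_{-L}^{h+\gamma}(u-\tilde v)_1\,dx_2|\le C(\mathfrak d+\kappa_2\tau^2)^{3/2}\|\partial_2(u-\tilde v)_1(\tau,\cdot)\|_{L^2(dx_2)}$, and the further Cauchy--Schwarz in $\tau$ yields $\bigl(\int_{-\lambda_*}^{\lambda_*}|d_{222}|^2(\mathfrak d+\gamma-\mathsf x_2)^3\,d\tau\bigr)^{1/2}\|\nabla(u-\tilde v)\|_{L^2}$. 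Since $|d_{222}|^2(\mathfrak d+\gamma-\mathsf x_2)^3=144(\tau-c_*^\bot)^2/(\mathfrak d+\gamma-\mathsf x_2)^3$ integrates to $\mathcal O(\mathfrak d^{-3/2})$, this is $\sim\mathfrak d^{-3/4}\|\nabla u\|_{L^2}$: precisely the same singular order as the naive Cauchy--Schwarz on $\partial_{22}\tilde\psi^\bot\,\partial_2(u-\tilde v)_1$. Your integration by parts and Hardy--Poincar\'e gain nothing and the estimate is still not time-integrable.

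The ingredient you are missing is the mean-zero property of the singular profile encoded in the optimality condition \eqref{eq_d222psinul}, $\int_{-\lambda_*}^{\lambda_*}d_{222}(\tau)\,d\tau=0$, which is exactly what the optimal choice of $c_*^\bot$ buys. The paper exploits it through the pressure of Appendix~\ref{app_P2}: the zero mean of $d_{222}$ allows one to extend $\tilde q$ by zero outside $\mathcal G_{\lambda_*}$, so that $-\Delta\tilde v^\bot+\nabla\tilde q$ retains only the subcritical derivatives $\partial_{111}\psi_{opt}$ and $\partial_{112}\psi_{opt}$; testing against the divergence-free, zero-trace field $u-\tilde v$ kills the pressure, and the residual is treated with the $P_2$ operator, which yields \eqref{eq_stokes_optim} and hence $|\int_\Omega\nabla\tilde v^\bot:\nabla(u-\tilde v)\,dx|\le C_{geo}(\|\nabla u\|_{L^2}+\|\nabla\tilde v\|_{L^2})\le C_{geo}\|\nabla u\|_{L^2}$ with no singular factor. (More in the spirit of your direct integration by parts: $u-\tilde v$ admits a stream function that is constant on $\partial A$ and constant on $\partial B$, so the vertical flux $\int_{-L}^{h+\gamma[\theta](\tau)}(u-\tilde v)_1\,dx_2$ is in fact independent of $\tau$, and the whole term equals that constant times $\int d_{222}\,d\tau=0$.) Without invoking this cancellation, your Step 2 loses a factor of $\mathfrak d^{-3/4}$ and the proof does not close.
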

The proofs of Lemma \ref{lem_Mod} and Lemma \ref{lem_Rem} are rather technical and postponed to the very end of the section in order to keep the focus on the estimate of $\sqrt{\mathfrak d(t)}$.
As a first step towards the proof of the lower bound stated in Assertion (ii) of Theorem \ref{th:uniform_distance}, we establish a lower bound restricted to the (possibly bounded) time interval $(0,T_M)$.
\begin{proposition}\label{prop:uniform_distance}
Assume $\lambda_0 \leq \lambda_{0}^{(0)}$. There exist 
 $\lambda_{0}^{(1)}>0$ depending on $m, {J}, e,L$ such that
 \begin{equation}\label{eq_minimaldistance-local}
  {\lambda_0} \le \lambda_0^{(1)} {k_0}\Rightarrow  {\rm dist}(B(t), \partial A) \geq {\mathfrak d}_{min}(E_0,k_0) \qquad \forall t \in(0,T_M)\,,
 \end{equation}
 where $ {\mathfrak d}_{min}$ is a monotone increasing function of $k_0$ and decreasing in $E_0$.
\end{proposition}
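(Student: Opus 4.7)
The strategy is to show that the potential energy of contact $P_c$ remains bounded from above on every time interval where the distance $\mathfrak d(t)$ is smaller than $\eta_0$. Combined with the two-sided bound of Lemma \ref{lem_H}, this will translate into the desired lower bound on $\mathfrak d(t)$. By the symmetry discussion at the beginning of Section \ref{sec:distance2}, it suffices to consider contacts with the bottom wall $x_2=-L$.

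First, I would argue that it is enough to work on a maximal time interval $(T_-,T_+)\subset(0,T_M)$ on which $h(t)<0$ and $\mathfrak d(t)<\eta_0$, with $\mathfrak d(T_-)=\eta_0$ (or $T_-=0$, after shrinking $\eta_0$ if necessary). Outside of such intervals the bound $\mathfrak d(t)\ge\eta_0$ is trivial, so I would eventually take $\mathfrak d_{min}\le\eta_0$. On the interval $(T_-,T_+)$, the crucial input from \eqref{eq_lbF} is that $H_h(h(t),\theta(t))\le -k_0<0$, where $k_0=\tfrac{\bar\varpi\bar\varrho(L-1-\eta_0)}{2}$.

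Integrating the identity of Lemma \ref{lem:H'} from $T_-$ to $t\in(T_-,T_+)$ and using $\underline a\le a(t)\le\overline a$ from Lemma \ref{lem_Mod}(a), I would get
\[
P_c(t)-P_c(T_-) \;\le\; -k_0\underline a\,(t-T_-) + \int_{T_-}^{t}\!\bigl(|\mathrm{Mod}(\tau)|+a(\tau)|\mathrm{Rem}_1(\tau)|+|\mathrm{Rem}_2(\tau)|\bigr)d\tau.
\]
Then Lemmas \ref{lem_Mod}, \ref{lem_Rem} bound the remainder integral by
\[
C_{dyn}\Bigl(\sqrt{t-T_-}\,\|\nabla u\|_{L^2((T_-,t);L^2)} + \|\nabla u\|^2_{L^2((T_-,t);L^2)} + \lambda_0^2(t-T_-)\Bigr).
\]
Inserting the refined energy estimate \eqref{eq_estimate_01}, writing $A\doteq 8E_0+C_{dyn}(\lambda_0^2+S_M)$ and $B\doteq C_{geo}(\lambda_0^2+\sqrt{\eta_0}\,S_M)$, and splitting $\sqrt{t-T_-}(A+B(t-T_-))^{1/2}\le\sqrt{A(t-T_-)}+\sqrt{B}\,(t-T_-)$, I would reach, with $s\doteq t-T_-$,
\[
P_c(t)-P_c(T_-) \;\le\; C_{dyn}A + C_{dyn}\sqrt{A}\,\sqrt{s}\;-\;\bigl[k_0\underline a - C_{dyn}(\sqrt{B}+B+\lambda_0^2)\bigr]\,s.
\]

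The key step is to absorb the linear-in-$s$ positive contributions into the dissipative coefficient $k_0\underline a$. Choosing first $\eta_0$ small (depending only on $k_0,\bar\varrho,S_M$ and the geometric constants) so that $C_{dyn}\sqrt{\sqrt{\eta_0}S_M}\le k_0\underline a/4$, and then imposing a smallness condition $\lambda_0\le\lambda_0^{(1)}k_0$ with $\lambda_0^{(1)}$ depending on $m,J,e,L$, I would ensure that $k_0\underline a - C_{dyn}(\sqrt{B}+B+\lambda_0^2)\ge k_0\underline a/2$. The map $s\mapsto C_{dyn}\sqrt{A}\sqrt{s}-(k_0\underline a/2)\,s$ then attains its maximum $C_{dyn}^2 A/(2k_0\underline a)$, yielding
\[
P_c(t)\;\le\;P_c(T_-) + C_{dyn}A + \frac{C_{dyn}^2 A}{2k_0\underline a}\;\le\;\overline a\Bigl(\frac{C_{geo}^{(max)}}{\sqrt{\eta_0}}+\sqrt{C_0^+}\Bigr)+\frac{C_{dyn}\,A}{k_0},
\]
where I used the upper bound of Lemma \ref{lem_H} at $t=T_-$ with $\mathfrak d(T_-)=\eta_0$. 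This upper bound for $P_c(t)$ is independent of $t$ and depends monotonically on $E_0$ and $1/k_0$.

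Finally, inverting the lower bound of Lemma \ref{lem_H}, namely $C_{geo}^{(min)}/\sqrt{\mathfrak d(t)}\le P_c(t)/a(t)+\sqrt{C_0^+}$, and using $a(t)\ge\underline a$, I would obtain
\[
\sqrt{\mathfrak d(t)}\;\ge\;\frac{C_{geo}^{(min)}\,\underline a}{\;P_c(t)+\underline a\sqrt{C_0^+}\;}\;\doteq\;\sqrt{\mathfrak d_{min}(E_0,k_0)}>0,
\]
with $\mathfrak d_{min}$ increasing in $k_0$ and decreasing in $E_0$ as claimed. Since this estimate holds on any sub-interval where $\mathfrak d<\eta_0$, and trivially elsewhere (upon taking $\mathfrak d_{min}\le\eta_0$), the bound \eqref{eq_minimaldistance-local} follows on all of $(0,T_M)$.

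The main obstacle, as indicated above, lies in the algebraic juggling of step four: the $s$-linear coefficient mixes three genuinely different sources of error (Poiseuille forcing through $\lambda_0^2$, fluid inertia through $B$ after inserting the energy estimate, and the square-root-in-$s$ term through AM--GM), and making it strictly smaller than $k_0\underline a$ is what forces both the smallness of $\eta_0$ and the scaling $\lambda_0\le\lambda_0^{(1)}k_0$. Every other step is then straightforward bookkeeping.
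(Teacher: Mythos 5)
Your argument reproduces the paper's proof in both strategy and detail: integrate the decomposition of $P_c'$ from Lemma \ref{lem:H'}, control $a$ via Lemma \ref{lem_Mod}, bound the remainder integrals via Lemmas \ref{lem_Mod} and \ref{lem_Rem}, insert the refined dissipation estimate \eqref{eq_estimate_01}, split off the $\sqrt{s}$-term by $\sqrt{A+Bs}\le\sqrt A+\sqrt{Bs}$, absorb the $s$-linear contributions into $-k_0\underline a\,s$ by choosing $\eta_0$ and then $\lambda_0\lesssim k_0$, maximize the concave function in $s$, and invert the two-sided bound of Lemma \ref{lem_H}. This is exactly the paper's route, and your identification of the "algebraic juggling" as the only delicate step matches where the paper puts its smallness conditions. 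The only blemish is in the very last display: $\sqrt{\mathfrak d_{min}}$ should be defined using the uniform upper bound on $P_c$ obtained two lines earlier rather than $P_c(t)$ itself (which is not yet known to be bounded at that point of the sentence); this is a notational slip, not a gap, since you have just proved such a $t$-independent upper bound.
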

\begin{proof}
Injecting Lemma \ref{lem_Mod} and Lemma \ref{lem_Rem} into the integration of \eqref{eq_Hdot} and recalling \eqref{eq_lbF} and \eqref{eq_estimate_01},  we infer that
\[
\begin{aligned}
P_c(t) &\leq P_c(T_-)\! +\!  C_{dyn}\left(E_0+\lambda_0^2+S_M\right)^{1/2}\!\!\sqrt{t \!-\! T_-}\!+\!C_{dyn}\left(E_0+\lambda_0^2+S_M\right) +C_{dyn}^-(t\!-\!T_-),
\end{aligned}
\]
where: 
\[
C^{-}_{dyn}=C_{dyn}\left(\sqrt{\eta_0}S_M+\lambda_0^2\right)^{\tfrac{1}{2}}+{C_{dyn}}\left(\sqrt{\eta_0}S_M+2\lambda_0^2\right)- \underline{a} k_0\,.
\]
We now choose $\eta_0,\lambda_0$ such that $\sqrt{\eta_0}S_M$ and $\lambda_0^2$ are small enough compared to $k_0^2$ so that 
$$C_{dyn}^-\le -\frac{\underline{a} k_0}{2}<0.$$ 
The previous estimate then yields that
\begin{equation}\label{eq:encoreune}
 P_c(t) \leq P_c(T_-) + C_{dyn}\left(E_0+\lambda_0^2+S_M\right)\left(1+\dfrac{1}{k_0}\right) \quad \forall \, t \in  [T_-,T_+]. 
\end{equation}
Moreover,  Lemma \ref{lem_H} implies 
\begin{equation}\label{eq:encoreuneautre}
P_c(T_-)  \leq  \dfrac{C_{geo}^{(max)} }{\sqrt{\eta_0}} +  C_{dyn}\left(E_0+\lambda_0^2+S_M\right)^{1/2}\,.
\end{equation}
Using Lemma \ref{lem_H} again and the choice of $\eta_0$, we eventually conclude that 
\begin{equation} \label{eq:eq_estimate_dist}
\begin{aligned}
\underline{a}\dfrac{ C_{geo}^{(min)}}{\sqrt{\mathfrak d(t)}}  
\leq  C_{dyn} +  C_0^+\left( 1+\dfrac{1}{k_0}+\dfrac1{k_0^2}\right)
\end{aligned}
\end{equation}
as long as  $t \in [T_-,T_+].$ Hence, the conclusion follows. 
\end{proof}
In the sequel, given $E_0$, we simply denote ${\mathfrak d}_{min}(E_0,k_0)$ by ${\mathfrak d}_{min}^0$.
\begin{remark}
It is important to emphasize that the lower bound found in \eqref{eq:eq_estimate_dist} does not depend on the length of the time interval $(T_-,T_+)$ so that it does not depend on the exact value of the time $T_M$. In particular, it becomes uniform in time when $T_M=+\infty$. Observe also that the elastic force $H_h$ brings the critical contribution in $C_{dyn}^-$ that provides the global control in time of the distance to the boundary. In absence of a vertical restoring force, we still have a bound which degenerates when $|T_+-T_-|\to\infty$ (and is henceforth valid only in finite time). 
\end{remark}

\begin{remark}\label{remark_viscosity}
The threshold for $\lambda_0$ in Proposition \ref{prop:uniform_distance} does basically depend on $k_0$ only as seen from the expression of $C^-_{dyn}$. In particular, $\lambda_0^{(1)}$ depends only on the solid description through $C_{dyn}$ and is independent of $\rho$ and $\mu^2$. Going back to the physical variables, this means the threshold is a smallness condition of the pressure drop $\mathcal P_0$ compared to the stiffness of the elastic force $F_h$.
Observe that the choice $\eta_0$ is also independent of $\rho$ and $\mu^2$ but it does depend decreasingly on $E_0$. It then follows from \eqref{eq:encoreune}-\eqref{eq:encoreuneautre} that the estimate of the minimal distance does not improve for large viscosity.  
\end{remark}

\black
\medskip

In order to complete the proof of Theorem \ref{th:uniform_distance}, and now that we have bounded $\mathfrak d(t)$ as long as $|\theta|<\theta_M$, it is enough to show that indeed $\theta$ never approaches $\theta_M$. This is the objective of the next subsection.

\subsection{Uniform bound on $|\theta|$ and conclusion of the proof of Theorem \ref{th:uniform_distance}} 
 We assume that $\lambda_0 \le  \lambda_{0}^{(1)}k_0$, so that \eqref{E_kin^M}-\eqref{eq_minimaldistance-local} apply to any solution $(u,h,\theta)$ to \eqref{eq:hom_pb1}-\eqref{eq:ODE_newref1} on $(0,T_M)$. Given an initial energy $E_0$, in view of \eqref{coercivity-bis},  $\theta_M$ is fixed by $\alpha_0$ and  so are the constants $S_M$ and $\frak{d}^0_{min}$. 
As already said, we aim to prove that $T_M=+\infty$, which then proves Assertion (ii) of Theorem \ref{th:uniform_distance}. This will be a consequence of the next theorem, which in fact contains Assertion (iii). 
\begin{theorem} \label{cor_estimation1}
Assume  $\lambda_0 \leq \min(\lambda_{0}^{(0)},\lambda_0^{(1)}{k_0})$. 
Let $\omega=\omega_0^-(E_0)$, $\sigma\in(0,1)$ and $C_{geo}>0$ be given by Proposition \ref{thm_dissipation 1}.
If 
\begin{equation}\label{jesperequecestladerniere}
2C_{geo}\lambda^2_0\le \sigma\omega\bar \varrho, 
\end{equation}
then	
$$
|\theta(t)|\le \alpha_0 \qquad\forall t\ge 0\,,
$$
and there holds
	\begin{equation}\label{uniform_estimate_en}
		E_{tot}(t)\le   3E_0\,e^{-\sigma\omega t}+  \frac{2C_{geo}\lambda_0^2}{\sigma\omega}\qquad\forall t\ge 0.
	\end{equation}
\end{theorem}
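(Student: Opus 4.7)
The strategy is a continuation argument aiming to rule out $T_M<+\infty$; once $T_M=+\infty$ is established, both conclusions of the theorem follow at once from the local-in-time estimate already available.

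First, I would combine the smallness assumption $\lambda_0\le \lambda_0^{(1)}k_0$ with Proposition \ref{prop:uniform_distance} to get $\mathrm{dist}(B(t),\partial A)\ge \mathfrak d_{min}^0>0$ on $(0,T_M)$. By the very definition \eqref{T_M} of $T_M$, if $T_M<+\infty$ the only remaining way it can be reached is $|\theta(T_M)|=\theta_M=\alpha_0+1$. Next, since $|\theta(t)|<\theta_M$ on $(0,T_M)$, Proposition \ref{thm_dissipation 1} applies on that whole interval with constants $\sigma$, $\omega=\omega_0^-(E_0)$, $C_{geo}$ depending only on $E_0$. Integrating the dissipation inequality it provides (a Gr\"onwall argument with forcing $C_{geo}\lambda_0^2$) yields the local-in-time bound
$$
E_{tot}(t) \;\le\; 3 E_0\, e^{-\sigma \omega t} \,+\, \frac{2\, C_{geo}\,\lambda_0^2}{\sigma\omega}, \qquad t\in[0,T_M).
$$

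Then I would feed the assumption $2C_{geo}\lambda_0^2\le \sigma\omega\bar\varrho$ into this bound to obtain $E_{tot}(t)\le 3E_0+\bar\varrho$ on $[0,T_M)$. Since the potential $H(h(t),\theta(t))$ is nonnegative and controlled by $E_{tot}(t)$, the coercivity consequence \eqref{coercivity-bis} of \eqref{hp_H} gives $|\theta(t)|\le \alpha_0$ on $[0,T_M)$. By continuity of $\theta$ this extends up to $t=T_M$, contradicting $|\theta(T_M)|=\alpha_0+1$. Hence $T_M=+\infty$, which simultaneously delivers $|\theta(t)|\le\alpha_0$ for all $t\ge 0$ and the exponential decay \eqref{uniform_estimate_en}.

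The main obstacle I anticipate is to make sure that the constants $\sigma,\omega,C_{geo}$ coming from Proposition \ref{thm_dissipation 1} are genuinely uniform on the whole open interval $(0,T_M)$ and depend only on $E_0$, not on $T_M$ itself. This uniformity is precisely what the two preceding ingredients secure: the distance estimate of Proposition \ref{prop:uniform_distance} prevents the lubrication-type quantities entering the perturbed Haraux-type energy from degenerating, while the a priori cap $|\theta|<\theta_M$ built into the definition of $T_M$ controls the nonlinear moduli $\bar\varpi,\bar\varrho$ and the maximum $S_M$ in \eqref{F_M} in terms of $E_0$ alone. Once these uniformities are in place, the bootstrap closes cleanly.
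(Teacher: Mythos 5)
Your argument is essentially the paper's own proof: contradiction from $T_M<\infty$ forcing $\theta(T_M)=\theta_M$, application of Proposition \ref{thm_dissipation 1} and Gr\"onwall to get the exponential bound on $[0,T_M)$, then the smallness assumption combined with the coercivity consequence \eqref{coercivity-bis} caps $|\theta|$ at $\alpha_0=\theta_M-1$ by continuity, yielding the contradiction and hence $T_M=+\infty$. The only cosmetic difference is that you state the $E_{tot}$ decay directly rather than first bounding $E_\omega$ and then invoking the equivalence from Lemma \ref{prop_boundsE}, and you cite \eqref{coercivity-bis} as a consequence of \eqref{hp_H} whereas it in fact comes from the separate coercivity hypothesis \eqref{coercivity}; neither affects correctness.
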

\begin{remark}\label{remark_viscosity2}
Theorem \ref{cor_estimation1} requires three smallness condition on $\lambda_0$. 
The first one is used to get the basic energy estimate of Theorem \ref{th:existence_weak_local}, see Remark \ref{rem-small}.	As emphasized in Remark \ref{remark_viscosity}, the second one boils down to a comparison between the pressure drop and the stiffness of the elastic force $F_h$. The third one, namely \eqref{jesperequecestladerniere}, is in fact of the same type as the first two. Indeed, it will be seen that $\omega$ has to satisfy two inequalities, see \eqref{etmerdecetaitpasladerniere}.  If we fix all the physical parameters but $\rho$ and $\mu$, the smallness condition \eqref{jesperequecestladerniere} means $\lambda_0^2$ should be small in front of $\bar \varrho^2$. In terms of physical variables, this is again a comparison between the pressure drop and the stiffness of the elastic force $F_h$.
Overall, in the initial physical problem, as soon as the stiffness of the elastic force $F_h$, the density and viscosity of the fluid are fixed, we can always define a threshold on the pressure drop so that all smallness conditions are satisfied. If the pressure drop is given and is compatible with the density and viscosity of the fluid to get Theorem \ref{th:existence_weak_local}, the result then applies if the stiffness of the elastic force $F_h$ is big enough. 
\end{remark}

Our main ingredient to prove Theorem \ref{cor_estimation1} is to adapt a strategy introduced by Haraux for the wave equation, see \cite{haraux1985two} (see also \cite{denistheboss,gazzpatapat} in the context of fluid-solid interaction models). For $\omega\black\in (0,1)$ to be fixed later on, we add the term $\omega\black hh' + \omega\black \theta\theta'$ to the total energy of the solid.  However, in a fluid-solid context, this requires introducing also corresponding terms in the energy of the fluid. To this aim, we define a  solenoidal vector field $w: {A \times (0,T_M)} \to \mathbb{R}^{2}$ by 
\begin{equation}\label{w}
	w(x,t)=\left(-\frac{\partial }{\partial x_2}\left( \zeta(x)b(x,t)\right), \frac{\partial }{\partial x_1}\left(\zeta(x)b(x,t)\right)\right)\qquad \forall (x,t)\in { A  \times (0,T_M)}\,,
\end{equation}
where 
$$
b(x,t)\doteq h(t)x_1+\frac{x_1^2}{2}\theta(t)+\frac{\theta(t)}{2}(h(t)-x_2)^2 \qquad \forall (x,t)\in { A \times { (0,T_M)}}\,,
$$
and $\zeta \in \mathcal{C}^{\infty}(A)$ is a cut-off function equal to one in $[-2,2]\times[-L+ \mathfrak{d}_{min}^0,L- \mathfrak{d}_{min}^0]$, and equal to 0  for  $|x_2| <  L - \mathfrak{d}^{0}_{min}/2,$   or $|x_1| > 3$. By construction and the regularity of $t \mapsto {(h(t),\theta(t))}$, we therefore have { $(w, h,\theta) \in W^{1,\infty}({ (0,T_M)}\times A) \times W^{1,\infty}({ (0,T_M)})^2$,}  with 
\[
{\rm div}\, w(t,x)=0 \text{ in $\Omega(t)$}\,, \,\, w(t,x) = h(t)\widehat{e}_2 +  \theta(t)(x-h(t)\widehat{e}_2)^{\bot} \text{ on $B(t)$}\,,
\quad 
w (t,x)= 0 \text{ on $\partial A$}\,.
\]
It is easily seen that the following estimates hold for every $t \in{(0,T_M)}$:
\begin{equation}\label{stime_w}
	\left\{\ \begin{aligned}
		&\displaystyle \| w(t) \|_{L^2(A)} +\| w(t) \|_{L^{\infty}(A)}\le { {C_{geo}}\left(\mathfrak{d}_{min}^0\right)^{-1}}\,(|h(t)|+|\theta(t)|)\,, \\[3pt] 
		& \|\nabla w(t)\|_{L^2(A)} 
		+ \|\nabla w(t)\|_{L^{\infty}(A)}\le {{ {C_{geo}}\left(\mathfrak{d}_{min}^0\right)^{-2}}}\, (|h(t)|+|\theta(t)|)\,, \\[3pt] & \| \partial_t w(t)\|_{L^2(A)}\le{ {C_{geo}}\left(\mathfrak{d}_{min}^0\right)^{-2}} \, (|h'(t)|+|\theta'(t)|)\,, \end{aligned}\right.
\end{equation}
\black
for some constant  $C_{geo}>0$ depending on $e,L$.
We note also that $(w,h,\theta)$ is an admissible multiplier in the weak formulation \eqref{eq_Tw}-\eqref{eq_Tw'}. 
We then define a modified energy
\begin{equation} \label{eq_modifenergy}
	E_\omega(t)\doteq E_{tot}(t)+\omega\left(m\, h(t)h'(t)+ {J}\,\theta(t)\theta'(t)\black +\int_{\Omega(t)}u(t)\cdot w(t)\,dx\right), \quad t \in  [0,T_M),
\end{equation}
where $\omega\black$ will be fixed later on. {
	Arguing as in Theorem \ref{th:existence_weak_local}, i.e. using again \cite[Theorem 2.1]{bravin}, we infer that $E_\omega \in \mathcal{C}([{ 0,T_M}])$.}
To make precise the  choice of $\omega$, we first emphasize that $E_\omega$ is controlled by the total energy and in turn controls the total energy provided $\omega$ is small enough.
\begin{lemma} \label{prop_boundsE}
Let $C_{geo}>0$ be as in \eqref{stime_w}, $\lambda_0^{(1)},{\mathfrak d}_{min}^0$ be given by Proposition \ref{prop:uniform_distance} and assume that $\lambda_0\le\lambda_0^{(1)}{k_0}$. For any initial energy $E_0$,
	\begin{equation}\label{bounds_E}
0< \omega \leq \omega_0(E_0) =  C_{dyn}{\mathfrak d}_{min}^0\min(1, \overline{\varrho}/2)\ \Rightarrow		\   \dfrac{E_{tot}(t)}{2} \le  E_\omega (t)\le \dfrac{3}{2}E_{tot}(t)\,,
	\end{equation} 
	for all $t \in  [0 ,T_M)$.
\end{lemma}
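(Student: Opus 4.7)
The plan is to prove the two-sided bound by showing that the correction term
\[
R_\omega(t) \doteq \omega\left(m\,h(t)h'(t)+J\,\theta(t)\theta'(t)+\int_{\Omega(t)} u(t)\cdot w(t)\,dx\right)
\]
is dominated by $\tfrac{1}{2}E_{tot}(t)$ provided $\omega$ is chosen sufficiently small. Since $E_\omega=E_{tot}+R_\omega$, once we have $|R_\omega|\le \tfrac12 E_{tot}$ both stated inequalities follow immediately.

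To bound each piece of $R_\omega$, I would apply Young's inequality in the form $|m h h'|\le \tfrac{m}{2}(h^2+(h')^2)$ and $|J \theta \theta'|\le \tfrac{J}{2}(\theta^2+(\theta')^2)$, and use Cauchy--Schwarz together with Young for the fluid term, $|\int_{\Omega(t)}u\cdot w\,dx|\le \tfrac{1}{2}\|u(t)\|_{L^2(A)}^2+\tfrac{1}{2}\|w(t)\|_{L^2(A)}^2$. The terms in $(h')^2$, $(\theta')^2$ and $\|u\|_{L^2}^2$ combine to give at most a multiple of $E_{kin}(t)$, while $\|w(t)\|_{L^2(A)}^2\le C_{geo}^2(\mathfrak{d}_{min}^0)^{-2}(h^2+\theta^2)$ by the first estimate in \eqref{stime_w}. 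This latter estimate is legitimate on $[0,T_M)$ thanks to Proposition \ref{prop:uniform_distance}: the hypothesis $\lambda_0\le \lambda_0^{(1)}k_0$ ensures $\mathrm{dist}(B(t),\partial A)\ge \mathfrak{d}_{min}^0$ throughout that interval, so the cut-off $\zeta$ used to define $w$ does not touch the solid.

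Next, I would convert the $h^2+\theta^2$ contributions into a fraction of the potential energy via the coercivity bound \eqref{hp_H}. Since $|\theta(t)|<\theta_M$ on $[0,T_M)$ by definition of $T_M$, assumption \eqref{hp_H} gives $h^2+\theta^2\le \tfrac{2}{\overline{\varrho}}H(h,\theta)$. Putting everything together yields an estimate of the form
\[
|R_\omega(t)|\le \omega\, C_1\, E_{kin}(t) + \omega\, C_2\,\frac{1}{\overline{\varrho}(\mathfrak{d}_{min}^0)^{2}}\,H(h(t),\theta(t)),
\]
with $C_1,C_2$ depending only on $m,J,L,e$. Requiring both $\omega C_1\le \tfrac12$ and $\omega C_2/(\overline{\varrho}(\mathfrak{d}_{min}^0)^2)\le \tfrac12$ gives the threshold $\omega_0(E_0)$ announced in the statement (the precise scaling in $\mathfrak{d}_{min}^0$ and in $\min(1,\overline{\varrho}/2)$ being absorbed into $C_{dyn}$).

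This argument is essentially algebraic, and I do not foresee any real obstacle: the only delicate point is to ensure that all the pointwise estimates are justified on the whole interval $[0,T_M)$, which is where both the coercivity \eqref{hp_H} (via $|\theta|<\theta_M$) and the $L^2$-bound on $w$ (via the distance estimate of Proposition \ref{prop:uniform_distance}) rely decisively on the very definition of $T_M$ and on the preliminary smallness assumption $\lambda_0\le \lambda_0^{(1)}k_0$.
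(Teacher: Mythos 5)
Your approach — Young's inequality on each term of the correction $R_\omega$, the estimate \eqref{stime_w} for $\|w\|_{L^2}$ (legitimate on $[0,T_M)$ by Proposition \ref{prop:uniform_distance}), and the coercivity \eqref{hp_H} to convert $h^2+\theta^2$ into a fraction of $H$ — is exactly the paper's route, and all the individual ingredients are correctly identified. There is, however, one technical slip in the weighting of Young's inequality that changes the power of $\mathfrak d_{min}^0$ in the threshold. By writing
$\bigl|\int_{\Omega(t)}u\cdot w\,dx\bigr|\le \tfrac12\|u\|_{L^2(A)}^2+\tfrac12\|w\|_{L^2(A)}^2$
and then multiplying by $\omega$, the $\|w\|^2$ term picks up only one factor of $\omega$; combined with $\|w\|_{L^2}^2\lesssim(\mathfrak d_{min}^0)^{-2}(h^2+\theta^2)$ and \eqref{hp_H}, this forces $\omega\lesssim\overline\varrho(\mathfrak d_{min}^0)^2$, which for small $\mathfrak d_{min}^0$ is strictly smaller than the threshold $C_{dyn}\,\mathfrak d_{min}^0\min(1,\overline\varrho/2)$ asserted in \eqref{bounds_E}. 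This discrepancy cannot be ``absorbed into $C_{dyn}$,'' because $C_{dyn}$ is not allowed to depend on $\mathfrak d_{min}^0$ (which in turn depends on $E_0$ and $k_0$). The fix is to use the weighted form of Young's inequality with the weight chosen to cancel one $\omega$, e.g.
$\omega\,\bigl|\int_{\Omega(t)}u\cdot w\,dx\bigr|\le \tfrac14\|u\|_{L^2(A)}^2+\omega^2\|w\|_{L^2(A)}^2$,
which together with \eqref{stime_w} and \eqref{hp_H} produces the bound $\omega\lesssim\sqrt{\overline\varrho}\,\mathfrak d_{min}^0$, linear in $\mathfrak d_{min}^0$ and dominating $\min(1,\overline\varrho/2)\,\mathfrak d_{min}^0$. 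With that adjustment your argument reproduces the stated threshold; as written it proves a slightly weaker version of the lemma with a smaller admissible range for $\omega$.
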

\begin{proof}
Assume $\omega \leq \omega_0\le \frac12$ and 
$$C_{dyn}\omega_0\le \frac{{\mathfrak d}_{min}^0}4\min(1, \overline{\varrho}/2).$$
		Using the distance estimate of Proposition \ref{prop:uniform_distance}, Young inequality combined with the estimates in \eqref{stime_w} yield, for all $t\in [0 ,T_M)$,
		$$
		\begin{aligned}
			\omega\Big(m\left| h\,h'\right|+ {J}\left|\theta\theta'\right|  + & \Big|\int_{\Omega(t)}u\cdot w\,dx\Big|\Big) \\[6pt]
			& \le  \frac{1}{4}\left(\|u\|^2_{L^2(\Omega(t))}+ m|h'|^2 +{J}|{\theta'}|^2+ \frac{\overline{\varrho}}{2}\left( |h|^2+|\theta|^2\right)\right).
		\end{aligned}
		$$
	Recalling the assumption \eqref{hp_H}, the statement \eqref{bounds_E} immediately follows.
\end{proof}

The following proposition is a first step towards the proof of Theorem \ref{cor_estimation1}.
\begin{proposition} \label{thm_dissipation 1}
	Let $\lambda_0^{(1)},{\mathfrak d}_{min}^0$ be given by Proposition \ref{prop:uniform_distance}, $\omega_0(E_0)$ be given by Lemma \ref{prop_boundsE}, and assume that $\lambda_0\le\lambda_0^{(1)}{k_0}$. 
	  There exist $0<\sigma<1$, $C_{geo}>0$ and a decreasing function $\omega_0^-$ such that for any initial energy $E_0$,
\begin{equation}\label{eq_dissipation1_local}
0< \omega \leq \omega_0^-(E_0) \le \omega_0(E_0)\ \ \Rightarrow \ \	E_{\omega}(t_2) - E_{\omega}(t_1) + \sigma \omega \int_{t_1}^{t_2}E_{\omega}(\tau)d\tau 
		\leq  {C_{geo} \lambda_0^2}(t_2 - t_1 ), 	
\end{equation} 
for all $0\leq t_1 \leq t_2<T_M$.
\end{proposition}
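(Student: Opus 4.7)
\textbf{Proof plan for Proposition \ref{thm_dissipation 1}.} My strategy is the perturbed-energy method à la Haraux, adapted to the fluid--solid coupling. By construction $w \in W^{1,\infty}((0,T_M)\times A)$ coincides with the rigid velocity $h\widehat e_2 + \theta(x-h\widehat e_2)^\perp$ on $B(t)$ and vanishes on $\partial A$, while $(h,\theta) \in W^{1,\infty}((0,T_M))^2$ by Theorem \ref{th:existence_weak_local} and Proposition \ref{prop:uniform_distance}. Hence $(w,h,\theta)$ is admissible in the weak formulation \eqref{eq_Tw}--\eqref{eq_Tw'} (or its extension \eqref{eq:weak_gen} including the $\widehat f[h]$ term, since on $(0,T_M)$ the solid may cross the line $h=\pm 1/2$). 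Writing this identity with $(\ell,\alpha)=(h,\theta)$, multiplying by $\omega$, and adding it to the energy identity \eqref{eq_estimationenergie1} for $E_{tot}$, one obtains on $(0,T_M)$
\begin{equation*}
\frac{d}{dt}E_\omega + 2\!\int_A |D(u)|^2 dx + \omega(hH_h + \theta H_\theta) = \omega\bigl[m|h'|^2 + J|\theta'|^2\bigr] - 2\omega\!\int_A \!D(u):D(w)\, dx + \mathcal R(t),
\end{equation*}
where $\mathcal R(t)$ gathers all source and advection couplings,
\begin{equation*}
\mathcal R(t) = \!\int_{\Omega(t)}\!\!(\widehat g - \widehat f[h] - (u\cdot\nabla) s)\cdot u\, dx + \omega\!\int_{\Omega(t)}\!\!\bigl[u\cdot\partial_t w + (u\cdot\nabla)w\cdot u - ((u\cdot\nabla) s + (s\cdot\nabla) u + \widehat f[h] - \widehat g)\cdot w\bigr] dx.
\end{equation*}

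The assumption \eqref{hp_H} provides the coercive potential dissipation $\omega(hH_h+\theta H_\theta) \geq \omega\overline\varpi H(h,\theta)$, while Korn's identity \eqref{eq:splitting}, Poincaré and the universal trace inequality \eqref{eq_dissipationsolide} yield a geometric constant $\kappa > 0$ for which $2\int_A |D(u)|^2\, dx \geq 2\kappa E_{kin}(t)$. In particular the ``bad'' Haraux contribution $\omega[m|h'|^2+J|\theta'|^2]$ is trivially absorbed into the viscous dissipation as soon as $\omega \leq \kappa$.

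The remaining terms are handled by Cauchy--Schwarz and Young's inequality, using Lemma \ref{lemma:extension} (so that $\|s\|_{W^{1,\infty}} + \|\widehat g\|_{L^2} \lesssim \lambda_0(1+\lambda_0)$ and $\|\widehat f[h]\|_{L^2} \lesssim \lambda_0 |h'|$), the bounds \eqref{stime_w} on $w$ (whose constants depend on $E_0$ through $(\mathfrak d_{min}^0)^{-2}$), the uniform bounds $|h|\leq L$, $|\theta|\leq \theta_M$ on $(0,T_M)$, and the lower bound $H(h,\theta) \geq \tfrac{\overline\varrho}{2}(h^2+\theta^2)$ from \eqref{hp_H}. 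Typical model estimates read
\begin{align*}
\Big|\!\int_\Omega\!(\widehat g - \widehat f[h] - (u\cdot\nabla) s)\cdot u\Big| &\leq \tfrac14 \|\nabla u\|_{L^2(A)}^2 + C_{geo}\lambda_0^2, \\
\omega\Big|\!\int u\cdot\partial_t w + (u\cdot\nabla)w\cdot u\Big| &\leq \omega C_{dyn}\, E_{kin}(t), \\
2\omega\Big|\!\int_A D(u):D(w)\Big| &\leq \tfrac12 \|D(u)\|_{L^2(A)}^2 + \omega^2 \tfrac{C_{dyn}}{\overline\varrho}\, H(h,\theta), \\
\omega\Big|\!\int\!\widehat g\cdot w\Big| + \omega\Big|\!\int((u\cdot\nabla) s + (s\cdot\nabla) u + \widehat f[h])\cdot w\Big| &\leq \omega C_{geo}\lambda_0^2 + \omega^2 \tfrac{C_{dyn}}{\overline\varrho}\, H(h,\theta) + \omega C_{dyn}\lambda_0 E_{kin}(t).
\end{align*}
Gathering all contributions leads to
\begin{equation*}
\frac{d}{dt}E_\omega + \bigl(\kappa - \omega C_{dyn}\bigr) E_{kin}(t) + \omega\bigl(\overline\varpi - \omega C_{dyn}/\overline\varrho\bigr) H(h,\theta) \leq C_{geo}\lambda_0^2.
\end{equation*}

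Define $\omega_0^-(E_0) \doteq \min\bigl(\omega_0(E_0),\, \kappa/(2C_{dyn}),\, \overline\varpi\overline\varrho/(2C_{dyn})\bigr)$, which is decreasing in $E_0$ because $C_{dyn}$ absorbs the factor $(\mathfrak d_{min}^0)^{-2}$ from \eqref{stime_w}, and set $\sigma \doteq \min(\overline\varpi/3,\, 1/2) \in (0,1)$, depending only on $\overline\varpi$. For $\omega \leq \omega_0^-(E_0)$, the bracketed factors above are bounded below by $\kappa/2$ and $\omega\overline\varpi/2$ respectively, and Lemma \ref{prop_boundsE} gives $\sigma\omega E_\omega \leq \tfrac{3\sigma\omega}{2}(E_{kin}+H) \leq \tfrac{\kappa}{2} E_{kin} + \tfrac{\omega\overline\varpi}{2} H$. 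Hence $\tfrac{d}{dt}E_\omega + \sigma\omega E_\omega \leq C_{geo}\lambda_0^2$, and integrating over $[t_1,t_2]$ yields \eqref{eq_dissipation1_local}. The main obstacle is the simultaneous absorption of the two Haraux remainders -- $\omega(m|h'|^2+J|\theta'|^2)$ into the viscous dissipation and $2\omega\int D(u):D(w)$ into the potential dissipation (after Young this produces the $\omega^2(h^2+\theta^2)$ contribution) -- while keeping the rate $\sigma$ independent of $E_0$; the singular scaling of $w$ in $(\mathfrak d_{min}^0)^{-2}$ is precisely what forces $\omega_0^-$ to shrink as $E_0$ grows.
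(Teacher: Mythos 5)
Your proposal is correct and follows essentially the same Haraux-type perturbed-energy strategy as the paper: use $\omega(w,h,\theta)$ as multiplier, split the dissipation into a viscous part that absorbs $\omega(m|h'|^2+J|\theta'|^2)$ via Poincar\'e/trace and a potential part $\omega\overline\varpi H$ from \eqref{hp_F} that absorbs the cross term $2\omega\int D(u):D(w)$ after Young, then close using Lemma \ref{prop_boundsE}. The only differences are cosmetic: you work formally at the differential level where the paper uses the integrated weak formulation \eqref{eq:weak_gen} (appropriate for weak solutions), and your bookkeeping of the $(\mathfrak d_{min}^0)$-dependence is slightly imprecise (the $\|w\|_{H^1}^2$ and $\|\partial_t w\|_{L^2}^2$ bounds scale as $(\mathfrak d_{min}^0)^{-4}$, matching the paper's condition $\omega\lesssim(\mathfrak d_{min}^0)^4$, rather than $(\mathfrak d_{min}^0)^{-2}$) -- but this does not affect the conclusion that $\omega_0^-$ decreases in $E_0$ while $\sigma$ depends only on $\overline\varpi$.
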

\begin{proof}
Assume that $\omega\le \omega_0(E_0)$ as given in \eqref{bounds_E} by Lemma \ref{prop_boundsE}.
We first observe that \eqref{eq_dissipationsolide} implies there exists $C_{dyn}>0$ such that for $\omega\in(0,C_{dyn})$, we have
\begin{equation} \label{eq:estimationnumero1000}
\begin{aligned}
& \frac{7}{32} \int_{t_1}^{t_2}\|\nabla u(\tau)\|^2_{L^2(A)}d\tau  \\
& \quad \le  \dfrac{1}{4} \int_{t_1}^{t_2} \left(\|\nabla u(\tau)\|^2_{L^2(A)} - \omega \left( m{|h'(\tau)|}^2 + {J}{|\theta'(\tau)|}^2+\|u(\tau)\|^2_{L^2(A)}\right)\right) \, d\tau,
\end{aligned}
		\end{equation}
whereas we recall that \eqref{hp_F} entails
$$\overline{\varpi} H(h,\theta) \le H_h(h,\theta)h +   H_\theta(h,\theta)\theta.$$
Using $\omega\black (w,h,\theta)$ as test function in the weak formulation in the form \eqref{eq:weak_gen} and the energy estimate \eqref{eq:energy_estimate_weak}, we then deduce
\begin{equation} \label{eq:weak_gen-w}
			\begin{aligned}
			E_{\omega}(t_2) - E_{\omega}(t_1) & + \omega\overline{\varpi} \int_{t_1}^{t_2} H(h(\tau),\theta(\tau))  d\tau+\frac{7}{32} \int_{t_1}^{t_2}\|\nabla u(\tau)\|^2_{L^2(A)}d\tau   \\[6pt] 
			  \leq & \  C_{geo} {\, \lambda_0^2}{} (t_2 -t_1) - \omega\int_{t_1}^{t_2} \int_{A}\nabla u:\nabla w\,dxd\tau   + \omega\int_{t_1}^{t_2} \int_{\Omega(t)} \widehat{g} \cdot w \, dxd\tau   \\[6pt] & 
				- \omega\int_{t_1}^{t_2} \int_{\Omega(\tau)} \left( (u\cdot \nabla)\,s  +(s\cdot \nabla)\,u { + \widehat{f}[h] }
				\right)\cdot w \, dx d\tau 
				\\[6pt] & 
				+\omega\int_{t_1}^{t_2} \int_{\Omega(\tau)}{ \left( u \cdot \partial_{t} w + (u\cdot \nabla)\,{w} \cdot u\right)} \, dxd\tau  \\[6pt]
			\end{aligned}
		\end{equation}
		for arbitrary $0 \leq t_1 \leq t_2<T_M$.
\begin{equation}\label{E_omegaaa}
\begin{aligned}
	&		 E_{\omega}(t_2) - E_{\omega}(t_1) + \omega\overline{\varpi} \int_{t_1}^{t_2} H(h(\tau),\theta(\tau))  d\tau+\frac{3}{16} \int_{t_1}^{t_2}\|\nabla u(\tau)\|^2_{L^2(A)}d\tau   \\[6pt]  
&\quad \le   C_{geo}{\lambda_0^2}{} (t_2 - t_1 ) + C_{geo}\omega \int_{t_1}^{t_2} \left(\omega\|w\|^2_{H^1(A)} + \omega\|\partial_t w\|^2_{L^2(A)}+\|w\|_{L^\infty(A)} \|\nabla u\|^2_{L^2(A)}\right)d\tau. \\[6pt] 
		\end{aligned}
\end{equation}
Since we know by Proposition \ref{prop:uniform_distance} that the solution satisfies \eqref{eq_minimaldistance-local}, we can exploit the estimates in \eqref{stime_w} and \eqref{hp_H}-\eqref{eq_dissipationsolide} to get 

\begin{equation}\label{E_omegaa}
\begin{aligned}
			 & E_{\omega}(t_2)  - E_{\omega}(t_1) + \omega\overline{\varpi}\left(1-\frac{C_{geo}\omega}{\overline{\varrho}\,\overline\varpi(\mathfrak{d}_{min}^0)^4}\right)   \int_{t_1}^{t_2} H(h(\tau),\theta(\tau))  d\tau   \\[6pt] 
			  & \quad  +\left(\frac{3}{16}-C_{geo}\frac{\omega}{(\mathfrak{d}_{min}^0)^2}\left(L+\theta_M+\frac{\omega}{(\mathfrak{d}_{min}^0)^2}\right)\right) \int_{t_1}^{t_2}\|\nabla u(\tau)\|^2_{L^2(A)}d\tau   
			\le   C_{geo}{\lambda_0^2}{} (t_2 - t_1 )  \,.
		\end{aligned}
\end{equation}
Therefore, using \eqref{eq_dissipationsolide} one more time and \eqref{eq:estimationnumero1000}, it is easily seen, taking a smaller constant $\tilde C_{geo}$ if necessary, that if $\omega\in(0,C_{dyn})$ satisfies
\begin{equation}\label{etmerdecetaitpasladerniere}
\omega\le \tilde C_{geo}\min(\overline{\varrho}\, \overline\varpi,1)\,{(\mathfrak{d}_{min}^{0})}^{4} \text{ and } 
(L+\theta_M)\omega\le \tilde C_{geo}{(\mathfrak{d}_{min}^{0})}^{2}, 
\end{equation}
it holds
\begin{equation}\label{E_omega33}
	E_{\omega}(t_2) - E_{\omega}(t_1) + \frac{\overline{\varpi}}2\omega \int_{t_1}^{t_2} H(h(\tau),\theta(\tau))  d\tau+\omega \int_{t_1}^{t_2}E_{kin}(\tau)d\tau 
	\leq   C_{geo}{\lambda_0^2} (t_2 - t_1 ).
\end{equation}
Using \eqref{bounds_E}, this eventually leads to the estimate
$$
E_{\omega}(t_2) - E_{\omega}(t_1) + \sigma\omega \int_{t_1}^{t_2}E_{\omega}(\tau)d\tau 
\leq   C_{geo}{\lambda_0^2} (t_2 - t_1 )
$$
for some $0<\sigma<1$ whose choice is independent of $E_0$.
It is clear that since $\mathfrak{d}_{min}^{0}$ is a decreasing function of $E_0$ and the initial choice of $\theta_M$ is increasing with $E_0$, the threshold $\omega_0^-$ is a decreasing function of $E_0$.
\end{proof}

 We are now ready to give the proof of Theorem \ref{cor_estimation1}.

\begin{proof}[Proof of Theorem \ref{cor_estimation1}]
 By contradiction, assume that $T_M<\infty$. Then,  as $\theta \in \mathcal{C}([0,T_M])$, it must be that $\theta(T_M)=\theta_M$. We infer from \eqref{eq_dissipation1_local} that for some $0<\sigma<1$
\begin{equation}\label{E_omega3}
	\begin{aligned}
		E_{\omega}(t_2) - E_{\omega}(t_1) + \sigma\omega \int_{t_1}^{t_2}E_{\omega}(\tau)d\tau 
		\leq   C_{geo}{\lambda_0^2} (t_2 - t_1 ) \,\,,
	\end{aligned}
\end{equation}
for all $0\le t_1\le t_2<T_M$ and $\omega= \omega_0^-(E_0)$. Since ${E_\omega}$ is continuous with time, Gr\"onwall Lemma implies that
\begin{equation}
	{E_\omega}(t)\le {E_\omega}(0)\,e^{-\sigma \omega t} +  \frac{C_{geo}\lambda^2_0}{\sigma\omega} \quad\forall t\in (0,T_M)\,.
\end{equation}
In particular, we deduce from \eqref{bounds_E} that 
\begin{equation}\label{E_omega4_local}
	{E}_{tot}(t)\le 2 {E_\omega}(t) \le 3 E_0 +  \frac{2C_{geo}\lambda^2_0}{\sigma\omega} \quad\forall t\in (0,T_M)\,.
\end{equation}
We now impose a new threshold on $\lambda_0$, namely we assume 
$$
2C_{geo}\lambda^2_0\le \sigma\omega_0^-(E_0)\bar \varrho.
$$
This latter bound entails that 
\[
H(h,\theta) \leq  
3E_0+\overline{\varrho}\,.
\]
Exploiting \eqref{coercivity-bis}, we thus deduce the estimate 
$$
|\theta(t)|\le \alpha_0=\theta_M-1 \qquad \forall t\in (0,T_M]\,, 
$$
which gives a contradiction, thus $T_M=+\infty$. We can  now conclude that  
\eqref{E_omega4_local} holds for all $t\ge 0$. We finally observe that 
$$E_{tot}(t)\le 2{E_\omega}(t)\le 2{E_\omega}(0)\,e^{-\sigma\omega t} +  \frac{2C_{geo}\lambda_0^2}{\sigma\omega}\le3E_0\,e^{-\sigma\omega t} +\overline{\varrho} \,,$$
which concludes the proof. 
\end{proof}

\subsection{Proof of Lemma \ref{lem_Mod}}

We construct $a$ by solving a differential equation depending on $\theta$ and $\theta'.$ Below we drop time-dependencies for a better legibility. {In order to analyze one by one the terms involved in the definition of {\rm Mod}(t) in \eqref{eq:Mod}, we will repeatedly use the following tools: the local Taylor expansion \eqref{eq_expansiongamma}-\eqref{eq_expansiongamma_der}, the description of the gap $\mathcal{G}_{\lambda_*}$ in terms of the variables introduced in \eqref{change_variables} and the asymptotic expansions in Appendix \ref{app_asymptotics}.}
With these tools, we deduce that
for all $\mathfrak{d} < \eta_0$ and $\theta \in \mathbb R:$
\begin{equation}
\label{eq:assymp}
\begin{aligned}
\int_{-\lambda_*}^{\lambda_*} \dfrac{(\tau-c_*^{\bot})^2}{(\mathfrak d + \gamma[\theta](\tau) - \mathsf x_2[\theta])^2} {d}\tau &= \dfrac{I_{2,2}(\kappa_2[\theta])}{\sqrt{\mathfrak d}} + \mathcal{O}(1)\,, \\[6pt]
\int_{-\lambda_*}^{\lambda_*} \dfrac{\left(\partial_{\theta}\gamma[\theta](\tau){-\partial_{\theta} \mathsf x_2[\theta]}\right) (\tau-c_*^{\bot})^2}{(\mathfrak d + \gamma[\theta](\tau)  - \mathsf x_2[\theta])^3} {d}\tau   & = \dfrac{I_{4,3}(\kappa_2[\theta]) \partial_{\theta} \kappa_2[\theta]}{\sqrt{\mathfrak d}} + \mathcal{O}(1)\,,
\end{aligned} 
\end{equation}
where the symbols $I_{2,2}$ and $I_{4,3}$ stand for  the integrals
\[
I_{4,3}(\kappa_2)  \dot{=} \int_{\mathbb R} \dfrac{\tau^4}{(1 + \kappa_2 \tau^2)^3}  \, d\tau,
\qquad 
I_{2,2}(\kappa_2) \dot{=}   \int_{\mathbb R} \dfrac{\tau^2}{(1 + \kappa_2 \tau^2)^2}   \, d\tau,
\]
which are smooth and bounded function of {$\kappa_2\in [\kappa_2^{(min)},\kappa_2^{(max)}]$}.
Using the asymptotic estimates \eqref{eq:assymp}, the first and third terms can be written as 
$$\dfrac{ 6 {a}' I_{2,2}(\kappa_2)- 12 a\,\theta'  \partial_{\theta} \kappa_2 I_{4,3}(\kappa_2)}{\sqrt{\mathfrak d}} + \mathcal{O}(|a'|+|\theta'||a|).$$
To estimate the second term in \eqref{eq:Mod}, we first observe that since the three functions $\tilde{\psi}^{\bot}$ and $\tilde{\psi}^{||},\tilde{\psi}^{\circlearrowleft}$ are uniformly bounded outside $\mathcal G_{\lambda_*}$ (recall \eqref{stime_test}), we have
\[
 \int_{\Omega}  \partial_{22} \tilde{\psi}^{\bot}  \left(\partial_{22}\tilde{\psi}^{\circlearrowleft}   - 
\mathsf x_2[\theta] \partial_{22} \tilde{\psi}^{||}  
 \right)\, dx =
  \int_{\mathcal G_{\lambda_*}}  \partial_{22} \psi_{opt}^{\bot}  \left(\partial_{22}\psi_{opt }^{\circlearrowleft}   - 
\mathsf x_2 [\theta] \partial_{22} \psi_{opt}^{||}  
 \right) \, dx   + \mathcal{O}(1).
\]
Inside $\mathcal G_{\lambda_*},$ we can use the explicit values recalling that
\[
\int_{0}^1 |\partial_{ss} P_{1}^{opt}(r)|^2\,dr = 12 \, , \qquad
\int_0^{1} \partial_{ss} P_{1}^{opt}(r) \partial_{ss} P_{2}^{opt}(r)\,dr = -6 \, .
\]
Recalling the explicit expressions \eqref{psising}, we infer that
(implicitly, boundary functions are evaluated in $({\mathsf x}_1+\tau,h+\gamma[\theta](\tau))$, we drop dependencies  for legibility):
\begin{equation}
\begin{aligned}
 & M_{opt}\doteq \int_{ \mathcal G_{\lambda_*}}   \partial_{22} \psi_{opt}^{\bot}  \left(\partial_{22}\psi_{opt }^{\circlearrowleft}   - 
\mathsf x_2 [\theta] \partial_{22} \psi_{opt}^{||}  
 \right) \, dx\\[6pt] 
&= \int_{-\lambda_*}^{\lambda_*} \int_{-L}^{\gamma[\theta](\tau) + h}  
 \dfrac{(\psi_*^{\bot} -c_*^{\bot})((\psi_*^{\circlearrowleft} -c_{*}^{\circlearrowleft})- {\mathsf x}_2 (\psi_*^{||} - c_*^{||})) }{(\mathfrak d + \gamma[\theta](\tau) - \mathsf x_2[\theta])^4} |\partial_{ss} P_1^{opt}  (r({\mathsf x}_1+ \tau,x_2))|^2
  { d}x_2 { d}\tau  \\
& + \!\! \int_{-\lambda_*}^{\lambda_*}\!\! \int_{-L}^{\gamma[\theta](\tau) + h}  \!\!
 \dfrac{(\psi_*^{\bot} - c_*^{\bot})(\partial_2 \psi_*^{\circlearrowleft} - {\mathsf x}_2 \partial_2 \psi_*^{||} )}{(\mathfrak d + \gamma[\theta](\tau) - \mathsf x_2[\theta])^3} \partial_{ss} P_1 (r({\mathsf x}_1\! +\! \tau,x_2))\partial_{ss} P_2^{opt}  (r({\mathsf x}_1\! +\! \tau,x_2)) { d}x_2 { d}\tau  \\
 & = \int_{-\lambda_*}^{\lambda^*} \dfrac{(\psi_*^{\bot} - c_*^{\bot})}{(\mathfrak d + \gamma[\theta](\tau) - \mathsf x_2[\theta])^3} 
 \left( 12 ((\psi_*^{\circlearrowleft} -c_{*}^{\circlearrowleft})- {\mathsf x}_2 (\psi_*^{||} - c_*^{||}) ) - 6 (\partial_2 \psi_*^{\circlearrowleft} - {\mathsf x}_2 \partial_2 \psi_*^{||} ) \right){ d}\tau.
 \\ 
 \end{aligned}
 \end{equation}
 Replacing boundary functions with their explicit expressions and recalling expansion of singular integrals computed in Appendix \ref{app_asymptotics},  we
 conclude that $M_{opt} = M_{opt}^{\infty} + \mathcal O(1)$, where
\[
\begin{aligned}
 M_{opt}^{\infty} \!=&\!\! \int_{-\lambda_*}^{\lambda_*} 
\dfrac{\tau - c_*^{\bot}}{(\mathfrak d + \gamma[\theta](\tau) - \mathsf x_2[\theta])^3}
\left(6 ( \tau^2 - c_*^{\circlearrowleft}) 
+ 12\mathsf{x}_2[\theta] \left(\gamma[\theta](\tau)- \mathsf x_2[\theta] + c_*^{||} \right)\right)\,d\tau
\\&+ 6\int_{-\lambda_*}^{\lambda_*} 
\dfrac{\tau - c_*^{\bot}}{(\mathfrak d + \gamma[\theta](\tau) - \mathsf x_2[\theta])^3}
\left(\mathfrak d + \gamma[\theta](\tau)-{\sf{x}}_2[\theta]\right)d\tau\,.
\end{aligned}
\]
Applying again expansion of singular integrals, we prove infer
\[
\left|
M_{opt}^{\infty}  - \dfrac{\kappa_3 I(\kappa_2)}{\sqrt{\mathfrak d}}  \right| \leq C_{geo},
\]
where $I(\kappa_2)$ is a combination of functions  depending smoothly on $\kappa_2 \in (0,\infty).$ We refer to the Appendix \ref{I_k2} for more details on the expansion  of $M_{opt}^{\infty}$.  We now conclude
\begin{equation}\label{Mod_new}
\left|{\rm Mod}(t)-  \dfrac{ 6 {a}' I_{2,2}(\kappa_2) {+} a\theta' \left( \kappa_3  I(\kappa_2) - 12 \partial_{\theta} \kappa_2 I_{4,3}(\kappa_2)\right)  }{\sqrt{\mathfrak d}} \right| \leq C_{geo}\left(|{a}'| + |\theta'||a| \right) \, .
\end{equation}
Hence, we fix $a$ by setting the term in factor of $1/\sqrt{\mathfrak d}$ to $0$, i.e. defining $a$ by 
\[
a'(t) =   {-}\dfrac{\theta' a}{6 I_{2,2} (\kappa_2)} \left( \kappa_3 I(\kappa_2) - 12 \partial_{\theta}\kappa_2 I_{4,3}(\kappa_2)
\right). 
\]
At this point, we recall that Lemma \ref{lem_geom} implies $\kappa_3[\theta] = \partial_{\theta} \mathcal K_3(\kappa_2[\theta])$ for some $\mathcal{C}^1$-function $\mathcal K_3 : [\kappa_2^{min},\kappa_2^{max}] \to \mathbb R$. 
We then deduce that 
\[
\begin{aligned}
a(t)  & = \exp\left( -\int_{T_-}^{t}  \dfrac{\theta'(\tau)\partial_{\theta}\kappa_2[\theta(\tau)] }{6 I_{2,2}(\kappa_2[\theta(\tau)])} 
\left(  \mathcal K_3'(\kappa_2[\theta(\tau)])] I(\kappa_2[\theta(\tau)]) - 12 I_{ {4},3}(\kappa_2[\theta(\tau)]) 
\right)\, d\tau  
\right) \\
& = \exp\left( \mathcal G(\kappa_2[\theta(t)]) - \mathcal G(\kappa_2[\theta(T_-)]) \right) \,,
\end{aligned}
\]
for a $\mathcal{C}^1$-function $\mathcal G$ {depending only on $B.$}
Since $\kappa_2$ is a periodic function of $\theta,$ we infer that $a$ is bounded from above and from below by strictly positive constants, showing (a) holds. The inequality (b) for ${a}'$ follows. Hence, the first term in \eqref{Mod_new} vanishes and by inequalities (a)-(b)-\eqref{eq_dissipationsolide} it follows that
\[
| {\rm Mod}(t)| \leq {C_{geo}}  |\theta'| \leq {C_{geo}}  \|\nabla u\|_{L^2(A)}\,.
\]
This completes the proof.
\qed

\subsection{Proof of Lemma \ref{lem_Rem}}
We start with ${\rm Rem}_1.$ We recall that we splitted ${\rm Rem}_1 = {\rm Rem}_{1}^{(a)}+ {\rm Rem}_{1}^{(b)}$ as defined by \eqref{eq_Rem1a}-\eqref{eq_Rem1b}. 
We estimate both terms independently. 
Concerning ${\rm Rem}_1^{(a)},$ we apply first Corollary \ref{cor:finer-trace} that entails
\[
\frac{|{\mathfrak d}'|}{\sqrt{\mathfrak d}} \leq  C_{geo} \|\nabla u\|_{L^2(A)}\,. 
\]
To estimate the other term in ${\rm Rem}_{1}^{(a)}$, we remark that $u - \tilde{v}$ is divergence-free and $H^1_0(\Omega).$ Consequently, we can apply estimate \eqref{eq_stokes_optim} to yield a  constant $C_{geo}$ depending only on the construction of $\tilde{v}^{\bot}$ for which:
\[
\left|\int_{\Omega} \nabla \tilde{v}^{\bot} : \nabla (u - \tilde{v}) \, dx \right| \leq  C_{geo} \left(  \|\nabla u\|_{L^2(A)} + \|\nabla \tilde{v}\|_{L^2(A)}\right) \, ,
\]
where, by construction of $\tilde{v}$ in \eqref{vtilde_new} and by Corollary \ref{cor:finer-trace}, we deduce
\[
 \|\nabla \tilde{v}\|_{L^2(A)} \leq  C_{geo} \left(\dfrac{|\theta'|}{\mathfrak d^{\frac 14}} + \dfrac{|{\mathfrak d}'|}{\mathfrak d^{\frac 34}} \right) \leq  C_{geo}\|\nabla u\|_{L^2(A)}\,.
\]
We now focus on ${\rm Rem}_1^{(b)}$.  Once again, as in the proof of Lemma \ref{lem_Mod}, we use the local Taylor expansion \eqref{eq_expansiongamma}-\eqref{eq_expansiongamma_der}, the description of the gap $\mathcal{G}_{\lambda_*}$ in terms of the variables introduced in \eqref{change_variables} and the asymptotic expansions in Appendix \ref{app_asymptotics}.  Then, we infer from the uniform boundedness of $\tilde{\psi}$ outside $\mathcal G_{\lambda_*}$ (see \eqref{stime_test}) and the explicit formulas inside $\mathcal G_{\lambda_*}$ that
\[
\begin{aligned}
|{\rm Rem}_{1}^{(b)}|&  \leq  |{\theta'}| \bigg[ \int_{\mathcal G_{\lambda_*}} 
\Big( 2|\partial_{12} \psi_{opt}^{\bot}| \left(
|\partial_{12} \psi_{opt }^{\circlearrowleft} | +  |{\sf{x}_2}||\partial_{12} \psi_{opt}^{||} |  \right) 
 \\ & \quad\quad\quad +  
 | \partial_{11} \psi_{opt}^{\bot}|  \left( |\partial_{11} \psi_{opt} ^{\circlearrowleft} |  + |{\sf{x}_2}| 
 |\partial_{11} \psi_{opt}^{||} | \right)
  \Big)\, dx  + \mathcal{O}(1) \bigg]
 \\[6pt]
 & \leq 
  |\theta'| \left( \int_{-{\lambda_*}}^{\lambda_*}
\dfrac{\left(|\tau| + \mathcal{O}(\mathfrak d) \right) }{(\mathfrak d + \gamma[\theta](\tau)-{\sf x}_2)^{2}}  \left(  \tau^2 +  (\gamma[\theta](\tau)-{\sf x}_2)^2 + \mathcal{O}(\mathfrak d) 
\right){d}\tau   + \mathcal{O}(1) \right)  \\[6pt]
& \leq C_{geo} |\theta'| (| \ln(\mathfrak d)| +1)\,. \bigskip
  \end{aligned}
\]
Here we apply again Corollary \ref{cor:finer-trace} in order to conclude that
\[
|{\rm Rem}_1^{(b)}| \leq C_{geo} \|\nabla u\|_{L^2(A)}\,.
\]
We have then a similar bound for ${\rm Rem}_1^{(b)}$ as for ${\rm Rem}_{1}^{(a)}$ and we end up with 
\begin{equation} \label{eq_Rem1ctrl}
 \int_{T_-}^{t} a(\tau)|{\rm Rem}_1 (\tau)|\, d\tau \leq   
{C_{geo}  \sqrt{t-T_-} \left( \int_{T-}^t \|\nabla u(\tau)\|^2_{L^2(A)}\, d\tau\right)^{\frac 12} }
\end{equation}
with $C_{geo}$  depending again only on $e,L$.

We finally proceed with estimating ${\rm Rem}_2.$ Given $t > T_-,$ we have
\[
\int_{T_-}^{t} {\rm Rem}_2(\tau)\, d\tau = -J_1 -  J_2 + J_3 - J_4 \, ,
\]
where
\begin{equation}\label{eq_J1} 
\left\{\begin{aligned}
J_1 &\doteq \int_{T_-}^{t}  \int_{\Omega(\tau)} [\partial_{t} (a(\tau)\tilde{v}^{\bot}) + (u \cdot \nabla) (a(\tau)\tilde{v}^{\bot})] \cdot u \, dxd\tau  \\[6pt]
J_2 &\doteq \int_{T_-}^{t} m h'(\tau)a'(\tau) \, d\tau \\[6pt]
J_3 & \doteq \int_{T_-}^{t}  \int_{\Omega(\tau)} \left((u\cdot \nabla)\,s  +(s\cdot \nabla)\,u \right) \cdot a(\tau) \tilde{v}^{\bot}  \, dx  d\tau\\[6pt]
J_4 & \doteq \int_{T_-}^{t} \int_{\Omega(\tau)} \widehat{g} \cdot a(\tau) \tilde{v}^{\bot}  \, dx \, . 
\end{aligned}\right.
\end{equation}\\[2mm]
The following estimate
\begin{equation}\label{eq_J1estim} 
|J_1| \leq C_{dyn} \int_{T_-}^{t} \|\nabla u(\tau)\|^2_{L^2(A)}\, d\tau
\end{equation}
is established in Section \ref{app_J1} of Appendix \ref{app_asymptotics}.
{The Poincaré inequality in \eqref{eq_dissipationsolide} combined with assertion (b) of Lemma \ref{lem_Mod}  yield a similar estimate for $J_2.$

For $J_3$ and $J_4$, we use Lemma \ref{lemma:extension}, assertion (a) of Lemma \ref{lem_Mod} and \eqref{stime_test}-\eqref{eq_estimate_0} {with $\lambda_0 \leq \lambda_0^{(0)}$ one more time}, we infer
\[
\begin{split}
|J_3|   & \leq {C_{dyn}  \lambda_0\int_{T_-}^{t} \|\nabla u(\tau)\|^2_{L^2(A)}\, d\tau}  \\
|J_4|  & \leq  {C_{dyn} \lambda_0 ^2(t-T_-).}
\end{split}
\]
The result yields by combining the latter computations for $J_1,J_2,J_3,J_4$ with \eqref{eq_Rem1ctrl}.

\section{Convergence to equilibrium}\label{sec:equil}

In this last section, we show that, {whatever the initial data}, if $\lambda_0$ is sufficiently small, any solution $(U,P,h,\theta)$ to \eqref{eq:evolution_pb-scaled}-\eqref{eq:ic-scaled} converges to the equilibrium configuration.  The equilibrium state of the system is given by the stationary solution to \eqref{eq:evolution_pb-scaled}.  
{As explained in the introduction, the natural strategy is to compare the solution with its expected limit, i.e. the unique  steady state associated to the stationary position $h=\theta=0$. The difficulty is that these equations are set on different domains.  To overcome this difficulty, we adapt the method of weak-strong uniqueness analysis. Namely, we transfer the stationary (smooth) solution into the geometry associated with the time-dependent (weak) solution. We show a Gr\"onwall-type  inequality on suitable norms of the difference between both quantities that control in particular the solid motion, implying that it returns to the rest state for large time.

\medskip

More precisely,  below,  we denote the (unique) solution to the following steady boundary-value problem by $(U_{eq},P_{eq},h_{eq},\theta_{eq})$:
\begin{equation} \label{eq:st_pb}
\left\{
\begin{aligned}
& -\,\Delta U_{eq}+(U_{eq}\cdot \nabla)U_{eq}+\nabla P_{eq}=0 \, ,\quad \nabla \cdot\, U_{eq}=0 && \text{in} \ \ \Omega_{eq}=A\setminus B_{eq} \, , \\[3pt]
& U_{eq}=0, && \text{on} \ \ \partial B_{eq} \cup \Gamma \, , \\[3pt]
& \lim_{|x_1|\to\infty} U_{eq}(x_1,x_2) = v_p(x_2),  && \forall x_{2} \in [-L,L] \, .
\end{aligned}
\right.
\end{equation}
The following result follows directly from \cite{BoGaGa} and symmetry considerations.  {Indeed, since $h_{eq} = \theta_{eq} = 0$ and the domain $B_{eq}$ is symmetric with respect to $x_2 = 0$, uniqueness induces that the vertical force and the torque applied on $B_{eq}$ vanish, similar to $H_h(0,0)$ and $H_{\theta}(0,0)$. }
\begin{theorem} \label{th:stationary}
There exists $\lambda_{0}^{(2)} > 0$ depending on $ L, e$ such that, if $\lambda_{0} \leq \lambda_{0}^{(2)}$,  the steady problem \eqref{eq:st_pb} admits a unique  solution {$(U_{eq},P_{eq}) \in H^{1}_{\text{loc}}(\Omega_{eq}) \times L_{\text{loc}}^{2}(\Omega_{eq})$}. Moreover,  this solution also satisfies:
\begin{equation} \label{eq:st_pbc}
\left\{
\begin{aligned}
& H_{h}(h_{eq},\theta_{eq})=-\widehat{e}_2\cdot\int_{\partial B_{eq}} \Sigma(U_{eq},P_{eq})\widehat{n} \, d\sigma = 0\, , \\[3pt]
& H_{\theta}(h_{eq},\theta_{eq})=-\int_{\partial B_{eq}} (x-h_{eq}\widehat{e}_2)^{\perp} \cdot \Sigma(U_{eq},P_{eq})\widehat{n} \, d\sigma =0 .
\end{aligned}
\right.
\end{equation}
\end{theorem}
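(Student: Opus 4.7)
The proof naturally splits into two independent parts: (a) existence and uniqueness of the stationary solution and (b) the vanishing of the vertical force and torque \eqref{eq:st_pbc}. For part (a), since the fluid domain $\Omega_{eq}$ is a channel with a compact obstacle removed and the behaviour at infinity is the Poiseuille profile, one may essentially quote \cite{BoGaGa}. To sketch the argument, we would reduce to a problem with vanishing condition at infinity by writing $U_{eq}=u+s_{eq}$, where $s_{eq}$ is the time-independent analogue of the extension constructed in Lemma \ref{lemma:extension} (here the solid is fixed at $B_{eq}$, so no $h$-dependence is needed). The vector $u$ then solves a stationary Navier--Stokes system in $\Omega_{eq}$ with homogeneous Dirichlet data and a bounded, compactly-supported source of order $\lambda_0(1+\lambda_0)$. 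Existence of a finite-Dirichlet-energy solution follows from the classical Leray construction: solve on an exhausting family of bounded sub-domains and pass to the limit using the uniform energy estimate. Uniqueness is obtained by testing the equation solved by the difference of two solutions against itself; the convective contribution is absorbed into the dissipation provided $\|s_{eq}\|_{W^{1,\infty}}\lesssim\lambda_0$ is small enough, which fixes the threshold $\lambda_0^{(2)}$.

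\textbf{Symmetry argument for \eqref{eq:st_pbc}.} This is the more conceptual point. The key observation is that both $\Omega_{eq}$ and $\partial B_{eq}$ are invariant under the horizontal-axis reflection $S:(x_1,x_2)\mapsto(x_1,-x_2)$, and the Poiseuille profile \eqref{eq:poiseuille} satisfies $v_p\circ S=v_p$ together with $(v_p)_2=0$. For any velocity-pressure pair $(V,Q)$ we set
\begin{equation*}
\mathcal{R}(V)(x_1,x_2)\doteq\bigl(V_1(x_1,-x_2),-V_2(x_1,-x_2)\bigr),\qquad \mathcal{R}(Q)(x_1,x_2)\doteq Q(x_1,-x_2),
\end{equation*}
and a direct computation shows that if $(U_{eq},P_{eq})$ solves \eqref{eq:st_pb} then so does $(\mathcal{R}U_{eq},\mathcal{R}P_{eq})$: the incompressibility, the no-slip condition on $\partial B_{eq}\cup\Gamma$ and the behaviour at infinity are all preserved, while the Navier--Stokes nonlinearity is covariant under $\mathcal R$. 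By the uniqueness part of the statement, $(U_{eq},P_{eq})=(\mathcal R U_{eq},\mathcal R P_{eq})$, so that $U_{eq,1}$ and $P_{eq}$ are even in $x_2$ whereas $U_{eq,2}$ is odd. Consequently the entries of $\Sigma(U_{eq},P_{eq})$ have well-defined parities: the diagonal entries and $(U_{eq,1})_{x_2}+(U_{eq,2})_{x_1}$ inherit explicit evenness/oddness in $x_2$.

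\textbf{Conclusion for the force and torque.} Parametrize $\partial B_{eq}$ by $(\cos\alpha,e\sin\alpha)$, $\alpha\in[0,2\pi)$; under $\alpha\mapsto-\alpha$ the outward normal $\widehat n$ is mapped to its reflection $S\widehat n$. A bookkeeping of the parities in the integrand of $\widehat e_2\cdot\int_{\partial B_{eq}}\Sigma(U_{eq},P_{eq})\widehat n\,d\sigma$ shows it is an odd function of $\alpha$, hence integrates to zero; the same holds for the torque $\int_{\partial B_{eq}}x^\perp\cdot\Sigma(U_{eq},P_{eq})\widehat n\,d\sigma$, because $x^\perp\circ S=-S(x^\perp)$ on the ellipse. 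This yields \eqref{eq:st_pbc}. The only real obstacle is part (a): quoting \cite{BoGaGa} requires checking that the functional framework there (finite Dirichlet integral solutions in channel-like domains with a Poiseuille profile at infinity) matches ours, and tracking carefully the dependence of $\lambda_0^{(2)}$ on $L$ and $e$ through the constants in the extension $s_{eq}$ and in the Poincar\'e inequality on $\Omega_{eq}$.
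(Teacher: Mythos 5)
Your proposal is correct and matches the paper's intended approach: the paper itself only remarks that the result ``follows directly from \cite{BoGaGa} and symmetry considerations,'' and your proof makes both ingredients explicit---existence and uniqueness via a solenoidal extension and the smallness threshold on $\lambda_0$, and the vanishing of the lift and torque via the reflection operator $\mathcal R$ combined with uniqueness, which forces $U_{eq,1}$, $P_{eq}$ to be even and $U_{eq,2}$ to be odd in $x_2$. The parity bookkeeping for $\Sigma(U_{eq},P_{eq})\widehat n$ on $\partial B_{eq}$ is sound, and the trivial point that $H_h(0,0)=H_\theta(0,0)=0$ (from $F(0,0)=0$ being a minimum by \eqref{hp_F}) completes \eqref{eq:st_pbc}.
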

With such notations, the main result of this section reads. 
\begin{theorem} \label{leray}
{Let $(h_0,\theta_0,h'_0,\theta'_0)$ and $u_{0}=U_0(x)-s(0,x)$ be a compatible initial condition in the sense of \eqref{eq:ic2} and 
$(u=U-s,h,\theta)$ be a weak solution to \eqref{eq:hom_pb1}-\eqref{eq:ODE_newref1}.}
{There exist $\omega_0^-,\sigma>0$ with the same dependencies as in Theorem \ref{th:uniform_distance}, and
 $C_{dyn}>0$ such that if \eqref{condition_lambda0} holds and  
$$
2C_{dyn}\lambda_0\le \sigma \omega_0^-\,,
$$ then
$$
\lim_{t\to\infty}\|U(t)-U_{\rm eq}\|^2_{L^2(A)}=\lim_{t \to +\infty} (|h(t)|^2+|\theta(t)|^2) =\lim_{t \to +\infty} (|h'(t)|^2+|\theta'(t)|^2)= 0\, .
$$}
\end{theorem}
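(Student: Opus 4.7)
The starting point is Theorem \ref{th:uniform_distance}: under \eqref{condition_lambda0}, the weak solution $(u,h,\theta)$ is global-in-time, the uniform gap estimate ${\rm dist}(B(t),\partial A)\ge\mathfrak d_{min}^0>0$ holds, and one has the quasi-decay
\[
E_{tot}(t)\le 3E_0\,e^{-\beta_0 t}+\frac{C_{geo}\lambda_0^2}{\beta_0}\qquad\forall t\ge0.
\]
Coupled with the coercivity bound \eqref{hp_H}, this yields a time $T_0=T_0(E_0,\lambda_0)$ such that for $t\ge T_0$ the whole state $(u,h,\theta,h',\theta')$ sits in a ball of size $C\lambda_0$ in the natural energy norm, thus reducing the long-time problem to an asymptotic stability analysis near the equilibrium $(U_{eq},0,0)$.

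To implement this stability analysis, I would compare $(U,h,\theta)$ with the steady state $(U_{eq},P_{eq})$ of Theorem \ref{th:stationary}, which lives on the fixed domain $\Omega_{eq}=A\setminus B_{eq}$. To overcome the domain mismatch, I construct a smooth family of area-preserving diffeomorphisms $\Phi_t\colon A\to A$ equal to the rigid motion $x\mapsto h(t)\widehat e_2+Q(\theta(t))x$ on a neighbourhood of $B_{eq}$ and equal to the identity outside a larger compact neighbourhood; the uniform gap $\mathfrak d_{min}^0$ makes this construction available for all $t$. Pushing $U_{eq}$ forward through $\Phi_t$ (with the Piola-type transform that preserves the divergence-free condition) defines a time-dependent vector field $V_{eq}(t,\cdot)\in H^1_{loc}(\Omega(t))$ that coincides with $v_p$ at spatial infinity, vanishes on $\partial B(t)\cup\Gamma$, and satisfies a Navier--Stokes equation on $\Omega(t)$ with an explicit right-hand side $R(t,\cdot)$ that is linear in $(h,\theta,h',\theta')$ up to quadratic remainders, controlled via the space regularity of $U_{eq}$ inherited from \cite{BoGaGa}.

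Setting $r=U-V_{eq}=u+s-V_{eq}$, I would then derive the PDE satisfied by $r$, which is the Navier--Stokes system on $\Omega(t)$ coupled to Newton's laws, with $r$ taking the rigid value $h'\widehat e_2+\theta'(x-h\widehat e_2)^\perp$ on $\partial B(t)$, zero on $\Gamma$, and source term $-R$. I would then perform an energy estimate on the triplet $(r,h,\theta)$, combining
\begin{itemize}
\item the viscous dissipation $\int|D(r)|^2$, which, by the uniform Poincar\'e / trace inequality \eqref{eq_dissipationsolide}, controls $|h'|^2+|\theta'|^2+\|r\|_{L^2}^2$;
\item the potential energy contribution $H(h,\theta)$, recalling that \eqref{eq:st_pbc-intro} ensures $\nabla H(0,0)=0$ so that $H_h(h,\theta),H_\theta(h,\theta)$ can be absorbed as quadratic in $(h,\theta)$;
\item the modulated Haraux-type functional from Section~\ref{sec:distance2}, namely adding $\omega(mhh'+J\theta\theta'+\int r\cdot w\,dx)$ as in \eqref{eq_modifenergy}, to extract a dissipative term in the potential energy itself.
\end{itemize}
The resulting differential inequality reads, schematically,
\[
\frac{d}{dt}\mathcal E_\omega(t)+\sigma\omega\,\mathcal E_\omega(t)\le C_{dyn}\lambda_0\,\mathcal E_\omega(t)+C_{dyn}\lambda_0^2\,\mathcal E_\omega(t)^{1/2},
\]
where $\mathcal E_\omega$ controls $\|r\|_{L^2}^2+m|h'|^2+J|\theta'|^2+H(h,\theta)$. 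Under the assumed smallness $2C_{dyn}\lambda_0\le\sigma\omega_0^-$, the right-hand side is absorbed and a Gr\"onwall argument delivers $\mathcal E_\omega(t)\to 0$, i.e. all four limits stated in the theorem.

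The main obstacle lies in estimating the various transfer and remainder terms produced by the change of variables, in particular the convective terms $(U\cdot\nabla)V_{eq}$ and $(V_{eq}\cdot\nabla)r$ together with the boundary contributions on the moving interface $\partial B(t)$. Far from the solid, these are handled using the space regularity of $U_{eq}$ and the explicit Lipschitz dependence of $\Phi_t$ on $(h,\theta)$. Near $\partial B(t)$, where $V_{eq}$ becomes singular in the gap as $\mathfrak d(t)\to\mathfrak d_{min}^0$, the crucial tool is the Galilean-type invariance of the Navier--Stokes equations: moving to the non-inertial frame attached to the solid eliminates the pointwise growth and allows to control the fluxes via the refined trace inequality of Corollary~\ref{cor:finer-trace}. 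Together with the uniform gap estimate already provided by Theorem~\ref{th:uniform_distance}, this closes the estimate and completes the proof.
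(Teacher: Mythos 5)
Your plan follows essentially the same route as the paper's proof: use Theorem \ref{th:uniform_distance} to reduce to a neighbourhood of the equilibrium, transport $U_{eq}$ into the moving geometry via a stream-function pullback through a diffeomorphism equal to the rigid motion near $B_{eq}$ and the identity far away (the paper's Lemma \ref{lem:change-var} and the field $v_{eq}=u^{[h,\theta]}_{eq}$), perform a weak-strong comparison on $\tilde u = v_{eq}-u$ with a Haraux-type correction $\tilde E_\omega$ as in Section \ref{sec:distance2}, and conclude by Gr\"onwall under the smallness assumption on $\lambda_0$. One point is slightly off in your justification, though. Once $t\ge t_0$ the solid is within $\eta_0$ of the rest position, so the gap is close to $L-e$ and bounded away from $\mathfrak d_{min}^0$; the transported field $V_{eq}$ is smooth there with no gap singularity, and Corollary \ref{cor:finer-trace} plays no role in this section (it is a tool used only for the collision analysis of Section~\ref{sec:distance2}). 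What the Galilean invariance of the Navier--Stokes equations actually delivers here (see Lemma \ref{lemma:u_eq}) is not control of a singular flux but exact algebraic identities in the neighbourhood $\mathcal O_0$ of $B_{eq}$ where the change of variables is an isometry: the transported source $\hat g^{[h,\theta]}$ is a rotation of $\hat g^-$, the transported torque vanishes, and the transported vertical force $F_{eq}^{[h,\theta]}$ is $O(\lambda_0|\theta|)$. These are precisely the smallness estimates needed so that, after cancelling the source of the steady problem against $\hat g^-$ in the weak formulation, the Haraux functional $\tilde E_\omega$ closes into a forcing-free dissipation inequality rather than the $C\lambda_0^2\mathcal E_\omega^{1/2}$ form you wrote (which would still work, but the paper's identity is cleaner).
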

Theorem \ref{leray} implies Theorem \ref{lerayintro} {with $p_0=2\lambda_0/L^2\le {p}_*^{(2)}$, where the threshold ${p}_*^{(2)}$ is deduced from the restriction imposed on $\lambda_0$}.
In this statement, we implicitly extended $U_{\rm eq}$ by $0$ on $B_{eq}.$ We point out that $U(t),U_{\rm eq} \in L^2_{loc}(A) \cap (v_P + L^2(|x_1| \geq {2}))$ for all times and consequently,  we have indeed $U(t)-U_{\rm eq} \in L^2(A).$
\begin{remark}\label{remark_viscosity3}
An analogous comment to Remark \ref{remark_viscosity2} holds also for Theorem \ref{leray} as the thresholds  are of the same type.
\end{remark}

The remainder of this section is devoted to the proof of Theorem \ref{leray}.  We split our approach into three parts. Firstly, we recall properties of the solutions $(U_{eq},P_{eq}).$ We also introduce a change of variables that enables to transfer $u_{eq} = U_{eq} - s^{-}$ into \textit{any} moving geometry. We can then apply this construction to the geometry associated to
a weak solution $(u,h,\theta)$ to \eqref{eq:hom_pb1}-\eqref{eq:ODE_newref1} -- once ensured that the ellipse $B(t)$ remains close to the stationary position $B_{eq}$ for large times -- by  building a vector-field ${v}_{eq}$ that we are allowed to compare with the velocity field $u$ of the weak solution.   Secondly, we provide a dissipation estimate on the difference $\tilde{u} =  {v}_{eq}-u$ in the spirit of Proposition \ref{thm_dissipation 1}. Such estimate includes a control on $(h,\theta)$ that shows a return to the equilibrium position of the ellipse making $v_{eq}$ converge to $u_{eq}$ asymptotically and thus the exponential decay of the difference $\tilde{u}$ provides the expected result.

\subsection{Moving the stationary solution into the time-depending geometry.}

Assume $\lambda_0 \leq \lambda_0^{(2)}$ and let $(U_{eq},P_{eq})$ be the unique $H^1_{loc}(\Omega_{eq}) \times L^2_{loc}(\Omega_{eq})$ solution to \eqref{eq:st_pb}. Let us introduce $u_{eq} = U_{eq} - s^{-}$,  $p_{eq}= P_{eq} - \pi_{p}.$ With computations similar to those of Section \ref{sec:preliminaries}, we observe that $(u_{eq},p_{eq})$ is a solution to 
\begin{equation} \label{eq:st_pb2}
\begin{aligned}
& 
\left\{
\begin{aligned}
& -\,\Delta u_{eq}+(u_{eq}\cdot \nabla)u_{eq}+ (u_{eq} \cdot \nabla) s^{-} + (s^{-} \cdot \nabla) u_{eq} + \nabla p_{eq}= \hat{g}^{-} \, && \text{in} \ \ \Omega_{eq}  \, , \\
& \nabla \cdot\, u_{eq}=0 && \text{in} \ \ \Omega_{eq}  \, , \\[3pt]
& u_{eq}=0, && \text{on} \ \ \partial B_{eq} \cup \Gamma \, , \\[3pt]
& \lim_{|x_1|\to\infty} u_{eq}(x_1,x_2) = 0,  && \forall x_{2} \in [-L,L] \, , 
\end{aligned}
\right.
\\
& \left\{
\begin{aligned}
& 0 =-\widehat{e}_2\cdot\int_{\partial B_{eq}} \Sigma(u_{eq},p_{eq})\widehat{n} \, d\sigma \, , \\[3pt]
& 0 =-\int_{\partial B_{eq}} x^{\perp} \cdot \Sigma(u_{eq},p_{eq})\widehat{n} \, d\sigma \,,
\end{aligned}
\right.
\end{aligned}
\end{equation}
where $\hat{g}^-$ is similarly constructed from $s^{-}$ as $\hat{g}$ is derived from $s$ in Lemma \ref{lemma:extension}. 
In particular, we recall that ${\rm supp}(\hat{g}^{-}) \subset A_0.$ 
By classical elliptic regularity \cite{galdi}, we remark that $u_{eq}$ and $p_{eq}$ are smooth on $\overline{\Omega}_{eq}$ with all norms controlled by $C_{geo}\,\lambda_0$.

\medskip

Our next aim is to define a change of variables that brings the steady solution $u_{eq}$ in the time-dependent geometry. In the construction, we first define a mapping that brings the equilibrium position $B_{eq}$ to a given position $B(h,\theta)$, assuming 
\begin{equation}\label{eq:h-theta-petits} 
\max(|h|,|\theta|)\le \eta_0, 
\end{equation}
with $\eta_0$ small. We will then apply this construction with $(h,\theta)=(h(t),\theta(t))$ associated to a given weak solution. 

For the construction, we need to set up some notations. First, we define two neighbourhoods $\mathcal O,\mathcal O'\subset A_0$ such that 
\begin{itemize}
\item  $\mathcal O\subset\mathcal O'$;
\item $\overline{B_{eq}}\subset\mathcal O$;
\item $\mathcal O'\cap{\rm supp}(s^-)=\emptyset$. 
\end{itemize}
This is possible since $B_{eq}\subset A_0$ and  $B_{eq}\cap{\rm supp}(s^-)=\emptyset$.

\begin{lemma}\label{lem:change-var}
There exist $\eta_0>0$ and a $\mathcal{C}^{1}$-mapping  $X : [-\eta_0,\eta_0]^2 \to \mathcal{C}^{3}(A)$ satisfying
\begin{itemize}
\item[(i)] $X[0,0]$ is the identity,
\end{itemize}
and, for all $(h,\theta)$ satisfying \eqref{eq:h-theta-petits},  
\begin{itemize}
\item[(ii)] $X[h,\theta]$ is a $\mathcal{C}^3$-diffeomorphism of $\Omega$,
\item[(iii)] $X[h,\theta](x) = h\widehat{e}_2 + Q(\theta) x$ in $\mathcal O$, $X[h,\theta]$ is the identity outside $\mathcal O'$,
\item[(iv)] $X[h,\theta](\mathcal O)\subset \mathcal O'$ and $X[h,\theta]^{-1}(\mathcal O)\supset B_{eq}$.
\end{itemize}
\end{lemma}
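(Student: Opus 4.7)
The plan is to exhibit $X[h,\theta]$ explicitly as a cutoff-based interpolation between the rigid motion $x\mapsto h\widehat e_2+Q(\theta)x$ (which by construction carries $B_{eq}$ onto $B(h,\theta)$) and the identity outside~$\mathcal O'$. Concretely, I would fix once and for all a cutoff $\chi\in\mathcal{C}^{\infty}(A)$ with $\chi\equiv 1$ on $\mathcal O$ and $\mathrm{supp}\,\chi\subset\mathcal O'$ (possible because $\overline{\mathcal O}\subset \mathcal O'\Subset A$), and then set
$$X[h,\theta](x)\doteq x+\chi(x)\bigl[h\,\widehat e_2+(Q(\theta)-\mathbb I)x\bigr],\qquad x\in A.$$
Property (i) is immediate since $X[0,0]=\mathrm{id}$. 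Property (iii) is built in: on $\mathcal O$ one has $\chi\equiv 1$ so the formula reduces to $x\mapsto h\widehat e_2+Q(\theta)x$, whereas outside $\mathcal O'$ we have $\chi\equiv 0$ so $X[h,\theta]=\mathrm{id}$. Smoothness of $X[h,\theta]\in\mathcal{C}^3(A)$ and $\mathcal{C}^1$-dependence on $(h,\theta)$ follow at once from the smoothness of $\chi$ and the analyticity of $\theta\mapsto Q(\theta)$.

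Next, for (ii), I would apply the inverse function theorem. A direct computation yields
$$DX[h,\theta](x)=\mathbb I+\chi(x)\bigl(Q(\theta)-\mathbb I\bigr)+\nabla\chi(x)\otimes\bigl[h\,\widehat e_2+(Q(\theta)-\mathbb I)x\bigr].$$
Because $\chi$ and $\nabla\chi$ are compactly supported and $x$ is bounded on $\mathrm{supp}\,\nabla\chi$, the correction is $O(|h|+|\theta|)$ uniformly in $x$. Choosing $\eta_0$ small enough thus makes $DX[h,\theta](x)$ invertible everywhere, hence $X[h,\theta]$ a local diffeomorphism. To promote this to a global diffeomorphism of $A$, I invoke the standard fact that a proper local diffeomorphism of a connected manifold that coincides with the identity outside a compact set and is $\mathcal{C}^1$-close to it is a global $\mathcal{C}^3$-diffeomorphism (by invariance of domain together with a simple degree/connectedness argument).

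Finally, (iv) is a direct inclusion check. Since $B_{eq}\Subset\mathcal O$ and $\overline{\mathcal O}\subset\mathcal O'$, there exist $\delta_1,\delta_2>0$ such that the $\delta_1$-neighbourhood of $B_{eq}$ lies in $\mathcal O$ and the $\delta_2$-neighbourhood of $\mathcal O$ lies in $\mathcal O'$. Since $X[h,\theta]$ acts as the rigid motion on $\mathcal O$, we have $X[h,\theta](B_{eq})=B(h,\theta)$ and $X[h,\theta](\mathcal O)=h\widehat e_2+Q(\theta)\mathcal O$; both differ in Hausdorff distance from $B_{eq}$, respectively $\mathcal O$, by at most $|h|+\mathrm{diam}(\mathcal O)\,|Q(\theta)-\mathbb I|$, which is controlled by $C(|h|+|\theta|)$. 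Taking $\eta_0$ smaller than a suitable multiple of $\min(\delta_1,\delta_2)$ then gives $X[h,\theta](B_{eq})\subset \mathcal O$ (equivalently $X[h,\theta]^{-1}(\mathcal O)\supset B_{eq}$) and $X[h,\theta](\mathcal O)\subset \mathcal O'$.

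The only genuine difficulty here is purely bookkeeping: one must choose $\eta_0$ small enough so that the three smallness conditions --- invertibility of $DX[h,\theta]$, the inclusion $X[h,\theta](\mathcal O)\subset\mathcal O'$, and $B(h,\theta)\subset\mathcal O$ --- hold simultaneously, and so that the global diffeomorphism argument applies. None of these conditions competes with the others, so a single threshold $\eta_0$ suffices, and the rest of the proof is routine.
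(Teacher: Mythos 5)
Your proposal matches the paper's method: define $X[h,\theta]$ by interpolating between the rigid motion and the identity via a fixed cutoff supported in $\mathcal O'$ and equal to one on $\mathcal O$, then conclude that for $\eta_0$ small the map is a $\mathcal C^1$-small perturbation of the identity and hence a global $\mathcal C^3$-diffeomorphism satisfying (i)--(iv). The only cosmetic difference is that the paper writes $X[h,\theta](y)=\xi(y)h\widehat e_2+Q(\xi(y)\theta)\,y$, cutting off the rotation \emph{angle}, whereas you cut off the full affine displacement; both constructions are equivalent for the purpose of the lemma and are verified in the same way.
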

\begin{proof}
The proof is standard and can be done by defining $X[h,\theta]$ by 
\begin{equation}
X[h,\theta](y) = \xi(y)h\widehat{e}_2 + Q(\xi(y)\theta) y,
\end{equation}
where $\xi$ is a cut-off function such that $\xi=1$ inside $\mathcal O$ and $\xi=0$ outside $\mathcal O'$. Fixing $\eta_0$ small enough allow to show that $X[h,\theta]$ is basically a perturbation of the identity so that assertions (ii)-(iv) hold. 
\end{proof}
In the sequel, we set $Y[h,\theta] = X[h,\theta]^{-1}$. In particular, 
$$Y[h,\theta](x) = Q(\theta)^T(x-h\widehat{e}_2),$$
for $x\in X[h,\theta](\mathcal O)$ and 
\[
X[h,\theta](B_{eq}) = B(h,\theta).
\]

Given $(h,\theta) \in [-\eta_0,\eta_0]^2,$ we now transform $u_{eq}$ into a divergence-free vector-field defined on the fluid domain
 $A\setminus B(h,\theta)$ by setting 
 $${u}^{[h,\theta]}_{eq}(x) \doteq \nabla^\perp(\psi_{eq}\circ Y[h,\theta])(x),$$ 
where $\psi_{eq}$ is the stream function associated to ${u}_{eq}$ (that we denote by $(u_1,u_2)$ in the next identity), i.e. 
\begin{equation} \label{eq_formulauhs}
{u}^{[h,\theta]}_{eq}(x) = 
\begin{pmatrix}
 \partial_2 Y_2[h,\theta](x)u_1 (Y[h,\theta](x))  - \partial_2 Y_1[h,\theta](x) u_2(Y[h,\theta](x)) \\
 - \partial_1 Y_2[h,\theta](x)u_1(Y[h,\theta](x)) + \partial_1 Y_2[h,\theta](x) u_2(Y[h,\theta](x))
\end{pmatrix}.
\end{equation}
Correspondingly, we define the transformed pressure 
$$p^{[h,\theta]}_{eq}(x) = p_{eq}(Y[h,\theta](x)).$$

With such notations, we have the following lemma 
\begin{lemma}\label{lemma:u_eq}
Let $\eta_0 > 0$ be as in Lemma \ref{lem:change-var}. There exists $C_{geo}>0$ (depending only on $e$, $L$ and $\eta_0$) such that
given $(h,\theta) \in (-\eta_0,\eta_0)^2$ we have: 
\[
\begin{aligned}
& 
\left\{
\begin{aligned}
& -\,\Delta u^{[h,\theta]}_{eq}+ \nabla p_{eq}^{[h,\theta]} +(u_{eq}^{[h,\theta]}\cdot \nabla)u^{[h,\theta]}_{eq}+ (u_{eq}^{[h,\theta]} \cdot \nabla) s^{-} + (s^{-} \cdot \nabla) u^{[h,\theta]}_{eq}  \\&= \hat{g}^{[h,\theta]} \,\quad\text{in}\,\quad A \setminus \overline{B}(h,\theta)    , \\
& \nabla \cdot\, u^{[h,\theta]}_{eq}=0 \qquad \text{in} \ \ A\setminus \overline{B}(h,\theta)   , \\[3pt]
& u^{[h,\theta]}_{eq}=0, \qquad \text{on} \ \ \partial B(h,\theta) \cup \Gamma  , \\[3pt]
& \lim_{|x_1|\to\infty} u^{[h,\theta]}_{eq}(x_1,x_2) = 0,  \qquad \forall x_{2} \in [-L,L] \, , 
\end{aligned}
\right.
\\
& \left\{
\begin{aligned}
& F_{eq}^{[h,\theta]}  = \widehat{e}_2\cdot\int_{\partial B(h,\theta)} \Sigma(u^{[h,\theta]}_{eq},p_{eq}^{[h,\theta]})\widehat{n} \, d\sigma \, , \\[3pt]
& 0 = \int_{\partial B(h,\theta)} (x- h \widehat{e}_2 )^{\perp} \cdot \Sigma(u^{[h,\theta]}_{eq},p_{eq}^{[h,\theta]})\widehat{n} \, d\sigma  .
\end{aligned}
\right.
\end{aligned}
\]
where:
\begin{itemize}
\item[(i)] $|F_{eq}^{[h,\theta]}| \leq C_{geo}\, \lambda_0|\theta|$
\item[(ii)] ${\rm supp}(\hat g^{[h,\theta]}) \subset A_0$ and 
\[
\|\hat g^{[h,\theta]} - \hat g^{-}\|_{L^{\infty}(A_0 \setminus B(h,\theta))} \leq C_{geo}\lambda_0 (|h| + |\theta|).
\] 
\end{itemize}
\end{lemma}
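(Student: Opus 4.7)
The plan is to verify each claim by tracking the change of variables $X[h,\theta]$ and exploiting its piecewise structure from Lemma \ref{lem:change-var}: identity outside $\mathcal O'$, exact rigid motion $x\mapsto h\widehat e_2+Q(\theta) x$ on $\mathcal O$, and smooth interpolation on $\mathcal O'\setminus \mathcal O$. The structural properties are immediate: divergence-freeness of $u^{[h,\theta]}_{eq}$ follows from its definition via the stream function $\psi_{eq}\circ Y[h,\theta]$; outside $\mathcal O'$ the map is the identity so $u^{[h,\theta]}_{eq}=u_{eq}$ inherits the vanishing at infinity and on $\Gamma$; on $\partial B(h,\theta)=X[h,\theta](\partial B_{eq})$, which sits in $\mathcal O$, the rigid isometry transports $u_{eq}|_{\partial B_{eq}}=0$ to zero.

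The PDE is obtained by applying the chain rule to the equation \eqref{eq:st_pb2} satisfied by $(u_{eq},p_{eq})$. Inside $\mathcal O$, since $s^-$ vanishes and $X$ is a rigid motion, the Navier–Stokes operator is covariant: one recovers exactly the left-hand side claimed in the statement, with source equal to the rotated $\widehat g^-|_{\mathcal O}=p_0\widehat e_1$, namely $Q(\theta)p_0\widehat e_1$. Outside $\mathcal O'$ the transformation is trivial and the equation is unchanged with source $\widehat g^-$. In the transition annulus $\mathcal O'\setminus\mathcal O$, the non-rigid parts of $DX[h,\theta]$ produce commutator terms with the Laplacian, the advection and the pressure gradient; all of these vanish identically at $(h,\theta)=(0,0)$ and, thanks to the $\mathcal C^1$-regularity stated in Lemma \ref{lem:change-var} together with the $W^{2,\infty}$-bound $\|u_{eq}\|+\|p_{eq}\|\lesssim \lambda_0$ from elliptic regularity on \eqref{eq:st_pb2}, depend Lipschitz-continuously on $(h,\theta)$. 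Absorbing these terms and the contribution $(Q(\theta)-I)p_0\widehat e_1$ into $\widehat g^{[h,\theta]}$ gives a source supported in $\mathcal O'\cup\mathrm{supp}(\widehat g^-)\subset A_0$ and yields (ii) by a first-order Taylor expansion in $(h,\theta)$.

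For the boundary quantities, since $\partial B(h,\theta)\subset \mathcal O$, on a neighbourhood of this boundary the transformation is an exact isometry. Using $\Sigma(u^{[h,\theta]}_{eq},p^{[h,\theta]}_{eq})(x)=Q(\theta)\Sigma(u_{eq},p_{eq})(Y[h,\theta](x))Q(\theta)^\top$, $\widehat n_{B(h,\theta)}(x)=Q(\theta)\widehat n_{B_{eq}}(Y[h,\theta](x))$ and the isometric invariance of the surface measure, the change of variables gives
\[
\int_{\partial B(h,\theta)}\Sigma(u^{[h,\theta]}_{eq},p^{[h,\theta]}_{eq})\widehat n\,d\sigma=Q(\theta)\int_{\partial B_{eq}}\Sigma(u_{eq},p_{eq})\widehat n\,d\sigma.
\]
The second factor has vanishing second component by \eqref{eq:st_pbc}, while its first component (the drag) is bounded by $C_{geo}\lambda_0$ thanks to the a priori estimate on $(u_{eq},p_{eq})$. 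Projecting on $\widehat e_2$ therefore yields $F^{[h,\theta]}_{eq}=\sin(\theta)\,\widehat e_1\!\cdot\!\int_{\partial B_{eq}}\Sigma(u_{eq},p_{eq})\widehat n\,d\sigma$, hence (i). For the torque, writing the moment arm on $\partial B(h,\theta)$ as $(x-h\widehat e_2)^\perp=Q(\theta)(Y[h,\theta](x))^\perp$ and using orthogonality of $Q(\theta)$, one checks that the integrand is preserved under the change of variables, reducing the expression to the original torque in \eqref{eq:st_pbc}, which vanishes.

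The main technical obstacle is the careful bookkeeping in the transition region $\mathcal O'\setminus\mathcal O$: all lower-order terms coming from $\Delta$, $\nabla$ and the advection operator under the non-rigid parts of $X[h,\theta]$ must be gathered into $\widehat g^{[h,\theta]}$ and estimated uniformly, using only the smoothness of $X[h,\theta]$ and the $\mathcal O(\lambda_0)$-control on $(u_{eq},p_{eq})$, to produce the linear bound in $(|h|+|\theta|)$.
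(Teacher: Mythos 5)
Your proof follows essentially the same route as the paper's: split $A$ into a far region where $X[h,\theta]$ is the identity, a near-body region where it is an exact rigid motion and Galilean covariance gives the transformed source $Q(\theta)p_0\widehat e_1$ and the rotated force/torque integrals, and an interpolation annulus where the defect is estimated via the $\mathcal C^1$-dependence of $X[h,\theta]$ on $(h,\theta)$ together with the $\mathcal O(\lambda_0)$ regularity of $(u_{eq},p_{eq})$. One small point to tighten: the rigid-motion covariance argument must be applied on a slightly shrunk neighbourhood $\mathcal O_0\Subset\mathcal O$ of $B_{eq}$ (chosen so that $Y[h,\theta](\mathcal O_0)\subset\mathcal O$ uniformly for $|h|,|\theta|\le\eta_0$), not all of $\mathcal O$, since one needs the isometry to hold at both $x$ and $Y[h,\theta](x)$; the paper introduces precisely such an $\mathcal O_0$ for this reason, while your version would need the same adjustment.
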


Before giving a proof of our result, we recall that $\hat{g}^{-}$ as is defined in item (v) of Lemma \ref{lemma:extension} is defined all over $A.$ Assertion (ii) of the previous statement is then not ambiguous.

\begin{proof}
We will work differently inside and outside $\mathcal O$. Let 
$$\varepsilon_0={\rm dist}(B_{eq},A_0\setminus\mathcal O)>0$$
and set 
$$ \mathcal O_0 = \left\{x\in \mathcal O : {\rm dist}(x,B_{eq})< \frac{\varepsilon_0}{4}\right\}.$$
Observe we can assume 
$$|Y[h,\theta](x)-y|< \frac{\varepsilon_0}{4},$$
for all $x\in A$ by taking $\eta_0$ small enough.

Recall that $\hat{g}^{[0,0]}= \hat g^{-}$. Hence, for $x\in A\setminus A_0$, we remark that $Y[h,\theta](x)= x = Y[0,0](x)$ which 
implies that $\hat g^{[h,\theta]}(x) - \hat g^{-}(x) = \hat g^{[0,0]}(x) - \hat g^{-}(x) =0$.

If $x\in A_0\setminus\mathcal O_0$, then $Y[h,\theta](x)\in B(x,{\varepsilon_0}/{4})\subset A\setminus B_{eq}$.
Since $u_{eq}$ is smooth in $A_0\setminus B_{eq}$ and $(h,\theta) \to Y[h,\theta]$ is $\mathcal{C}^1 : [-\eta_0,\eta_0]^2 \to \mathcal{C}^3(\overline A_0)$, we can estimate, with standard chain rule arguments, all quantities involved in $\hat{g}^{[h,\theta]}- \hat g^{-}$ to get 
\[
\|\hat g^{[h,\theta]} - \hat g^{-}\|_{L^{\infty}(A_0 \setminus \mathcal O_0)} \leq C_{geo}\, \lambda_0(|h| + |\theta|).
\] 
It remains to treat the case where $x\in \mathcal O_0\setminus \overline{B_{eq}}$ and to compute the forces. 
For $x\in \mathcal O_0\setminus \overline{B_{eq}}$, $y=Y[h,\theta](x)\in \mathcal O$ and therefore both $X[h,\theta](y)$ and $Y[h,\theta](x)$ are isometries. Since $s^-(x)=0$ and $\hat g^-(x)= \lambda_0\widehat{e}_1$, we infer from the Galilean invariance of the Navier-Stokes equations that $\hat g^{[h,\theta]}(x) = \lambda_0 Q(-\theta)\widehat{e}_1$ closing the proof of assertion (ii).

Arguing similarly, still by Galilean invariance, we observe that 
$$\int_{\partial B(h,\theta)} (x- h \widehat{e}_2 )^{\perp} \cdot \Sigma(u^{[h,\theta]}_{eq},p_{eq}^{[h,\theta]})\widehat{n} \, d\sigma = \int_{\partial B_{eq}} x^{\perp} \cdot \Sigma(u_{eq},p_{eq})\widehat{n} \, d\sigma = 0,$$
and 
$$F_{eq}^{[h,\theta]}
= Q(-\theta)\widehat{e}_2 \cdot\int_{\partial B_{eq}} \Sigma(u_{eq},p_{eq})\widehat{n} \, d\sigma = 
\left(Q(-\theta)\widehat{e}_2 - \widehat{e}_2\right) \cdot\int_{\partial B_{eq}} \Sigma(u_{eq},p_{eq})\widehat{n} \, d\sigma.$$
We then deduce assertion (i) using the regularity of $(u_{eq},p_{eq})$.
\end{proof}

\subsection{Proof of Theorem \ref{leray}:  stability estimate}
Let $(u,h,\theta)$ be a weak solution to \eqref{eq:hom_pb1}-\eqref{eq:ODE_newref1}. Under the assumptions of Theorem \ref{th:uniform_distance}, we infer from \eqref{uniform_estimate_en-section5} and \eqref{hp_H} that 
$$|h(t)|^2+|\theta(t)|^2\le \frac{2}{\bar\varrho} H(h(t),\theta(t))\le E_{tot}(t)\le  3E_0\,e^{-\beta_0 t}+  \frac{C_{geo}\lambda_0^2}{\beta_0},$$
for all $t\ge 0$, yielding the existence of  $t_0>0$ and $\lambda_0(\eta_0)>0$ (depending decreasingly on $E_0$ and on $L,e$), such that 
\begin{equation}\label{Remark}
\max(|h(t)|,|\theta(t)|)\le \eta_0\,
\end{equation}
for $t\ge t_0$ whenever $\lambda\in[0,\lambda_0(\eta_0)]$. This does not yet prove the convergence to equilibria since the solid could still oscillate around the rest position. Since the problem is autonomous, we will assume in the sequel that $t_0=0$. We can also suppose  that $\lambda_0(\eta_0)$ is chosen in such a way that \eqref{uniform_estimate_en-section5} implies 
\begin{equation}\label{bounded_L2}
\sup_{t\in(0,\infty)}\|u(t)\|_{L^2(\Omega(t))}\le 4 E_0\,,
\end{equation}
whenever $\lambda\in[0,\lambda_0(\eta_0)]$.

To prepare the proof of {Theorem \ref{leray}} and to shorten notations, we denote $u^{[h(t),\theta(t)]}_{eq}(x)$ by $v_{eq}(t,x)$ from now on, and we define 
$$\tilde u(t,x) = v_{eq}(t,x)-u(t,x),$$
for $t\ge 0$ and $x\in \Omega(t)$. 
Observe that $v_{eq}$ is smooth in the space variables on $(0,\infty).$ We denote below: 
\begin{equation} \label{eq_ueta2}
\left\{
\begin{aligned}
 &- \Delta v_{eq}(t) + \nabla q_{eq}(t)  + (v_{eq}(t) \cdot \nabla) v_{eq}(t) + (s^{-} \cdot \nabla) v_{eq}(t)  + (v_{eq}(t) \cdot \nabla) s^{-}\\[2pt]& =  {g}_{eq}(t)\,,\quad
 \text{ in $\Omega(t)$}\,,\\[2pt]&\nabla\cdot v_{eq}(t)=0\,, \quad \text{in $\Omega(t)$}\\[2pt]&
 \widehat{e}_2\cdot\int_{\partial B(t)} \Sigma(v_{eq}(t),q_{eq}(t))\widehat{n} \, d\sigma = F_{eq} (t)\, 
 \, , \\[3pt]&
  \int_{\partial B(t)} (x- h(t) \widehat{e}_2 )^{\perp} \cdot \Sigma(v_{eq}(t),q_{eq}(t))\widehat{n} \, d\sigma  = 0\,,
\end{aligned}
\right.
\end{equation}
where $q_{eq}(t) = p^{[h(t),\theta(t)]}_{eq}$ and ${g}_{eq}(t) = \hat{g}^{[h(t),\theta(t)]},$ $F_{eq}(t) = F_{eq}^{[h(t),\theta(t)]}$ for all $t \ge 0.$ With obvious notations, we also have that
\[
\partial_t v_{eq} = {h}' \partial_h u^{[h,\theta]} + {\theta}' \partial_{\theta} u^{[h,\theta]}\,.
\]
The function $v_{eq}$ gains the $W^{1,\infty}$ regularity of $h$ and $\theta$ in time. In particular, observe that $v_{eq}\in W^{1,\infty}((0,\infty) \times A)$ and $\partial_t v_{eq}=0$ outside $\mathcal O'$ and, thanks to the regularity of $Y[h,\theta]$ and $u_{eq}$ (see \cite[Section VI.2]{galdi} for the decay properties of $u_{eq}$ outside $A_0$), there exists a constant $C_{geo}$
for which:
\begin{equation} \label{eq_veq}
\|\partial_t v_{eq}(t) \|_{L^{\infty}(A_0)} \leq C_{geo} \lambda_0 (|{h}'(t)| + |{\theta}'(t)|) \,, \qquad
\|\nabla v_{eq}(t)\|_{L^{\infty}(A)} \leq C_{geo} \lambda_0\,, 
\end{equation}
for all $t \ge 0.$
\medbreak

We can then use a weak-strong argument to compare $u(t)$ and $v_{eq}$ for large times.  To this aim, we introduce 
\[
\tilde{E}(t)\doteq\frac{1}2\|\tilde{u}(t)\|^2_{L^2(\Omega(t))}+ \frac{m}2|h'(t)|^2+\frac{J}2|\theta'(t)|^2+H(\theta(t),h(t))\qquad \forall \, t \geq 0.
\]
As previously,  we will also use a perturbed version of $\tilde{E}$, namely 
\[
  \tilde{E}_\omega(t)\doteq \tilde{E}(t)+\omega\left(m\, h(t)h'(t)+J\,\theta(t)\theta'(t)+\int_{\Omega(t)}\tilde{u}(t)\cdot w(t)\,dx\right)\,,
\]
where $\omega\in (0,1)$ will be chosen small enough in order to derive a dissipation estimate in the spirit of Proposition \ref{thm_dissipation 1}.
Similarly to Lemma \eqref{prop_boundsE}, with the further remark that in view of \eqref{Remark} the distance between $B(t)$ and $\partial A$ is now uniformly bounded from below by $1$,  we have 
\begin{equation} \label{eq_boundsEeta}
 \dfrac{\tilde{E}(t)}{2} \leq \tilde{E}_\omega(t) \leq \dfrac{3}{2} \tilde{E}(t)\,,
\end{equation}
under the condition $\omega \leq \omega_0$ with $\omega_0$ depending only on dynamical parameters. 
Theorem \ref{leray} will be a direct consequence of the next proposition where we show the exponential decay of $\tilde{E}_\omega(t)$. 

\begin{proposition} \label{thm_dissipation2}
There exist $\omega_0^-,\sigma>0$ with the same dependencies as in Theorem \ref{th:uniform_distance}, and
 $C_{dyn}>0$ such that, assuming \eqref{condition_lambda0} and further that 
$$
2C_{dyn}\lambda_0\le \sigma \omega_0^-\,,
$$
then
\begin{equation} \label{eq_dissipation2}
\limsup_{t\to \infty} \tilde{E}_{\omega}(t) =0.
\end{equation}
\end{proposition}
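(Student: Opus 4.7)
The plan is to mimic the strategy of Proposition \ref{thm_dissipation 1}, but applied to the difference $\tilde{u} = v_{eq} - u$ rather than to $u$ itself. Since by \eqref{Remark} we already have $\max(|h(t)|,|\theta(t)|) \leq \eta_0$ for all $t \geq 0$ (we restart the clock if needed), the implicit source $\widehat{f}[h]$ and the difference $s - s^-$ vanish on the relevant domain, and $v_{eq}$ is defined and smooth with $\|\nabla v_{eq}\|_{L^\infty} \leq C_{geo}\lambda_0$. The idea is to derive an ``energy'' inequality for $\tilde{E}_\omega$ in which the only sources on the right-hand side are either quadratic in $\tilde{u}$ (and absorbed by dissipation when $\lambda_0$ is small) or proportional to $|h|+|\theta|$ (controlled via $H(h,\theta) \leq \tilde{E}_\omega$ thanks to \eqref{hp_H}), and then close the estimate by Gr\"onwall.

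First, I would derive a weak equation satisfied by $\tilde{u}$ by testing the weak formulation of $u$ against $\phi = v_{eq}$ (with rigid velocities $\ell = \alpha = 0$, admissible since $v_{eq} = 0$ on $\partial B(t)$) and combining with the energy identity for $u$, where $(\phi, \ell, \alpha) = (u, h', \theta')$ is used formally. Using the equations \eqref{eq_ueta2} satisfied by $v_{eq}$, one rewrites the nonlinear convective contributions as $(\tilde{u} \cdot \nabla) \tilde{u} + (\tilde{u} \cdot \nabla) v_{eq} + (v_{eq} \cdot \nabla) \tilde{u}$ plus the similar mixed terms with $s = s^-$. Since the trace of $\tilde{u}$ on $\partial B(t)$ equals $-(h'\widehat{e}_2 + \theta'(x-h\widehat{e}_2)^\bot)$, the Newton equations with the force identity of Lemma \ref{lemma:u_eq} fold in to give the formal identity
\begin{equation*}
\frac{d \tilde{E}}{dt} + 2\int_{\Omega(t)} |D(\tilde{u})|^2\,dx
= F_{eq}(t)\,(-h'(t)) + \mathcal{R}_{nl}(t) + \mathcal{R}_{src}(t),
\end{equation*}
where $\mathcal{R}_{nl}$ collects the convective cross-terms and the contribution of $\partial_t v_{eq}$, and $\mathcal{R}_{src}$ collects the source difference $g_{eq} - \widehat{g}$. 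Lemma \ref{lemma:u_eq} gives $|F_{eq}| \leq C_{geo}\lambda_0|\theta|$ and $\|g_{eq} - \widehat{g}\|_{L^\infty(A_0)} \leq C_{geo}\lambda_0(|h|+|\theta|)$; \eqref{eq_veq} gives $\|\partial_t v_{eq}\|_{L^\infty(A_0)} \leq C_{geo}\lambda_0(|h'|+|\theta'|)$. All these terms can be bounded by $C_{dyn} \lambda_0 (|h|+|\theta|+|h'|+|\theta'|)\|\nabla \tilde{u}\|_{L^2(A)}$ plus $C_{geo}\lambda_0\|\nabla \tilde{u}\|^2_{L^2(A)}$.

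Second, I would add the Haraux correction by testing the $\tilde{u}$-equation against $\omega(w,h,\theta)$ with the multiplier $w$ from \eqref{w} (admissible because the estimates \eqref{stime_w} and the lower bound $\mathfrak{d}_{min}^0$ of Theorem \ref{th:uniform_distance} still hold). This exactly reproduces the computation leading to \eqref{E_omega33}, but for $\tilde{u}$: the term $H_h h + H_\theta \theta$ produces $\omega \overline{\varpi} H(h,\theta)$ on the left via \eqref{hp_F}, while \eqref{eq_dissipationsolide} turns the viscous dissipation of $\tilde{u}$ into control of $m|h'|^2 + J|\theta'|^2 + \|\tilde{u}\|^2_{L^2}$. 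Choosing $\omega \leq \omega_0^-$ as in Proposition \ref{thm_dissipation 1} and using \eqref{eq_boundsEeta}, one arrives at an inequality of the form
\begin{equation*}
\frac{d \tilde{E}_\omega}{dt} + \sigma \omega\, \tilde{E}_\omega
\leq C_{dyn}\lambda_0 \left(\tilde{E}_\omega + \sqrt{\tilde{E}_\omega}\sqrt{H(h,\theta)}\right)
\leq 2 C_{dyn}\lambda_0 \,\tilde{E}_\omega ,
\end{equation*}
where $H(h,\theta) \leq \tilde{E}_\omega$ has been used to absorb the mixed $|h|+|\theta|$ contributions (coming from $F_{eq}$ and $g_{eq}-\widehat{g}$) by Young's inequality. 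Under the assumption $2C_{dyn}\lambda_0 \leq \sigma \omega_0^-$, the right-hand side is absorbed into the left, giving $\frac{d}{dt}\tilde{E}_\omega + (\sigma\omega/2)\tilde{E}_\omega \leq 0$, and Gr\"onwall forces $\tilde{E}_\omega(t) \to 0$ as $t \to \infty$.

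The main obstacle I anticipate is handling the source/force terms $F_{eq}(t)$ and $g_{eq}(t) - \widehat{g}(t)$: unlike Proposition \ref{thm_dissipation 1} where a $\lambda_0^2(t_2-t_1)$ residue was acceptable, here any constant residual in time would forbid convergence to zero. The key observation that saves the day is that these terms are proportional to $|h| + |\theta|$ (by Lemma \ref{lemma:u_eq}, a consequence of the symmetry of $B_{eq}$ which makes $F_{eq}^{[0,0]} = 0$), and hence to $\sqrt{H(h,\theta)} \leq \sqrt{\tilde{E}_\omega}$. This allows them to be absorbed into $\sigma \omega \tilde{E}_\omega$ via Young's inequality rather than leaving a $\lambda_0^2$ constant term, which is exactly why the smallness condition in the proposition involves $\lambda_0$ linearly (not quadratically) against $\sigma\omega_0^-$.
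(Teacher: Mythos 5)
Your proposal is correct and mirrors the paper's proof: combine the weak formulation tested against $v_{eq}$ and against $\omega(w,h,\theta)$ with the steady equation \eqref{eq_ueta2}, use the Haraux-perturbed functional $\tilde E_\omega$, and exploit that $F_{eq}$, $g_{eq}-\widehat g^-$, and $\partial_t v_{eq}$ all carry a factor $|h|+|\theta|$ or $|h'|+|\theta'|$ (Lemma \ref{lemma:u_eq} and \eqref{eq_veq}) so they can be absorbed into $\sigma\omega\tilde E_\omega$ and the $\tilde u$-dissipation via Young, leaving no constant residual. You correctly identified that this is precisely why the smallness condition is linear rather than quadratic in $\lambda_0$, which is the key structural point of the argument.
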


\begin{proof} 
Recalling the definition \eqref{eq_modifenergy} of $E_{\omega}$, we have:
\begin{equation} \label{eq_splitEomega}
\tilde{E}_\omega(t) = {E_\omega}(t) -  
 \int_{\Omega(t)} v_{eq}(t) \cdot u(t) + \frac12
\|v_{eq}(t)\|^2_{L^2(\Omega(t))}  - \omega \int_{\Omega(t)} v_{eq}(t) \cdot w(t)\, dx.
\end{equation}
We fix then $t_2 \geq t_1\ge 0$ and 
we use $\omega(w, h,\theta)$ as test in 
\eqref{eq:weak_gen} and \eqref{eq:energy_estimate_weak} to get, similarly to the previous section, that
\begin{equation}\label{E_omega2bis}
{
\begin{aligned}
&E_\omega(t_2) -  E_\omega(t_1) + \int_{t_1}^{t_2} \|\nabla u(\tau)\|^2_{L^2(A)}\, d\tau\\
& + \int_{t_1}^{t_2}\left(\omega \left[   H_h(h(\tau),\theta(\tau))h(\tau) +  H_\theta(h(\tau),\theta(\tau))\theta(\tau) - m{|h'(\tau)|}^2 - J{|\theta'(\tau)|}^2\right]\right)\, d\tau   \\[6pt]
= &
 \int_{t_1}^{t_2} \int_{\Omega(\tau)}
 [ \widehat{g}^-  - u  \cdot \nabla s^{-}] \cdot u \, dxd\tau +\omega\int_{t_1}^{t_2}\int_{\Omega(\tau)}u\cdot \partial_t w\,dxd\tau
  -\omega \int_{t_1}^{t_2} \int_{\Omega(\tau)}\nabla u:\nabla w\,dxd\tau \\
& + \omega\int_{t_1}^{t_2} \int_{\Omega(\tau)} \left(\widehat{g}^{-}-(u\cdot \nabla)s^- -(u\cdot \nabla)u - (s^-\cdot \nabla )u\right) \cdot w \, dxd\tau. \\ 
 \end{aligned}
  }
\end{equation}
We recall that since $|h(t)| \leq \eta_0 \leq 1/2$ on $(0,\infty)$, we have $\hat{f}[h(t)]=0$, $\hat{g}(t) = \hat{g}^{-},$ and $s(t)=s^-$ for $t \geq 0$.

We compute then the increment of the second term in the right-hand side of \eqref{eq_splitEomega} by remarking that $(v_{eq},0,0)$ has sufficient space/time-regularity to be admissible test-function in \eqref{eq:weak_gen}. We obtain
\begin{equation} \label{eq_weakueta}
\begin{aligned}
& \int_{\Omega(t_2)} u(t_2) \cdot v_{eq}(t_2)dx   -  \int_{\Omega(t_1)} u(t_1) \cdot v_{eq}(t_1)dx + \int_{t_1}^{t_2} \int_{\Omega(\tau)} \nabla u : \nabla v_{eq} \, dx \, d\tau  \\
= &  \int_{t_1}^{t_2} \int_{\Omega(\tau)} u \cdot \partial_t v_{eq}  \, dx \, d\tau + \int_{t_1}^{t_2}  \int_{\Omega(\tau)}{  (u\cdot \nabla)\,v_{eq} \cdot u} \, dx  \, d\tau   \\& + \int_{t_1}^{t_2} \int_{\Omega(\tau)} \left[ \widehat{g}^- - (u\cdot \nabla)\,s^- - s^-\cdot \nabla)\,u \right] \cdot v_{eq}  \, dx \, d\tau.
 \end{aligned}
\end{equation}
Multiplying now \eqref{eq_ueta2} by $u(t)$ between $t_1$ and $t_2$ and integrating in time and space, we get
\begin{equation} \label{eq_wfueta}
\begin{aligned}
& \int_{t_1}^{t_2} \int_{\Omega(\tau)} \nabla v_{eq} : \nabla  u \,dxd\tau = \int_{t_1}^{t_2}F_{eq}(\tau) {h}'(\tau)\, d \tau+ \int_{t_1}^{t_2} \int_{\Omega(\tau)}  {g}_{eq} \cdot u \,dxd\tau\, \\
& - \int_{t_1}^{t_2} \int_{\Omega(\tau)} \left[ (v_{eq} \cdot \nabla) v_{eq}  
+  ( s^{-} \cdot \nabla) v_{eq}  + (v_{eq} \cdot \nabla) s^{-}  \right] \cdot u\,  dxd\tau.
\end{aligned}
\end{equation}
We also observe that
\begin{equation} \label{eq_Dtw}
\int_{\Omega(t_2)} v_{eq} \cdot w(t_2) - \int_{\Omega(t_1)} v_{eq} \cdot w(t_1)  = 
\int_{t_1}^{t_2} \int_{\Omega(\tau)} \left( v_{eq} \cdot \partial_t w 
+ \partial_t v_{eq} \cdot w  \right)\, dxd\tau
\end{equation}
and 
\begin{equation} \label{eq_dtveq}
\frac12\|v_{eq}\|_{L^2(\Omega(t_2))}-\frac12\|v_{eq}\|_{L^2(\Omega(t_1))}
=\int_{t_1}^{t_2}\int_{\Omega(\tau)}\partial_t v_{eq}\cdot v_{eq}\, dx\, d\tau.
\end{equation}
We point out that we used the fact that $v_{eq} =0$ on $\partial B(t)$ for all $t\ge 0$ in this computation to reduce the material time-derivative to a standard time-derivative by integrating by parts.
Finally, multiplying the equation \eqref{eq_ueta2} with  $v_{eq}$ on $\Omega(\tau)$ for $\tau\in (t_1,t_2)$, we have
\begin{equation} \label{eq_energyueta2}
 \int_{t_1}^{t_2} \|\nabla v_{eq}\|_{L^2(\Omega(\tau))}^2\, d\tau  =   \int_{t_1}^{t_2}\int_{\Omega(\tau)} \left(g_{eq}- (v_{eq} \cdot \nabla)s^-\right) \cdot v_{eq}\, dxd\tau
\end{equation}

Substracting \eqref{eq_weakueta}-\eqref{eq_wfueta}-\eqref{eq_Dtw} from the sum of \eqref{E_omega2bis}
with \eqref{eq_energyueta2} and substituting with \eqref{eq_dtveq}, we infer that
\begin{equation}\label{utile2}
\begin{aligned}
& \tilde{E}_\omega(t_2) - \tilde{E}_\omega(t_1) +   \int_{t_1}^{t_2}\|\nabla \tilde{u}(\tau)\|^2_{L^2(A)} \, d\tau\\
& +  \int_{t_1}^{t_2}\left( \omega\left[   H_h(h(\tau),\theta(\tau))h(\tau) +  H_\theta(h(\tau),\theta(\tau))\theta(\tau) - m{|h'(\tau)|}^2 - J{|\theta'(\tau)|}^2\right]\right)d\tau  \\[6pt]
& 
=  - \int_{t_1}^{t_2} \int_{\Omega(\tau)}
 ((\tilde{u}  \cdot \nabla) s^{-} + \partial_tv_{eq} + (\tilde u\cdot \nabla)v_{eq})\cdot \tilde{u}\,  dx\, d\tau    - \int_{t_1}^{t_2} F_{eq}(\tau)h'(\tau)\, d\tau
 \\
 &
  +\omega\int_{t_1}^{t_2}\int_{\Omega}\tilde{u}\cdot \partial_t w\,dx\, d\tau -\omega \int_{t_1}^{t_2} \int_{\Omega(\tau)}\nabla u:\nabla w\,dxd\tau\\
  &+ \omega\int_{t_1}^{t_2} \int_{\Omega(\tau)} \left(\widehat{g}^{-} - \partial_t v_{eq} - (u\cdot \nabla)s^- - (u\cdot \nabla)u -(s^-\cdot \nabla )u \right)\cdot w \, dxd\tau.
  \end{aligned}
\end{equation}
We then multiply \eqref{eq_ueta2} by $w$ to deduce
\begin{equation}\label{utile}
\begin{aligned}
&  \int_{t_1}^{t_2} \int_{\Omega(\tau)} \nabla v_{eq} : \nabla  w \,dxd\tau 
\\
&= \int_{t_1}^{t_2} F_{eq}(\tau)h(\tau)  + \int_{t_1}^{t_2} \int_{\Omega(t)} \left(  {g}_{eq}- v_{eq} \cdot \nabla v_{eq}
-   s^{-} \cdot \nabla v_{eq}   - v_{eq} \cdot \nabla s^{-} \right) \cdot w\, dxd\tau  .
\end{aligned}
\end{equation}
Multiplying \eqref{utile} by $\omega$ and subtracting this identity from \eqref{utile2}, we infer that 
\begin{equation}\label{utile3}
\begin{aligned}
& \tilde{E}_\omega(t_2) - \tilde{E}_\omega(t_1) +   \int_{t_1}^{t_2}\|\nabla \tilde{u}(\tau)\|^2_{L^2(A)} \, d\tau\\
& +  \int_{t_1}^{t_2}\left( \omega\left[   H_h(h(\tau),\theta(\tau))h(\tau) +  H_\theta(h(\tau),\theta(\tau))\theta(\tau) - m{|h'(\tau)|}^2 - J{|\theta'(\tau)|}^2\right]\right)d\tau   \\[6pt]
& 
=  - \int_{t_1}^{t_2} \int_{\Omega(\tau)}
\left( (\tilde{u}  \cdot \nabla) s^{-} + \partial_tv_{eq} + (\tilde u\cdot \nabla)v_{eq}\right)\cdot \tilde{u}\,  dx\, d\tau   - \int_{t_1}^{t_2} F_{eq}(\tau)(h'(\tau)+\omega h(\tau))\, d\tau\\
 & +\omega\int_{t_1}^{t_2}\int_{\Omega(\tau)}\tilde{u}\cdot \partial_t w\,dx\, d\tau-\omega \int_{t_1}^{t_2} \int_{\Omega(\tau)}\nabla \tilde u:\nabla w\,dxd\tau
\\ 
& + \omega\int_{t_1}^{t_2} \int_{\Omega(\tau)} \left(\widehat{g}^{-}-g_{eq}-\partial_t v_{eq}-(\tilde u\cdot \nabla)s^- - (u\cdot \nabla)\tilde{u}-(\tilde{u}\cdot \nabla){v}_{eq} - (s^-\cdot \nabla )\tilde u\right) \cdot w \, dxd\tau 
  \end{aligned}
\end{equation}
By standard H\"older inequality and Poincaré inequality, we will now estimate the right-hand side $RHS$ of \eqref{utile3}. We start with the terms in the first line 
which can be estimated by 
\[
\begin{split}
& C_{geo}\int_{t_1}^{t_2}\left(\|s^{-}\|_{W^{1,\infty}(\Omega(\tau))}+\|\partial_tv_{eq}\|_{L^2(\Omega(\tau))}+\|\nabla v_{eq}\|_{L^\infty(\Omega(\tau))}\right)\|\nabla \tilde{u}(\tau)\|^2_{L^2({A})}\,d\tau \\  & +  \int_{t_1}^{t_2}|F_{eq}(\tau)|(|h'(\tau)|+\omega |h(\tau)|)\,d\tau. 
\end{split}
\]
while the sum of all the remaining terms multiplied by $\omega$ is smaller than 

$$
\begin{aligned}
& (C_{geo}+1)\int_{t_1}^{t_2}\left(\|\partial_t w\|_{L^2(\Omega(\tau))}
+\|\nabla w\|_{L^2(\Omega(\tau))}
\right)\|\nabla \tilde{u}(\tau)\|_{L^2(A)}\, d\tau
\\[2pt]
& +\int_{t_1}^{t_2}\left(\|\hat{g}^{-}-g_{eq}\|_{L^2(\Omega(\tau))}+\|\partial_t v_{eq}\|_{L^2(\Omega(\tau))}\right)\|w\|_{L^2(\Omega(\tau))}\,d\tau
\\[2pt]
& \!+\!(C_{geo}+1)\!\! \int_{t_1}^{t_2}\left(\|s^{-}\|_{W^{1,\infty}(\Omega(\tau))}+\|u\|_{L^2(\Omega(\tau))} + \|\nabla v_{eq}\|_{L^\infty(\Omega(\tau))}\right)\|w\|_{L^{\infty}(A_0)}\|\nabla \tilde{u}\|_{L^2(A)}\,d\tau.
\end{aligned}
$$
Next, we exploit Lemma \ref{lemma:u_eq} and estimates \eqref{stime_w}, where $\mathfrak{d}_{min}^0\ge 1$, and  \eqref{eq_dissipationsolide}-\eqref{bounded_L2}-\eqref{eq_veq} to get 
$$
\begin{aligned}
|RHS|\le& \left[(C_{geo}+1)(\lambda_0+\omega)+\frac{1}4\right]\int_{t_1}^{t_2}\|\nabla \tilde{u}(\tau)\|^2_{L^2(A)}\,d\tau\\[2pt]&+(C_{geo}+1)\,[(1+E_0)\omega^2+\lambda_0\omega+\lambda_0^2]\int_{t_1}^{t_2}(|h(\tau)|^2+|\theta(\tau)|^2)\,d\tau\\[2pt]\le& \left[(C_{geo}+1)(\lambda_0+\omega)+\frac{1}4\right]\int_{t_1}^{t_2}\|\nabla \tilde{u}(\tau)\|^2_{L^2(A)}\,d\tau\\[2pt]&+C_{dyn}\,[(1+E_0)\omega^2+\lambda_0\omega+\lambda_0^2]\int_{t_1}^{t_2}\tilde{E}_\omega(\tau)\,d\tau\,,
\end{aligned}
$$
where in the last inequality we used again \eqref{hp_H} and \eqref{eq_boundsEeta}. Similarly, from \eqref{hp_F} and \eqref{eq_dissipationsolide}, provided that $\omega\le C_{dyn}^{-1}$,  we infer that 
\begin{multline*}
\int_{t_1}^{t_2}\left( \omega\left[   H_h(h(\tau),\theta(\tau))h(\tau) +  H_\theta(h(\tau),\theta(\tau))\theta(\tau) - m{|h'(\tau)|}^2 - J{|\theta'(\tau)|}^2\right] \right)d\tau \\[2pt]
+\int_{t_1}^{t_2} \|\nabla \tilde{u}(\tau)\|^2_{L^2(A)}  d\tau \ge \int_{t_1}^{t_2}\left( \omega \tilde{E}_\omega(\tau)+  \frac{1}2\|\nabla \tilde{u}\|^2_{L^2(A)}  \right)d\tau\,.
\end{multline*}
With our assumptions on $\omega_0^{-}$ and $\lambda_0$, we deduce that 
$$
\tilde{E}_{\omega}(t_2)-\tilde{E}_{\omega}(t_1)+\frac{\omega_0^-}2\int_{t_1}^{t_2}\tilde{E}_{\omega}(\tau)\,d\tau \le 0\,.
$$
We can then conclude by a Grownwall-type inequality taking into account the continuity of $\tilde{E}_{\omega}$
that
\[
\lim_{t \to \infty} |h(t)| + |\theta(t)| = \lim_{t\to \infty} |h'(t)| + |\theta'(t)| = \lim_{t \to \infty} \|u(t) - v_{eq}(t)\|_{L^2(\Omega(t))} = 0,
\]
and thus, by extending $u(t)$ and $v_{eq}(t)$ with their solid counterparts:
\[
\lim_{t \to \infty} \|u(t) - v_{eq}(t)\|_{L^2(A)} = 0.
\]
However, since $v_{eq}(t)$ is obtained by transforming $u_{eq}$ into the geometry associated with $(h(t),\theta(t))$ that converges to $0$ the regularity of the transformation \eqref{eq_formulauhs} in $L^2(A)$ entails:
\[
\lim_{t \to \infty} \|v_{eq}(t) - u_{eq}\|_{L^2(A)} = 0.
\]
Eventually, we conclude that
\[
\lim_{t \to \infty} \|u(t) - u_{eq}\|_{L^2(A)} = 0.
\]

\end{proof}


\appendix

\section{Further properties of the ellipse} \label{app_difftheta}
This appendix contains explicit computations supporting the general description of Section \ref{sec_gapdescription} in the case of an ellipse. 
We also provide a proof of Lemma \ref{lem_geom} that applies in this specific case of an ellipse.\medskip

\subsection{Explicit description of $\gamma[\theta]$}
We start by providing an explicit expression for the parametrizing function $\gamma[\theta]$ and by proving \eqref{constants}. As $h$ has no influence here, we assume $h=0$ without loss of generality and, due to the symmetries of the ellipse, we can consider the case $\theta\in (-\pi/2,0]$ only. Then, for every $(x_1,x_2)\in \partial B$ there holds 
\begin{equation}\label{eq:ellipse1}
	(x_1,x_2)\in \partial B \iff Q(\theta(t))^{\top}(x_1,x_2)\in \partial B_{eq}. 
\end{equation}
Given the shape of the obstacle $B_{eq}$, condition \eqref{eq:ellipse1} is verified if and only if 
\begin{equation} \label{elipse0}
	g(x_1,x_2)\doteq{(x_1\cos{\theta}+x_2\sin{\theta})^2}+
	\frac{(-x_1\sin{\theta}+x_2\cos{\theta})^2}{e^2}-1=0.
\end{equation}
Noticing that the coordinates $(\sf{x}_1,\sf{x}_2)$ of the point of contact $X_v$ must satisfy the relation 
$$
\frac{\partial g}{\partial x_1}({\sf{x}_1},{\sf{x}_2})=0,
$$
we differentiate \eqref{elipse0} implicitly with respect to $x_{1}$, thereby obtaining the identity
\begin{equation}\label{identity_c}
{	\sf{x}_1}=\frac{(1-e^2)\cos{\theta}\sin{\theta}}{{e^2\cos^2{\theta}}+\sin^2{\theta}}\,\sf{x}_2.  
\end{equation}
Inserting \eqref{identity_c} in \eqref{elipse0}, we obtain the coordinates $X_v=(\sf{x}_1,\sf{x}_2)$ as functions of the angle $\theta$ 
\begin{equation} \label{eq:x1x2}
{	\sf{x}_1} =-\frac{(1-e^2){\cos{\theta}\sin{\theta}}}{{\sqrt{ \sin^2\theta + e^{2}\cos^2{\theta}}}}>0  \quad \text{and} \quad {\sf{x}_2} = - {\sqrt{ \sin^2\theta + e^{2}\cos^2{\theta}}} < 0 .
\end{equation}
We want to express $x_{2}$ as a function of $x_{1}$ in some interval of the type $[-\lambda(\theta), \lambda(\theta) ]$, with $\lambda(\theta) > 0$, by parametrizing the border of the obstacle close to the point of contact. For this reason, we rewrite the identity \eqref{elipse0} as a quadratic equation in the unknown variable $x_2$:
\begin{equation} \label{elipse3}
	\widetilde{A}(\theta)x^2_1+\widetilde{B}(\theta)x_1 x_2+\widetilde{C}(\theta)x_{2}^{2}-1=0,
\end{equation}
where
$$
\begin{aligned}
	& \widetilde{A}(\theta)=\left({\cos{\theta}}\right)^2+\left(\frac{\sin{\theta}}{e}\right)^2>0, \quad \widetilde{B}(\theta)=\sin2\theta \left({1}-\frac{1}{e^2} \right)>0, \\& \widetilde{C}(\theta)=\left({\sin{\theta}}\right)^2+\left(\frac{\cos{\theta}}{e}\right)^2>0.
\end{aligned}
$$
Identity \eqref{elipse3} shows that we can express $x_{2}$ as a function of $x_{1}$ provided that
\begin{equation} \label{disc1}
	|x_1|\le \lambda(\theta) \doteq \sqrt{e^2 \sin^2{\theta} +  \cos^2{\theta}},
\end{equation}
thus yielding
\begin{equation}\label{eq:Gamma}
	\Gamma_\theta(x_1) \doteq x_{2} (x_{1}) = - \dfrac{\widetilde{B}(\theta)}{2\widetilde{C}(\theta)}x_1 -\dfrac{1}{\widetilde{C}(\theta)} \sqrt{\widetilde{C}(\theta)-\frac{x^2_1}{e^2}}\qquad \forall x_{1} \in [-\lambda(\theta), \lambda(\theta) ] .
\end{equation}
Notice that, by construction, we have 
\begin{equation} \label{gammath}
	\Gamma_{\theta}({\sf x_1}) = {\sf x}_2 \qquad \text{and} \qquad \Gamma_{\theta}'({\sf x}_1) = 0.
\end{equation}
We notice that for all $\theta \in \left[ -\pi/2,0 \right]$
$$
\mathfrak{P}(\theta) \doteq | \lambda(\theta) - {\sf x}_1| = \sqrt{e^2 \sin^2{\theta} + \cos^2{\theta}} + \frac{(1-e^2){\cos{\theta}\sin{\theta}}}{{\sqrt{ \sin^2\theta + e^{2}\cos^2{\theta}}}} 
$$
is uniformly bounded from below: $
\mathfrak{P}(\theta) > e. $
We can then take $\lambda_0 \doteq  e$, so that
\begin{equation} \label{interval}
	[{\sf x}_1 - \lambda_0, {\sf x}_1 + \lambda_0] \subset (-\lambda(\theta), \lambda(\theta)) \qquad \forall\theta \in \left[ 0,  \pi/2\right],
\end{equation}
and define $\gamma : [-\lambda_0, \lambda_0] \longrightarrow \mathbb{R}$ by
$$
\begin{aligned}
	\gamma(x_1) & \doteq\Gamma_{\theta}(x_1+{\sf x}_1) \\[3pt]
	& =  - \dfrac{\widetilde{B}(\theta)}{2\widetilde{C}(\theta)}(x_1+{\sf x}_1) -\dfrac{1}{\widetilde{C}(\theta)} \sqrt{\widetilde{C}(\theta)-\frac{(x_1+{\sf x}_1)^2}{e^2}}  \qquad\forall \,x_1\in [-\lambda_0,\lambda_0],
\end{aligned}
$$
where $\Gamma_\theta : [-\lambda(\theta), \lambda(\theta) ] \to \mathbb{R}$ is as in \eqref{eq:Gamma}. In view of \eqref{interval}, we infer $\gamma\in\mathcal{C}^{\infty}([-\lambda_0,\lambda_0];\mathbb{R})$. From \eqref{gammath} we also deduce that
$\gamma(0)-{\sf x_2}=\gamma'(0)=0$ and $\gamma''(0) > 0$. Then, given $\tau \in [-\lambda_{0}, \lambda_{0}]$ we can find $\xi_{\tau} \in [-\lambda_{0}, \lambda_{0}]$ such that $|\xi_{\tau} - \tau | < |\tau |$ and
$$
\gamma[\theta](\tau)-{\sf x}_2 = \dfrac{\gamma''(0)}{2} \tau^{2} + \dfrac{\gamma'''(\xi_{\tau})}{6} \tau^{3} .
$$
This immediately yields the etimates
$$
\left( \dfrac{\gamma''(0)}{2} - \dfrac{\lambda_{0}}{6} \| \gamma''' \|_{L^{\infty}([-\lambda_{0}, \lambda_{0}])} \right) \tau^{2} \leq  \gamma[\theta](\tau) -{\sf x}_2 \leq \left( \dfrac{\gamma''(0)}{2} + \dfrac{\lambda_{0}}{6} \| \gamma''' \|_{L^{\infty}([-\lambda_{0}, \lambda_{0}])}\right) \tau^{2} .
$$
In fact, notice further that
$$
\gamma''(0) \geq \dfrac{1}{ e^{2}} \left( 1 + \dfrac{1}{ e^{2}} \right)^{-3/2} \,
$$
so that there exists $\lambda_{1} \in (0,\lambda_{0})$ independent of $\theta$ and $h$, sufficiently small so that 
$$
\dfrac{\gamma''(0)}{2} - \dfrac{\lambda_{1}}{6} \| \gamma''' \|_{L^{\infty}([-\lambda_{1}, \lambda_{1}])} > 0 .
$$
 The proof of \eqref{constants} is concluded by choosing $\lambda_{*} = \min \{ \lambda_{1}, \lambda_{2} \}$ and defining 
$$
\begin{aligned}
	& c_1^{(2)} \doteq \inf_{\theta\in(-\pi/2,0]}\left( \dfrac{\gamma''(0)}{2} - \dfrac{\lambda_{*}}{6} \| \gamma''' \|_{L^{\infty}([-\lambda_{*}, \lambda_{*}])} \right), \\[6pt]
	& c_2^{(2)}\doteq \sup_{\theta\in(-\pi/2,0]}\left( \dfrac{\gamma''(0)}{2} + \dfrac{\lambda_{*}}{6} \| \gamma''' \|_{L^{\infty}([-\lambda_{*}, \lambda_{*}])} \right), \\[6pt]
	& c_3^{(2)} \doteq \inf_{\theta\in(-\pi/2,0]}\left( \dfrac{1}{2}\frac{\partial}{\partial\theta}\gamma''(0) - \dfrac{\lambda_{*}}{6} \left\| \frac{\partial}{\partial\theta} \gamma''' \right\|_{L^{\infty}([-\lambda_{*}, \lambda_{*}])} \right), \\[6pt]
	& c_4^{(2)} \doteq \sup_{\theta\in(-\pi/2,0]}\left( \dfrac{1}{2}\frac{\partial}{\partial\theta}\gamma''(0) - \dfrac{\lambda_{*}}{6} \left\| \frac{\partial}{\partial\theta} \gamma''' \right\|_{L^{\infty}([-\lambda_{*}, \lambda_{*}])} \right) .
\end{aligned}
$$
\subsection{Proof of Lemma \ref{lem_geom}}\label{sec:Proof-appA}
Let $h < 0$ and $\theta \in [0,2\pi)$ be fixed and such that $(h,\theta) \in A_{1,e}$. 
When $\theta \in [0,\pi/2]$ we have, by a geometric argument, that the mappings 
\[
\begin{aligned}
& [0,\pi/2] \mapsto [-1,-e] :\theta \mapsto \mathsf x_2[\theta] \\
& [0,\pi/2]\mapsto [\kappa_2^{min},\kappa_2^{max}] : \theta \mapsto  \kappa_2[\theta]
\end{aligned}
\]
are diffeomorphisms. {This follows from explicit expressions : \eqref{eq:x1x2} for $\mathsf x_2[\theta]$ and \eqref{eq_kappa2_theta} below for $\kappa_2$.}  Hence, they induce, by composition, two diffeomorphisms $\mathcal K_2$ and $\mathsf X_2$ as claimed in assertion $(i).$  The property 
\eqref{eq_K_2etY} extends to every {$\theta \in \mathbb R$} by symmetry and periodicity. 

\medskip

To prove assertion $(ii)$ {and compute $\kappa_2$}, we provide some more technical computations. As before, $h$ has no influence here and we assume $h=0$ without loss of generality. 
We can rewrite identity \eqref{elipse0} as
\[
x_1^2 \left({\cos^2\theta} + \dfrac{\sin^2\theta}{e^2} \right) + x_2^2 \left({\sin^2\theta} + \dfrac{\cos^2\theta}{e^2} \right) + 2x_1x_2 \cos\theta\sin\theta \left( 1 - \dfrac{1}{e^2}\right) = 1.
\]
Around $\mathsf x= (\mathsf x_1,\mathsf x_2)$ we can then plug the ansatz
\[
x_1 = \mathsf x_{1} + \tau \qquad x_2 = \mathsf x_2+ \kappa_2 \tau ^2+ \kappa_3 \tau^3 + \text{l.o.t.} 
\]
Identifying powers of $\tau,$ we obtain the following sequence of equations. 
At order $\tau^0$, we have 
\[
\mathsf x_1^2 \left({\cos^2\theta} + \dfrac{\sin^2\theta}{e^2} \right) + \mathsf x_2 ^2 \left({\sin^2\theta} + \dfrac{\cos^2\theta}{e^2} \right) + 2\mathsf x_1 \mathsf x_2 \cos\theta\sin\theta \left( 1 - \dfrac{1}{e^2}\right) = 1.
\]
that we rewrite as
$$
\begin{aligned}
& \mathsf x_1 \left[ \mathsf x_1 \left({\cos^2\theta} + \dfrac{\sin^2\theta}{e^2} \right)  +  \mathsf x_2 \cos\theta\sin\theta \left( 1- \dfrac{1}{e^2}\right) \right] \\[6pt]
& \hspace{-4mm} + \mathsf x_2 \left[  \mathsf x_1  \cos\theta\sin\theta \left( 1 - \dfrac{1}{e^2}\right) +  \mathsf x_2  \left({\sin^2\theta}  + \dfrac{\cos^2\theta}{e^2} \right)\right] = 1 .
\end{aligned}
$$
At order $\tau$, we have
\[
\mathsf x_1 \left({\cos^2\theta} + \dfrac{\sin^2\theta}{e^2} \right)  +  \mathsf x_2 \cos\theta\sin\theta \left( 1 - \dfrac{1}{e^2}\right)  = 0 .
\]
The $\tau$ equation, taking the $\tau^0$ equation into account, implies 
\begin{equation} \label{eq_geometricfactor}
 \mathsf x_1  \cos\theta\sin\theta \left( 1 - \dfrac{1}{e^2}\right) +  \mathsf x_2  \left({\sin^2\theta}  + \dfrac{\cos^2\theta}{e^2} \right) =  \dfrac{1}{\mathsf x_2} .
\end{equation}
At order $\tau^2$, we get
\[
  \left({\cos^2\theta} + \dfrac{\sin^2\theta}{e^2} \right)  + 2 \kappa_2 \left(  \mathsf x_1  \cos\theta\sin\theta \left( 1 - \dfrac{1}{e^2}\right) +  \mathsf x_2  \left({\sin^2\theta}  + \dfrac{\cos^2\theta}{e^2} \right) \right) = 0,
\]
and therefore, using \eqref{eq_geometricfactor}, we infer that
\begin{equation} \label{eq_kappa2_theta}
\kappa_2 = - \dfrac{\mathsf x_2}{2}  \left({\cos^2\theta} + \dfrac{\sin^2\theta}{e^2} \right) .
\end{equation}
At order $\tau^3$, we see
\[
\begin{split}
\kappa_3 \left( \mathsf x_1  \cos\theta\sin\theta \left( 1 - \dfrac{1}{e^2}\right)  \right.  + & \left. \mathsf x_2  \left({\sin^2\theta}+ \dfrac{\cos^2\theta}{e^2} \right) \right) 
+  \kappa_2 \cos\theta \sin\theta \left( 1 - \dfrac{1}{e^2}\right)
= 0,
 \end{split}
\]
that is, due to \eqref{eq_geometricfactor},
\begin{equation} \label{eq_kappa3_theta}
\kappa_3 = - {\cos\theta \sin\theta}{\mathsf x_2} \kappa_2 \left( 1 - \dfrac{1}{e^2}\right) .
\end{equation}
Differentiating then \eqref{eq_kappa2_theta} with respect to $\theta$, we deduce that 
\[
\partial_{\theta} (\kappa_2 / \mathsf x_2) =   \cos\theta \sin\theta \left( 1- \dfrac{1}{e^2} \right) = - \dfrac{\kappa_3}{\kappa_2\mathsf x_2 } . 
\]
Multiplying this latter identity with $\kappa_2 \mathsf x_2$ yields \eqref{eq_kappa_2}.
\medbreak
Finally, concerning the third assertion, combining (i) and (ii), we see that 
$$\kappa_3 
=- \left(\kappa_2 - \dfrac{\kappa_2^2\mathsf X_2'(\kappa_2)}{\,\mathsf X_2(\kappa_2)}\right)\partial_{\theta}\kappa_2$$
and we therefore define
$$\mathcal {K}_3(\kappa_2) =  \int_{\kappa_2}^{\kappa_2^{max}} \left(\xi -  \dfrac{\xi^2\,\mathsf X_2'(\xi)}{\mathsf X_2(\xi)}\right)d\xi.$$ 
\hfill $\square$
\par For technical purpose, we need to compute the $\theta$-derivative of $\mathsf x_i[\theta],$ for $i=1,2$ and prove their boundedness.  
\begin{lemma}
	The derivatives $\partial_{\theta} \mathsf x_1[\theta],\partial_{\theta} \mathsf x_2[\theta]$ are uniformly bounded.
\end{lemma}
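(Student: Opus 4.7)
The plan is to exploit the explicit formulas \eqref{eq:x1x2} for $\mathsf x_1[\theta]$ and $\mathsf x_2[\theta]$ that were derived above in the appendix, namely
\[
\mathsf x_1[\theta] = -\frac{(1-e^2)\cos\theta\sin\theta}{\sqrt{\sin^2\theta + e^2\cos^2\theta}}, \qquad \mathsf x_2[\theta] = -\sqrt{\sin^2\theta + e^2\cos^2\theta}.
\]
Since these formulas were established for $\theta \in (-\pi/2,0]$ under a symmetry reduction, the values for general $\theta\in\mathbb{R}$ are recovered by the obvious $\pi$-periodicity and reflection symmetries of the ellipse; hence it suffices to bound the derivatives on a compact fundamental period.

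First I would observe that the denominator satisfies the uniform lower bound
\[
\sqrt{\sin^2\theta + e^2\cos^2\theta} \ge \min(1,e) > 0 \qquad \forall \theta\in\mathbb{R},
\]
so the two expressions are $\mathcal C^\infty$ functions of $\theta$ on all of $\mathbb R$. A direct differentiation gives
\[
\partial_\theta \mathsf x_2[\theta] = -\frac{(1-e^2)\cos\theta\sin\theta}{\sqrt{\sin^2\theta+e^2\cos^2\theta}},
\]
which, incidentally, coincides with $\mathsf x_1[\theta]$ (an identity already used in the proof of Corollary \ref{cor:finer-trace}). An analogous direct computation yields an explicit rational-trigonometric expression for $\partial_\theta \mathsf x_1[\theta]$, with the same denominator raised to a power $3/2$.

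The conclusion then follows by combining the smoothness of numerators (which are bounded trigonometric polynomials) with the uniform positive lower bound on the denominators, and invoking periodicity to get a bound independent of $\theta\in\mathbb R$. There is no conceptual obstacle; the only mild care is in handling the correct power of the denominator in $\partial_\theta \mathsf x_1[\theta]$ to ensure it remains bounded uniformly.
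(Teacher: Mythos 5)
Your proof plan is correct, and it takes a genuinely different route from the paper's. The paper does \emph{not} differentiate the explicit formulas \eqref{eq:x1x2}; instead it returns to the two implicit defining relations for $(\mathsf x_1,\mathsf x_2)$ (the ellipse equation at order $\tau^0$ and the tangency condition at order $\tau$), differentiates that $2\times 2$ system with respect to $\theta$, and inverts the resulting linear system. The key point there is that the determinant of the coefficient matrix equals $-\mathsf x_2/e^2$, which is a continuous periodic function bounded away from zero because $\mathsf x_2 \in [-d_{max},-d_{min}]$; combined with a continuous periodic right-hand side this gives uniform boundedness of the solution. Your approach instead differentiates the explicit closed-form expressions directly and controls the result via the uniform lower bound $\sqrt{\sin^2\theta + e^2\cos^2\theta}\ge \min(1,e)>0$ on the denominator, plus periodicity. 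Both arguments are valid. Yours is shorter and more elementary given that the explicit formulas are already available in this appendix; the paper's implicit-differentiation argument is more structural and would generalize to convex shapes for which no closed-form parametrization of the contact point exists (a point the authors care about, as they remark in the introduction that the method should extend to general strictly convex bodies). Your side observation that $\partial_\theta \mathsf x_2[\theta] = \mathsf x_1[\theta]$ matches the identity the paper records and uses in the proof of Corollary~\ref{cor:finer-trace}, and it confirms the consistency of your direct computation.
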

\begin{proof}
	We recall that $\mathsf x_1,\mathsf x_2$ is fixed by the conditions:
	\[
	\left\{
	\begin{aligned}
		&\mathsf x_1^2 \left({\cos^2\theta} + \dfrac{\sin^2\theta}{e^2} \right) + \mathsf x_2 ^2 \left(\dfrac{\sin^2\theta}{d^2} + \dfrac{\cos^2\theta}{e^2} \right) + 2\mathsf x_1 \mathsf x_2 \cos\theta\sin\theta \left( 1 - \dfrac{1}{e^2}\right) = 1\\[6pt]
		&\mathsf x_1 \left({\cos^2\theta} + \dfrac{\sin^2\theta}{e^2} \right)  +  \mathsf x_2 \cos\theta\sin\theta \left( 1 - \dfrac{1}{e^2}\right)  = 0,
	\end{aligned}
	\right.
	\]
	with the condition $\mathsf x_2 < 0.$ We drop here and below the $\theta$-dependency in the notations for a better legibility.  
	Differentiating with respect to $\theta$ we obtain that $(\partial_{\theta} \mathsf x_1,\partial_{\theta} \mathsf x_2)$ is solution to the system:
	\[
	\begin{pmatrix}
		\mathsf x_1 a_{\theta} + \mathsf x_2 d_{\theta}  & \mathsf x_2 b_{\theta} + \mathsf x_1 d_{\theta}
		\\
		a_{\theta} & d_{\theta} 
	\end{pmatrix}
	\begin{pmatrix}
		\partial_{\theta} \mathsf x_1 \\
		\partial_{\theta} \mathsf x_2
	\end{pmatrix}
	= S(\theta),
	\]
	where $S(\theta)$ is a periodic continuous source-term and
	\[
	a_{\theta} = \left({\cos^2\theta} + \dfrac{\sin^2\theta}{e^2} \right), \quad 
	b_{\theta} = \left({\sin^2\theta} + \dfrac{\cos^2\theta}{e^2} \right), \quad 
	d_{\theta} = \cos\theta\sin\theta \left( 1 - \dfrac{1}{e^2}\right) .
	\]
	In particular 
	\[
	\begin{pmatrix}
		\mathsf x_1 a_{\theta} + \mathsf x_2 d_{\theta}  & \mathsf x_2 b_{\theta} + \mathsf x_1 d_{\theta} 
		\\
		a_{\theta} & d_{\theta} 
	\end{pmatrix}^{-1}
	= 
	\dfrac{1}{\mathsf x_2 (d_{\theta}^2 - a_{\theta} b_{\theta}) }
	\begin{pmatrix}
		d_{\theta}  & -\mathsf x_2 b_{\theta} - \mathsf x_1 d_{\theta} 
		\\
		- a_{\theta} & \mathsf x_1 a_{\theta} + \mathsf x_2 d_{\theta}    
	\end{pmatrix}
	\]
	where:
	\[
	\mathsf x_2 (d_{\theta}^2 - a_{\theta} b_{\theta})  = 
	- \dfrac{\mathsf x_2}{e^2}\] 
	is continuous and periodic, see \eqref{eq:x1x2}. This immediately gives the statement.

\end{proof}

\section{Analysis of the Stokes asymptotics} \label{app_asymptotics}

In this section, we analyze the properties of the approximations 
to the solution to the Stokes system provided in Section \ref{sec:test-function}. We start with an elementary lemma that classifies the family of diverging integrals
\[
 \int_{-\lambda^*}^{\lambda^*}\frac{\tau^{p}}{(\mathfrak{d}+\gamma[\theta](\tau) -\mathsf x_2[\theta] )^{q}}\,d\tau \qquad
 (p,q) \in \mathbb N^2, 
\]
 that are ubiquitous in the forthcoming computations:
\begin{lemma}\label{lemma_Nico}
We have the following bounds when $\mathfrak d << 1:$
\begin{itemize}
\item if $p$ is even :
\[
\int_{-\lambda_*}^{\lambda_*} \dfrac{\tau^{p}}{(\mathfrak{d}+\gamma[\theta](\tau) -\mathsf x_2[\theta])^q}d\tau =
\left\{
\begin{aligned}
& \mathcal{O} \left(  \mathfrak{d}^{\frac{p+1}{2} - q} \right)
&& \text{ if $p \leq 2q -2$,} \\[4pt]
& \mathcal{O}(1) && \text{ if $p \geq 2q $,}
\end{aligned} 
\right.
\]
\item if  $p$ is odd:
\[
\int_{-\lambda_*}^{\lambda_*} \dfrac{\tau^{p}}{(\mathfrak{d}+\gamma[\theta](\tau) -\mathsf x_2[\theta])^q}d\tau =
\left\{
\begin{aligned}
& \mathcal{O} \left(  \mathfrak{d}^{\frac{p+2}{2} -q}\right)
&& \text{ if $p \leq 2q - 3$,}\\[4pt]
& \mathcal{O}(1) && \text{ if $p  \geq 2q -1 $.}
\end{aligned} 
\right.
\]

\end{itemize}
Furthermore, there exist constants $K_i[\theta]<\infty$ such that
\begin{equation}\label{formula}
 \int_{-\lambda^*}^{\lambda^*}\frac{\tau^{2i}}{(\mathfrak{d}+\gamma[\theta](\tau)-\mathsf x_2[\theta])^{i+1}}\,d\tau = \dfrac{K_{i}[\theta]}{\sqrt{\mathfrak{d}}} + \mathcal{O}(1)\qquad \forall i \geq 1.   
\end{equation}
\end{lemma}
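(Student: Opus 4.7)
The plan is to reduce each integral to a canonical rescaled form via the bounds \eqref{constants} and the Taylor expansion \eqref{eq_expansiongamma}, and then distinguish the even and odd cases by a parity argument. First, \eqref{constants} yields the two-sided bound $\mathfrak d + c_1^{(2)}\tau^2 \le \mathfrak d + \gamma[\theta](\tau) - \mathsf x_2[\theta] \le \mathfrak d + c_2^{(2)}\tau^2$ uniformly in $\theta$. Performing the rescaling $\tau = \sqrt{\mathfrak d}\,u$ in
\[
\int_{-\lambda_*}^{\lambda_*} \frac{|\tau|^p}{(\mathfrak d + c\tau^2)^q}\,d\tau = \mathfrak d^{\frac{p+1}{2} - q}\int_{-\lambda_*/\sqrt{\mathfrak d}}^{\lambda_*/\sqrt{\mathfrak d}} \frac{|u|^p}{(1+cu^2)^q}\,du,
\]
I would treat the even case directly: when $p \le 2q-2$, the $u$-integrand is integrable on $\mathbb R$ and the $u$-integral is uniformly bounded, producing $O(\mathfrak d^{(p+1)/2 - q})$; when $p \ge 2q$, the pointwise bound $|\tau|^p/(\mathfrak d+c\tau^2)^q \le |\tau|^{p-2q}/c^q$ is $\mathfrak d$-independent and integrable on $[-\lambda_*,\lambda_*]$, hence $O(1)$.

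The odd case is the main technical point and I would handle it through parity cancellation. Writing $\rho[\theta](\tau) \doteq \gamma[\theta](\tau) - \mathsf x_2[\theta] - \kappa_2[\theta]\tau^2 = O(\tau^3)$ uniformly in $\theta$ (by \eqref{eq_expansiongamma} and the compactness of $\partial B$), the integral $\int_{-\lambda_*}^{\lambda_*} \tau^p/(\mathfrak d + \kappa_2[\theta]\tau^2)^q\,d\tau$ vanishes by oddness, while the mean-value theorem yields
\[
\left|\frac{1}{(\mathfrak d + \kappa_2\tau^2 + \rho[\theta](\tau))^q} - \frac{1}{(\mathfrak d + \kappa_2\tau^2)^q}\right| \le \frac{C\,|\tau|^3}{(\mathfrak d + c_1^{(2)}\tau^2)^{q+1}}.
\]
Thus the odd-$p$ integral is controlled by an even-$(p+3)$ integral with denominator exponent $q+1$, and the even-case analysis above produces $O(\mathfrak d^{(p+2)/2 - q})$ when $p+3 \le 2(q+1)-2$, that is $p \le 2q-3$, and $O(1)$ when $p \ge 2q-1$.

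For the explicit identity \eqref{formula} (the borderline even case $p=2q-2$ with $q=i+1$), I would sharpen the rescaling to $\tau = \sqrt{\mathfrak d/\kappa_2[\theta]}\,s$, split the integration into $|\tau| \le \eta$ and $|\tau| \ge \eta$ for some small fixed $\eta$, and apply dominated convergence to the rescaled integrand $s^{2i}/(1 + s^2 + O(\sqrt{\mathfrak d}\,s^3))^{i+1}$ to obtain
\[
\int_{-\lambda_*}^{\lambda_*} \frac{\tau^{2i}}{(\mathfrak d + \gamma[\theta](\tau) - \mathsf x_2[\theta])^{i+1}}\,d\tau = \frac{1}{\sqrt{\mathfrak d}\,\kappa_2[\theta]^{i+1/2}}\int_{\mathbb R}\frac{s^{2i}}{(1+s^2)^{i+1}}\,ds + O(1),
\]
with the $O(1)$ absorbing the outer region, the truncation tails, and the cubic perturbation. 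This identifies $K_i[\theta] = \kappa_2[\theta]^{-(i+1/2)} \int_{\mathbb R} s^{2i}/(1+s^2)^{i+1}\,ds$. The main obstacle is quantifying the odd-case cancellation and the $O(1)$ remainder uniformly in $\theta$; both rely on the $\theta$-uniform positive lower bound $\kappa_2[\theta] \ge \kappa_2^{min} > 0$ and the $\theta$-uniform control on $\rho[\theta]$ supplied by Section \ref{sec_gapdescription}.
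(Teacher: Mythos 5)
Your treatment of the even and odd cases is correct and follows essentially the paper's plan: two-sided bound on the denominator via \eqref{constants}, rescaling $\tau=\sqrt{\mathfrak d}\,u$ in the subcritical range, pointwise bound in the supercritical range, and subtraction of the vanishing symmetric integral $\int \tau^p/(\mathfrak d+\kappa_2\tau^2)^q\,d\tau=0$ in the odd case. Your mean-value step with denominator exponent $q+1$ is in fact the correct one (the paper's displayed intermediate bound carries exponent $q$, which does not follow from $|a^{-q}-b^{-q}|\le q\,|a-b|/\min(a,b)^{q+1}$), and with exponent $q+1$ the even-case rates applied to $(p+3,q+1)$ reproduce exactly the claimed odd-case rates.

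The argument for \eqref{formula}, however, has a gap. After rescaling $\tau=\sqrt{\mathfrak d/\kappa_2}\,s$, you need the $s$-integral to differ from $\int_{\mathbb R} s^{2i}/(1+s^2)^{i+1}\,ds$ by $O(\sqrt{\mathfrak d})$, not merely $o(1)$; dominated convergence only gives the latter, and $o(1)/\sqrt{\mathfrak d}$ is not $O(1)$. Worse, a direct absolute-value estimate of the cubic perturbation $\rho_s=\kappa_3\kappa_2^{-3/2}\sqrt{\mathfrak d}\,s^3+O(\mathfrak d s^4)$ produces
\[
\left|\int_{-R}^{R}s^{2i}\left[\frac{1}{(1+s^2+\rho_s)^{i+1}}-\frac{1}{(1+s^2)^{i+1}}\right]ds\right|
\le C\sqrt{\mathfrak d}\int_{-R}^{R}\frac{|s|^{2i+3}}{(1+s^2)^{i+2}}\,ds
=O\left(\sqrt{\mathfrak d}\,\log\tfrac{1}{\mathfrak d}\right),
\]
with $R\sim \mathfrak d^{-1/2}$, and after dividing by $\sqrt{\mathfrak d}$ this leaves an uncontrolled $\log(1/\mathfrak d)$. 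The fix is the same parity cancellation you already invoke for the odd case: expand $(1+s^2+\rho_s)^{-(i+1)}$ to first order in $\rho_s$; the leading contribution is a constant multiple of $\sqrt{\mathfrak d}\int_{-R}^{R}s^{2i+3}/(1+s^2)^{i+2}\,ds$, which vanishes by oddness, and the remaining terms (the even $O(\mathfrak d s^4)$ part of $\rho_s$ and the quadratic-in-$\rho_s$ Taylor remainder) contribute only $O(\mathfrak d R)=O(\sqrt{\mathfrak d})$. This is precisely what the paper's terse phrase ``same symmetry trick'' is gesturing at, and it is the one place in the proof where the signed $\kappa_3\tau^3$ coefficient of the expansion \eqref{eq_expansiongamma}, rather than the crude bound $|\gamma[\theta](\tau)-\mathsf x_2[\theta]-\kappa_2[\theta]\tau^2|\le c^{(3)}|\tau|^3$, is genuinely needed.
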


{In the above lemma we have introduced Landau notations $\mathcal O$ for comparing functions. We recall that whenever real functions $f,g$ are defined in a neighborhood of $0$ (possibly only for strictly positive variables, say $\mathfrak d \in (0,\mathfrak d_0)$)  with $f \geq 0$ we say that $g = \mathcal O(f)$  if there exists a constant $K>0$ for which:
\[
|f(\mathfrak d)| \leq K g(\mathfrak d) \qquad \forall \mathfrak d \in (0,\mathfrak d_0).
\]
In our statement and also in those below, this constant will depend only on $d,e.$ Whenever $f$ depends on a second variable $\theta \in \mathbb R$ we used the same notations to denote:
 \[
|f(\mathfrak d,\theta)| \leq K g(\mathfrak d) \qquad \forall \mathfrak d \in (0,\mathfrak d_0) \quad \forall \theta \in \mathbb R.
\]
The constant $K$ is thus independent of $\theta.$}

\begin{proof}
Let us briefly sketch the proof of these bounds. 
The even case relies on \eqref{constants}.  We can then bound:
\[
\int_{-\lambda_*}^{\lambda_*} \dfrac{\tau^{p}}{[(\mathfrak{d}+\gamma[\theta](\tau)-\mathsf x_2[\theta])]^q} \, d\tau \leq \int_{-\lambda_*}^{\lambda_*} \dfrac{\tau^{p}}{(\mathfrak{d}+ {c_1^{(2)}} \tau^2 )^q} \, d\tau .
\]
In case $p \geq 2q$ the integrand is uniformly bounded in $\mathfrak d$ while if $p \leq 2q-2$ we obtain the expected result by performing the change of variable $\tau = \sqrt{\mathfrak d} s.$

In the odd case, we must first use the explicit expansion \eqref{eq_expansiongamma} of $\gamma[\theta]$ by remarking that
\[
\int_{-\lambda_*}^{\lambda_*} \dfrac{\tau^{p}}{(\mathfrak d + \kappa_2[\theta] \tau^2)^{q}}d\tau = 0,
\]
since $q$ is odd.  Remarking further that $|\gamma[\theta](\tau) -\mathsf x_2[\theta]- \kappa_2[\theta]\tau^2| \leq {c^{(3)} |\tau|^3}$ {with $c^{(3)}$} uniform in $\theta,$ we infer that
\begin{align*}
\left| 
\int_{-\lambda_*}^{\lambda_*} \dfrac{\tau^{p}}{[(\mathfrak{d}+\gamma[\theta](\tau)-\mathsf x_2[\theta])]^q} \, d\tau \right|
& = \left| \int_{-\lambda_*}^{\lambda_*} \tau^p \left(\dfrac{1}{(\mathfrak{d}+\gamma[\theta](\tau)-\mathsf x_2[\theta])^q} -  \dfrac{1}{(\mathfrak d + \kappa_2[\theta] \tau^2)^{q}} \right) d\tau \right|  \\
& \leq  \int_{-\lambda_*}^{\lambda_*} \dfrac{{c^{(3)}}|\tau|^{p+3} }{(\mathfrak{d} + {c_2^{(2)}} \tau^2)^q}d\tau .
 \end{align*}
We conclude then like in the even case.

Concerning the asymptotic expansions we apply the same symmetry trick in order to obtain 
\[
\int_{-\lambda_*}^{\lambda_*} \frac{\tau^{2i}}{(\mathfrak{d}+\gamma[\theta](\tau)-\mathsf x_2[\theta])^{i+1}}\,d\tau - 
\int_{-\lambda_*}^{\lambda_*} \frac{\tau^{2i}}{(\mathfrak{d}+ \kappa_2[\theta]\tau^2)^{i+1}}\,d\tau  = \mathcal{O}(1),
\]
and we compute then the second integral by performing again the change of variable $\tau = \sqrt{\mathfrak d} s$. This gives \eqref{formula} with 
$$
K_i(\theta)\doteq \int_{-\infty}^{\infty}\frac{s^{2i}}{(1+\kappa_2[\theta]s^2)^{i+1}}\,ds.
$$
\end{proof}

We consider now a boundary data $v_* = \nabla^{\bot} \psi_*$ and address the relevance of the approximation $\tilde{v}$
to the solution $v$ of the Stokes problem to \eqref{eq_stokesprofile} as constructed in Section \ref{sec:test-function}.  
We provide the arguments in the general case and mention the subsequent results with the specific boundary conditions concerning $\tilde{v}^{\bot},\tilde{v}^{||},\tilde{v}^{\circlearrowleft}. $
Following the conventions of Section \ref{sec:test-function} we assume that $\psi_*({\mathsf x_1},{\mathsf x}_2+h) = 0$ (actually $\psi_*$ is defined up to a constant so that this does not restrict the generality).  We point out that, with respect to  rough {\em a priori} computations, we might gain a factor $\mathfrak d^{1/2}$ in the following estimates thanks to symmetry arguments like the ones in the previous section. This might be used without specific notification.

\subsection{Size of $c_*$} We recall that $c_*$ is computed by matching the condition
\[
\int_{\mathsf x_1-\lambda_*}^{\mathsf x_1+\lambda_*} \partial_{222} \psi_{opt}(x_1,-L) dx_1 = 0 .
\]
Dropping $\theta$ dependencies again for legibility and replacing with explicit values, we obtain 
\[
-12 \int_{-\lambda_*}^{\lambda_*} \dfrac{\psi_1(\tau)}{(\mathfrak d + \gamma[\theta](\tau) -\mathsf x_2[\theta])^3}d\tau   + 6 \int_{-\lambda_*}^{\lambda_*} \dfrac{\psi_2(\tau)}{(\mathfrak d+ \gamma[\theta](\tau) -\mathsf x_2[\theta])^3}d\tau   = 0,
\]
 and therefore
 \[
 \begin{split}
c_* = - \dfrac{1}{\displaystyle \int_{-\lambda_*}^{\lambda_*} \dfrac{1}{(\mathfrak d+ \gamma[\theta](\tau) - \mathsf x_2[\theta])^3}\, d\tau } & \left( 6 \int_{-\lambda_*}^{\lambda_*} \dfrac{\partial_2 \psi_*(\mathsf x_1+\tau,h+\gamma[\theta](\tau))}{(\mathfrak d+   \gamma[\theta](\tau) - \mathsf x_2[\theta])^{2}}d\tau \right. \\
 & \qquad \left.  - 12  \int_{-\lambda_*}^{\lambda_*} \dfrac{\psi_*(\mathsf x_1 +\tau, h+\gamma[\theta](\tau))}{(\mathfrak d + \gamma[\theta](\tau)- \mathsf x_2[\theta])^3}d\tau \right) . 
\end{split}
 \]
 {In the general case $\psi_* \in \mathcal{C}^{\infty}(\mathbb R^2)$ with $\psi_*(\mathsf x) = 0$, we obtain readily that 
 \begin{equation} \label{eq_aprioric*}
 |c_*| \leq C \sqrt{\mathfrak d}\|\psi_*\|_{\mathcal{C}^1(A \cap |x_1| < 2d)}
 \end{equation}
 where $C$ depends only on $B$. However, in the various cases at-hand here, we obtain the finer expansion as stated in the next lemma.}
 \begin{lemma} \label{lem_c*}
 When $\mathfrak d << 1$, the expansions 
\[
\begin{aligned}
& c_*^{\bot} = \mathfrak d \kappa_3[\theta] c_{\infty}^{\bot}(\kappa_2[\theta]) + \mathcal{O}(\mathfrak d^{3/2}),\\
& c_*^{||} = \mathfrak dc_{\infty}^{||}(\kappa_2[\theta]) + \mathcal{O}(\mathfrak d^{3/2}),\\
& c_{*}^{\circlearrowleft} =   \mathfrak d c_{\infty}^{\circlearrowleft}(\kappa_2[\theta]) + \mathcal{O}(\mathfrak d^{3/2}),
\end{aligned}
\] 
holds true, uniformly in $\theta$. Moreover $c_{\infty}^{\bot}$, $c_{\infty}^{||}$ and $c_{\infty}^{\circlearrowleft}$ are $\mathcal{C}^{\infty}$ on $(0,\infty).$
 \end{lemma}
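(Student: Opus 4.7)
The plan is to work from the explicit representation of $c_*$ displayed just before the lemma, namely
\[
c_* = -\frac{N_*(\mathfrak d,\theta)}{D(\mathfrak d,\theta)},\qquad D(\mathfrak d,\theta) \doteq \int_{-\lambda_*}^{\lambda_*}\frac{d\tau}{(\mathfrak d+\gamma[\theta](\tau)-\mathsf x_2[\theta])^3},
\]
and to extract sharp asymptotics of both the common denominator $D$ and the numerator $N_*$ associated to each of the three stream functions $\psi_*^\bot,\psi_*^{||},\psi_*^\circlearrowleft$ from \eqref{eq:stream}. I would first treat the denominator: substituting the Taylor expansion \eqref{eq_expansiongamma} and performing the rescaling $\tau = \sqrt{\mathfrak d/\kappa_2[\theta]}\,s$ on the leading profile $(\mathfrak d+\kappa_2[\theta]\tau^2)^{-3}$, while bounding the remainder through Lemma \ref{lemma_Nico}, yields uniformly in $\theta$
\[
D(\mathfrak d,\theta) = \mathfrak d^{-5/2}\,\Phi_0(\kappa_2[\theta]) + \mathcal O(\mathfrak d^{-2}),
\]
with $\Phi_0$ smooth and strictly positive on $(0,\infty)$.

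Next I would expand the numerator case by case, plugging the explicit form of $\psi_*$ evaluated along $\tau\mapsto(\mathsf x_1+\tau,h+\gamma[\theta](\tau))$ and using the algebraic identity $\gamma[\theta](\tau)-\mathsf x_2 = (\mathfrak d+\gamma[\theta](\tau)-\mathsf x_2)-\mathfrak d$ to reduce every remaining integral to the standard family controlled by Lemma \ref{lemma_Nico}. For $\psi_*^\bot$ only $\int\tau/(\mathfrak d+\gamma-\mathsf x_2)^3\,d\tau$ survives; since the profile integral $\int\tau/(\mathfrak d+\kappa_2\tau^2)^3\,d\tau$ vanishes by parity, the leading contribution is forced by the cubic correction $\kappa_3[\theta]\tau^3$ in \eqref{eq_expansiongamma}, producing $N_*^\bot = -36\,\kappa_3[\theta]\,\mathfrak d^{-3/2}\,\Phi_1^\bot(\kappa_2[\theta]) + \mathcal O(\mathfrak d^{-1/2})$. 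For $\psi_*^{||}$ one writes $\mathsf x_2-\gamma[\theta] = -[(\mathfrak d+\gamma-\mathsf x_2)-\mathfrak d]$, so that $N_*^{||}$ collapses to a linear combination of $\int d\tau/(\mathfrak d+\gamma-\mathsf x_2)^2$ and $\mathfrak d D$, both of size $\mathfrak d^{-3/2}$; for $\psi_*^\circlearrowleft$ one further expands $(\gamma-\mathsf x_2)^2 = (\mathfrak d+\gamma-\mathsf x_2)^2 - 2\mathfrak d(\mathfrak d+\gamma-\mathsf x_2) + \mathfrak d^2$ and the leftover integral $\int\tau^2/(\mathfrak d+\gamma-\mathsf x_2)^3\,d\tau$ is again rescaled to yield $N_*^\circlearrowleft = \mathfrak d^{-3/2}\,\Phi_1^\circlearrowleft(\kappa_2[\theta]) + \mathcal O(\mathfrak d^{-1/2})$. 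Dividing by $D$ and Taylor-expanding $1/D = \mathfrak d^{5/2}/\Phi_0(\kappa_2) \cdot [1+\mathcal O(\mathfrak d^{1/2})]$ delivers the three advertised expansions with
\[
c_\infty^\bot(\kappa_2) = \frac{36\,\Phi_1^\bot(\kappa_2)}{\Phi_0(\kappa_2)},\quad c_\infty^{||}(\kappa_2) = -\frac{\Phi_1^{||}(\kappa_2)}{\Phi_0(\kappa_2)},\quad c_\infty^\circlearrowleft(\kappa_2) = -\frac{\Phi_1^\circlearrowleft(\kappa_2)}{\Phi_0(\kappa_2)},
\]
each rational in the universal profile integrals $J_{p,q}(\kappa_2) = \int_{\mathbb R} s^p(1+\kappa_2 s^2)^{-q}\,ds$ and therefore smooth on $(0,\infty)$.

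The main obstacle is not the leading-order extraction, which is routine once every integral is reduced to the scaling profile, but rather the \emph{uniformity in} $\theta$ of the remainders. This is handled by exploiting the uniform control on the Taylor remainder of $\gamma[\theta]$ provided by \eqref{constants}, together with the fact that $\kappa_2[\theta]$ and $\mathsf x_2[\theta]$ stay in the closed intervals $[\kappa_2^{min},\kappa_2^{max}]$ and $[-d_{max},-d_{min}]$ identified in Section \ref{sec_gapdescription}. Under those uniform bounds, each error integral is majorized by a $\theta$-independent integral of the form $\int_{-\lambda_*}^{\lambda_*}|\tau|^p(\mathfrak d+c_1^{(2)}\tau^2)^{-q}\,d\tau$ to which Lemma \ref{lemma_Nico} applies directly, and concatenating the three cases produces the uniform expansions asserted in the statement.
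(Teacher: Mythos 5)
Your proposal is correct and follows essentially the same route as the paper: write $c_*$ as $-N_*/D$, expand $D$ to order $\mathfrak d^{-5/2}$ via the rescaling $\tau\mapsto\sqrt{\mathfrak d}\,s$ after replacing $\gamma[\theta]$ by its Taylor expansion, and then treat the numerators case by case, exploiting the parity cancellation for $\psi_*^{\bot}$ to expose the $\kappa_3[\theta]$ factor. The paper only writes out the $\bot$ case in detail and dismisses $||$, $\circlearrowleft$ as ``similar computations''; your telescoping identities $\gamma-\mathsf x_2 = (\mathfrak d+\gamma-\mathsf x_2)-\mathfrak d$ and $(\gamma-\mathsf x_2)^2 = (\mathfrak d+\gamma-\mathsf x_2)^2 - 2\mathfrak d(\mathfrak d+\gamma-\mathsf x_2)+\mathfrak d^2$ are a clean way to reduce those to Lemma~\ref{lemma_Nico} and make those two cases explicit, so your write-up is slightly more self-contained, but the underlying argument and the source of uniformity in $\theta$ are the same.
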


 \begin{proof}
 We provide a proof in the case of $c_{*}^{\bot}. $ The other cases follow from similar computations. 
Replacing $\psi_*^{\bot}$ with its explicit value, we have
\begin{equation} \label{eq_c*bot}
c_*^{\bot} = \dfrac{12}{\displaystyle \int_{-\lambda_*}^{\lambda_*} \dfrac{1}{(\mathfrak d+ \gamma[\theta](\tau) - \mathsf x_2[\theta])^3}\, d\tau} \int_{-\lambda_*}^{\lambda_*} \dfrac{\tau}{(\mathfrak d + \gamma[\theta](\tau)- \mathsf x_2[\theta])^3}d\tau.
\end{equation}
Let start with computing the denominator, i.e. 
\[
den = \int_{-\lambda_*}^{\lambda_*} \dfrac{1}{(\mathfrak d+ \gamma[\theta](\tau) - \mathsf x_2[\theta])^3}\, d\tau.
\]
Expanding $\gamma$ and using the control from below given by  \eqref{constants}, we infer
\begin{multline*}
den = \int_{-\lambda_*}^{\lambda_*}  \left( \dfrac{1}{(\mathfrak d+ \kappa_2[\theta] \tau^2)^3}  - 3 \dfrac{\kappa_3[\theta] \tau^3}{(\mathfrak d+ \kappa_2[\theta] \tau^2)^4}  \right)d\tau \\
+ \mathcal O \left(  \int_{-\lambda_*}^{\lambda_*} \left( \dfrac{\tau^4 }{(\mathfrak d+ c_2^{(2)} \tau^2)^4}  +  \dfrac{\tau^6 }{(\mathfrak d+ c_2^{(2)} \tau^2)^5}   \right) d\tau \right).
\end{multline*}
By symmetry, the integral of the second term on the first line vanishes while we set $\tau = \sqrt{\mathfrak d}s$ when computing the integral of the first term. By straightforward estimate of remainder terms at infinity, we deduce
\[
\int_{-\lambda_*}^{\lambda_*}  \left( \dfrac{1}{(\mathfrak d+ \kappa_2[\theta] \tau^2)^3}  - 3 \dfrac{\kappa_3[\theta] \tau^3}{(\mathfrak d+ \kappa_2[\theta] \tau^2)^4}  \right)d\tau  = \dfrac{1}{{\mathfrak d}^{5/2}}\int_{-\infty}^{\infty} \dfrac{1}{(1+\kappa_2[\theta] s^2)^3}\, ds  + \mathcal O(1).
\] 
Below, we denote the integral on the right-hand side by $den_{\infty}(\kappa_2[\theta])$. We remark that it is a smooth function of $\kappa_2[\theta]$ when it ranges $(0,\infty)$ with strictly positive values when $\kappa_2[\theta] \in [\kappa_2^{min},\kappa_2^{max}].$
With a similar change of variable in the integral, we observe that the remainder term on the second line of the expression of $den$ is $\mathcal O({\mathfrak d}^{-3/2}).$

Concerning the numerator, we proceed similarly, remarking that the leading term $\frac{\tau}{(\mathfrak d + \kappa_2[\theta] \tau^2)^3}$ leads to a vanishing integral. We then obtain 
\[
 \int_{-\lambda_*}^{\lambda_*} \dfrac{\tau}{(\mathfrak d + \gamma[\theta](\tau)- \mathsf x_2[\theta])^3}d\tau =  \dfrac{ (-3) \kappa_3[\theta]}{{\mathfrak d}^{\frac 32}} \int_{-\infty}^{\infty} \dfrac{\tau^2}{({\mathfrak d} + \kappa_2[\theta]\tau^2)^4} \, d\tau + \mathcal O\left( \dfrac{1}{\mathfrak d}\right).
\]
We point out that the remainder term could be made $\mathcal O(\mathfrak d^{-1/2})$ by playing again on symmetries but this will have no influence below. 
We denote  the integral appearing in the right-hand side of this latter identity by $num_{\infty}(\kappa_2[\theta])$. We remark again that it is a smooth function of $\kappa_2[\theta]$ with strictly positive values when $\kappa_2[\theta] \in [\kappa_2^{min},\kappa_2^{max}].$  We eventually obtain 
\[
c_*^{\bot}  = - \dfrac{36 \kappa_3[\theta] \mathfrak d^{-3/2} num_{\infty}(\kappa_2[\theta]) + O(\mathfrak d^{-1})}{ {\mathfrak d}^{-5/2} den_{\infty}(\kappa_2[\theta])+\mathcal O(\mathfrak d^{-3/2})} = \mathfrak d \kappa_3[\theta] c_{\infty}^{\bot}(\kappa_2[\theta]) + \mathcal O(\mathfrak d^{3/2}),
\]
where
\[
c_{\infty}^{\bot}(\kappa_2) =  - 36 \dfrac{num_{\infty}(\kappa_2)}{den_{\infty}(\kappa_2)} \in  \mathcal{C}^{\infty}((0,\infty)).
\]
 \end{proof}
  The formula \eqref{eq_c*bot}  entails that $c_*^{\bot}$ depends on the gap geometry through both parameters $\mathfrak d$ and $\theta.$
Considering the mapping $(\mathfrak d,\theta) \to c_*^{\bot}$ -- that is smooth on $(0,\infty) \times \mathbb R$ by standard parameter-integral arguments -- we prove the following bounds on $\nabla c_*^{\bot}$.
\begin{lemma}
We have the following inequalities for $\mathfrak d << 1$ and $\theta \in \mathbb R:$
\begin{equation} \label{eq_cdot}
|\partial_{\theta} {c}_*^{\bot}| \leq C_{geo}  {\mathfrak d},  \qquad |\partial_{\mathfrak d} {c}_*^{\bot}| \leq C_{geo},
\end{equation}
with a constant $C_{geo}$ depending only on $d,e.$ 
\end{lemma}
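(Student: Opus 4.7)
The starting point is the explicit formula \eqref{eq_c*bot}, namely
\[
c_*^{\bot}(\mathfrak d,\theta) = 12\,\frac{N(\mathfrak d,\theta)}{D(\mathfrak d,\theta)}, \qquad
N = \int_{-\lambda_*}^{\lambda_*}\!\frac{\tau\,d\tau}{(\mathfrak d+\gamma[\theta](\tau)-\mathsf x_2[\theta])^{3}}, \qquad D = \int_{-\lambda_*}^{\lambda_*}\!\frac{d\tau}{(\mathfrak d+\gamma[\theta](\tau)-\mathsf x_2[\theta])^{3}}.
\]
Since the integrands depend smoothly on $(\mathfrak d,\theta)$ and $D$ is bounded below away from $0$ in the regime $\mathfrak d\ll 1$, both partial derivatives are given by the quotient rule after differentiating under the integral sign. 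My plan is to estimate the four derivatives $\partial_{\mathfrak d}N,\partial_{\mathfrak d}D,\partial_{\theta}N,\partial_{\theta}D$ by the same method as in the proof of Lemma \ref{lem_c*}: use the Taylor expansions \eqref{eq_expansiongamma}--\eqref{eq_expansiongamma_der}, replace $\gamma[\theta](\tau)-\mathsf x_2[\theta]$ by $\kappa_2[\theta]\tau^{2}$ at leading order, and apply Lemma \ref{lemma_Nico}.

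For the $\mathfrak d$-derivative, $\partial_{\mathfrak d}$ lowers the denominator exponent by one (bringing down a factor $-3$), so Lemma \ref{lemma_Nico} yields the bookkeeping $\partial_{\mathfrak d}N = \mathcal O(\mathfrak d^{-5/2})$ and $\partial_{\mathfrak d}D=\mathcal O(\mathfrak d^{-7/2})$, combined with $N=\mathcal O(\mathfrak d^{-3/2})$ and $D^{-1}=\mathcal O(\mathfrak d^{5/2})$. Plugging these into the quotient rule gives immediately $|\partial_{\mathfrak d}c_*^{\bot}|\le C_{geo}$ without any cancellation being needed.

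The $\theta$-derivative is the delicate part, and it is where I expect the main difficulty. Differentiation brings down a factor $\partial_{\theta}(\gamma[\theta](\tau)-\mathsf x_2[\theta]) = \partial_{\theta}\kappa_2[\theta]\,\tau^{2}+\mathcal O(\tau^{3})$ (by \eqref{eq_expansiongamma_der}). Direct power counting via Lemma \ref{lemma_Nico} gives $\partial_{\theta}N = \mathcal O(\mathfrak d^{-3/2})$ and $\partial_{\theta}D = \mathcal O(\mathfrak d^{-5/2})$, so that $\partial_{\theta}N\cdot D$ and $N\cdot\partial_{\theta}D$ are each $\mathcal O(\mathfrak d^{-4})$, while $D^{2}=\mathcal O(\mathfrak d^{-5})$. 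A naive bound therefore yields only $\mathcal O(\mathfrak d)$ after an exact cancellation between the two leading terms. Hence I cannot use the triangle inequality on the numerator of the quotient rule and must instead track leading coefficients. Concretely, I will refine the expansions as in the proof of Lemma \ref{lem_c*} to write
\[
D = \frac{D_{\infty}(\kappa_2)}{\mathfrak d^{5/2}}+\mathcal O(\mathfrak d^{-3/2}),\qquad N = -3\kappa_3\,\frac{N_{\infty}(\kappa_2)}{\mathfrak d^{3/2}}+\mathcal O(\mathfrak d^{-1}),
\]
and derive analogous expansions
\[
\partial_{\theta}D = \frac{\partial_{\theta}\kappa_2\,D_{\infty}'(\kappa_2)}{\mathfrak d^{5/2}}+\mathcal O(\mathfrak d^{-3/2}), \qquad
\partial_{\theta}N = -3\,\frac{\partial_{\theta}(\kappa_3 N_{\infty}(\kappa_2))}{\mathfrak d^{3/2}}+\mathcal O(\mathfrak d^{-1}),
\]
obtained by first replacing $\gamma[\theta](\tau)-\mathsf x_2[\theta]$ by $\kappa_2[\theta]\tau^{2}$ and then rescaling $\tau=\sqrt{\mathfrak d}s$, keeping careful control of the remainders (which are uniform in $\theta$ thanks to the compactness of $[\kappa_2^{min},\kappa_2^{max}]$ and the uniform bounds on $\partial_{\theta}\gamma$ from \eqref{constants}). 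Substituting these four expansions into $\partial_{\theta}c_*^{\bot} = 12(\partial_{\theta}N\cdot D - N\cdot\partial_{\theta}D)/D^{2}$ produces an $\mathfrak d^{-4}$-term multiplied by the bracket $\{\partial_{\theta}(\kappa_3 N_{\infty})\,D_{\infty}-\kappa_3 N_{\infty}\,\partial_{\theta}\kappa_2\,D_{\infty}'\}$, divided by an $\mathfrak d^{-5}$-denominator, hence $\mathcal O(\mathfrak d)$; the remainders contribute only $\mathcal O(\mathfrak d^{3/2})$. This will yield the claimed bound $|\partial_{\theta}c_*^{\bot}|\le C_{geo}\,\mathfrak d$.
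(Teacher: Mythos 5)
Your plan follows the same route as the paper: apply the quotient rule to $c_*^{\bot}=12N/D$, differentiate under the integral sign, and then estimate the resulting singular integrals using the parity trick encoded in Lemma \ref{lemma_Nico} together with the Taylor expansions \eqref{eq_expansiongamma}--\eqref{eq_expansiongamma_der} and \eqref{constants}. The $\mathfrak d$-derivative part is correct as stated. However, you introduce an unnecessary complication in the $\theta$-derivative: you claim that a triangle-inequality bound on the quotient-rule numerator is insufficient and that you need to track leading coefficients to exploit a cancellation. This is not so. With the power counting you yourself derive --- $N=\mathcal O(\mathfrak d^{-3/2})$, $\partial_\theta N=\mathcal O(\mathfrak d^{-3/2})$, $\partial_\theta D=\mathcal O(\mathfrak d^{-5/2})$, and, crucially, the two-sided bound $c\,\mathfrak d^{-5/2}\le D\le C\,\mathfrak d^{-5/2}$ (the lower bound coming from $den_\infty(\kappa_2)>0$ and the compactness of $[\kappa_2^{min},\kappa_2^{max}]$) --- each of the two terms
\[
\frac{\partial_\theta N}{D}\quad\text{and}\quad\frac{N\,\partial_\theta D}{D^{2}}
\]
is \emph{individually} $\mathcal O(\mathfrak d)$: the first is $\lesssim\mathfrak d^{-3/2+5/2}=\mathfrak d$, the second is $\lesssim\mathfrak d^{-3/2-5/2+5}=\mathfrak d$. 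So the triangle inequality gives $|\partial_\theta c_*^{\bot}|\le C_{geo}\,\mathfrak d$ directly, with no cancellation whatsoever. The real subtlety, which you do use, is that obtaining $N=\mathcal O(\mathfrak d^{-3/2})$ and $\partial_\theta N=\mathcal O(\mathfrak d^{-3/2})$ (rather than the weaker $\mathcal O(\mathfrak d^{-2})$ one would get by bounding $|\tau|^3$ naively) requires the odd-$p$ symmetry argument inside Lemma \ref{lemma_Nico}, i.e.\ subtracting the vanishing integral over $(\mathfrak d+\kappa_2\tau^2)^{-q}$ and absorbing the error into a higher power of $\tau$. Your proposed coefficient-tracking would also succeed, but it reproves in essence the asymptotics already established in Lemma \ref{lem_c*}; the streamlined argument avoids that. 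Drop the sentence about forced cancellation and the plan is correct.
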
 
\begin{proof}
The proof follows from the following computations : 
$$
\begin{aligned}
\partial_{\theta}{c}_*^{\bot} & = \dfrac{36\, \displaystyle\int_{-\lambda_*}^{\lambda_*} \dfrac{\tau}{(\mathfrak d + \gamma[\theta](\tau)- \mathsf x_2[\theta])^3}d\tau \, \int_{-\lambda_*}^{\lambda_*} \dfrac{ \left( \partial_{\theta}\gamma[\theta](\tau) - \partial_{\theta} \mathsf x_2[\theta] \right)}{(\mathfrak d + \gamma[\theta](\tau)- \mathsf x_2[\theta])^4}d\tau}{\displaystyle \left( \int_{-\lambda_*}^{\lambda_*} \dfrac{1}{(\mathfrak d+ \gamma[\theta](\tau) - \mathsf x_2[\theta])^3}\, d\tau\right)^2}   \\[6pt]
& \hspace{4mm} - \dfrac{36\, \displaystyle\int_{-\lambda_*}^{\lambda_*} \dfrac{ \left( \partial_{\theta}\gamma[\theta](\tau) - \partial_{\theta} \mathsf x_2[\theta] \right)\tau}{(\mathfrak d + \gamma[\theta](\tau)- \mathsf x_2[\theta])^4}d\tau}{\displaystyle \int_{-\lambda_*}^{\lambda_*} \dfrac{1}{(\mathfrak d+ \gamma[\theta](\tau) - \mathsf x_2[\theta])^3}\, d\tau} , \\
\partial_{\mathfrak d}{c}_*^{\bot}  &=  \dfrac{36\displaystyle\, \int_{-\lambda_*}^{\lambda_*} \dfrac{1}{(\mathfrak d + \gamma[\theta](\tau)- \mathsf x_2[\theta])^4}\, d\tau \, \int_{-\lambda_*}^{\lambda_*} \dfrac{\tau}{(\mathfrak d + \gamma[\theta](\tau)- \mathsf x_2[\theta])^3}d\tau}{\displaystyle \left( \int_{-\lambda_*}^{\lambda_*} \dfrac{1}{(\mathfrak d+ \gamma[\theta](\tau) - \mathsf x_2[\theta])^3}\, d\tau\right)^2} \\[6pt]
& \hspace{4mm} - \dfrac{36\, \displaystyle \int_{-\lambda_*}^{\lambda_*} \dfrac{\tau}{(\mathfrak d + \gamma[\theta](\tau)- \mathsf x_2[\theta])^4}d\tau}{\displaystyle \int_{-\lambda_*}^{\lambda_*} \dfrac{1}{(\mathfrak d+ \gamma[\theta](\tau) - \mathsf x_2[\theta])^3}\, d\tau}\\
\end{aligned}
$$
arguing then as in the proof of {Lemma \ref{lem_c*}}.
\end{proof}

\subsection{Quality of the approximation}
{In what follows we compare $\tilde{v}$ reconstructed from $\psi_{opt}$ to the exact solution $(v,q)$ to \eqref{eq_stokesprofile} with the corresponding boundary data $v_* = \nabla^{\bot} \psi_*$. Again, we explain the computations in the general case and write down the results in the three cases at hand here.} 
Given the variational characterization of $\psi_{opt}$ we have:
\begin{equation} \label{eq_boundbelow}
\int_{\mathcal G_{\lambda_*}} |\partial_{22} \psi_{opt}|^2\, dx \leq \int_{\Omega} |\nabla v|^2\, dx.
\end{equation}
Owing to the fact that $\tilde{v}$ matches the same boundary conditions as ${v}$ on $\partial \Omega,$ we infer  also that:
\[
\int_{A} |\nabla (v -\tilde v)|^2 \, dx= \int_{\Omega} |\nabla (v -\tilde v)|^2 \, dx\leq \int_{\Omega} |\nabla \tilde{v}|^2 \, dx- \int_{\Omega} |\nabla{v}|^2 \, dx
\]
and, by \eqref{eq_boundbelow}:
\begin{equation} \label{eq_error}
\int_{A} |\nabla (v -\tilde{v})|^2 \, dx\leq \int_{\Omega} |\nabla \tilde{v}|^2 \, dx- \int_{\mathcal G_{\lambda_*}} |\partial_{22} \psi_{opt}|^2 \, dx.
\end{equation}
We remark here that our candidate $\tilde{v}$ is uniformly bounded in terms of $(h,\theta)$ outside $\mathcal G_{\lambda_*}. $
This implies that
\[
\int_{A} |\nabla {v} - \nabla {\tilde{v}}|^2  \, dx\leq \int_{\mathcal G_{\lambda_*}}  \left( |\nabla^2 \psi_{opt}|^2 - |\partial_{22}\psi_{opt}|^2 \right)\, dx + \mathcal{O}(1).
\]
The relevance of our approximation $\tilde{v}$ is then related to the computation of the $L^2$-norms of $\partial_{11} \psi_{opt}$ and $\partial_{12} \psi_{opt}$  in terms of 
$\psi_*$ and  the geometrical descriptors $\mathfrak d,\theta.$ {For instance, in the case of $\psi^{\bot}_{opt}$, adopting the change of variables in \eqref{change_variables} and using Lemmas \ref{lemma_Nico}- \ref{lem_c*}, we get:
\begin{multline}\label{stime_test2}
\int_{\mathcal G_{\lambda_*}}  \left( |\nabla^2 \psi^{\bot}_{opt}|^2 - |\partial_{22}\psi^{\bot}_{opt}|^2 \right)\, dx=\int_{\mathcal{G}_{\lambda_*}}\left(2|\partial_{12}\psi_{opt}^{\bot} |^2+|\partial_{11}\psi_{opt}^{\bot}|^2\right)\, dx\\
\le C\int_{-\lambda_*}^{\lambda_*} \left(\frac{{(\tau-c_*^{\bot})^2 \tau^2}}{(\mathfrak{d}+\gamma[\theta](\tau)-{\sf x_2})^3} { + \dfrac{1}{(\mathfrak{d}+\gamma[\theta](\tau)-{\sf x_2})}}\right)\,d\tau=\mathcal{O}(\mathfrak d^{-1/2})+\mathcal{O}(1).
\end{multline}
}
By similar computations for $\psi_{opt}^{||},\psi_{opt}^{\circlearrowleft}$, we obtain the next lemma.
\begin{lemma} \label{lem_relevance}
 We have the expansions when $\mathfrak d << 1$ uniformly in $\theta$:
\[
\|\nabla (v^{\bot} - \tilde{v}^{\bot})\|_{L^2(A)} =\mathcal{O}(\mathfrak d^{-1/2}),
\qquad
\|\nabla (v^{||} - \tilde{v}^{||})\|_{L^2(A)} + \|\nabla (v^{\circlearrowleft} - \tilde{v}^{\circlearrowleft})\|_{L^2(A)}   = \mathcal{O}(1).
\] 
\end{lemma}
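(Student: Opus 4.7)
The plan is to exploit the variational characterization of $\psi_{opt}$ (it is the minimizer of $\int |\partial_{22}\psi|^2$ in the gap under the prescribed boundary data) together with the fact that $v$ itself is the Dirichlet-energy minimizer among divergence-free extensions matching the boundary conditions of $\tilde v$. This is precisely what was already set up in \eqref{eq_boundbelow}--\eqref{eq_error} just above the statement. Using the uniform bounds \eqref{stime_test} on $\tilde v$ outside $\mathcal G_{\lambda_*}$, one reduces the proof to showing the gap-localized estimate
\[
\int_{\mathcal G_{\lambda_*}} \bigl( 2|\partial_{12}\psi_{opt}|^2 + |\partial_{11}\psi_{opt}|^2 \bigr)\, dx
=
\begin{cases} \mathcal O(\mathfrak d^{-1/2}) & \text{for } \psi_{opt}^\bot, \\ \mathcal O(1) & \text{for }\psi_{opt}^{||},\,\psi_{opt}^{\circlearrowleft}. \end{cases}
\]

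The next step is to evaluate these integrals using the explicit representations \eqref{psising}, the change of variables \eqref{change_variables}, and the table of asymptotics of singular integrals provided by Lemma \ref{lemma_Nico}, combined with the size of $c_*^\bot,c_*^{||},c_*^{\circlearrowleft}$ in Lemma \ref{lem_c*}. The case of $\psi_{opt}^\bot$ is already executed in \eqref{stime_test2}: the worst singularity comes from a term of the form $(\tau-c_*^{\bot})^2\tau^2/(\mathfrak d + \gamma[\theta](\tau)-\mathsf x_2)^3$, which is in the ``$p=4,q=3$'' regime of Lemma \ref{lemma_Nico} and therefore behaves like $\mathfrak d^{-1/2}$.

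For $\psi_{opt}^{||}$ and $\psi_{opt}^{\circlearrowleft}$, one writes down the $x_1$-derivatives of the polynomia $P_1^{opt}(r), P_2^{opt}(r)$ and of the prefactors $\gamma(x_1-\mathsf x_1)-\mathsf x_2+c_*^{||}$ or $\tfrac12(x_1-\mathsf x_1)^2 + \tfrac12(\gamma-\mathsf x_2)^2 - c_*^{\circlearrowleft}$; every such derivative produces either an extra factor of $\tau$ (from $\partial_1 \gamma(\tau) \sim 2\kappa_2\tau$) or of $(\gamma(\tau)-\mathsf x_2)\sim\kappa_2\tau^2$. After squaring, the numerators thus gain enough powers of $\tau$ that the resulting integrals drop into the regime $p\ge 2q$ (even case) or $p\ge 2q-1$ (odd case) of Lemma \ref{lemma_Nico}, which yields $\mathcal O(1)$. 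The size estimates $c_*^{||},c_*^{\circlearrowleft}=\mathcal O(\mathfrak d)$ from Lemma \ref{lem_c*} are needed to confirm that the ``constant'' terms in these boundary data do not spoil the cancellation.

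The main obstacle is not conceptual but bookkeeping: one must carefully carry out the chain-rule expansions of $\partial_{11}\psi_{opt}$ and $\partial_{12}\psi_{opt}$ in each of the three cases, keeping track of which terms give the critical power-count in Lemma \ref{lemma_Nico} and verifying that the $c_*$-dependent contributions are controlled by Lemma \ref{lem_c*}. Once the three gap integrals are computed, the lemma follows by taking square roots of \eqref{eq_error} combined with \eqref{stime_test}.
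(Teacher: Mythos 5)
Your proposal is correct and follows the same path as the paper's (very terse) argument: set up the variational bound \eqref{eq_boundbelow}--\eqref{eq_error}, use the uniform exterior bound \eqref{stime_test} to localize to $\mathcal G_{\lambda_*}$, invoke the computation \eqref{stime_test2} for the $\bot$ case, and then argue that the same chain-rule bookkeeping applied to $\psi_{opt}^{||}$ and $\psi_{opt}^{\circlearrowleft}$ produces integrals in the $p\geq 2q$ or $p \geq 2q-1$ regime of Lemma \ref{lemma_Nico}, hence $\mathcal O(1)$. The paper dismisses the latter with the phrase ``by similar computations''; your proposal correctly identifies the mechanism that makes those two cases benign (the boundary streamfunctions $\psi_*^{||}$ and $\psi_*^{\circlearrowleft}$ scale like $\tau^2$ near the contact point rather than $\tau$, together with $c_*^{||},c_*^{\circlearrowleft}=\mathcal O(\mathfrak d)$ from Lemma \ref{lem_c*}), so the power count in the worst terms lands at $p=2q$ instead of $p=2q-2$.
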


\subsection{Pressure.} 
We construct now a pressure $\tilde{q}$ so that 
\[
- \Delta \tilde{v}  +\nabla \tilde{q} = R \qquad {\rm div} \tilde{v} = 0,
\] 
where $R$ is bounded in a suitable sense.  For this, we remark that $\tilde{v}$ is uniformly bounded outside the gap $\mathcal G_{\lambda_*}$ whatever the value of $\mathfrak d.$ Hence, we focus on the construction of $\tilde{q}$ inside $\mathcal G_{\lambda_*}.$  By construction, we have 
\[
\tilde{v} = \nabla^{\bot} \psi_{opt}\quad  \text{ and } \quad 
- \Delta \tilde{v}
 = \begin{pmatrix}
  \partial_{211} \psi_{opt} + \partial_{222} \psi_{opt} \\
  - \partial_{111} \psi_{opt} - \partial_{122} \psi_{opt}
 \end{pmatrix}.
\] 
In this expression, the most diverging term should be $\partial_{222} \psi_{opt}(x_1,x_2) =~d_{222}(x_1)$ and  it is a function of $x_1$ only. 
Hence,  we set, for $(x_1,x_2) \in \mathcal G_{\lambda_*}:$
\[
\begin{aligned}
\tilde{q}(x_1,x_2) &= - \int_{\mathsf x_1-\lambda_{*}}^{x_1} 
d_{222}(\tau)d\tau + \partial_{12} \psi_{opt}(x_1,x_2)  \\[6pt]
&=  \int_{x_1}^{\mathsf x_1+\lambda_*} d_{222}(\tau)d\tau  + \partial_{12} \psi_{opt}(x_1,x_2).
\end{aligned}
\]
The second identity uses that the mean of $d_{222}$ vanishes. We can extend this pressure by $0$ without generating singularities.  We point out that astonishingly,  this mean-free property seems to appear incidentally while it is deeply related to the variational construction of $\psi_{opt}.$ In particular $-\Delta \tilde{v} + \nabla \tilde{q}$ is uniformly bounded outside $\mathcal G_{\lambda_*/2}$. Then, for arbitrary $(w_1,w_2) \in [\mathcal{C}^{\infty}(\overline{\mathcal G_{\lambda_*}})]^2$ that vanish on the top and bottom boundaries, we have then
$$
\begin{aligned}
\int_{\mathcal G_{\lambda_*/2}} (-\Delta \tilde{v} & + \nabla \tilde{q}) \cdot  (w_1,w_2) \, dx \\[6pt]
& = \int_{\mathcal G_{\lambda_*/2}} \left(2 \partial_{211} \psi_{opt} w_1 - \partial_{111} \psi_{opt} w_2\right)\, dx \\[6pt]
&  = - \int_{\mathsf x_1-\lambda_*/2}^{\mathsf x_1+\lambda_*/2} \int_{-L}^{h + \gamma[\theta](x_1-\mathsf x_1)} 
\left( 2 \partial_{11} \psi_{opt} \partial_2 w_1 +
 \partial_{111} \psi_{opt} w_2\right) dx_1 dx_2
\end{aligned}
$$
 where we integrated by parts the first term.  For the second term we use the \textit{primitive} operator:
 \[
 P_2 \partial_{111} \psi_{opt}(x_1,x_2) = - \int_{x_2}^{h + \gamma[\theta](x_1- \mathsf x_1)}  \partial_{111} \psi_{opt}(x_1,z)dz,
 \]
 so that:
 $$
 \begin{aligned}
  \int_{\mathcal G_{\lambda_*}} (-\Delta \tilde{v} + & \nabla \tilde{q}) \cdot  (w_1,w_2)\, dx\\[6pt]
& \hspace{-4mm} = - \int_{\mathsf x_1-\lambda_*/2}^{\mathsf x_1+\lambda_*/2} 
\int_{-L}^{h + \gamma[\theta](x_1-\mathsf x_1)} 
\left( 2 \partial_{11} \psi_{opt}\partial_2 w_1 -
 P_2 \partial_{111} \psi_{opt}\partial_2 w_2\right) dx_1 dx_2.
 \end{aligned}
 $$
We can then estimate this latter term with H\"older inequalities.  In the general case we obtain the following statement. 
{
\begin{lemma} 
 Under the assumption that $\psi_*({\mathsf x_1},{\mathsf x}_2+h) = 0,$ there exists a constant $C_*$ depending only on $B$ such that
there exists a pressure $\tilde{q} \in L^2(\Omega)$ for which
\[
\|-\Delta \tilde{v} + \nabla \tilde{q}\|_{H^{-1}(\Omega)} \leq C_* \|\psi_*\|_{\mathcal{C}^3(A \cap |x_1| < d)} .
\]
\end{lemma}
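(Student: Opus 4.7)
The plan is to take the construction of $\tilde{q}$ sketched immediately before the lemma statement and turn it into a rigorous $H^{-1}$-bound. The key observations to exploit are the following: inside the gap $\mathcal{G}_{\lambda_*}$, $\psi_{opt}$ is a cubic polynomial in the rescaled vertical variable $r(x_1,x_2)$ given in \eqref{change_variables}, so $\partial_{222}\psi_{opt}$ -- the most singular term of $-\Delta \tilde{v}$ -- depends only on $x_1$; moreover the optimality condition \eqref{eq_d222psinul} that defines $c_*$ says that this function has zero mean over $(\mathsf{x}_1-\lambda_*,\mathsf{x}_1+\lambda_*)$. This is exactly what lets the primitive $-\int_{\mathsf{x}_1-\lambda_*}^{x_1} d_{222}(\tau)\,d\tau$ vanish at both ends of the gap and therefore be extended by $0$ laterally without creating Dirac masses on the vertical sides.

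Concretely, I would first define $\tilde{q}$ as in the paragraphs above the statement inside $\mathcal{G}_{\lambda_*/2}$, glue it to zero by a smooth $x_1$-cutoff in the transition annulus $\mathcal{G}_{\lambda_*}\setminus\mathcal{G}_{\lambda_*/2}$, and then set $\tilde{q}\equiv 0$ in $\Omega\setminus\mathcal{G}_{\lambda_*}$. Thanks to \eqref{stime_test} and the smoothness of $\tilde{v}$ away from $\mathcal{G}_{\lambda_*}$, the residual $-\Delta\tilde{v}+\nabla\tilde{q}$ is bounded in $L^2(\Omega\setminus\mathcal{G}_{\lambda_*/2})$ by $C_{*}\|\psi_*\|_{\mathcal{C}^3(A\cap\{|x_1|<d\})}$ (the localization follows from the support properties of $\psi_{opt}$ and the construction in \eqref{eq:tilde_psi}). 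So everything reduces to estimating the residual in the gap $\mathcal{G}_{\lambda_*/2}$.

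In this region, by construction the two components of $-\Delta\tilde{v}+\nabla\tilde{q}$ are $2\partial_{211}\psi_{opt}$ and $-\partial_{111}\psi_{opt}$ -- the singular $\partial_{222}$ term having been absorbed into $\partial_1\tilde{q}$, and the $\partial_{122}$ term into $\partial_2\tilde{q}$. Testing against an arbitrary $w=(w_1,w_2)\in H^1_0(\Omega)^2$ and integrating by parts in $x_2$ (using $w$ vanishes on $\partial A$, together with an auxiliary primitive $P_2$ in the vertical variable as defined in the paragraph before the lemma), I reach the identity
\[
\int_{\mathcal{G}_{\lambda_*/2}}\!\!(-\Delta\tilde{v}+\nabla\tilde{q})\cdot w\,dx
= -\int_{\mathcal{G}_{\lambda_*/2}}\!\!\Bigl(2\partial_{11}\psi_{opt}\,\partial_2 w_1 - P_2\partial_{111}\psi_{opt}\,\partial_2 w_2\Bigr)dx.
\]
Cauchy--Schwarz reduces the problem to bounding $\|\partial_{11}\psi_{opt}\|_{L^2(\mathcal{G}_{\lambda_*/2})}$ and $\|P_2\partial_{111}\psi_{opt}\|_{L^2(\mathcal{G}_{\lambda_*/2})}$ by the $\mathcal{C}^3$-norm of $\psi_*$ on $A\cap\{|x_1|<d\}$. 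Using the explicit form \eqref{eq:optimizer} of $\psi_{opt}$ together with Lemma \ref{lem_c*} to control $c_*$ and the standard change of variables \eqref{change_variables} to evaluate the integrals, each such norm is bounded by $C\|\psi_*\|_{\mathcal{C}^3}$ times $\mathcal{O}(1)$ or at worst $\mathcal{O}(|\ln\mathfrak{d}|^{1/2})$ in $\mathfrak{d}$, hence in particular uniformly bounded, parallel to the computation already performed in \eqref{stime_test2}.

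The conceptual difficulty is the first step: verifying that the natural pointwise definition of $\tilde{q}$ can be extended into a global function of $H^{-1}$-admissible regularity. This is where the variational origin of $\psi_{opt}$ intervenes in an indirect but essential way through the vanishing-mean property of $d_{222}$, without which the extension by zero would fail and an entirely different Bogovskii-type correction would be required. Once this is in hand, everything else is standard integration by parts and H\"older estimates on explicit profiles.
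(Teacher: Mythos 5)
Your proposal reproduces the paper's own argument: the same pressure built from the mean-free function $d_{222}$, the same residual $(2\partial_{211}\psi_{opt},-\partial_{111}\psi_{opt})$, the same vertical integration by parts via the primitive operator $P_2$, and the same H\"older reduction to $L^2$ bounds on $\partial_{11}\psi_{opt}$ and $P_2\partial_{111}\psi_{opt}$. Two details need adjusting to make it precise. First, the control on $c_*$ for a \emph{generic} stream function $\psi_*$ is the a priori bound \eqref{eq_aprioric*}, namely $|c_*|\lesssim\sqrt{\mathfrak d}\,\|\psi_*\|_{\mathcal{C}^1}$, not Lemma~\ref{lem_c*}, which is stated only for the three particular data $\psi_*^{\bot},\psi_*^{\parallel},\psi_*^{\circlearrowleft}$. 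Second, the bound ``$\mathcal{O}(|\ln\mathfrak d|^{1/2})$ at worst'' is not uniform in $\mathfrak d$ and is in tension with the claim that $C_*$ depends on $B$ only; the divergence in \eqref{stime_test2} comes from $\partial_{12}\psi_{opt}$, which does not enter your estimate, while the quantities that do enter, $\partial_{11}\psi_{opt}$ and $P_2\partial_{111}\psi_{opt}$, are genuinely $\mathcal{O}(1)$ once the vanishing $\psi_*(\mathsf x)=0$, the bound on $c_*$, and the extra weight $(\mathfrak d+\gamma-\mathsf x_2)^2$ from the Hardy inequality for $P_2$ are all exploited through Lemma~\ref{lemma_Nico}.
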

The proof is based on the above identity combined with \eqref{eq_aprioric*} and the analysis of the operator $P_2$ given below in Appendix \ref{app_P2}.
}

A straightforward consequence to the previous lemma is that, if we are given an approximation $\tilde{v}=\nabla^\perp \psi_{opt}$ and two divergence-free test-functions $w,\tilde{w}$ that match the same boundary condition on $\partial B$ and $\partial A,$ we have 
\begin{equation} \label{eq_stokes_optim}
\left|\int_{\Omega} \nabla w : \nabla \tilde{v} \, dx- \int_{\Omega} \nabla \tilde{w} : \nabla \tilde{v}\, dx \right| \leq C_* \|\nabla (w - \tilde{w})\|_{L^2(\Omega)} .
\end{equation}
\subsection{Computation of $M_{opt}^{\infty}$}\label{I_k2} 
We split the expression in three terms :$M_{opt}^{\infty} = 6 G_1 + 12 G_2 + 6 G_3$, where
\[
\begin{aligned}
G_1 &\doteq\int_{-\lambda_*}^{\lambda_*} 
\dfrac{(\tau - c_*^{\bot}) ( \tau^2 - c_*^{\circlearrowleft}) }{(\mathfrak d + \gamma[\theta](\tau) - \mathsf x_2[\theta])^3} \, d\tau, \\
G_2 & \doteq\int_{-\lambda_*}^{\lambda_*} 
\dfrac{	 \mathsf{x}_2 (\tau - c_*^{\bot})
 (\gamma[\theta](\tau)- \mathsf x_2[\theta] + c_*^{||} )  }{(\mathfrak d + \gamma[\theta](\tau) - \mathsf x_2[\theta])^3} \, d\tau,
\\
G_3& \doteq\int_{-\lambda_*}^{\lambda_*} 
\dfrac{ {\mathsf x}_2(\tau - c_*^{\bot})}{(\mathfrak d + \gamma[\theta](\tau) - \mathsf x_2[\theta])^2} \, d\tau.
\end{aligned}
\]
The true challenge of these computations is to show that all these integrals can be written as the multiplication of $\kappa_3$ with some integral depending on $\kappa_2$. To this end, we may either exploit the fact that $c_*^{\bot}$ has this form or exploit symmetries to cancel integrals with an odd power of $\tau$ on the denominator.  
For instance, we have
\[
G_1 = \int_{-\lambda_*}^{\lambda_*} 
\dfrac{\tau^3  - c_*^{\bot}\tau^2  - c_*^{\circlearrowleft} \tau  + c_*^{\bot} c_*^{\circlearrowleft} }{(\mathfrak d + \gamma[\theta](\tau) - \mathsf x_2[\theta])^3} \, d\tau.
\]
Exploiting symmetries, we infer 
\[
\begin{aligned}
 \int_{-\lambda_*}^{\lambda_*} 
\dfrac{\tau^3}{(\mathfrak d + \gamma[\theta](\tau) - \mathsf x_2[\theta])^3}\, d\tau  & = -3 \dfrac{\kappa_3 I_{6,4}(\kappa_2)}{\sqrt{\mathfrak d}} + \mathcal O(1) \\
  \int_{-\lambda_*}^{\lambda_*} 
\dfrac{\tau}{(\mathfrak d + \gamma[\theta](\tau) - \mathsf x_2[\theta])^3}\, d\tau  & = -3 \dfrac{\kappa_3 I_{4,4}(\kappa_2)}{\mathfrak d^{\frac 32}} + \mathcal O(1) 
\end{aligned}
\]
where here and below, we fix the notation $I_{p,q}(\kappa_2)$ by
\[
I_{p,q}(\kappa_2) = \int_{\mathbb R} \dfrac{\tau^{p}}{(1+\kappa_2 \tau^2)^{q}}d\tau  \qquad \forall\, 0 \leq p < 2q  -1.
\]
Recalling Lemma \ref{lem_c*} to rewrite $c_*^{\bot}$ and $c_*^{\circlearrowleft},$ we deduce that
\[
\begin{split}
G_1 = \dfrac{\kappa_3}{\sqrt{\mathfrak d}} \Big( -3I_{6,4}(\kappa_2) & - 3
c_{\infty}^{\bot}(\kappa_2)  I_{4,4}(\kappa_2)   \\
&  + 3 c_*^{\circlearrowleft} (\kappa_2) I_{4,4}(\kappa_2)  + c_{\infty}^{\bot}(\kappa_2)   c_{\infty}^{\circlearrowleft} (\kappa_2) I_{0,3}(\kappa_2) \Big) + \mathcal O(1). 
\end{split}
\]
As for $G_2,G_3,$ we first remind that, Assertion (i) in Lemma \ref{lem_geom} implies $x_2 = X_2(\kappa_2),$ and then, arguing as previously, we eventually conclude that
\[
\begin{aligned}
G_2 = \kappa_3 \dfrac{X_2(\kappa_2)}{\sqrt{\mathfrak d}} \Big( - 3\kappa_2
 I_{6,4}(\kappa_2) & - \kappa_2  c_{\infty}^{\bot}(\kappa_2) I_{2,4}(\kappa_2)\\ 
 &  - 3  c_{\infty}^{||}(\kappa_2) I_{4,4}(\kappa_2) -  c_{\infty}^{\bot}(\kappa_2) c_{\infty}^{||}(\kappa_2) I_{0,3}(\kappa_2)  \Big) + \mathcal O(1),
\end{aligned}
\]
and 
\[ 
 G_3 = \kappa_3 \dfrac{X_2(\kappa_2)}{\sqrt{\mathfrak d}} \left( -3 I_{4,3}(\kappa_2) - c_{\infty}^{\bot}(\kappa_2)I_{0,2}(\kappa_2) \right) + \mathcal O(1).
 \]
This ends the proof.

\subsection{Computation of $J_1$} \label{app_J1}
This section is devoted to the estimate \eqref{eq_J1estim} of $J_1$ defined by
\[
\int_{T_-}^{\sigma}  \int_{\Omega(t)} [\partial_{t} (a(t)\tilde{v}^{\bot}) + (u \cdot \nabla) (a(t)\tilde{v}^{\bot})] \cdot u\, dxdt = J_1^{(a)} + J_{1}^{(b)} + J_{1}^{(c)},
\]
where
\[
\begin{aligned}
J_1^{(a)} & =\int_{T_-}^{\sigma}  \int_{\Omega(t)} {a}'(t)\tilde{v}^{\bot} \cdot u\, dxdt, \\[6pt]
J_1^{(b)} &= \int_{T_-}^{\sigma}  \int_{\Omega(t)} {a}(t) \partial_t \tilde{v}^{\bot} \cdot u\, dxdt, \\[6pt]
J_1^{(c)} &=  \int_{T_-}^{\sigma} a(t)  \int_{\Omega(t)}  (u \cdot \nabla) \tilde{v}^{\bot} \cdot u\, dxdt.
\end{aligned}
\]
Regarding  $J_1^{(c)}$, we first remark that $\tilde{v}^{\bot}$ is, by construction, uniformly bounded outside $\mathcal G_{\lambda_*}$, see \eqref{stime_test}. 
Thus there holds 
\[
|J_1^{(c)}| \leq C_{geo}  \int_{T_-}^{\sigma}\left( \|u\|^2_{L^2(\Omega)} + \left| \int_{\mathcal G_{\lambda_*}} ( u \cdot \nabla)\tilde{v}^{\bot} \cdot u \, dx\right|\right)\, dt.
\]
To compute the last integral, we introduce
\[
P_2 \nabla \tilde{v}^{\bot}(x_1,x_2) = -\int_{x_2}^{h+ \gamma[\theta](x_1-\mathsf x_1[\theta])} \nabla \tilde{v}^{\bot} (x_1,z)\, dz 
\]
so that, after integration by parts (noting that $P_2 \nabla \tilde{v}^{\bot}$ vanishes on the top-boundary of $\mathcal G_{\lambda_*}$ while $u$ vanishes on the lower boundary):
$$
\begin{aligned}
&\left| \int_{\mathcal G_{\lambda_*}} ( u \cdot \nabla \tilde{v}^{\bot} )\cdot u\, dx\right|   = \left| \int_{\mathcal G_{\lambda_*}}
 \partial_2 u \otimes u : P_2 \nabla \tilde{v}^{\bot} \, dx \right| \\[6pt]
&  \leq \int_{-\lambda_*}^{\lambda_*} \sup_{x_2 \in (-L,h+\gamma[\theta](\tau))} |P_2 \nabla \tilde{v}^{\bot}|  \left( \int_{-L}^{h+\gamma[\theta](\tau)}  |\partial_2 u(\mathsf x_1+\tau,x_2)|^2dx_2 \right)^{\frac 12}\, d\tau \\[6pt]
& \hspace{4mm} \times\left( \int_{-L}^{h+\gamma[\theta](\tau)}  | u(\mathsf x_1 + \tau,x_2)|^2 dx_2\right)^{\frac 12} \\[6pt]
& \leq  C_{geo}  \left(\sup_{\tau \in (-\lambda_*,\lambda_*)}\sup_{x_2 \in (-L,h+\gamma[\theta](\tau))} (\mathfrak d + \gamma[\theta]( \tau) - \mathsf x_2[\theta])|P_2 \nabla \tilde{v}^{\bot}| \right)
\int_{\mathcal G_{\lambda_*}} |\partial_2 u|^2\, dx,
\end{aligned}
$$
where we have used Poincar\'e inequality with optimal constant on the line $x_1 = \mathsf x_1+\tau,$ $x_2\in (-L,h+\gamma[\theta](\mathsf x_1+\tau))$ to pass from the second to the last line. With the explicit formulas at-hand, we obtain
\[
\left[\sup_{x_2 \in (-L,h+\gamma[\theta](\mathsf x_1+\tau))} (\mathfrak d + \gamma[\theta](\mathsf x_1 + \tau) - \mathsf x_2[\theta])|P_2 \nabla \tilde{v}^{\bot}| \right] \leq C_{geo}
\] 
so that
\begin{equation} \label{eq_J1c}
|J_1^{(c)}| \leq C_{geo} \int_{T_-}^{\sigma} \int_{A} \|\nabla u(\tau)\|^2\, dxd\tau.
\end{equation}

\medskip 

To compute $J_1^{(a)},$ we simply use the control on $a'$ obtained in Lemma \ref{lem_Mod} combined with Poincaré inequality.  By \eqref{stime_test}, this yields the desired estimate, that is 
\begin{equation} \label{eq_J1a}
|J_1^{(a)}| \leq C_{geo} \int_{T_{-}}^{\sigma} |{\theta}'(\tau)| \|u(\tau)\|_{L^2(A)}\, d\tau \leq C_{geo} \int_{T_-}^{\sigma} \|\nabla u(\tau)\|^2\, d\tau.
\end{equation}

It remains to compute $\partial_{t} \tilde{v}^{(\bot)}.$ 
By construction $\tilde{v}^{\bot}(t,x) = \nabla \tilde{\psi}^{\bot}(t,x)$, 
where
\[
\tilde{\psi}^{\bot}(t,x)
= 
\left\{
\begin{aligned}
& \zeta\left( \dfrac{ (x_1 - \mathsf x_1)}{\lambda_*} \right)  \psi^{\bot}_{opt}(t,x)\\& + \left(1-\zeta\left( \dfrac{ (x_1 -\mathsf x_1) }{\lambda_*} \right)\right) \zeta\left( \dfrac{{\rm dist}({x},B)}{d_{*}}\right) \left( (x_1- \mathsf x_1) - c^{\bot}_*)  \right) \; && \text{ in $\mathcal G_{2\lambda_*}$,} \\[6pt]
&  \zeta\left( \dfrac{{\rm dist}({x},B)}{d_{*}}\right) \left( (x_1- \mathsf x_1) - c^{\bot}_* \right) 
\; &&  \text{ in $\Omega \setminus \mathcal G_{2\lambda_*}$,} \\[6pt]
& (x_1 - \mathsf x_1) - c^{\bot}_* && \text{ in $B$}
\end{aligned}
\right.
\]
and
\[
\psi_{opt}^{\bot}(t,x) =  \left((x_1-\mathsf x_1) - c_*^{\bot} \right) P_1^{opt} \left(\dfrac{x_2 + L}{\mathfrak{d} + \gamma(x_1 - \mathsf x_1) - \mathsf x_2[\theta]} \right).
\]
We remind that this formula depends on time through $\mathsf x_1,\mathsf x_2$, $\theta$ and therefore also $\mathfrak d$. In particular, we can split
\[
\partial_t \tilde{\psi}^{\bot} = {\mathsf x}'_1 \psi_t^{(1)} + {\mathsf x}'_2 \psi_t^{(2)} + {\mathfrak d}' \psi_t^{(3)}  + [{c}_{*}^{\bot}]' \psi_t^{(4)}
\quad \text{ in $\Omega\setminus \mathcal G_{\lambda_*}$},
\]  
where $\psi_t^{(i)}$ ($i=1,2,3,4$) are smooth functions, while, in $\mathcal G_{\lambda_*}$, we have 
\[
\begin{aligned}
\partial_t  \psi^{\bot}(t,x) & = \partial_t \psi_{opt}^{\bot}(t,x) = 
\psi_{opt}^{reg}(t,x) + \psi_{opt}^{div}(t,x) 
\end{aligned}
\]
where
\[
\begin{aligned}
 \psi_{opt}^{reg}(t,x) = & -({\theta}' \partial_{\theta}{\mathsf x}_1 + [{c}_*^{\bot}]' )P_1^{opt} \left(\dfrac{x_2 + L}{\mathfrak{d} + \gamma(x_1 - \mathsf x_1) - \mathsf x_2[\theta]} \right), \\
 \psi_{opt}^{div}(t,x)  = &\! -\!\left( {\mathfrak d}'+ {\theta}' \left( \partial_{\theta} \gamma(x_1 \!-\! \mathsf x_1)  -  \partial_{\theta} \mathsf x_1 \partial_x \gamma(x_1-\mathsf x_1)\! -\! \partial_{\theta} \mathsf x_2[\theta] \right) \right)\!\! \dfrac{(x_2+L)}{(\mathfrak{d} + \gamma(x_1 - \mathsf x_1) - \mathsf x_2[\theta])^2 }  \\
&\times\left((x_1-\mathsf x_1) - c_*^{\bot} \right) \partial_z P_1^{opt} \left(\dfrac{x_2 + L}{\mathfrak{d} + \gamma(x_1 - \mathsf x_1) - \mathsf x_2[\theta]} \right).
\end{aligned}
\]
Using  the differentials computed in Appendix \ref{app_difftheta}, the previous computations of integrals in this Appendix \ref{app_asymptotics} and the use of the operator $P_2$ (see Appendix \ref{app_P2}), 
we infer that 
$$
\|\nabla  \psi_{opt}^{reg}  (t,x)\|_{L^2(\mathcal G_{\lambda_*})}
+ \|\nabla P_2 \psi_{opt}^{div}(t,x)\|_{L^2(\mathcal G_{\lambda_*})} \leq  C_{geo} \left( |{\theta}'| + |{\mathfrak d}'| \right),
$$
where
\[
P_2 \psi_{opt}^{div}(t,x) = - \int_{x_2}^{h+\gamma(x_1-\mathsf x_1)} \psi_{opt}^{div}(t,x_1,z)dz.
\]
Eventually, we obtain 
\[
\begin{aligned}
\int_{\Omega(t)}\partial_{t} \tilde{v}^{\bot} \cdot u\, dx& = \int_{\Omega(t) \setminus \mathcal G_{\lambda_*}}  \partial_{t} \tilde{v}^{\bot} \cdot u \, dx+ 
\int_{\mathcal G_{\lambda_*}} \nabla^{\bot} \psi_{opt}^{reg} \cdot u \, dx
-   \int_{\mathcal G_{\lambda_*}} \nabla^{\bot} P_2 \psi_{opt}^{div} \cdot \partial_2 u \, dx
 \\
\end{aligned}
\]
so that combining the previous computations with \eqref{eq_dissipationsolide}, we get
\begin{equation} \label{eq_J1b}
|J_1^{(b)}| \leq  C_{geo} \int_{T_-}^{\sigma} \left( |{\theta}'| + |{\mathfrak d}'| \right) \|\nabla u\|_{L^2(A)}   \leq C_{geo}  \int_{T_-}^{\sigma}  \|\nabla u(\tau)\|_{L^2(A)}^2 \, d\tau.
\end{equation}
Combining \eqref{eq_J1c}-\eqref{eq_J1a}-\eqref{eq_J1b} yields \eqref{eq_J1}.

\section{Properties of the operator $P_2$} \label{app_P2}

In the previous computations we have used several times the operator 
\[
P_2 f(x_1,x_2) = - \int_{x_2}^{h+\gamma[\theta](x_1-\mathsf x_1)} f(x_1,z)dz \quad \forall x \in \mathcal G_{\lambda} .
\]
for a function $f$ defined in the gap $\mathcal G_{\lambda}$.  The main purpose of this construction is to gain regularity in the $x_2$ variable.  Precisely, we have the following
properties for $f \in \mathcal{C}^{\infty}(\mathcal G_{\lambda})$.
First, observe that since we have the boundary condition
\[
P_2f(x_1,h+\gamma[\theta](x_1-\mathsf x_1)) = 0 \qquad \forall x_1 \in (\mathsf x_1 - \lambda,\mathsf x_1 + \lambda),
\]
we can apply a Hardy inequality on $(-L,h+\gamma[\theta](x_1-\mathsf x_1))$ for any $x_1 \in (\mathsf x_1 - \lambda,\mathsf x_1 + \lambda)$. We therefore compute  the gradient of $P_2$, i.e. 
\[
\partial_1 P_2f(x_1,x_2) =  P_2 \partial_1 f(x_1,x_2) - \partial_x \gamma[\theta] (x_1-\mathsf x_1)f(x_1,h+\gamma[\theta](x_1-\mathsf x_1)) \quad \text{and} \quad 
\partial_2 P_2f = f,
\]
to get
\[
\int_{-L}^{h+\gamma[\theta](x_1-\mathsf x_1)} |P_2f(x_1,x_2)|^2 dx_2 \leq (\mathfrak d + \gamma[\theta](x_1-\mathsf x_1)- \mathsf x_2[\theta])^2 \int_{-L}^{h+\gamma[\theta](x_1-\mathsf x_1)} |f(x_1,x_2)|^2  dx_2,
\]
since $\mathfrak d + \gamma[\theta](x_1-\mathsf x_1)- \mathsf x_2[\theta] = h+\gamma[\theta](x_1-\mathsf x_1) + L.$
We thus infer 
\begin{equation} \label{eq_boundP2L2}
\|P_2f \|_{L^2(\mathcal G_{\lambda})}^2 \leq  \int_{\mathsf x_1 -\lambda}^{\mathsf x_1 + \lambda} (\mathfrak d + \gamma[\theta](x_1-\mathsf x_1)- \mathsf x_2[\theta])^2 \int_{-L}^{h+\gamma[\theta](x_1-\mathsf x_1)} |f(x_1,x_2)|^2dx_1 dx_2 \
\end{equation}
and
$$
\begin{aligned}
&\|\nabla P_2f\|_{L^2(\mathcal G_{\lambda})}^2
 \leq  \int_{\mathsf x_1 -\lambda}^{\mathsf x_1 + \lambda} (\mathfrak d + \gamma[\theta](x_1-\mathsf x_1)- \mathsf x_2[\theta])^2 \int_{-L}^{h+\gamma[\theta](x_1-\mathsf x_1)} |\partial_1 f(x_1,x_2)|^2 dx_2 dx_1\\[6pt]
& \hspace{4mm} + \int_{\mathsf x_1 -\lambda}^{\mathsf x_1 + \lambda}(\mathfrak d + \gamma[\theta](x_1-\mathsf x_1)- \mathsf x_2[\theta]) |\partial_x \gamma[\theta] (x_1-\mathsf x_1)|^2 | f(x_1,h+\gamma[\theta](x_1-\mathsf x_1))|^2  dx_1 .
\end{aligned}
$$

\section*{Declarations}
\begin{itemize}
\item The authors have no relevant financial or non-financial interests to disclose.
\item The authors have no competing interests to declare that are relevant to the content of this article.
\item Data sharing not applicable to this article as no datasets were generated or analysed during the current study.
\end{itemize}

\end{document}